\def\l@subsection{\@tocline{2}{0pt}{2.5pc}{5pc}{}}
\renewcommand\tocchapter[3]{%
  \indentlabel{\@ifnotempty{#2}{\ignorespaces#2.\quad}}#3%
}
\newcommand\@dotsep{4.5}
\def\@tocline#1#2#3#4#5#6#7{\relax
  \ifnum #1>\c@tocdepth % then omit
  \else
    \par \addpenalty\@secpenalty\addvspace{#2}%
    \begingroup \hyphenpenalty\@M
    \@ifempty{#4}{%
      \@tempdima\csname r@tocindent\number#1\endcsname\relax
    }{%
      \@tempdima#4\relax
    }%
    \parindent\z@ \leftskip#3\relax \advance\leftskip\@tempdima\relax
    \rightskip\@pnumwidth plus1em \parfillskip-\@pnumwidth
    #5\leavevmode\hskip-\@tempdima{#6}\nobreak
    \leaders\hbox{$\m@th\mkern \@dotsep mu\hbox{.}\mkern \@dotsep mu$}\hfill
    \nobreak
    \hbox to\@pnumwidth{\@tocpagenum{#7}}\par
    \nobreak
    \endgroup
  \fi}
\renewcommand\csname r@tocindent0\endcsname{0pt}
\def\l@subsection{\@tocline{2}{0pt}{2.5pc}{5pc}{}}
\theoremstyle{plain}
\newtheorem{theorem}{Theorem}[section]
\newtheorem{lemma}[theorem]{Lemma}
\newtheorem{corollary}[theorem]{Corollary}
\newtheorem{conjecture}[theorem]{Conjecture}
\newtheorem{proposition}[theorem]{Proposition}
\theoremstyle{definition}
\newtheorem{definition}[theorem]{Definition}
\newtheorem{remark}[theorem]{Remark}
\newtheorem{example}[theorem]{Example}
\numberwithin{equation}{section}
\author{Lang Mou}
\title{Scattering Diagrams of Quivers with Potentials and Mutations}
\date{}
\begin{document}
\maketitle

\begin{abstract}
    For a quiver with non-degenerate potential, we study the associated stability scattering diagram and how it changes under mutations. We show that under mutations the stability scattering diagram behaves like the cluster scattering diagram associated to the same quiver, which gives them identical cluster chamber structures. As an application, we prove if the quiver has a green-to-red sequence, these two scattering diagrams are identical and if the quiver comes from a once-punctured torus, they differ by a central wall-crossing. This verifies in those cases a conjecture of Kontsevich-Soibelman that a particular set of initial data given by a quiver determines the Donaldson-Thomas series of the quiver with a non-degenerate potential.
\end{abstract}

\tableofcontents

\section{Introduction}

Cluster algebras, introduced by Fomin and Zelevinsky in \cite{fomin2002cluster}, are a class of commutative algebras generated in some Laurent polynomial ring by a distinguished set of Laurent polynomials (the \textit{cluster variables}) grouped in overlapping subsets (the \textit{clusters}) recursively obtained by operations called \textit{mutations}. Important properties of cluster algebras regarding their bases are proved by Gross-Hacking-Keel-Kontsevich in \cite{gross2018canonical} where a technical tool named \textit{cluster scattering diagram} plays a crucial role. In general, a scattering diagram, sometimes under the name \textit{wall-crossing structure}, is a (possibly infinite) cone complex in a vector space where its codimension one cones (\textit{walls}) are decorated with certain transformations (elements in some group) referred as \textit{wall-crossings}. The cluster scattering diagram associated to a cluster algebra has maximal cells (the \textit{cluster chambers}) corresponding to clusters and the walls of the cluster chambers are decorated with wall-crossings to describe mutations. A nicely behaved basis of a cluster algebra (the \textit{canonical basis}) is constructed in \cite{gross2018canonical} by counting in the associated cluster scattering diagram certain piecewise linear curves (the \textit{broken lines}) which bend when crossing walls. The canonical basis contains monomials of cluster variables in the same cluster (the \textit{cluster monomials}).

A large class of cluster algebras of interest are those associated to quivers. To cluster algebras of this type, there is a seemingly different approach which utilizes a categorification modeled on quiver representations. For more details in this approach (the \textit{additive categorification}), we refer to the nice survey \cite{keller2008cluster}. In view of the additive categorificaiton, clusters correspond to t-structures of the relevant triangulated category and mutations are essentially tiltings of the t-structures. In this framework, cluster monomials, part of the canonical basis, can be recovered by applying Caldero-Chapoton type formulas to certain quiver representations; see \cite{caldero2006cluster} for finite type quivers, generalizations in \cite{caldero2008triangulated, caldero2006triangulated, palu2008cluster},  \cite{derksen2010quivers, plamondon2011cluster} for arbitrary 2-acyclic quivers and \cite{nagao2013donaldson} for a point of view closest to this paper.

It is interesting to ask what are the meanings of cluster scattering diagrams, wall-crossings and broken lines in the framework of additive categorification? An important first step towards an answer is taken by Bridgeland, who constructs in \cite{bridgeland2016scattering} a \textit{stability scattering diagram} (\cref{qstabilityscatteringdiagram}) for each quiver with potential using representations of the corresponding Jacobian algebra. He shows that for an acyclic quiver (thus with only zero potential), the stability scattering diagram is identical to the corresponding cluster scattering diagram. In this case, the Caldero-Chapoton formulas for cluster monomials thus have interpretations in both scattering diagrams. However, even in this case, the representation theoretic meaning of the canonical basis (apart from cluster monomials) is still unclear.

One goal of this paper is to further investigate the relationship between these two related but a priori not necessarily identical scattering diagrams, setting foundations towards a better understanding of the categorical meanings of the combinatorial objects extracted from cluster scattering diagrams, especially the categorification of broken lines.

We closely follow the setting in \cite{gross2018canonical} but restrict ourselves to the cases associated to quivers; that is, we fix a lattice $N\cong \mathbb Z^n$ equipped with a $\mathbb Z$-valued skewsymmetric (instead of more generally skewsymmetrizable) form. A seed $\mathbf{s}$ is a basis of $N$ and there is a quiver $Q(\mathbf{s})$ whose adjacency matrix is the matrix of pairings of $\mathbf{s}$ in the skewsymmetric form. To the quiver $Q(\mathbf{s})$, there is an associated cluster scattering diagram $\mathfrak D_\mathbf{s}$ (\cref{clustersd}). A seed with potential $(\mathbf{s,w})$ is a seed $\mathbf{s}$ together with a potential $\mathbf{w}$ of the quiver $Q(\mathbf{s})$. To the quiver with potential $(Q(\mathbf{s}),\mathbf{w})$, we have the associated stability scattering diagram $\mathfrak D_\mathbf{s,w}$ (\cref{qstabilityscatteringdiagram}). The mutation $\mu_k^{-}$ (defined in \cref{mutationofsp}, corresponding to $\mu_k$ in \cite[section 1.3]{gross2018canonical}) transforms $(\mathbf{s,w})$ into another seed with potential $\mu_k^-(\mathbf{s,w})$ where $k$ is one of the vertices of the quiver $Q(\mathbf {s})$. Our first main result is the following mutation formula of stability scattering diagrams. This theorem should be compared with \cite[theorem 1.24]{gross2018canonical}. For the precise statements, see \cref{scatmutation}, \cref{chamberstructure} and \cref{agreecc}.
\begin{theorem}\label{theorem1}
Let $(\mathbf{s,w})$ be a non-degenerate seed with potential. 

\begin{enumerate}[label=(\roman*)]
    \item we have the following equality between stability scattering diagrams: 
\[
\mathfrak D_{\mu_k^-(\mathbf{s,w})} = T_k(\mathfrak D_{\mathbf{s,w}})
\]
where $T_k$ is certain piecewise linear transformation on the scattering diagram $\mathfrak D_{\mathbf{s,w}}$.
    \item The scattering diagrams $\mathfrak D_\mathbf{s,w}$ and $\mathfrak D_\mathbf{s}$ have the same cluster chambers and the same wall-crossings on walls of cluster chambers. %Moreover, they have identical wall-crossings everywhere in the convex hull of the cluster complex.
\end{enumerate}
\end{theorem}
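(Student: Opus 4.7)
My plan is to prove part (i) first, using representation-theoretic input, and then deduce part (ii) by a formal induction that parallels the analogous argument for cluster scattering diagrams in \cite{gross2018canonical}.

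For part (i), the strategy is to realize the mutation formula as the combinatorial shadow of a derived equivalence. The Jacobian algebras of $(Q(\mathbf{s}), \mathbf{w})$ and its mutation $\mu_k^-(\mathbf{s}, \mathbf{w})$ fit into a derived equivalence, under which the two standard hearts are related by a left tilt at the simple $S_k$. This tilt induces a piecewise linear bijection between dimension vectors that coincides with the transformation $T_k$: on the half-space $\langle e_k^*, \cdot \rangle \geq 0$ the two coordinates agree, while on the opposite half-space one performs the standard mutation of $g$-vectors. Because stability conditions, $\mu$-semistable objects, and the resulting DT invariants are all transported across the derived equivalence, the walls and wall-crossing automorphisms of $\mathfrak D_{\mu_k^-(\mathbf{s,w})}$ agree with those of $T_k(\mathfrak D_{\mathbf{s,w}})$ away from the hyperplane $e_k^\perp$. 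A consistency/uniqueness argument for scattering diagrams with prescribed initial data on each side of $e_k^\perp$ then forces the two scattering diagrams to coincide globally.

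For part (ii), I would argue by induction on the length of a mutation sequence. In the base case, both $\mathfrak D_{\mathbf{s,w}}$ and $\mathfrak D_{\mathbf{s}}$ contain the positive orthant $\{x : \langle e_i^*, x \rangle > 0 \text{ for all } i\}$ as a chamber, and its bounding walls carry the same wall-crossing automorphisms: on the stability side, the initial walls arise from the simple modules $S_i$ of the Jacobian algebra, whose wall functions match the initial scattering functions of $\mathfrak D_{\mathbf s}$ by direct computation. For the inductive step, the cluster chambers of $\mathfrak D_{\mathbf s}$ are, by \cite{gross2018canonical}, the images of the positive chamber under successive compositions of piecewise linear mutations $T_{k_1}, T_{k_2}, \dots$. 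Part (i), combined with its cluster analogue in \cite{gross2018canonical}, shows the same for $\mathfrak D_{\mathbf{s,w}}$. Both families of cluster chambers are therefore generated by the same piecewise linear group acting on the same initial chamber, so they coincide as subsets of the ambient vector space, and their bounding walls carry identical wall-crossings because those wall-crossings are themselves computed by transporting the initial wall-crossings (which already agree) through the same sequence of $T_{k_i}$'s.

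The principal obstacle is part (i): matching the DT data produced by the two Jacobian algebras under the piecewise linear transformation $T_k$. This requires a careful identification of the two tilted hearts associated with $\mu_k^{\pm}$, tight control over the behaviour of cluster characters of semistable modules under the derived equivalence (in the style of Nagao), and a consistency result strong enough to upgrade agreement of walls to agreement of scattering diagrams. Once these ingredients are in place, the rest of the argument is combinatorial and mirrors the mutation proof for cluster scattering diagrams in \cite{gross2018canonical}.
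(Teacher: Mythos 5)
Your plan captures the overall shape of the argument but chooses different machinery than the paper does, and leaves the critical technical step underspecified. The paper does not pass through a derived equivalence at all. Instead it constructs explicit \emph{generalized reflection functors} $F_k^\pm$ directly between the abelian module categories $J(\mathbf{s,w})\text{-mod}$ and $J(\mu_k^\pm(\mathbf{s,w}))\text{-mod}$ (Section~4.2), proves that they restrict to inverse equivalences between the torsion-free and torsion subcategories $\mathcal A_{k,+}$ and $\mathcal A'_{k,-}$ preserving short exact sequences and dimension vectors (\cref{equiv}), and then shows directly that they carry $m$-semistable objects to $m$-semistable objects on the appropriate half-space (\cref{ifm}, \cref{fmg0}). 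The comparison of scattering diagrams is then made in the \emph{motivic Hall algebra}: the equivalences induce isomorphisms of Hall subalgebras, the elements $1^{\mathrm{ss}}(m)$ are matched on each half-space (\cref{hallalgebramutation}), and the stability scattering diagram statement follows by applying Joyce's no-poles theorem and the integration map. The derived-equivalence route you propose (Keller--Yang tilting, Nagao-style transport of DT data) is a legitimate alternative and is closer in spirit to \cite{nagao2013donaldson}, but it is not what the paper does; the paper's abelian-level construction is more elementary and avoids the derived machinery.

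The place where your write-up has an actual gap is the sentence ``A consistency/uniqueness argument for scattering diagrams with prescribed initial data on each side of $e_k^\perp$ then forces the two scattering diagrams to coincide globally.'' As stated this is not enough: two consistent scattering diagrams can agree away from the hyperplane $s_k^\perp$ and still differ by the wall-crossing on that hyperplane, so one has to compute the wall-crossings \emph{on} $s_k^\perp$ on both sides. The paper does this explicitly in \cref{scatmutation}, identifying the wall-crossings at a generic $m\in s_k^\perp$ as $\exp(-\mathrm{Li}_2(-x^{s_k}))$ for $\mathfrak D_{\mathbf{s,w}}$ and $\exp(-\mathrm{Li}_2(-x^{-s_k}))$ for $\mathfrak D_{\mu_k^-(\mathbf{s,w})}$, which is precisely what makes the piecewise-linear $T_k^-$ the right transformation. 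You gesture at this (``initial walls arise from the simple modules $S_i$'') but only in the base-case discussion for part (ii); it is needed already in part (i) and cannot be swept into an unspecified uniqueness principle. Once that is added, your argument for part (ii) (induction along mutation sequences, matching the cluster chambers generated from the positive orthant by the $T_{k_i}$'s) is essentially the paper's construction of the cluster complexes $\Delta^\pm_{\mathbf{s,w}}$ and $\Delta^\pm_{\mathbf{s}}$ in \cref{chamberstructure} and \cref{agreecc}, so that part is fine.
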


We have the following corollary in comparing stability scattering diagrams with cluster scattering diagrams. Note that Qin also has a proof of (i) of \cref{theorem2} using \textit{opposite} scattering diagrams \cite{qin2019bases}.

\begin{corollary}\label{theorem2}
Let $(\mathbf{s,w})$ be a non-degenerate seed with potential.
\begin{enumerate}[label=(\roman*)]

\item  If the quiver $Q(\mathbf{s})$ has a green-to-red sequence, then the stability scattering diagram $\mathfrak D_\mathbf{s,w}$ is equal to the cluster scattering diagram $\mathfrak D_{\mathbf{s}}$. See \cref{reddening}.

\item If the quiver $Q(\mathbf{s})$ comes from a once-punctured torus (the Markov quiver), then $\mathfrak D_\mathbf{s,w}$ and $\mathfrak D_\mathbf{s}$ differ by a central wall-crossing on the separating hyperplane. See \cref{markovquiver}.
\end{enumerate}
\end{corollary}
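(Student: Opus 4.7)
The plan is to deduce both parts of the corollary directly from Theorem~\ref{theorem1}(ii), which asserts that $\mathfrak{D}_{\mathbf{s,w}}$ and $\mathfrak{D}_\mathbf{s}$ share the same cluster chambers and the same wall-crossings on every wall bounding a cluster chamber. Consequently any difference between the two scattering diagrams must be supported on walls that do not touch any cluster chamber, and both statements reduce to controlling such ``hidden'' walls.

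For part (i), I would argue that under a green-to-red sequence no hidden walls exist. By definition a green-to-red sequence transports the all-green initial seed to an all-red seed; in the language of scattering diagrams this amounts to a chain of mutations carrying the positive cluster chamber $\mathcal{C}^+$ to $-\mathcal{C}^+$. By a now standard argument, the union of closures of the cluster chambers then exhausts the ambient vector space, so every wall of either scattering diagram is a wall of some cluster chamber. Theorem~\ref{theorem1}(ii) then forces $\mathfrak{D}_{\mathbf{s,w}} = \mathfrak{D}_\mathbf{s}$. This step should be essentially formal once Theorem~\ref{theorem1} is established.

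For part (ii) one treats the Markov quiver explicitly. Here the cluster chambers tile the complement of a single separating hyperplane $H$, on which the ``Badlands'' of walls concentrates. Theorem~\ref{theorem1}(ii) already forces the two scattering diagrams to agree on the open complement of $H$, so the remaining task is to compare the wall-crossings supported on $H$. On the stability side one reads the wall-crossing along $H$ directly from Bridgeland's construction, in terms of Donaldson-Thomas invariants of Jacobian representations whose central charge lies on $H$; for the Markov potential these contributions are governed by a well-studied generating series. On the cluster side, the wall-crossing on $H$ is forced by consistency with the cluster-chamber walls on either side of $H$, so it can be recovered from the known description of the Markov cluster scattering diagram. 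The ratio of these two automorphisms is then an automorphism supported on $H$, and one verifies it is central in the relevant completed group.

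The main obstacle I expect is in part (ii): although Theorem~\ref{theorem1} localizes the problem to a single hyperplane, one still has to identify the two wall-crossings on $H$ explicitly and check that their ratio is central. Morally, centrality reflects the fact that every dimension vector contributing along $H$ is proportional to the imaginary root of the Markov quiver, so the corresponding monomials commute in the pronilpotent completion in which the wall-crossings live. Part (i), by contrast, should be a soft consequence of Theorem~\ref{theorem1} together with the coverage property of green-to-red sequences and will not require any new computation.
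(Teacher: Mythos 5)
Your argument for part~(i) has a genuine gap. The step ``the union of closures of the cluster chambers then exhausts the ambient vector space'' does \emph{not} follow from the existence of a green-to-red sequence, and is false in general: wild acyclic quivers (e.g.\ the $3$-Kronecker quiver) admit maximal green sequences, yet their cluster fans leave an uncovered cone around the imaginary-root direction, so there are walls of both scattering diagrams that are not walls of any cluster chamber. The paper's proof of \cref{reddening} needs no density statement. It relies instead on the fact (\cref{thesecondbijection}) that a consistent scattering diagram is completely determined by the single element $\mathfrak p_{+,-}(\mathfrak D)=\phi_{\mathfrak D}(0)\in \hat G$, computable as the path-ordered product along any path from $\mathcal C^+_{\mathbf s}$ to $\mathcal C^-_{\mathbf s}$. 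A green-to-red sequence yields exactly such a path that crosses only codimension-one cones of $\Delta_{\mathbf s}^+$, and \cref{agreecc} says $\phi_{\mathbf s}$ and $\phi_{\mathbf{s,w}}$ agree there; hence the defining group elements coincide and the diagrams are equal everywhere, regardless of whatever walls live in the badlands.

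For part~(ii), your claim that Theorem~\ref{theorem1}(ii) ``already forces'' agreement on $M_\mathbb R\setminus H$ is too quick for the same reason: there are walls in $M_\mathbb R\setminus H$ that are not walls of cluster chambers (the two-dimensional cones $S_v$ attached to each ray $\mathbb R_+v$ of the cluster fan in the paper's argument). The paper handles them by noting that the wall-crossing on $S_v$ equals the face-crossing at the ray $\mathbb R_+v$, which is forced by consistency because both the positive and negative maximal cells relative to $\mathbb R_+v$ are cluster chambers; this geometric observation about the Markov cluster fan in the affine slice $H_1$ is needed and is not a formal consequence of Theorem~\ref{theorem1}(ii). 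Also, the suggestion to explicitly compute the stability-side wall-crossing on $H$ via DT invariants is unnecessary: the paper never identifies it. Only centrality is used, and that follows, as you correctly guess at the end, from the skew-symmetry: everything supported on $H=n_0^\perp$ is graded by multiples of $n_0$, and $\{kn_0,ln_0\}=0$, so that part of the group is central in $\exp(\hat{\mathfrak g}_{\mathbf s})$.
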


The above theorems have analogues for scattering diagrams that are relevant to the Donaldson-Thomas theory of quivers with potentials \cite{kontsevich2008stability, joyce2012theory}. For a seed $\mathbf{s}$, one can associated a scattering diagram ${\mathfrak{D}}^\mathrm{In}_\mathbf{s}$ (\cref{dtscattering}) using a particular set of initial data similar to the one in defining the cluster scattering diagram $\mathfrak D_\mathbf{s}$. On the other hand, the counterpart of $\mathfrak D_\mathbf{s,w}$ is the \textit{Donaldson-Thomas scattering diagram} ${\mathfrak D}^\mathrm{DT}_{\mathbf{s,w}}$ (\cref{dtscattering}) which encodes the Donaldson-Thomas invariants for the quiver with potential $(Q(\mathbf{s}),\mathbf{w})$ (while $\mathfrak D_\mathbf{s,w}$ encodes the Joyce invariants). The following conjecture is proposed by Kontsevich-Soibelman in \cite[Conjecture 3.3.4]{kontsevich2014wall}. For the precise statement, see conjecture \ref{ksconjecture}.

\begin{conjecture}
For a seed with non-degenerate potential $(\mathbf{s,w})$, the DT scattering diagram ${\mathfrak D}^\mathrm{DT}_{\mathbf{s,w}}$ and the scattering diagram $\mathfrak D^\mathrm{In}_\mathbf{s}$ differ by a central element in the group of wall-crossings. 
\end{conjecture}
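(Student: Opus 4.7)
The plan is to parallel the strategy used for \cref{theorem1} and \cref{theorem2}, transferring the mutation analysis from the Joyce/stability side to the Donaldson-Thomas side via the standard (logarithmic) correspondence between Joyce invariants and Donaldson-Thomas invariants. Concretely, the wall-crossing group in which $\mathfrak{D}^\mathrm{DT}_{\mathbf{s,w}}$ and $\mathfrak{D}^\mathrm{In}_{\mathbf{s}}$ live is canonically isomorphic (via a Kontsevich-Soibelman integration map) to the one containing $\mathfrak{D}_{\mathbf{s,w}}$ and $\mathfrak{D}_{\mathbf{s}}$, and the DT generating functions attached to a wall are determined by the Joyce generating functions on the same wall. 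The first step is to make this translation explicit at the level of scattering diagrams, so that $\mathfrak{D}^\mathrm{DT}_{\mathbf{s,w}}$ and $\mathfrak{D}^\mathrm{In}_{\mathbf{s}}$ become the images of $\mathfrak{D}_{\mathbf{s,w}}$ and $\mathfrak{D}_{\mathbf{s}}$ under one and the same group isomorphism.

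Once this is in place, I would show that the piecewise-linear mutation $T_k$ of \cref{theorem1}(i) is compatible with this isomorphism, so that one obtains a DT-mutation formula $\mathfrak{D}^\mathrm{DT}_{\mu_k^-(\mathbf{s,w})} = T_k(\mathfrak{D}^\mathrm{DT}_{\mathbf{s,w}})$, and likewise $\mathfrak{D}^\mathrm{In}_{\mu_k^-(\mathbf{s})} = T_k(\mathfrak{D}^\mathrm{In}_{\mathbf{s}})$. By transporting \cref{theorem1}(ii) through the correspondence, the two DT-type diagrams then share identical cluster chambers and identical wall-crossings on the walls of those chambers, so that their quotient in the wall-crossing group is supported away from the cluster complex. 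The point is that any wall-crossing surviving outside the cluster complex and simultaneously commuting with every mutation of the cluster chamber structure must be a central element of the group, which is exactly the statement of the conjecture.

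To verify this conjecture in the concrete cases covered by \cref{theorem2}, I would then apply it as follows. For a quiver with a green-to-red sequence, \cref{theorem2}(i) transferred through the DT/Joyce isomorphism gives $\mathfrak{D}^\mathrm{DT}_{\mathbf{s,w}} = \mathfrak{D}^\mathrm{In}_{\mathbf{s}}$, i.e.\ the central element is trivial. For the Markov quiver, the central wall-crossing on the separating hyperplane identified in \cref{theorem2}(ii) translates, under the same isomorphism, to a still-central wall-crossing supported on the same hyperplane, so that the conjecture is verified with an explicit (non-trivial) central generating function.

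The hard part will be making the DT/Joyce correspondence functorial with respect to the piecewise linear mutations $T_k$: the integration map is linear in the relevant Lie algebras but $T_k$ is only piecewise linear, and one must check that the chamber-dependent linearizations match on both sides. A secondary subtlety is showing that centrality is genuinely preserved under this transfer, and that in the Markov case the center of the wall-crossing group is rich enough to accommodate the translated wall, which amounts to a finiteness/commutativity check on the Jacobian representation theory restricted to the separating hyperplane.
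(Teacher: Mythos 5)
The statement is a conjecture (Kontsevich--Soibelman, restated as \cref{ksconjecture}), and the paper does \emph{not} prove it in general; it only verifies it for quivers with a green-to-red sequence (\cref{dtreddening}) and for the Markov quiver (\cref{markovquiver}). Your proposal, by contrast, purports to give a proof for an arbitrary non-degenerate seed with potential, so it must be making an unjustified leap somewhere. Here it is.

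The gap is in the sentence ``any wall-crossing surviving outside the cluster complex and simultaneously commuting with every mutation of the cluster chamber structure must be a central element of the group.'' This is not a consequence of anything established in \cref{theorem1} or \cref{theorem2}; it is essentially a restatement of the hard part of the conjecture and is asserted without proof. What the mutation formula \cref{scatmutation} (together with its cluster counterpart \cref{ghkkmutation}) gives you is that $\mathfrak D^\mathrm{DT}_{\mathbf{s,w}}$ and $\mathfrak D^\mathrm{In}_\mathbf{s}$ agree on the cluster complex $\Delta^\pm_\mathbf{s}$ and are simultaneously mutation-equivariant. But the complement of $\Delta^+_\mathbf{s}\cup\Delta^-_\mathbf{s}$ can be a large region (the ``invisible'' part of the scattering diagram), and there is no argument that the discrepancy of the two diagrams there is forced to be central. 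Mutation-equivariance of the discrepancy does not by itself imply centrality: the invariance is under the piecewise-linear maps $T_k^\pm$, which only identify half-spaces and leave the behavior on the closure of the non-cluster region unconstrained. This is exactly why the paper's verification is restricted to two situations in which the non-cluster region is either empty (green-to-red sequence, so $\overline{\mathcal C^-_\mathbf{s}}\in\Delta^+_\mathbf{s}$ and the path-ordered product is pinned down entirely by cluster wall-crossings) or a single hyperplane $n_0^\perp$ (Markov quiver, where one can explicitly see that the remaining wall-crossing on $H$ must be central because $n_0$ lies in the kernel of $p^*$).

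A secondary issue: you describe the DT and Joyce sides as images of the same Hall-algebra data under ``one and the same group isomorphism,'' but the two integration maps $\mathcal I$ and $\bar{\mathcal I}$ are genuinely different (ordinary vs.\ Behrend-weighted Euler characteristics, and the target Lie algebras $\mathfrak g_\mathbf{s}$ vs.\ $\mathfrak h_\mathbf{s}$ differ by a sign twist in the bracket). Consequently $\mathfrak D^\mathrm{DT}_{\mathbf{s,w}}$ is not obtained from $\mathfrak D_{\mathbf{s,w}}$ by applying an isomorphism of wall-crossing groups, and results about the stability scattering diagram cannot be ``transferred'' mechanically. What does transfer is the \emph{strategy}: the same Hall-algebra mutation statements (\cref{hallalgebramutation}) push forward under either integration map, yielding parallel cluster-chamber results for $\mathfrak D^\mathrm{DT}$ and $\mathfrak D^\mathrm{In}$ directly, which is how \cref{dtreddening} is actually proved.

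Your proposed treatments of the two special cases are in line with the paper once the preceding issues are repaired, but as written the general argument does not close the conjecture.
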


As an application of the proofs of \cref{theorem2}, we confirm this conjecture for quivers with green-to-red sequences and also for the Markov quiver (\cref{dtreddening} and \cref{markovquiver}). Of course, one can formulate the same conjecture for $\mathfrak D_\mathbf{s,w}$ and $\mathfrak D_\mathbf{s}$. Moreover, these two conjectures are both mutation-invariant; see \cref{mutationinvariance} for the following proposition.

\begin{proposition}
The above conjecture is true for the scattering diagrams $\mathfrak D_{\mathbf{s,w}}$ and $\mathfrak D_\mathbf{s}$ (resp. ${\mathfrak D}^\mathrm{DT}_\mathbf{s,w}$ and ${\mathfrak D}^\mathrm{In}_\mathbf{s}$) if and only if it is true for their mutations $\mathfrak D_{\mu^-_k(\mathbf{s,w})}$ and $\mathfrak D_{\mu^-_k(\mathbf{s})}$ (resp. ${\mathfrak D}^\mathrm{DT}_{\mu_k^-(\mathbf{s,w})}$ and ${\mathfrak D}^\mathrm{In}_{\mu_k^-(\mathbf{s})}$). \end{proposition}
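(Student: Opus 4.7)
The plan is to use the mutation formula from \cref{theorem1}(i) together with the analogous known mutation behaviour of the cluster scattering diagram (\cite[Theorem~1.24]{gross2018canonical}) to reduce the ``before mutation'' version of the conjecture to the ``after mutation'' version by applying one and the same piecewise linear transformation $T_k$ to both diagrams in play.

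First I would record the two mutation identities side by side:
\[
\mathfrak D_{\mu_k^-(\mathbf{s,w})}=T_k(\mathfrak D_{\mathbf{s,w}}),\qquad
\mathfrak D_{\mu_k^-(\mathbf{s})}=T_k(\mathfrak D_{\mathbf{s}}),
\]
and likewise the DT/initial versions once the analogous mutation formulas for $\mathfrak D^\mathrm{DT}_{\mathbf{s,w}}$ and $\mathfrak D^\mathrm{In}_{\mathbf{s}}$ are in hand (these should follow from the same techniques used in \cref{theorem1}, essentially because $T_k$ is determined by the common wall-crossings on the walls of the initial cluster chambers, by \cref{theorem1}(ii)). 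Since $T_k$ is a single well-defined operation on scattering diagrams supported in the relevant ambient space, the two identities can be manipulated in parallel.

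Next I would translate the statement ``$\mathfrak D_{\mathbf{s,w}}$ and $\mathfrak D_{\mathbf{s}}$ differ by a central wall-crossing $C$'' into the precise form that $\mathfrak D_{\mathbf{s,w}} = C\cdot \mathfrak D_{\mathbf{s}}$, where $C$ is supported on the separating hyperplane and commutes with every element of the wall-crossing group. Because $T_k$ acts on the group of wall-crossings by conjugation by the mutation automorphism (up to the piecewise linear reflection across the wall $k^\perp$), it preserves the centre of that group. Therefore $T_k(C)$ is again central, and the key geometric point to verify is that $T_k$ sends the separating hyperplane of $\mathbf{s}$ to the separating hyperplane of $\mu_k^-(\mathbf{s})$; this is a direct computation on the two half-spaces defining $T_k$. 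Combined with the two mutation identities above, this gives
\[
\mathfrak D_{\mu_k^-(\mathbf{s,w})} = T_k(C)\cdot \mathfrak D_{\mu_k^-(\mathbf{s})},
\]
with $T_k(C)$ central on the new separating hyperplane, which is exactly the conjecture for the mutated seed; the converse direction follows by applying $T_k^{-1}=T_k$. The same argument transported verbatim to the DT setting handles the pair $(\mathfrak D^\mathrm{DT}_{\mathbf{s,w}},\mathfrak D^\mathrm{In}_{\mathbf{s}})$.

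The main obstacle I expect is the last geometric verification: making sure that, on the nose, $T_k$ identifies the separating hyperplane before and after mutation, and that the piecewise linear nature of $T_k$ does not fragment the supposedly single central wall into pieces with inconsistent wall-crossing elements. This requires a careful check that $C$, being supported on the separating hyperplane, meets the bending locus of $T_k$ (namely $k^\perp$) only in a way that is compatible with centrality, so that no non-trivial conjugation is introduced on the two half-spaces. Once this compatibility is established, the proposition follows formally from \cref{theorem1}(i) and its DT analogue.
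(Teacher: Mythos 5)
Your overall strategy — transport the central-element relation through the mutation formulas of \cref{scatmutation} and \cref{ghkkmutation} (and the DT/In analogues) — is exactly what the paper does: the proof of \cref{mutationinvariance} is stated as an immediate consequence of these two mutation theorems, and the Proposition in the introduction is its ``differ by a central element'' generalization. So the approach is the same; the useful thing to fix is the framing of what needs to be checked.

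Two points are imprecise. First, you interpret ``differ by a central wall-crossing'' as ``$C$ supported on the separating hyperplane.'' The conjecture is algebraic, not geometric: per the remark following \cref{dtreddening}, it means $\phi_\mathbf{s,w}(0)=\phi_\mathbf{s}(0)\cdot g_0$ with $\log(g_0)\in(\hat{\mathfrak g}_\mathbf{s})_{N_0}$, where $N_0=\ker p^*$. In general $N_0$ can have rank greater than one, so there need not be a single separating hyperplane; the Markov example (\cref{markovquiver}) is the special rank-one case. Consequently the geometric verification you flag as the ``main obstacle'' — that $T_k$ takes the separating hyperplane to the separating hyperplane, and doesn't fragment the wall — is not really the right thing to check. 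What one needs is the one-line algebraic computation that $T_k^\vee\colon n\mapsto n+\{s_k,n\}s_k$ preserves $N_0$: if $p^*(n)=0$ then $\{s_k,n\}=0$, so $T_k^\vee(n)=n\in N_0$. Hence the induced isomorphism on pro-unipotent groups preserves $\exp\bigl((\hat{\mathfrak g}_\bullet)_{N_0}\bigr)$, which is the centrality statement you want; the worry about ``inconsistent wall-crossing elements on the two half-spaces'' then takes care of itself, because on each half-space $T_k^-$ is linear and fixes $N_0$ pointwise. Second, ``$T_k^{-1}=T_k$'' is false: $T_k$ is not an involution; what is true is that the piecewise linear maps $T_k^+$ and $T_k^-$ (with respect to the appropriate seeds) are inverse to one another, matching \cref{involutionsp}. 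Also note the mutation identity should use the piecewise linear $T_k^-$ rather than the linear $T_k$. With these corrections your argument closes cleanly and coincides with the paper's.
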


We conclude with the contents of this paper. General definitions and properties of scattering diagrams are given in \cref{definitions}. Then we review cluster and stability scattering diagrams and their quantizations in \cref{cluster}. Almost every of the aforementioned results can be extended to the quantum case. Our main technical tool is developed and the main results are proved in \cref{muta}. The applications in cluster algebras and Donaldson-Thomas theory are explained in \cref{application}.

\section*{Acknowledgements}
The author thanks Eric Babson for his patience, encouragement and countless discussions. He would also like to thank Tom Bridgeland, Ben Davison, Bernhard Keller, Travis Mandel and Fan Qin for helpful discussions. He thanks Bernhard Keller for his comments and remarks.

\section{General theory of scattering diagrams}\label{definitions}
In this section, we introduce the definition of consistent $\mathfrak g$-valued scattering diagrams (consistent $\mathfrak g$-SD in short) for a graded Lie algebra $\mathfrak g$ following Kontsevich-Soibelman \cite{kontsevich2014wall}. To be precise, the case we consider in this paper is corresponding to in loc. cit. the \textit{wall-crossing structures on a vector space} with the assumption that the support of the Lie algebra $\mathfrak g$ is contained in a strict convex cone. The main result in this section is the existence of a canonical cone complex structure of an arbitrary consistent $\mathfrak g$-SD with finite support; see \cref{minimalsupport}. This feature of consistent scattering diagrams is important in applications in cluster theory and we hope it to be also useful for studying connections to toric geometry as in \cite{gross2010tropical}.

\subsection{Graded Lie algebras} \label{setting}
\subsubsection{} Let $N\cong \mathbb Z^n$ be an $n$-dimensional lattice, i.e. a free abelian group of rank $n\in \mathbb N$. Fix a basis $\mathbf{s} = \{s_1,\dots, s_n\}$ of $N$. We define $N^+ = N^+_\mathbf{s}$ to be the subsemigroup (without $0$) of $N$ non-negatively generated by $\mathbf{s}$. The monoid $N_\mathbf{s}^\oplus$ is $N_\mathbf{s}^+\cup\{0\}$. We will denote an $N^+$-graded Lie algebra by $\mathfrak g$. That is,
\[
\mathfrak g=\bigoplus_{d\in N^+}\mathfrak g_d
\]
as a free module over a commutative algebra over $\mathbb Q$ (usually a vector space over a field $k$ of characteristic zero) with a Lie bracket such that $[\mathfrak g_{n_1},\mathfrak g_{n_2}]\subset \mathfrak g_{n_1+n_2}$ for any $n_1, n_2\in N^+$. For a subset $S$ of $N^+$, we will usually denote by $\mathfrak g_S$ the direct sum of the homogeneous spaces supported on $S$, i.e. 
\[
\mathfrak g_S\colon = \bigoplus_{d\in S}\mathfrak g_S.
\]

Every ideal $I$ of the semigroup $N^+$ gives an ideal $\mathfrak g_I$ of the Lie algebra $\mathfrak g$ and a quotient Lie algebra
\[
\mathfrak g^{<I}\colon =\mathfrak g/\mathfrak g_I.
\]
Note that the Lie algebra $\mathfrak g^{<I}$ is still $N^+$-graded and is supported on the set $N^+\setminus I$. If we have an inclusion of ideals $I\subset J$ of $N^+$, then there is an induced $N^+$-graded Lie algebra homomorphism $\rho_{I,J}\colon \mathfrak g^{<I}\rightarrow \mathfrak g^{<J}$.

When the Lie algebra $\mathfrak g$ is nilpotent, there is a corresponding unipotent algebraic group $G$ which is in bijection with $\mathfrak g$ as sets. The product in the group $G$ is given by the Baker–Campbell–Hausdorff formula. The bijection is denoted by $\mathrm {exp}\colon \mathfrak g\rightarrow G$. If the Lie algebra $\mathfrak g$ has finite support, i.e. when \[\mathrm{Supp}(\mathfrak g)\colon = \{d\in N^+|\ \mathfrak g_d\neq 0\}\] is a finite set, then it is nilpotent. We say an ideal $I$ of $N^+$ is \textit{cofinite} if $N^+\setminus I$ is a finite set and denote the set of all cofinite ideals by $\mathrm{Cofin}(N^+)$. In this case, the quotient Lie algebra $\mathfrak g^{<I}$ has finite support and thus is nilpotent, giving the corresponding unipotent group $G^{<I}$. The inclusion of cofinite ideals $I\subset J$ induces a quotient map between groups which we also denote by $\rho_{I,J}\colon G^{<I}\rightarrow G^{<J}$. In fact, we can define an order $J\leq I$ for $I\subset J$. Then the set of cofinite ideals $\mathrm{Cofin}(N^+)$ becomes a directed set and the associations $I\mapsto \mathfrak g^{<I}$ and $I\mapsto  G^{<I}$ become inverse systems indexed by $\mathrm{Cofin}(N^+)$. Taking the projective limits, we obtain a pro-nilpotent Lie algebra and a corresponding pro-unipotent algebraic group:
\begin{equation}\label{socfi}
\hat {\mathfrak g}\colon =\lim_{\underset{I}{\longleftarrow}}\mathfrak g^{<I}\cong\prod_{n\in N^+}\mathfrak g_n\quad \mathrm{and}\quad \hat{G}\colon =\lim_{\underset{I}{\longleftarrow}} G^{<I}.
\end{equation}
The group $\hat G$ is again in bijection with the Lie algebra $\hat {\mathfrak g}$ as sets.

We put
\[
M\colon =\mathrm{Hom} (N,\mathbb Z)\cong Z^n\quad \mathrm{and}\quad  M_\mathbb R\colon =M\otimes \mathbb R\cong \mathbb R^n.
\]
For any $m\in M_\mathbb R$, there is a partition of $N^+\colon$ $$N^+ = P_{m,+}\sqcup P_{m,0}\sqcup P_{m,+}$$ where
\begin{equation}\label{supportdecomposition}
P_{m,\pm}\colon =\{n\in N^+|\ m(n)\gtrless 0\}\ 
 \ \mathrm{and}\ P_{m,0}\colon=\{n\in N^+|\ m(n)=0\}.    
\end{equation}
This partition of $N^+$ induces a decomposition of $\mathfrak g\colon$
\begin{equation}\label{decompositionliealgebra}
\mathfrak g = \mathfrak g_{m,+}\oplus \mathfrak g_{m,0} \oplus \mathfrak g_{m,-}    
\end{equation}
where $\mathfrak g_{m,\bullet} \colon = \mathfrak g_{P_{m,\bullet}}$ is a graded Lie subalgebra for $\bullet\in \{0,+,-\}$. We denote the corresponding pro-unipotent subgroups by $\hat G_{m,\bullet}$. In the following lemma, we fix some $m\in M_\mathbb R$ so $m$ is omitted in the subscript.

\begin{lemma}\label{factorization}
  Fix some $m\in M_\mathbb R$. Then the decomposition (\ref{decompositionliealgebra}) induces a unique factorization of any element $g\in \hat G$ into $g = g_{+}\cdot g_0\cdot g_-$ where $g_\bullet\in \hat G_{\bullet}$ for $\bullet \in \{0,+,-\}$. In other words, the map $\phi\colon \hat G_{+}\times \hat G_{0}\times \hat G_{-}\rightarrow \hat G$ defined by
\[
\phi(g_+,g_0,g_-) = g_+\cdot g_0 \cdot g_-
\]
is a set bijection.
\end{lemma}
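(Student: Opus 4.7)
The plan is to reduce the statement to each finite-dimensional nilpotent quotient $G^{<I}$ and then establish it there by induction along a natural filtration. By (\ref{socfi}), $\hat G$ is the projective limit of the $G^{<I}$ over cofinite ideals, and the decomposition (\ref{decompositionliealgebra}) is $N^+$-graded, hence compatible with every cofinite ideal $I$. If factorization is proved uniquely in each $G^{<I}$, passing to the limit yields both existence and uniqueness in $\hat G$, since uniqueness automatically renders the factors $g_\bullet^{<I}$ compatible with every transition map $\rho_{I,J}$.

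For the finite case, I would fix a cofinite ideal $I$ and filter by total degree: set $J_\ell := I \cup \{d \in N^+ : |d| > \ell\}$, where $|d|$ denotes the sum of coordinates of $d$ in the basis $\mathbf{s}$. These form a descending chain of cofinite ideals stabilizing at $I$. The key observation is that the kernel of $G^{<J_{\ell+1}} \to G^{<J_\ell}$ sits inside a single graded slice $\{d \in N^+ \setminus I : |d| = \ell+1\}$ and is \emph{central} in $\mathfrak g^{<J_{\ell+1}}$, because any nontrivial bracket strictly raises the total degree and hence vanishes in $\mathfrak g^{<J_{\ell+1}}$. Each inductive step is therefore an extension by a central abelian kernel.

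For existence at stage $\ell+1$, given a factorization $g \equiv \tilde g_+ \tilde g_0 \tilde g_-$ modulo $J_\ell$ with arbitrary lifts $\tilde g_\bullet \in G^{<J_{\ell+1}}_\bullet$, the discrepancy $g \cdot (\tilde g_+ \tilde g_0 \tilde g_-)^{-1}$ equals $\exp(\delta)$ for some $\delta$ in the central slice, and $\delta$ splits as $\delta_+ + \delta_0 + \delta_-$ via (\ref{decompositionliealgebra}). Since each $\exp(\delta_\bullet)$ lies in $G^{<J_{\ell+1}}_\bullet$ and is central, replacing $\tilde g_\bullet$ by $\exp(\delta_\bullet) \tilde g_\bullet$ produces the refined factorization of $g$. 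For uniqueness, if $g_+ g_0 g_- = g_+' g_0' g_-'$ at stage $\ell+1$, the inductive hypothesis yields $g_\bullet' = g_\bullet \exp(\alpha_\bullet)$ with $\alpha_\bullet$ in the central slice; centrality then forces $\exp(\alpha_+ + \alpha_0 + \alpha_-) = 1$, and the direct-sum decomposition restricted to the slice gives $\alpha_\bullet = 0$ for each $\bullet$.

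The main potential obstacle—handling the Baker--Campbell--Hausdorff formula among the non-commuting factors $\hat G_\bullet$ globally—is neatly avoided by this filtration, because the induction always operates inside a single graded slice where every bracket vanishes and BCH degenerates to addition. The only nontrivial ingredient is the centrality of the kernel at each stage, which is an immediate consequence of the $N^+$-grading on $\mathfrak g$.
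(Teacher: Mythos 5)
Your proof is correct and is essentially the same as the paper's: both arguments reduce to the finite quotients $G^{<I}$, then induct along a filtration of cofinite ideals whose successive steps have abelian, central kernels (here supported in a single graded slice), and then pass to the projective limit by observing that uniqueness forces compatibility with the transition maps. The only difference is bookkeeping: the paper refines the filtration so that each step removes a single element $n_i$ from the ideal, and packages the inductive step as an equivariant map between $\exp(\mathfrak g_{n_i})$-torsors over a base where $\phi_i$ is already a bijection, whereas you step by total degree and carry out the existence/uniqueness check by explicit arithmetic in the central slice. Both versions hinge on the same point --- the centrality of the kernel, guaranteed by the $N^+$-grading --- and are interchangeable.
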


\begin{proof}
We first prove this for any $I\in \mathrm{Cofin}(N^+)$.
%This bijection is obvious if $\mathfrak g^{<I}$ is abelian. In fact, in this case, the bijection is canonically given by the decomposition $\mathfrak g^{<I}=\mathfrak g^{<I}_{m, +}\oplus\mathfrak g^{<I}_{m, 0}\oplus\mathfrak g^{<I}_{m, -}$.
Take a filtration of cofinite ideals
\[
I = I_k \subset I_{k-1}\subset \cdots \subset I_0 = N^+
\]
such that $I_{i}\setminus I_{i+1} = \{n_i\}$ contains only one element. We have the following commutative diagram
\[\begin{tikzcd}
G_{+}^{<I_{i+1}}\times G_{0}^{<I_{i+1}}\times G_{-}^{<I_{i+1}}\arrow[r, "\phi_{i+1}"]\arrow[d, "f_i"]&G^{<I_{i+1}}\arrow[d, "h_i"]\\
G_{+}^{<I_{i}}\times G_{0}^{<I_{i}}\times G_{-}^{<I_{i}}\arrow[r, "\phi_{i}"]&G^{<I_{i}}
\end{tikzcd}
\]
where $f_{i}$ and $h_i$ are given by natural quotient maps induced by the inclusion $I_{i+1}\subset I_i$. Note that the maps $f_i$ and $g_i$ are both fibrations of $\exp({\mathfrak g_{n_i}})$-torsors and the map $\phi_{i+1}$ is $\exp(\mathfrak g_{n_i})$-equivariant. Here $\mathfrak g_{n_i}$ denotes the quotient Lie algebra $\mathfrak g^{<I_{i+1}}/\mathfrak g^{<I_{i}}$. Therefore, the bijection of $\phi_i$ would imply the bijection of $\phi_{i+1}$. Since $\phi_0$ is a bijection, the map 
\[
	\phi_I = \phi_k\colon G_{+}^{<I}\times G_{0}^{<I}\times G_{-}^{<I}\rightarrow G^{<I}
\]
is a bijection by induction.

Now we have similar commutative diagrams for any inclusion $I\subset J$
\[\begin{tikzcd}
G_{+}^{<I}\times G_{0}^{<I}\times G_{-}^{<I}\arrow[r, "\phi_I"]\arrow[d, "f_{I,J}"]&G^{<I}\arrow[d, "\rho_{I,J}"]\\
G_{+}^{<J}\times G_{0}^{<J}\times G_{-}^{<J}\arrow[r, "\phi_{J}"]&G^{<J}.
\end{tikzcd}
\]
Then the bijection extends to the projective limits.
\end{proof}
The factorization in the above lemma defines projection maps (of sets)
\begin{equation}\label{projectionmaps}
\pi_{m,\bullet}\colon \hat G\rightarrow \hat G_{m,\bullet}	
\end{equation}
by sending $g$ to $g_{m,\bullet}$. We will simply write $\pi_m$ for $\pi_{m,0}$.

\subsubsection{} By a \textit{cone} in the vector space $M_\mathbb R$, we mean a subset closed under scaling by $\mathbb R_{>0}$. A cone is \textit{convex} if it is convex as a subset of $M_\mathbb R$. A \textit{polyhedral cone} is a closed convex subset of $M_\mathbb R$ of the form
\[
\sigma = \left\{\sum_{i=1}^k\lambda_iv_i\mid \lambda_i\in \mathbb R_{\geq 0},\, v_i\in M_\mathbb R\right\}.
\]
It is called a \textit{rational polyhedral cone} if $v_i$ is in $M$ for each $i$. A face of a cone $\sigma$ is a subset of the form
\[
\sigma\cap n^\perp = \{m\in \sigma\mid m(n) = 0\}
\]
where $n\in N_\mathbb R$ satisfies $m(n)\geq 0$ for all $m\in \sigma$. A face of a cone is again a cone.

\begin{definition}
A \textit{cone complex} $\mathfrak S$ in $M_\mathbb R$ is a collection of rational polyhedral cones in $M_\mathbb R$ such that
\begin{enumerate}[label = (\roman*)]
\item for any $\sigma\in \mathfrak S$, if $\tau \subset \sigma$ is a face of $\sigma$, then $\tau\in \mathfrak S$;
\item for any $\sigma_1, \sigma_2\in \mathfrak S$, $\sigma_1\cap \sigma_2$ is a face of $\sigma_1$ and $\sigma_2$. 
\end{enumerate}
\end{definition}

Note that we do not require the cones in a cone complex to be \textit{strictly convex}. For example, a closed half space is allowed. We also do not require the collection to be finite, for which we call $\textit{a finite cone complex}$. If the union of all cones $|\mathfrak S|$ equals $M_\mathbb R$, we say that the cone complex $\mathfrak S$ is \textit{complete}. The complement of the union of all proper faces of $\sigma$ in $\sigma$ is called the \textit{relative interior} of $\sigma$ and is denoted by $\sigma^\circ$. It is relatively open, i.e. open in the subspace in $M_\mathbb R$ spanned $\sigma$. The set of cones $\sigma^\circ$ for all $\sigma\in \mathfrak S$ is denoted by $\mathfrak S^\circ$.

A cone complex $\mathfrak S$ is also a poset with the partial order $\sigma_1\prec \sigma_2$ if and only if $\sigma_1$ is a face of $\sigma_2$. It can also be viewed as a category where the only morphisms are of the form $\sigma_1 \prec \sigma_2$. We define
\begin{equation}\label{star}
	\mathrm {Star} (\sigma) \colon = \{\tau^\circ \in \mathfrak S^\circ\mid \sigma\prec \tau\}
\end{equation}
and
\[
|\mathrm{Star}(\sigma)|\colon = \bigcup_{\tau^\circ \in \mathrm {Star}(\sigma)}\tau^\circ.
\]

%A complete cone complex $\mathfrak S$ induces a stratification of $M_\mathbb R$. The $i$-dimensional stratum is the union of interiors of all cones of dimension $i$. Let us assume again that the set $S =\mathrm{Supp}(\mathfrak g)$ is finite. The arrangement of hyperplanes with normal vectors in $S$ induces a complete cone complex $\mathfrak S_S$ in $M_\mathbb R$. Denote the subset of all codimension one cones by $\mathfrak W_S$. 

\begin{example}[Hyperplane arrangements]\label{hyperplanearrangements}
	Let $S$ be a finite subset of $N$. Consider a partition $P$ of $S$ into three disjoint subsets
	\[
	S = P_+ \sqcup P_0 \sqcup P_-. 
	\]
	We define
	\[
\sigma_P\colon = \overline{\{m\in M_\mathbb R|\ m(P_+)> 0,\ m(P_0) = 0\ \mathrm{and}\ m(P_-)< 0\}}.
	\]
	One easily checks the (non-empty) cones $\sigma_P$ for all such partitions of $S$ form a complete cone complex $\mathfrak S_S$ in $M_\mathbb R$. We have $\sigma_{P_1}\prec \sigma_{P_2}$ if and only if
	\[
	P_1\prec P_2,\ \mathrm{i.e.}\ P_{1,+}\subset P_{2,+},\ P_{2,0}\subset P_{1,0},\ \mathrm{and}\ P_{1,+}\subset P_{2,+}.
	\]
\end{example}

\subsubsection{}\label{sectionofkeylemma} Assume that $S = \mathrm{Supp}(\mathfrak g)$ is finite. Let $\sigma \in \mathfrak S_S$ and set $$\sigma ^\perp = \{d\in N\mid m(d) = 0\quad \forall m \in \sigma \}.$$ We put $\mathfrak g_\sigma\colon  = \mathfrak g_{\sigma^\perp \cap S}$. If $\sigma_1\prec \sigma_2$, we define a map $\pi_{\sigma_1,\sigma_2}\colon \exp(\mathfrak g_{\sigma_1})\rightarrow \exp(\mathfrak g_{\sigma_2})$ as follows. Suppose $\sigma_i = \sigma_{P_i}$ for $i=1, 2$. Then we have $P_1\prec P_2$. Let $m$ be in $\sigma_2^\circ$ and it gives (independent of the choice of $m$) a partition
\[
P_{1,0} = (P_{1,0})_{m,+}\sqcup P_{2,0} \sqcup (P_{1,0})_{m,-}
\]
as in (\ref{supportdecomposition}). Note that $\mathfrak g_{\sigma_i} = \mathfrak g_{P_{i,0}}$. Then one has the projection $\pi_{\sigma_1,\sigma_2}\colon \exp(\mathfrak g_{\sigma_1})\rightarrow \exp(\mathfrak g_{\sigma_2})$ exactly as in \cref{factorization}. For example, if $\sigma_1$ be the smallest cone in $\mathfrak S_S$ with respect to the order $\prec$, $\pi_{\sigma_1,\sigma_2} = \pi_m$ for any $m\in \sigma_2^\circ$. The following key lemma will be used later.
\begin{lemma}\label{keyfunctor}
	Let $\mathfrak g$ be an $N^+$-graded Lie algebra with finite support $S$. The assignment $\sigma \mapsto \exp(\mathfrak g_{\sigma})$, $(\sigma_1 \prec \sigma_2)\mapsto \pi_{\sigma_1,\sigma_2}$ defines a functor from $\mathfrak S_S$ to $\mathrm {Grp}$ the category of groups.
\end{lemma}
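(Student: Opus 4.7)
My plan is to verify the functor axioms in order: well-definedness on objects and morphisms, then preservation of identities, then composition. For every cone $\sigma \in \mathfrak S_S$ the subset $\sigma^\perp \cap S$ of $N^+$ is closed under those sums that remain in $S$ (if $m(n_1)=m(n_2)=0$ for all $m \in \sigma$ then $m(n_1+n_2)=0$, and either $n_1+n_2 \notin S$, forcing $\mathfrak g_{n_1+n_2}=0$, or $n_1+n_2 \in \sigma^\perp \cap S$). Hence $\mathfrak g_\sigma$ is a graded Lie subalgebra of $\mathfrak g$, nilpotent because $S$ is finite, so $\exp(\mathfrak g_\sigma)$ is the associated unipotent group. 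For the morphism $\pi_{\sigma_1,\sigma_2}$ to be well-defined, the partition $P_{1,0}=(P_{1,0})_{m,+}\sqcup P_{2,0}\sqcup (P_{1,0})_{m,-}$ must be independent of $m \in \sigma_2^\circ$; this follows because each $n \in P_{1,0}$ lies in exactly one of $P_{2,+}, P_{2,0}, P_{2,-}$, which fixes the sign of $m(n)$ on the connected relative interior $\sigma_2^\circ$. The same closure-under-addition argument shows each of $(P_{1,0})_{m,\pm}$ and $P_{2,0}$ indexes a Lie subalgebra of $\mathfrak g_{\sigma_1}$, so \cref{factorization} applied inside $\exp(\mathfrak g_{\sigma_1})$ produces the middle-factor projection $\pi_{\sigma_1,\sigma_2}$.

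The identity axiom $\pi_{\sigma,\sigma}=\mathrm{id}$ is immediate because the $\pm$ pieces of the partition are empty when $\sigma_1=\sigma_2$. For composition with $\sigma_1\prec \sigma_2\prec \sigma_3$, I would choose $m_2 \in \sigma_2^\circ$ and then $m_3 \in \sigma_3^\circ$ close enough to $\sigma_2$ that $\operatorname{sgn} m_3(n)=\operatorname{sgn} m_2(n)$ for every $n \in P_{1,0}\setminus P_{2,0}$; such an $m_3$ exists since only finitely many strict inequalities (one per element of $S$) need to be preserved. This yields the partition refinement
\[
(P_{1,0})_{m_3,\pm} \;=\; (P_{1,0})_{m_2,\pm}\,\sqcup\,(P_{2,0})_{m_3,\pm}.
\]
For $g \in \exp(\mathfrak g_{\sigma_1})$, factor $g=g_+g_0g_-$ using $m_2$ and then $g_0=h_+h_0h_-$ inside $\exp(\mathfrak g_{\sigma_2})$ using $m_3$, producing $g=(g_+h_+)\,h_0\,(h_-g_-)$. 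The refinement identity combined with the Lie-subalgebra property of each $\mathfrak g_{(P_{1,0})_{m_3,\pm}}$ places $g_+h_+$ and $h_-g_-$ in the respective $\pm$-subgroups, so the uniqueness in \cref{factorization} applied with respect to $m_3$ forces $\pi_{\sigma_1,\sigma_3}(g)=h_0=(\pi_{\sigma_2,\sigma_3}\circ \pi_{\sigma_1,\sigma_2})(g)$.

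The step I expect to be most delicate is proving that each $\pi_{\sigma_1,\sigma_2}$ is actually a group homomorphism rather than merely a set map, since middle-factor projections from Iwasawa-type decompositions are not in general homomorphisms. My plan is to expand a product $gh \in \exp(\mathfrak g_{\sigma_1})$ in $m$-factored form by moving negative-part factors past positive-part ones using graded commutation, and to track that every correction landing in $\exp(\mathfrak g_{P_{2,0}})$ is already captured by $\pi_{\sigma_1,\sigma_2}(g)\cdot \pi_{\sigma_1,\sigma_2}(h)$; an induction along a maximal chain of cofinite ideals of $N^+$ that adds a single graded piece at each step should isolate the combinatorial identity behind the cancellation.
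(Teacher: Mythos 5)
Your well-definedness and identity checks are fine (the paper dispenses with them silently), and your composition argument is essentially the paper's: the ``close enough'' condition on $m_3$ is automatic, because $\sigma_2\prec\sigma_3$ forces $P_{2,\pm}\subset P_{3,\pm}$, so $\operatorname{sgn}m_3(n)=\operatorname{sgn}m_2(n)$ holds for \emph{every} $m_3\in\sigma_3^\circ$ and every $n\in P_{1,0}\setminus P_{2,0}$. The paper's proof reduces WLOG to the smallest cone being the origin and then runs the same two-step factorization plus uniqueness argument you describe, so your composition step is sound.

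Your final paragraph, however, flags a real issue but proposes an unworkable fix: $\pi_{\sigma_1,\sigma_2}$ is in general \emph{not} a group homomorphism, and the cancellation you plan to exhibit does not occur. Take $N=\mathbb Z^2$ with the standard basis, $S=\{(1,0),(0,1),(1,1)\}$, and let $\mathfrak g$ be the Heisenberg algebra with $\mathfrak g_{(1,0)}=\langle x\rangle$, $\mathfrak g_{(0,1)}=\langle y\rangle$, $\mathfrak g_{(1,1)}=\langle z\rangle$, $[x,y]=z$, $z$ central. For $m=(1,-1)$ one has $\mathfrak g_{m,+}=\langle x\rangle$, $\mathfrak g_{m,0}=\langle z\rangle$, $\mathfrak g_{m,-}=\langle y\rangle$, so $\pi_m(\exp x)=\pi_m(\exp y)=1$; but $\exp(y)\exp(x)=\exp(x)\exp(-z)\exp(y)$, hence $\pi_m(\exp(y)\exp(x))=\exp(-z)\neq 1$. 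No induction along cofinite ideals will manufacture the needed cancellation, because it is simply false. This omission is already present in the paper: its proof opens with ``the only thing we need to check is'' the composition identity and never addresses multiplicativity, and every later use of \cref{keyfunctor} (in \cref{relativeopen}, \cref{supportoflink}, and the base-of-topology lemma for $\mathcal G^{\text{\'et}}$) only invokes $\pi_{\sigma_1,\sigma_2}\circ\pi_{\sigma_0,\sigma_1}=\pi_{\sigma_0,\sigma_2}$. The phrase ``functor to $\mathrm{Grp}$'' should be read as a functor to groups and arbitrary set-maps (equivalently, a $\mathbf{Set}$-valued functor whose values happen to be groups), and you should delete the multiplicativity step from your plan rather than attempt to prove it.
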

\begin{proof}
	The only thing we need to check is if $\sigma_0\prec \sigma_1\prec \sigma_2$, then
	\[
	\pi_{\sigma_1,\sigma_2}\circ \pi_{\sigma_0,\sigma_1} = \pi_{\sigma_0,\sigma_2}.
	\]
	Without loss of generality, we assume that $\sigma_0$ is the origin. Then $\mathfrak g_{\sigma_0} = \mathfrak g$ and for $m_i\in \sigma_i^\circ$, $\pi_{\sigma_0,\sigma_i} = \pi_{m_i}$ for $i= 1,2$ (see (\ref{projectionmaps})). Suppose $\sigma_i$ comes from a partition $P_i$ as before.
	We know $\pi_{\sigma_0,\sigma_1}(g)$ for $g\in G = \exp(\mathfrak g)$ is the middle term in the factorization (\cref{factorization})
	\[
	g = \pi_{m_1,+}(g)\cdot\pi_{m_1}(g)\cdot \pi_{m_1,-}(g).
	\]
	By factorizing $g_1 = \pi_{m_1}(g)$ further with respect to $m_2$, we get $\pi_{\sigma_1,\sigma_2}\circ \pi_{\sigma_0,\sigma_1}(g) = \pi_{m_2}(g_1)$:
	\[
	g_1 = \pi_{m_2,+}(g_1)\cdot \pi_{m_2}(g_1)\cdot \pi_{m_2,-}(g_1).
	\]
	Using previous notations, we have
	\[
	 P_{1,0} = (P_{1,0})_{m_2,+}\sqcup P_{2,0} \sqcup (P_{1,0})_{m_2,-}
	\]
	and
	\[
	P_{2,+} = P_{1,+}\sqcup (P_{1,0})_{m_2,+},\quad P_{2,-} = P_{1,-}\sqcup (P_{1,0})_{m_2,-}.
	\]
	This implies 
	\[
	\pi_{m_1,+}(g)\cdot \pi_{m_2,+}(g_1) \in \exp(\mathfrak g_{P_{2,+}}) = G_{m_2,+}
	\]
	and 
	\[
	\pi_{m_2,-}(g_1)\cdot \pi_{m_1,-}(g)\in \exp(\mathfrak g_{P_{2,-}}) = G_{m_2,-}.
	\]
	Therefore we obtain a factorization
	\[
	g = (\pi_{m_1,+}(g)\cdot \pi_{m_2,+}(g_1))\cdot \pi_{\sigma_1,\sigma_2}\circ \pi_{\sigma_0,\sigma_1}(g) \cdot (\pi_{m_2,-}(g_1)\cdot \pi_{m_1,-}(g)).
	\]
	Such a factorization is unique by \cref{factorization}. We conclude that the middle term $\pi_{\sigma_1,\sigma_2}\circ \pi_{\sigma_0,\sigma_1}(g)$ equals $\pi_{m_2}(g) = \pi_{\sigma_0,\sigma_2}(g)$. 
\end{proof}

\subsection{Wall-crossing structures}

In this section, we define \textit{wall-crossing structures} following \cite{kontsevich2014wall}. Readers can safely skip this section to \cref{consistent}. 
\subsubsection {} We first assume that $S = \mathrm{Supp}(\mathfrak g)$ is finite. Define the following set (the étale space)
\[
\mathcal G^{\text{ét}}\colon = \{(m,g')\mid m\in M_\mathbb R,\, g'\in G_{m,0}\}
\]
with a collection of subsets 
\[
W_{g,U}\colon = \{(m,g')\mid m\in U,\, g' = \pi_m(g) \}
\]
where $g$ runs through $G$ and $U$ runs through all open sets of $M_\mathbb R$.

\begin{lemma}
The subsets $W_{g,U}$ give a base of topology on $\mathcal G^{\text{ét}}$. The projection $\mathcal G^{\text{ét}}\rightarrow M_\mathbb R$ defined by $(m,g')\mapsto m$ is a local homeomorphism.
\end{lemma}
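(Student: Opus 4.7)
The plan is first to verify the two base-of-topology axioms for $\{W_{g,U}\}$, and then, with the topology in hand, to exhibit around each point of $\mathcal G^{\text{ét}}$ a basic open neighborhood on which the projection $p\colon (m,g')\mapsto m$ is a homeomorphism onto an open subset of $M_\mathbb R$. Both the intersection axiom and the openness of $p$ will hinge on the ``star'' propagation encoded by the functor of \cref{keyfunctor}.

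Covering is immediate: for $(m,g')\in \mathcal G^{\text{ét}}$, the element $g'\in G_{m,0}$ factors as $1\cdot g'\cdot 1$ in \cref{factorization}, so $\pi_m(g')=g'$ and hence $(m,g')\in W_{g',M_\mathbb R}$. For the intersection axiom I would argue as follows. Suppose $(m_0,g_0')\in W_{g_1,U_1}\cap W_{g_2,U_2}$, so that $g_0'=\pi_{m_0}(g_1)=\pi_{m_0}(g_2)$, and let $\sigma\in \mathfrak S_S$ be the unique cone with $m_0\in\sigma^\circ$. Applying \cref{keyfunctor} to $\{0\}\prec\sigma\prec\tau$ for any $\tau\succ\sigma$ and any $m\in \tau^\circ$ yields
\[
\pi_m(g_i)\;=\;\pi_{\sigma,\tau}\bigl(\pi_{m_0}(g_i)\bigr)\;=\;\pi_{\sigma,\tau}(g_0'),
\]
which is independent of $i\in\{1,2\}$. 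The strict inequalities cutting out $\sigma^\circ$ persist under small perturbations, so $|\mathrm{Star}(\sigma)|$ is an open neighborhood of $m_0$ in $M_\mathbb R$; choosing an open $V\ni m_0$ with $V\subseteq U_1\cap U_2\cap|\mathrm{Star}(\sigma)|$, the identity $W_{g_1,V}=W_{g_2,V}$ supplies the required basic open set inside the intersection.

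For the local homeomorphism step, $p$ restricted to any $W_{g,U}$ is a bijection onto $U$ (the second coordinate being determined by the first), and continuity of $p$ is immediate from $p^{-1}(U)=\bigcup_g W_{g,U}$. I would then verify that $p$ is open on a neighborhood $W_{g_0',V}$ of $(m_0,g_0')$ with $V\subseteq |\mathrm{Star}(\sigma)|$ by showing that the candidate local inverse $m\mapsto (m,\pi_m(g_0'))$ is continuous: for any basic open $W_{g,U}$, the preimage $\{m\in V\cap U:\pi_m(g_0')=\pi_m(g)\}$ is open, by the same star argument applied at each of its points.

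The main obstacle I anticipate is precisely the intersection axiom. One needs the pointwise equality $\pi_{m_0}(g_1)=\pi_{m_0}(g_2)$ to persist on a full Euclidean neighborhood of $m_0$, not merely on $\sigma^\circ$ (which is typically a proper face, with empty interior in $M_\mathbb R$). The functoriality of \cref{keyfunctor} is exactly the tool that carries this equality from $\sigma^\circ$ across to every adjacent cone of $\mathrm{Star}(\sigma)$, producing the genuine open neighborhood needed to close the argument.
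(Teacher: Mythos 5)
Your proof is correct and follows essentially the same route as the paper's: the paper likewise reduces the intersection axiom to the openness of $\{m : \pi_m(g)=\pi_m(h)\}$, which it obtains by propagating pointwise agreement across $|\mathrm{Star}(\sigma)|$ via \cref{keyfunctor}, and then observes $W_{g',U}\to U$ is a homeomorphism. The only difference is that you spell out the covering step and the openness of the local inverse in more detail than the paper does.
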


\begin{proof}
First of all, the subsets $W_{g,U}$ cover $\mathcal G^{\text{ét}}$. Consider the cone complex $\mathfrak S_S$ defined in \cref{hyperplanearrangements}. Let $W_{g, U}$ and $W_{h,V}$ be two subsets of $\mathcal G^\text{ét}$. By \cref{keyfunctor}, if $\pi_m(g) = \pi_m(h)$ for some $m$, then $\pi_{m'}(g) = \pi_{m'}(h)$ for any $m'$ in the interior of $|\mathrm{Star}(\sigma)|$ where $\sigma$ is the only cone in $\mathfrak S_S$ such that $m\in \sigma^\circ$. It follows that the set $\{m\in M_\mathbb R\mid \pi_m(g) = \pi_m(h)\}$ is open in $M_\mathbb R$ and correspondingly $W_{g, U}\cap W_{h,V}$ is a set of the same form. Thus the collection of subsets $W_{g,U}$ form a base of topology.

For the second statement, at $(m,g')\in \mathcal G^{\text{ét}}$, we take the subset $W_{g',U}$ where $U$ is open and contains $m$. It is clear that the map $W_{g',U}\rightarrow U$ is a homeomorphism.
\end{proof}

\begin{definition}\label{definition}
The \textit{sheaf of wall-crossing structures} $\mathcal{WCS}_{\mathfrak g}$ of the Lie algebra $\mathfrak g$ is defined to be the sheaf of sections of the local homeomorphism $\mathcal G^{\text{ét}}\rightarrow M_\mathbb R$ in the above lemma.
\end{definition}

For any $g\in G$, there is a section $$s_g\colon M_\mathbb R\rightarrow \mathcal G^{\text{ét}},\quad m\mapsto (m, \pi_m(g)).$$ The image of $s_g$ is $W_{g,M_\mathbb R}$ and $s_g$ is clearly a global section of the sheaf $\mathcal {WCS}_\mathfrak g$. By the construction of the étale space $\mathcal G^{\text{ét}}$, the stalk $(\mathcal{WCS}_\mathfrak g)_m$ is canonically identified with $G_{m,0}$. The germ of the section $s_g$ at $m$ is just given by $\pi_m(g)\in G_{m,0}$.

Consider the map $s\colon G\rightarrow \Gamma(\mathcal {WCS}_\mathfrak g,M_\mathbb R)$ sending $g$ to the global section $s_g$.  The following lemma follows from corollary 2.1.3 and lemma 2.1.7 in \cite{kontsevich2014wall}.

\begin{lemma}\label{bijection}
The map $s\colon G\rightarrow \Gamma(\mathcal {WCS}_\mathfrak g,M_\mathbb R)$, $g\mapsto s_g$ is a bijection. Thus the set of global sections of $\mathcal{WCS}_\mathfrak g$ is in bijection with the group $G$.
\end{lemma}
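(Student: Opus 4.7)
My plan is to construct an explicit two-sided inverse to $s$ by evaluation at the origin $0 \in M_\mathbb R$, combining the local triviality of the étale space $\mathcal G^{\text{ét}}$ with the cone complex $\mathfrak S_S$ from \cref{hyperplanearrangements} and the functoriality of \cref{keyfunctor}.

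For injectivity, I would evaluate at $m = 0$. The partition \eqref{supportdecomposition} there collapses to $P_{0,+} = P_{0,-} = \emptyset$ and $P_{0,0} = N^+$, forcing $\mathfrak g_{0,0} = \mathfrak g$, $G_{0,0} = G$, and the trivial factorization $g = e \cdot g \cdot e$ in \cref{factorization}. Hence $\pi_0 = \mathrm{id}_G$ and $s_g(0) = (0, g)$, from which $g$ is recovered.

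For surjectivity, fix $t \in \Gamma(\mathcal{WCS}_\mathfrak g, M_\mathbb R)$, write $t(0) = (0, g)$ for the unique $g \in G_{0,0} = G$, and aim to show $t = s_g$. By the defining base $\{W_{h,U}\}$ of the topology on $\mathcal G^{\text{ét}}$, there exist an open $U \ni 0$ and $h \in G$ with $t|_U = s_h|_U$; evaluating at $0$ forces $h = g$, so $t$ and $s_g$ agree on $U$. The smallest cone $\sigma_0 := S^\perp$ of $\mathfrak S_S$ (with $S = \mathrm{Supp}(\mathfrak g)$) contains $0$ in its relative interior and is a face of every $\sigma \in \mathfrak S_S$, so any neighborhood of $0$ meets every open stratum $\sigma^\circ$. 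On each $\sigma^\circ$, the projection $\pi_m$ depends only on $\sigma$ (being determined by the constant sign pattern of $m$ on $S$) and agrees with $\pi_{\sigma_0, \sigma}$; consequently, the second coordinate of any continuous section is locally constant on $\sigma^\circ$, and globally constant by convexity, hence connectedness, of $\sigma^\circ$. Thus the agreement of $t$ and $s_g$ at one point of $U \cap \sigma^\circ$ propagates to all of $\sigma^\circ$. Since $\mathfrak S_S$ is complete, this gives $t = s_g$ on $M_\mathbb R$.

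The main obstacle I anticipate is establishing the global constancy on each $\sigma^\circ$: a priori, a section of $\mathcal{WCS}_\mathfrak g$ only agrees locally with representatives $s_h$, and different patches of $\sigma^\circ$ could in principle yield different constants. Resolving this uses \cref{keyfunctor}, which identifies the value on a patch as the single element $\pi_{\sigma_0, \sigma}(h) \in \exp(\mathfrak g_\sigma)$, together with the convexity of $\sigma^\circ$ to glue patches. Once this constancy is secured, checking agreement with $s_g$ in a single neighborhood of the origin suffices to conclude.
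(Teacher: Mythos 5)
Your proof is correct, and it fills a gap the paper deliberately leaves open: the paper does not prove \cref{bijection} itself but defers to Kontsevich--Soibelman (corollary~2.1.3 and lemma~2.1.7 of \cite{kontsevich2014wall}). Your argument is the natural self-contained version of what those citations accomplish. The two ingredients you isolate are the right ones: evaluation at $0 \in M_\mathbb R$ collapses the factorization of \cref{factorization} to the trivial one (since $P_{0,\pm}=\emptyset$ and $P_{0,0}=N^+$, so $\pi_0 = \mathrm{id}_G$), giving injectivity immediately; and the stratification $\mathfrak S_S$ from \cref{hyperplanearrangements} together with \cref{keyfunctor} shows that the germ of any continuous section is constant along each stratum $\sigma^\circ$ (because $\pi_m = \pi_{\sigma_0,\sigma}$ for every $m \in \sigma^\circ$, $\sigma_0 = S^\perp$ being the minimal cone), so agreement near the origin --- which, by conicality, is in the closure of every $\sigma^\circ$ --- propagates to all of $M_\mathbb R$. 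The only place to be slightly careful, and you handle it correctly, is the passage from \emph{local} agreement $t = s_h$ (with $h$ a priori depending on the patch) to \emph{global} constancy of the second coordinate on $\sigma^\circ$: this hinges on the fact that $\pi_m(h)$ depends only on the induced partition of $S$, which is fixed on $\sigma^\circ$, so the locally-defined values coincide on overlaps, and connectedness of the (convex) stratum finishes the job. The argument as written is complete and valid.
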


\subsubsection{} Now we remove the finiteness restriction of the set $\mathrm{Supp}(\mathfrak g)$. There is a directed inverse system of sheaves $\mathcal{WCS}_{\mathfrak g^{<I}}$ indexed by the directed set $\mathrm{Cofin}(N^+)$
\begin{equation}\label{inversesystem}
    \mathrm{Cofin}(N^+)\rightarrow Sh(M_\mathbb R),\quad I\mapsto \mathcal {WCS}_{\mathfrak g^{<I}}
\end{equation}
induced by natural quotient maps of Lie algebras. 

\begin{definition}\label{sheafofwallcrossing}
In general, the \textit{sheaf of wall-crossing structures} of the Lie algebra $\mathfrak g$ is defined to be the projective limit of the above mentioned inverse system (\ref{inversesystem}) in the category of sheaves of sets on $M_\mathbb R$:
\[
\mathcal{WCS}_\mathfrak{g}\colon = \lim_{\underset{I}{\longleftarrow}}\mathcal{WCS}_{\mathfrak g^{<I}}\in Sh(M_\mathbb R).
\]
\end{definition} 

It is standard that the space of sections of the projective limit of sheaves is in bijection with the projective limit of spaces of sections. So we have the identification of the space of global sections with the pro-unipotent group $\hat G$:
\begin{equation}\label{globalsection}
\Gamma (\mathcal {WCS}_\mathfrak g, M_\mathbb R) = \lim_{\underset{I}{\longleftarrow}}\Gamma (\mathcal{WCS}_{\mathfrak g^{<I}},M_\mathbb R) = \lim_{\underset{I}{\longleftarrow}} G^{<I} = \hat G.    
\end{equation}
Given a global section $g\in \hat G$, we define the following map:
\begin{equation}\label{thefunction}
\phi_g\colon M_\mathbb R\rightarrow \hat G, m\mapsto \pi_m(g)\in \hat G_{m,0}\subset \hat G  
\end{equation}
which records the germs of the global section $g$ at every stalk. 

\begin{definition}[$\mathfrak g$-WCS cf. \cite{kontsevich2014wall}]\label{defscat}
A global section of the sheaf $\mathcal {WCS}_\mathfrak g$ is called a $\mathfrak g$-\textit{wall-crossing structure} ($\mathfrak g$-WCS in short). Thus the equality (\ref{globalsection}) shows $\Gamma(\mathcal{WCS}_\mathfrak g, M_\mathbb R)$, the set of all $\mathfrak g$-WCS', is in bijection with the pro-unipotent group $\hat G$. 
\end{definition}

\subsection{Consistent scattering diagrams}\label{consistent}
In this section we give the definition of a \textit{consistent $\mathfrak g$-scattering diagram} (\textit{consistent} $\mathfrak g$\textit{-SD} in short). The relation with $\mathfrak g$-WCS will be explained in \cref{equivalenceoftwodefinitions}.

\subsubsection{}As usual we first assume $S = \mathrm {Supp} (\mathfrak g)$ to be finite. 
\begin{definition}[$\mathfrak g$-SD]\label{equivalentdefinition}
A $\mathfrak g$-\textit{scattering diagram} ($\mathfrak g$-SD in short) $\mathfrak D$ with $S = \mathrm {Supp} (\mathfrak g)$ is a pair $(\mathfrak S_S, \phi_\mathfrak D)$ consisting of the cone complex $\mathfrak S_S$ (\cref{hyperplanearrangements}) and a function $\phi_{\mathfrak D}\colon  \mathfrak S_S\rightarrow  G$ for any $\sigma\in \mathfrak S_S$, $\phi_\mathfrak D(\sigma)$ is in the subgroup $\exp(\mathfrak g_{\sigma})\subset G$. If $\mathrm{codim}\ \sigma = 1$ (resp. $>1$), we call $\phi_\mathfrak D (\sigma)$ the \textit{wall-crossing (resp. face-crossing)} at $\sigma$.  
\end{definition}

For each cone $\sigma = \sigma_P$ in $\mathfrak S_S$ of codimension at least one, there is a maximal cell $\sigma^+$ relative to $\sigma$. It is the relative interior of the cone $\sigma_{P'}$ given by the partition $P'$ such that
\[
P'_{+} = P_{+}\sqcup P_0,\ P'_0 = \emptyset,\ \mathrm{and}\ P'_- = P_{-}.
\]
Similarly there is the negative maximal cell $\sigma^-$. For example, if $\sigma$ is a wall (i.e. $\mathrm{codim}\ \sigma = 1$), then $\sigma^+$ and $\sigma^-$ are the two maximal cells on the two sides of $\sigma$.

\begin{definition}\label{path}
A $\mathfrak S_S$-\textit{path} is a smooth curve $\gamma\colon [0,1]\rightarrow M_\mathbb R$ such that if for some $t\in [0,1]$, $\gamma(t)\in \sigma^\circ$ for some cone $\sigma$ of codimension at least one in $\mathfrak S_S$, then there exists a neighborhood $(t-\epsilon,t+\epsilon)$ of $t$ such that $\gamma((t-\epsilon, t))\subset \sigma^-$ and $\gamma((t,t+\epsilon))\subset \sigma^+$. We call $t$ a $\textit{negative crossing}$ and denoted by $t^-$ if $\epsilon >0$  and a \textit{positive crossing} $t^+$ if $\epsilon <0$ respectively.
\end{definition}

Let $\mathfrak D$ be a $\mathfrak g$-SD and $\gamma$ be a $\mathfrak S_S$-path $\gamma$ with finitely many crossings 
\[
0<t_1^{\epsilon_1}<t_2^{\epsilon_2}<\cdots<t_k^{\epsilon_k}<1
\]
where $\epsilon_i\in\{-,+\}$. Record the cones at these crossings by $\sigma_i$ the corresponding wall-crossings or face-crossings by
\[
g_i = 
\begin{cases}
 	\phi_\mathfrak D(\sigma_i)\quad &\mathrm{if}\quad \epsilon_i = +\\
 	\phi_\mathfrak D(\sigma_i)^{-1}\quad &\mathrm{if}\quad \epsilon_i = -\\
\end{cases}.
\]

\begin{definition}\label{pathorderedproduct}
The \textit{path-ordered product} for a $\mathfrak S_S$-path $\gamma$ in $\mathfrak D$ is defined as
\[
\mathfrak p_\gamma (\mathfrak D)\colon = g_k\cdot \cdots \cdot g_2\cdot g_1 \in G.
\]
\end{definition}

\begin{definition}[Consistent $\mathfrak g$-SD]\label{consistentscatteringdiagram}
A  $\mathfrak g$-SD $\mathfrak D$ is said to be a \textit{consistent} $\mathfrak g$-SD if the path-ordered product $\mathfrak p_\gamma(\mathfrak D)$ for any $\mathfrak S_S$-path $\gamma$ only depends on the end points $\gamma (0)$ and $\gamma (1)$.	
\end{definition}

There is a maximal cell $\mathcal C^+$ (resp. $\mathcal C^-$) in $\mathfrak S_S$ which is the intersection of all positive (resp. negative) open half spaces in the hyperplane arrangement in \cref{hyperplanearrangements}. Given $\mathfrak D$ a consistent $\mathfrak g$-SD, we define
\[
	\mathfrak p_{+,-}(\mathfrak D) \colon = \mathfrak p_\gamma (\mathfrak D)
\]
for any $\mathfrak S_S$-path $\gamma$ with $\gamma(0)\in \mathcal C^+$ and $\gamma(1)\in \mathcal C^-$. It does not depend on $\gamma$ since $\mathfrak D$ is consistent. This defines a map $\mathfrak p_{+,-}$ from $\mathfrak g $-$\mathcal {SD}$ the set of all consistent $\mathfrak g$-SD's to $G = \exp(\mathfrak g)$ by sending $\mathfrak D$ to $\mathfrak p_{+,-}(\mathfrak D)$. The following theorem asserts that $\mathfrak p_{+,-}$ is a bijection.
 
\begin{theorem}[cf. \cite{kontsevich2014wall}]\label{thesecondbijection}
The map $\mathfrak p_{+,-}\colon \mathfrak g\text{-}\mathcal {SD} \rightarrow G$, $\mathfrak D\mapsto \mathfrak p_{+,-}(\mathfrak D)$ is a bijection of sets. 
\end{theorem}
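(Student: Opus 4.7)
The plan is to promote Lemma~\ref{bijection}, which identifies $G$ with the set of $\mathfrak g$-WCS (global sections of $\mathcal{WCS}_\mathfrak g$), to a bijection between $G$ and $\mathfrak g\text{-}\mathcal{SD}$. Concretely, I would define a candidate inverse $\Psi\colon G\to \mathfrak g\text{-}\mathcal{SD}$ by sending $g$ to the $\mathfrak g$-SD $\mathfrak D_g$ with
\[
\phi_{\mathfrak D_g}(\sigma) := \pi_m(g) \in \exp(\mathfrak g_\sigma)
\]
for any $m\in \sigma^\circ$. This is well defined because $\pi_m$ depends only on the partition $P$ with $\sigma = \sigma_P$, and because Lemma~\ref{factorization} places $\pi_m(g)$ in the correct subgroup. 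Equivalently, $\mathfrak D_g$ records the germs of the global section $s_g$ from Lemma~\ref{bijection} on each cone of $\mathfrak S_S$.

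The key step is checking that $\mathfrak D_g$ is consistent. I would do this by tracking the factorization $g=g_+(m)\cdot g_0(m)\cdot g_-(m)$ along a $\mathfrak S_S$-path. When the path crosses a cone $\sigma$ positively (from $\sigma^+$ to $\sigma^-$), Lemma~\ref{factorization} combined with the functoriality of Lemma~\ref{keyfunctor} shows that $g_0(m)=\phi_{\mathfrak D_g}(\sigma)$ is absorbed into the negative part on the other side, modifying $g_-$ by left multiplication by $\phi_{\mathfrak D_g}(\sigma)$; a negative crossing undoes this. Induction on the number of crossings then yields $\mathfrak p_\gamma(\mathfrak D_g)\cdot g_-(\gamma(0)) = g_-(\gamma(1))$, so the path-ordered product depends only on the endpoints of $\gamma$, which is precisely consistency. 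In particular, for a path from $\mathcal C^+$ (where $g_+(m)=g$ and $g_-(m)=e$) to $\mathcal C^-$ (where $g_-(m)=g$), one obtains $\mathfrak p_{+,-}(\mathfrak D_g)=g$, so $\mathfrak p_{+,-}\circ \Psi = \mathrm{id}_G$.

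For the reverse composition, suppose $\mathfrak D$ is consistent. Applying path-consistency to a short path moving from $\sigma^\circ$ through $\sigma^+$ (or $\sigma^-$) into $\tau^\circ$, for any face relation $\sigma\prec\tau$, one extracts the gluing identity $\pi_{\sigma,\tau}(\phi_\mathfrak D(\sigma))=\phi_\mathfrak D(\tau)$. By the topology on $\mathcal G^{\text{ét}}$ defined via the basic opens $W_{g,U}$, this gluing is exactly continuity of the assembled assignment $m\mapsto (m,\phi_\mathfrak D(\sigma_m))$, so it defines a global section $s_\mathfrak D$ of $\mathcal{WCS}_\mathfrak g$. By Lemma~\ref{bijection}, $s_\mathfrak D=s_g$ for a unique $g\in G$; hence $\phi_\mathfrak D(\sigma)=\pi_m(g)=\phi_{\mathfrak D_g}(\sigma)$, giving $\mathfrak D=\Psi(g)$ and thus $\Psi\circ \mathfrak p_{+,-}=\mathrm{id}$.

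The hardest step is establishing the equivalence between path-consistency (a global statement about all $\mathfrak S_S$-paths) and the local gluing condition at face relations. The forward direction is handled by short-path arguments, but the reverse direction, showing that local gluing at all face relations forces all path-ordered products to depend only on endpoints, requires inducting on the number of crossings and repeatedly applying Lemma~\ref{keyfunctor} to slide successive factors past one another. The face-crossings assigned to codimension $\geq 2$ cones are indispensable here: they encode the higher coherence needed to make path-ordered products through non-generic paths agree with those through generic homotopic ones.
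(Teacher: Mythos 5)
Your proof attempt is correct in its broad outline; the surjectivity half (constructing $\mathfrak D_g$ with $\phi_{\mathfrak D_g}(\sigma) = \pi_m(g)$, proving consistency by induction on crossings via \cref{factorization} and \cref{keyfunctor}, and noting $\mathfrak p_{+,-}(\mathfrak D_g)=g$) is essentially identical to the paper's. Your bookkeeping of the factorization is done via the $g_-$-component where the paper uses $g_+$, but these are equivalent for paths with endpoints in maximal cells, which is what matters.

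The injectivity half, however, takes a genuinely different route: you pass through the sheaf $\mathcal{WCS}_\mathfrak g$ and invoke \cref{bijection}, whereas the paper proves injectivity directly. The paper fixes $m_\sigma\in\sigma^\circ$ and $\lambda\in\mathcal C^+$, considers the straight-line $\mathfrak S_S$-path $\gamma(t)=m_\sigma-\lambda t$ (which crosses every cone positively), and observes that the wall- and face-crossings before $\sigma$ lie in $G_{m_\sigma,-}$ while those after lie in $G_{m_\sigma,+}$; by the uniqueness in \cref{factorization} this forces $\phi_\mathfrak D(\sigma)=\pi_{m_\sigma}(\mathfrak p_{+,-}(\mathfrak D))$ with no sheaf theory required. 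Your alternative is workable but is longer and depends on the WCS machinery that the paper flags as skippable. More importantly, your derivation of the local gluing identity $\pi_{\sigma,\tau}(\phi_\mathfrak D(\sigma))=\phi_\mathfrak D(\tau)$ from consistency "by a short path from $\sigma^\circ$ through $\sigma^+$ into $\tau^\circ$" is where the real work is hidden and is too vague as stated: a $\mathfrak S_S$-path from $\sigma^\circ$ to $\tau^\circ$ picks up crossings at intermediate cones, and you need exactly the same positional analysis (which crossings land in $G_{m_\tau,\pm}$) that the paper carries out for its long path. Spelling this out, you would essentially reproduce the paper's argument anyway, at which point the detour through $\mathcal{WCS}_\mathfrak g$ buys nothing. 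Finally, your closing paragraph discusses the "reverse direction" (local gluing implying path-consistency) and the role of face-crossings in encoding higher coherence; this direction is never needed in either half of the proof — you prove consistency of $\mathfrak D_g$ directly in part one, and start from a consistent $\mathfrak D$ in part two — so this tangent suggests some residual confusion about which implications are actually required.
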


\begin{proof}
	Let $g\in G$. We construct a $\mathfrak g$-SD $\mathfrak D_g$ as follows. Recall the function in (\ref{thefunction})
	\[
	\phi_g\colon M_\mathbb R\rightarrow G,\ m\mapsto \pi_m(g).
	\] 
	Consider a $\mathfrak g$-SD $\mathfrak D_g = (\mathfrak S_S, \phi_{\mathfrak D_g})$ with $\phi_{\mathfrak D_g}(\sigma) = \phi_g(m)$ for any $\sigma\in \mathfrak S_S$ and $m\in \sigma^\circ$. We show it is consistent. In fact, a wall-crossing, or more generally face-crossing has the following description. Let $m^+\in \sigma^+$ and $m^-\in \sigma^-$. By the definitions of $\sigma^{\pm}$ and the uniqueness in \cref{factorization}, we have
	\[
	\pi_{m^+,+}(g) = \pi_{m,+}(g)\cdot \pi_m(g),\ \pi_{m^+}(g) = 1,\ \mathrm{and}\ \pi_{m^+, -}(g) = \pi_{m, -}(g);
	\]
	\[
	\pi_{m^-,+}(g) = \pi_{m,+}(g),\ \pi_{m^-}(g) = 1,\ \mathrm{and}\ \pi_{m^-, -}(g) =\pi_m(g)\cdot \pi_{m, -}(g).
	\]
	This gives
	\[
	\phi_g(m) = \pi_{m^-,+}^{-1}(g)\cdot \pi_{m^+, +}(g)\quad \mathrm{and}\quad \phi_g(m)^{-1} = \pi_{m^+,+}^{-1}(g)\cdot \pi_{m^-, +}(g).
	\]
By induction on the number of crossings, we have
\begin{equation}\label{pathorderedproductformula}
\mathfrak p_\gamma = \pi_{\gamma(1),+}^{-1}(g)\cdot \pi_{\gamma(0),+}(g) 	
\end{equation} 
for any $\mathfrak S_S$-path $\gamma$, which only depends on the end points, proving the consistency. Note that by construction $\mathfrak p_{+,-}(\mathfrak D_g) = \phi_g(0) = g$. This shows the map $\mathfrak p_{+,-}$ is surjective.

We show next the map $\mathfrak p_{+,-}$ is also injective. Let $\mathfrak D\in \mathfrak g\text{-}\mathcal{SD}$, i.e. a consistent $\mathfrak g$-SD. Let $\sigma\in \mathfrak S_S$ and we choose $m_\sigma \in \sigma^\circ$ and $\lambda\in \mathcal C^+$. Consider the path $\gamma\colon (-\infty,+\infty)\rightarrow M_\mathbb R$ given by $\gamma(t)=m_\sigma - \lambda t$. Whenever $\gamma$ meets some cone $\tau^\circ\in \mathfrak S_S^\circ$, it always goes from $\tau^+$ to $\tau^-$. Thus after rescaling, we get a $\mathfrak S_S$-path $\gamma$ going from $\mathcal C^+$ to $\mathcal C^-$ with positive crossings at $\sigma_1,\cdots,\sigma_k$ in order where $\sigma = \sigma_l$ for some $l$. The path-ordered product is then
\[
\mathfrak p_{+,-}(\mathfrak D) = \phi_{\mathfrak D}(\sigma_k)\cdots \phi_{\mathfrak D}(\sigma_l) \cdots \phi_{\mathfrak D}(\sigma_1)
\]
If $i<l$, then $m_i\colon =m_\sigma + t_i\cdot m^+\in \sigma _i^\circ$ for some $t_i>0$. If $n\in P_{m_i, 0}$, i.e. $\langle m_i, n\rangle = 0$, then $\langle m_\sigma + t_i\cdot m^+, n\rangle=0$ which implies $\langle m_\sigma, n\rangle<0$, i.e. $n\in P_{m_\sigma, -}$. Thus we have $P_{m_i,0}\subset P_{m_\sigma, -}$ and $\phi_{\mathfrak D}(\sigma_i)\in G_{m_{\sigma},-}$. Similarly if $j>l$, we have $P_{m_j,0}\subset P_{m_\sigma, +}$ and $\phi_{\mathfrak D}(\sigma_j)\in G_{m_{\sigma},+}$. Thus we have,
\[
\mathfrak p_{+,-}(\mathfrak D) = \left(\phi_{\mathfrak D}(\sigma_k)\cdots \phi_{\mathfrak D}(\sigma_{l+1})\right)\cdot\phi_{\mathfrak D}(\sigma)\cdot \left(\phi_{\mathfrak D}(\sigma_{l-1})\cdots \phi_{\mathfrak D}(\sigma_{1})\right)
\]
as a factorization with respect to $m$ in \cref{factorization}. This in particular shows 
\[
\phi_\mathfrak D(\sigma) = \pi_{m_\sigma}(\mathfrak p_{+,-}(\mathfrak D)) 
\]
and therefore the consistent $\mathfrak g$-SD is completely determined by $\mathfrak p_{+,-}(\mathfrak D)\in G$, i.e. the map $\mathfrak p_{+,-}$ is injective. 
\end{proof}

We clarify in the next proposition the relation between the notion of a $\mathfrak g$-WCS and that of a consistent $\mathfrak g$-SD when $\mathrm{Supp}(\mathfrak g)$ is finite. It is an immediate consequence of the proof \cref{thesecondbijection}.

\begin{proposition}\label{equivalenceoftwodefinitions}
Let $g\in G$ and $\mathfrak D_g = (\mathfrak S_S, \phi_{\mathfrak D_g})$ be $\mathfrak p_{+,-}^{-1}(g)\in \mathfrak g\text{-}\mathcal {SD}$. By \cref{bijection}, there is also  a $\mathfrak g$-WCS $s_g$ and it is determined by the function $\phi_g\colon M_\mathbb R\rightarrow G$ in (\ref{thefunction}). Then for any $\sigma\in \mathfrak S_S$, we have $$\phi_{\mathfrak D_g} (\sigma) = \phi_g (m)$$ for any $m\in \sigma^\circ$. 	
\end{proposition}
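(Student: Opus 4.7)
The proposition is really a packaging statement rather than a new result: the construction used to prove surjectivity of $\mathfrak{p}_{+,-}$ in \cref{thesecondbijection} already assigns $\phi_{\mathfrak{D}_g}(\sigma) := \phi_g(m)$ for any $m\in \sigma^\circ$, and the injectivity part then identifies this $\mathfrak{D}_g$ as the unique preimage $\mathfrak{p}_{+,-}^{-1}(g)$. So my plan is simply to verify that this assignment is a well-defined $\mathfrak{g}$-SD and that it matches the WCS $s_g$ from \cref{bijection}. I will not need to redo the consistency computation, which is exactly formula \eqref{pathorderedproductformula}.

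First I would check well-definedness of the assignment $\sigma\mapsto \phi_g(m_\sigma)$. For any $m,m'\in \sigma^\circ$ with $\sigma=\sigma_P$, the triples $(P_{m,+},P_{m,0},P_{m,-})$ and $(P_{m',+},P_{m',0},P_{m',-})$ coincide with $(P_+,P_0,P_-)$, so the factorizations from \cref{factorization} with respect to $m$ and $m'$ are identical, giving $\pi_m(g) = \pi_{m'}(g)$. Moreover $\pi_m(g)\in \hat G_{m,0}=\exp(\mathfrak{g}_{P_{m,0}})=\exp(\mathfrak{g}_{\sigma^\perp\cap S})=\exp(\mathfrak{g}_\sigma)$, so $\phi_{\mathfrak{D}_g}$ lands in the required subgroup at each $\sigma$. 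Thus $\mathfrak{D}_g=(\mathfrak{S}_S,\phi_{\mathfrak{D}_g})$ is a genuine $\mathfrak{g}$-SD in the sense of \cref{equivalentdefinition}.

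Next I would recall from the surjectivity argument of \cref{thesecondbijection} that this particular $\mathfrak{D}_g$ is consistent and satisfies $\mathfrak{p}_{+,-}(\mathfrak{D}_g)=g$; by injectivity of $\mathfrak{p}_{+,-}$ it is therefore equal to $\mathfrak{p}_{+,-}^{-1}(g)$. On the WCS side, the section $s_g$ furnished by \cref{bijection} is by construction $m\mapsto (m,\pi_m(g))$, i.e.\ the global section whose germ at $m$ is exactly $\phi_g(m)$. Combining these two observations with the definition of $\phi_{\mathfrak{D}_g}$ yields
\[
\phi_{\mathfrak{D}_g}(\sigma)=\phi_g(m)\qquad \text{for any } m\in \sigma^\circ,
\]
which is the desired identity.

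There is no real obstacle here: the only thing one might worry about is circularity with the proof of \cref{thesecondbijection}, but that proof defines $\mathfrak{D}_g$ from $\phi_g$ in precisely this way, so the proposition is an \emph{a posteriori} comparison and not an independent assertion. The substantive content—that the germwise formula $m\mapsto \pi_m(g)$ defines a consistent scattering diagram—was already handled via \eqref{pathorderedproductformula} and \cref{factorization}.
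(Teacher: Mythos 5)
Your proposal is correct and matches the paper's own reasoning: the paper gives no separate proof and simply states that the proposition is ``an immediate consequence of the proof of \cref{thesecondbijection},'' which is precisely the unpacking you carry out — the surjectivity half of that proof defines $\mathfrak D_g$ by $\sigma\mapsto \phi_g(m)$, $m\in\sigma^\circ$, the injectivity half identifies it as $\mathfrak p_{+,-}^{-1}(g)$, and the WCS $s_g$ from \cref{bijection} has germ $\pi_m(g)=\phi_g(m)$ by construction.
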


According to the above proposition, we will simply denote the $\mathfrak g$-SD $\mathfrak p^{-1}_{+,-}(g)$ by $$\mathfrak D_g = (\mathfrak S_S, \phi_g)$$ for $g\in G$. The function $\phi_g$ then has domain $M_\mathbb R$ rather than $\mathfrak S_S$. However by $\phi_g(\sigma)$, we always mean $\phi_{\mathfrak D_g}(\sigma)$ without any ambiguity.

\subsubsection{}  A consistent $\mathfrak g$-SD $\mathfrak D$ is determined by its wall-crossings among all the crossings. Let $\sigma \in \mathfrak S_S$ and suppose codim $\sigma >1$. One can always find a $\mathfrak S_S$-path $\gamma$ from $\sigma^+$ to $\sigma^-$ with only wall-crossings. Thus the face-crossing $\phi_\mathfrak D(\sigma)$ is equal to a product of wall-crossings because of consistency.  

Deonte the subset of all cones of codimension one in $\mathfrak S_S$ by $\mathfrak W_S$. The following is yet another equivalent definition to \cref{consistentscatteringdiagram}.

\begin{definition}
	Let $\mathfrak g$ be an $N^+$-graded Lie algebra with finite support $S$. A consistent $\mathfrak g$-SD $\mathfrak D$ is the datum of a function $\phi_\mathfrak D\colon \mathfrak W_S \rightarrow G$ such that 
	\begin{enumerate}[label=(\roman*)]
		\item $\phi(\sigma)\in \exp(\mathfrak g_\sigma)\subset G$ for any $\sigma\in \mathfrak W_S$;
		\item any path-ordered product only depends on end points. Here we only allow $\mathfrak S_S$-paths with wall-crossings (instead of face-crossings).
	\end{enumerate}
\end{definition}

\subsubsection{} Now we remove the finiteness restriction of $S = \mathrm{Supp}(\mathfrak g)$ to define consistent $\mathfrak g$-SDs in general. For $I\in \mathrm{Cofin}(N^+)$, the quotient Lie algebra $\mathfrak g^{<I}$ is supported on $S^{<I} = S\setminus I$. Recall that we have a group homomorphism $\rho^{<I}\colon \hat{G}\rightarrow G^{<I}$.

\begin{definition}\label{sdwithinfinitesupport}
A $\mathfrak g$-SD $\mathfrak D$ is a function $\phi_\mathfrak D \colon M_\mathbb R\rightarrow \hat G$ such that the induced function $$\phi_\mathfrak D^{<I} = \rho^{<I}\circ \phi_{\mathfrak D}\colon M_\mathbb R\rightarrow G^{<I}$$ for each $I\in \mathrm{Cofin}(N^+)$ comes from a $\mathfrak g^{<I}$-SD $\mathfrak D^{<I} = (\mathfrak S_{S^{<I}}, \phi_{\mathfrak D^{<I}})$, i.e.
\[
	\phi_{\mathfrak D}^{<I}(m) = \phi_{\mathfrak D^{<I}}(\sigma)
\]
for any $\sigma\in \mathfrak S_{S^{<I}}$ and any $m\in \sigma^\circ$.
It is said to be consistent if every $\mathfrak D^{<I}$ is consistent.
\end{definition}

\begin{proposition}
	The set $\mathfrak g$-$\mathcal{SD}$ of all consistent $\mathfrak g$-SDs is in bijection with $\hat G$ by sending $\mathfrak D$ to $\phi_{\mathfrak D}(0)\in \hat G$.
\end{proposition}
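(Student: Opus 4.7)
The plan is to bootstrap from the finite-support bijection \cref{thesecondbijection} by passing to the projective limit over $\mathrm{Cofin}(N^+)$. Thus I would first reinterpret a consistent $\mathfrak g$-SD as a coherent family of consistent $\mathfrak g^{<I}$-SDs, apply the finite case termwise, and then invoke the universal property of $\hat G = \varprojlim_I G^{<I}$.

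More precisely, by \cref{sdwithinfinitesupport} a consistent $\mathfrak g$-SD $\mathfrak D$ is the same data as a family $\{\mathfrak D^{<I}\}_{I \in \mathrm{Cofin}(N^+)}$ of consistent $\mathfrak g^{<I}$-SDs compatible under the quotients $\rho_{I,J}$, meaning $\rho_{I,J}\circ \phi_{\mathfrak D^{<I}}(m) = \phi_{\mathfrak D^{<J}}(m)$ for all $m \in M_\mathbb R$ and $I \subset J$. By \cref{thesecondbijection} each $\mathfrak D^{<I}$ corresponds to $g^{<I} := \phi_{\mathfrak D^{<I}}(0) \in G^{<I}$, so evaluating compatibility at $m = 0$ shows that $(g^{<I})_I$ is coherent in the inverse system \eqref{socfi}. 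Therefore $\phi_\mathfrak D(0) = (g^{<I})_I$ is a well-defined element of $\hat G$. Injectivity is then immediate: if $\phi_\mathfrak D(0) = \phi_{\mathfrak D'}(0)$, then all finite truncations agree by finite-case injectivity, forcing $\mathfrak D = \mathfrak D'$.

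For surjectivity, fix $g \in \hat G$ with components $g^{<I} = \rho^{<I}(g)$. \cref{thesecondbijection} produces a consistent $\mathfrak g^{<I}$-SD $\mathfrak D_{g^{<I}}$, and by \cref{equivalenceoftwodefinitions} its value at any $m \in M_\mathbb R$ is $\pi_m(g^{<I})$. To conclude, I must verify that the family $\{\mathfrak D_{g^{<I}}\}_I$ is coherent, which reduces to the identity $\rho_{I,J}(\pi_m(g^{<I})) = \pi_m(g^{<J})$ for all $I \subset J$ and $m \in M_\mathbb R$.

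This commutativity of the projections $\pi_m$ with the quotient maps $\rho_{I,J}$ is the single technical point of the proof, and I would derive it from the uniqueness in \cref{factorization}. Applying $\rho_{I,J}$ to the factorization $g^{<I} = \pi_{m,+}(g^{<I})\cdot \pi_m(g^{<I}) \cdot \pi_{m,-}(g^{<I})$ in $G^{<I}$ produces a factorization of $g^{<J}$ of the form required by \cref{factorization}, and uniqueness in $G^{<J}$ then forces the middle factor to equal $\pi_m(g^{<J})$. The coherent family $\{\mathfrak D_{g^{<I}}\}_I$ therefore assembles into a consistent $\mathfrak g$-SD $\mathfrak D$ with $\phi_\mathfrak D(m) = \pi_m(g) \in \hat G_{m,0}$; in particular $\phi_\mathfrak D(0) = g$, completing the proof of surjectivity.
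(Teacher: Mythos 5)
Your proof is correct and follows essentially the same route as the paper: reduce to the finite-support bijection of \cref{thesecondbijection} on each truncation, then pass to the limit over $\mathrm{Cofin}(N^+)$, with the key step being the compatibility $\rho_{I,J}\circ\pi_m = \pi_m\circ\rho_{I,J}$ (which the paper invokes silently as $\pi_m(\rho^{<I}(g)) = \rho^{<I}(\pi_m(g))$). You spell out the derivation of this compatibility from uniqueness in \cref{factorization}, which the paper leaves implicit, but the overall argument is the same.
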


\begin{proof}
	Suppose $\mathfrak D$ is a consistent $\mathfrak g$-SD and let $g = \phi_\mathfrak D(0)$. Then by definition, we have $\phi_\mathfrak D^{<I}(0) = \rho^{<I} (g)$ for any $I\in \mathrm{Cofin}(N^+)$. Since $\mathfrak D^{<I}$ is consistent, for any $m\in M_\mathbb R$, $$\rho^{<I}(\phi_\mathfrak D(m))= \phi_{\mathfrak D^{<I}}(m) = \pi_{m}(\rho^{<I}(g)) = \rho^{<I}(\pi_m(g))$$ for any $I$. This shows $\phi_\mathfrak D(m) = \pi_m(g)$ for any $m\in M_\mathbb R$. Such a function $\phi_\mathfrak D\colon M_\mathbb R\rightarrow \hat G$, $m\mapsto \pi_m(g)$ obviously defines a consistent $\mathfrak g$-SD of \cref{sdwithinfinitesupport} for arbitrary $g\in \hat G$. The wanted bijection then follows.
\end{proof}

\iffalse

\begin{corollary}
Let $\gamma$ be a $\mathfrak S_g$-path contained in $\mathrm{Star}(\sigma)$ for some $\sigma\in \mathfrak S_g$ such that $\gamma(0)\in \sigma^+$ and $\gamma(1)\in \sigma ^-$. Then we have $\mathfrak p_\gamma = \phi_g(\sigma)$, i.e. the path-ordered product is equal to the face-crossing at $\sigma$.
\end{corollary}

\begin{proof}
One can choose a path directly from $\sigma^+$ to $\sigma^-$ by only crossing $\sigma$. This path should have the same path-ordered product as $\gamma$ by \cref{consistency}.
\end{proof}

Now we remove the finite support condition of the Lie algebra $\mathfrak g$, i.e. we do not assume that $\mathrm{Supp}(\mathfrak g)$ is finite.

\begin{definition}\label{pathandpathorderedproduct}
A smooth curve $\gamma\colon [0,1]\rightarrow M_\mathbb R$ is said to be a $\mathfrak S_g$-\textit{path} if it is a $\mathfrak S_{g^{<I}}$-path (\cref{path}) for any cofinite ideal $I\subset N^+$. The \textit{path-ordered product} $\mathfrak p_\gamma$ of $\gamma$ is defined to be $\pi_{\gamma(1),-}(g)^{-1}\cdot \pi_{\gamma(0),-}(g)$. In particular, it only depends on the end points of $\gamma$. 
\end{definition}

\begin{remark}
By \cref{consistency}, $\mathfrak p_\gamma$ is the projective limit in $\hat G$ of the path-ordered products of $\gamma$ in $\mathfrak S_{g^{<I}}$. This justifies the name path-ordered product when Supp($\mathfrak{g}$) is infinite.
\end{remark}

\fi

\subsection{Minimal support}\label{conecomplex}
When the Lie algebra $\mathfrak g$ has finite support $S$, usually the cone complex $\mathfrak S_S$ induced by the arrangement of hyperplanes is too fine to describe the scattering diagram $\mathfrak D_g = (\mathfrak S_S, \phi_g)$. For example, a path-connected component of $\phi_g^{-1}(h)$ for some $h\in G$ may be a union of cones in $\mathfrak S_S^\circ$. The following theorem is our main result in this section, which gives a minimal choice of the underlying cone complex.

\begin{theorem}\label{minimalsupport}
For any $g\in G$, the associated map $\phi_g\colon M_\mathbb R\rightarrow G$ satisfies the following properties. 
\begin{enumerate}[label = (\roman*)]
	\item For any $h\in G$, the preimage $\phi_g^{-1}(h)$ is relatively open in some subspace of $M_\mathbb R$.
	\item Each connected component of $\phi_g^{-1}(h)$ is the relative interior of a rational polyhedral cone.
	\item These cones form a finite complete cone complex $\mathfrak S_g$ of $M_\mathbb R$.
\end{enumerate}

\end{theorem}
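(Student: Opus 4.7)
The plan is to describe each connected component of a level set of $\phi_g$ as the relative interior of an explicit rational polyhedral cone, read off from the factorization of $g$, and then verify the cone-complex axioms.

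By \cref{keyfunctor}, $\phi_g$ is constant on each open cone $\sigma^\circ$ of $\mathfrak S_S$ (both $m_1,m_2\in\sigma^\circ$ give $\phi_g(m_i)=\pi_{\{0\},\sigma}(g)$), so each level set $\phi_g^{-1}(h)$ is a finite union of open cones of $\mathfrak S_S$ and lies in $T^\perp$ for $T:=\mathrm{Supp}(\log h)$. Thus (i) will follow once (ii) is established. For (ii), fix $m\in M_\mathbb R$, let $h=\phi_g(m)$, and take the unique factorization $g=g_+\cdot h\cdot g_-$ from \cref{factorization}, with supports $T_\pm:=\mathrm{Supp}(\log g_\pm)\subset P_{m,\pm}$. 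Define the rational polyhedral cone
\[
\sigma_m := \bigl\{\, m'\in M_\mathbb R :\ m'(d)=0\ \forall d\in T;\ m'(d)\geq 0\ \forall d\in T_+;\ m'(d)\leq 0\ \forall d\in T_-\,\bigr\}.
\]
Then $m\in\sigma_m^\circ$, and for any $m'\in\sigma_m^\circ$ the same product $g_+\cdot h\cdot g_-$ qualifies as the $m'$-factorization (the three conditions of \cref{factorization} hold relative to $m'$), so by uniqueness $\phi_g(m')=h$. Thus the convex open set $\sigma_m^\circ$ is a connected subset of $\phi_g^{-1}(h)$.

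The main obstacle is showing $\sigma_m^\circ$ is the \emph{entire} connected component at $m$, equivalently that it is closed in $\phi_g^{-1}(h)$. Suppose for contradiction $m_\infty\in \phi_g^{-1}(h)\cap\partial\sigma_m$; then $m_\infty(d_0)=0$ for some $d_0\in T_+\cup T_-$, say $d_0\in T_+$. Refactoring $g_\pm$ relative to $m_\infty$ as in the proof of \cref{keyfunctor} gives the $m_\infty$-factorization of $g$ whose $G_{m_\infty,0}$-part equals $B\cdot h\cdot C$, where $B=(g_+)_{m_\infty,0}$ and $C=(g_-)_{m_\infty,0}$; the assumption $\phi_g(m_\infty)=h$ then forces $B=h C^{-1} h^{-1}$. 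Comparing $d_0$-graded components leads to a contradiction: $\log B$ has a nonzero $d_0$-component (to leading BCH order it equals the $d_0$-component of $\log g_+$, nonzero since $d_0\in T_+$), while every degree in $\mathrm{Supp}(\log(hC^{-1}h^{-1}))$ pairs strictly negatively with $m$ (this support is obtained from $\mathrm{Supp}(\log C)\subset P_{m,-}$ by iterated $\mathrm{ad}_{\log h}$, and $T\subset P_{m,0}$), contradicting $m(d_0)>0$.

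For (iii), the collection $\mathfrak S_g:=\{\sigma_m : m\in M_\mathbb R\}$ is complete (as $m\in\sigma_m^\circ$) and finite (as the finite complex $\mathfrak S_S$ refines it). Each face of $\sigma_m$ equals $\sigma_{m'}$ for any $m'$ in its relative interior---there the $m'$-factorization absorbs the newly-zero coordinates of $g_\pm$ into an enlarged middle term replacing $h$---and intersections $\sigma_m\cap\sigma_{m'}$ are common faces by combining their defining half-space inequalities, giving the cone-complex axioms.
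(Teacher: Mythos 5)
Your construction of $\sigma_m$ directly from the supports of the factors in the $m$-factorization of $g$ is a genuinely different route from the paper's, and for parts (i) and (ii) it works. The paper first proves relative openness of level sets (\cref{relativeopen}), then establishes convexity of the closure of each component by a local argument relying on persistence of extreme support elements (\cref{supportoflink}); you instead write down the candidate cone explicitly and verify that its relative interior is exactly one level-set component. One minor imprecision: the parenthetical ``to leading BCH order'' undersells your own argument — since every BCH term of $\log\bigl((g_+)_{m_\infty,+}\cdot B\bigr)$ containing at least one $\log\bigl((g_+)_{m_\infty,+}\bigr)$ factor has degree strictly positive under $m_\infty$, the identity $\log B=(\log g_+)|_{P_{m_\infty,0}}$ is \emph{exact}, not leading-order; you should say so, since that exactness is what makes the contradiction airtight. (Your case $d_0\in T_-$ is symmetric, using $C=h^{-1}B^{-1}h$.)

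Part (iii) is where the proposal has a genuine gap. The sentence ``Each face of $\sigma_m$ equals $\sigma_{m'}$ for any $m'$ in its relative interior'' is precisely the cone-complex face axiom, and you treat it as a remark. What is easy is the inclusion $\tau^\circ\subset\sigma_{m'}^\circ$: for $m'\in\tau^\circ$ one has $\phi_g(m')=(g_+)_{m',0}\cdot h\cdot (g_-)_{m',0}$, and constancy of $\phi_g$ on $\tau^\circ$ follows because two points of $\tau^\circ$ induce the same sign partition of the semigroup generated by $T_\pm$. What is \emph{not} addressed is the reverse inclusion. Your cone $\sigma_{m'}$ has span $(T')^\perp$ with $T'=\mathrm{Supp}\bigl(\log\phi_g(m')\bigr)$, while the face $\tau$ has span $(T\cup T_0)^\perp$ where $T_0$ is the set of elements of $T_+\cup T_-$ vanishing on $\tau$. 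You need $\mathrm{span}(T')=\mathrm{span}(T\cup T_0)$; the inclusion $\subset$ is clear, but $\supset$ can fail a priori because BCH brackets in $\log(BhC)$ may cancel components of $T$, $\mathrm{Supp}(\log B)$, or $\mathrm{Supp}(\log C)$, making $\sigma_{m'}$ a strictly larger cone than $\tau$. This is exactly the kind of support-persistence statement the paper isolates in \cref{supportoflink} via extreme elements, and then feeds (together with \cref{relativeopen}) into the face argument. Your one-line hint (``the $m'$-factorization absorbs the newly-zero coordinates\dots'') describes what the middle factor becomes, but it does not rule out the support collapse, and without ruling it out the face axiom (and hence the intersection axiom, which you derive from it) is unproved.
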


\begin{remark}
In the literature, for example in \cite{kontsevich2014wall, bridgeland2016scattering, gross2018canonical}, the codimension one skeleton of $\mathfrak S_g$ is corresponding to the \textit{minimal or essential support} and codimension one cones are usually referred to as \textit{walls}.
\end{remark}

\subsubsection{} We need some preparations before the proof to \cref{minimalsupport}. Assume that $S = \mathrm {Supp} (\mathfrak g)$ is finite.

\begin{definition}
Let $g\in G$. We define $\mathrm{Supp}(g)$ to be the minimal subset of $\mathrm{Supp}(\mathfrak g)$ such that $\mathfrak g_{\mathrm{Supp}(g)}$ is a Lie subalgebra of $\mathfrak g$ that contains $\log (g)$. We say $n\in \mathrm{Supp}(g)$ is \textit{extreme} if it is not a positive linear combination of other elements in $\mathrm{Supp}(g)$. Denote the subset of all extreme elements by $E(g)$.
\end{definition}

We know from \cref{equivalenceoftwodefinitions} that the function $\phi_g$ is constant on any $\sigma^\circ \in \mathfrak S_S^\circ$.

\begin{lemma}\label{relativeopen}
For any $h\in G$, the preimage $\phi_g^{-1}(h)$, as a union of cones in $\mathfrak S_S^\circ$, is relatively open in the subspace $\mathrm{Supp}(h)^\perp \subset M_\mathbb R$. 
\end{lemma}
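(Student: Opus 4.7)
\smallskip

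\textbf{Proof proposal.}  The plan is to prove two things in sequence: first that $\phi_g^{-1}(h)$ is contained in the linear subspace $\mathrm{Supp}(h)^\perp$, and second that it is a relatively open subset there. Both rely on the factorization uniqueness of \cref{factorization} together with the functoriality of the projections $\pi_{\sigma_1,\sigma_2}$ established in \cref{keyfunctor}.

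For the containment, I would argue directly from definitions. If $\phi_g(m)=h$, then by \cref{equivalenceoftwodefinitions} we have $h=\pi_m(g)\in \hat G_{m,0}$, so $\log(h)\in \mathfrak g_{m,0}=\mathfrak g_{P_{m,0}}$. By the minimality clause in the definition of $\mathrm{Supp}(h)$, this forces $\mathrm{Supp}(h)\subseteq P_{m,0}$, hence $m(n)=0$ for all $n\in\mathrm{Supp}(h)$, i.e.\ $m\in\mathrm{Supp}(h)^\perp$.

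For the relative openness, fix $m_0\in\phi_g^{-1}(h)$ and let $\sigma_0\in\mathfrak S_S$ be the unique cone with $m_0\in\sigma_0^\circ$. Since $\phi_g$ is constant on $\sigma_0^\circ$, this whole cone lies in $\phi_g^{-1}(h)\subseteq\mathrm{Supp}(h)^\perp$. Now pick any $m_1\in\mathrm{Supp}(h)^\perp$ sufficiently close to $m_0$; it lies in the relative interior of some cone $\sigma_1\in\mathfrak S_S$ with $\sigma_0\prec\sigma_1$ (since small perturbations of a point in $\sigma_0^\circ$ land in cones whose closures contain $\sigma_0$). Writing $\sigma_i=\sigma_{P_i}$, the condition $m_1\in\mathrm{Supp}(h)^\perp\cap\sigma_1^\circ$ forces $\mathrm{Supp}(h)\cap P_{1,+}=\emptyset$ and $\mathrm{Supp}(h)\cap P_{1,-}=\emptyset$ (elements of $\mathrm{Supp}(h)$ must pair to zero with $m_1$, but pair strictly positively/negatively with elements of $\sigma_1^\circ$). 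Hence $\mathrm{Supp}(h)\subseteq P_{1,0}$, which in turn gives $h\in\exp(\mathfrak g_{\sigma_1})\subseteq\exp(\mathfrak g_{\sigma_0})$.

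Finally, I would invoke the functoriality of \cref{keyfunctor} applied to $\{0\}\prec\sigma_0\prec\sigma_1$: taking the origin as the base cone yields $\pi_{\{0\},\sigma_i}=\pi_{m_i}$, so
\[
\pi_{m_1}(g)=\pi_{\sigma_0,\sigma_1}\bigl(\pi_{m_0}(g)\bigr)=\pi_{\sigma_0,\sigma_1}(h).
\]
But since $h\in\exp(\mathfrak g_{\sigma_1})$, the unique factorization of $h$ with respect to any $m\in\sigma_1^\circ$ has trivial positive and negative pieces and middle piece $h$ itself, so $\pi_{\sigma_0,\sigma_1}(h)=h$. Therefore $\phi_g(m_1)=h$, proving $\phi_g^{-1}(h)$ is relatively open in $\mathrm{Supp}(h)^\perp$. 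The main conceptual point, rather than any technical obstacle, is recognizing that once $m_1\in\mathrm{Supp}(h)^\perp$ one automatically has $h$ living in the smaller subgroup $\exp(\mathfrak g_{\sigma_1})$, so that the projection $\pi_{\sigma_0,\sigma_1}$ acts trivially on it.
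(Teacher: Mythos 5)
Your proof is correct and follows essentially the same route as the paper: after the containment step (which you spell out slightly more carefully via the minimality of $\mathrm{Supp}(h)$), you show relative openness by applying the functoriality of \cref{keyfunctor} to the chain $\{0\}\prec\sigma_0\prec\sigma_1$ and observing that $\mathrm{Supp}(h)\subseteq P_{1,0}$ forces $\pi_{\sigma_0,\sigma_1}(h)=h$. The paper phrases the local argument in terms of $|\mathrm{Star}(\sigma)|\cap\mathrm{Supp}(h)^\perp$ rather than small perturbations, but these are equivalent, and the key step — that $h$ already lies in the middle piece of the factorization so the projection is the identity on it — is identical.
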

\begin{proof}
First of all the set $\phi_g^{-1}(h)$ is a union of cones in $\mathfrak S_S^\circ$ as $\phi_g$ is constant on any $\sigma^\circ$. We have $\phi^{-1}_g(h)\subset \mathrm{Supp}(h)^\perp$ since $\log(h)$ is supported on $\mathrm{Supp}(h)$. We just need to show that for any $\sigma^\circ$ contained in $\phi_g^{-1}(h)$,  \[|\mathrm{Star}(\sigma)|\cap {\mathrm{Supp}(h)^\perp}\subset \phi_g^{-1}(h).\]
Let $m\in \sigma^\circ $ and $m'\in |\mathrm{Star}(\sigma)|\cap {\mathrm{Supp}(h)^\perp}.$ By \cref{keyfunctor}, $$\phi_g(m') = \pi_{m'}(g) = \pi_{m,m'}(\pi_{m}(g)) = \pi_{m,m'}(h).$$
The map $\pi_{m,m'}$ depends on the partition
\[
S_{m,0} = (S_{m,0})_{m',+}\sqcup S_{m',0}\sqcup (S_{m,0})_{m',-}.
\]
Note that by assumption we have $\mathrm{Supp}(h)\subset S_{m',0}\subset S_{m, 0}$. Therefore $\phi_g(m') = \pi_{m,m'}(h) = h$ which implies any such $m'$ is contained in $\phi_g^{-1}(h)$. This finishes the proof.
\end{proof} 

\begin{lemma}\label{supportoflink}
Let $\sigma\in \mathfrak S_S$. Suppose that $e\in E(\phi_g(\sigma))$ is extreme. Then for any cone $\rho$ in $\mathfrak S_S$ that is contained in $e^\perp$ and contains $\sigma$, we have the component $\log(\phi_g(\rho))_e \neq 0$, i.e. $e\in \mathrm{Supp}(\phi_g(\rho))$. 
\end{lemma}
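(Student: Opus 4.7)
The plan is to realize $\phi_g(\rho)$ as the middle factor in the decomposition of $\phi_g(\sigma)$ furnished by \cref{factorization}, and then use extremeness of $e$ together with the Baker--Campbell--Hausdorff (BCH) formula to show that the $e$-component of $\log\phi_g(\sigma)$ lives entirely in this middle factor.

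First, since $\sigma\prec\rho$, combining \cref{equivalenceoftwodefinitions} with the functoriality in \cref{keyfunctor} (taking the origin as the smallest cone) gives $\phi_g(\rho)=\pi_{\sigma,\rho}(\phi_g(\sigma))$. Explicitly, choosing $m'\in\rho^\circ$ partitions $P_\sigma:=\sigma^\perp\cap S$ as $(P_\sigma)_{m',+}\sqcup P_\rho\sqcup(P_\sigma)_{m',-}$, and \cref{factorization} applied to $\mathfrak g_\sigma$ yields a unique factorization
\[
\phi_g(\sigma)=g_+\cdot g_0\cdot g_-,\qquad g_0=\phi_g(\rho),
\]
with $g_\pm\in\exp(\mathfrak g_{(P_\sigma)_{m',\pm}})$ and $g_0\in\exp(\mathfrak g_{P_\rho})$. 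Applying \cref{factorization} \emph{inside} the Lie subalgebra $\mathfrak g_{\mathrm{Supp}(\phi_g(\sigma))}$ (which by definition contains $\log\phi_g(\sigma)$) and invoking uniqueness shows additionally that $\mathrm{Supp}(g_\bullet)\subset\mathrm{Supp}(\phi_g(\sigma))$ for each $\bullet\in\{+,0,-\}$.

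Next I would expand $\log(g_+ g_0 g_-)$ by BCH as a formal sum of iterated Lie brackets of $\log g_+,\log g_0,\log g_-$; a bracketed term of length $k\geq 2$ lands in degree $d_1+\cdots+d_k$ with each $d_i\in\mathrm{Supp}(\phi_g(\sigma))$. The key combinatorial claim is that $e$ admits no such decomposition: letting $c$ be the number of indices with $d_i=e$, the equation $\sum_{d_i\ne e}d_i=(1-c)e$ in $N^\oplus$ forces $c\in\{0,1\}$; the case $c=1$ is excluded because a nonempty sum of elements of $N^+$ is nonzero, and $c=0$ would display $e$ as a positive integer (hence $\mathbb R_{>0}$) combination of elements of $\mathrm{Supp}(\phi_g(\sigma))\setminus\{e\}$, contradicting extremeness. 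Consequently BCH brackets of length $\geq 2$ contribute nothing to degree $e$, and
\[
(\log\phi_g(\sigma))_e=(\log g_+)_e+(\log g_0)_e+(\log g_-)_e.
\]
Since $\rho\subset e^\perp$ gives $e\in P_\rho$ and therefore $e\notin (P_\sigma)_{m',\pm}$, the outer summands vanish, so $(\log\phi_g(\rho))_e=(\log\phi_g(\sigma))_e$. The latter is nonzero by the same bookkeeping: otherwise $\mathrm{Supp}(\phi_g(\sigma))\setminus\{e\}$ would still be closed under addition within $S$ (again by extremeness of $e$) and hence support a Lie subalgebra containing $\log\phi_g(\sigma)$, contradicting the minimality in the definition of $\mathrm{Supp}$. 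Therefore $e\in\mathrm{Supp}(\phi_g(\rho))$.

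The main obstacle is the BCH bookkeeping in the middle step: the core subtlety is confining all bracket contributors to $\mathfrak g_{\mathrm{Supp}(\phi_g(\sigma))}$ (which is why the preliminary fact $\mathrm{Supp}(g_\bullet)\subset\mathrm{Supp}(\phi_g(\sigma))$ is essential), so that extremeness of $e$ within this finite set can be applied to preclude any nontrivial additive decomposition. Once this is in place, the remainder of the argument reduces to the elementary partition analysis above.
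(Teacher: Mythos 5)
Your proof is correct and follows the same route as the paper: factor $\phi_g(\sigma)=g_+\cdot\phi_g(\rho)\cdot g_-$ via \cref{keyfunctor} and \cref{factorization}, observe the outer factors are supported outside $S_{\rho,0}$, and use extremeness of $e$ with BCH. You usefully spell out two points the paper leaves implicit, namely the confinement $\mathrm{Supp}(g_\bullet)\subset\mathrm{Supp}(\phi_g(\sigma))$ needed to make the bracket-bookkeeping apply, and the separate minimality argument showing $(\log\phi_g(\sigma))_e\neq0$.
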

\begin{proof}
Let $h = \phi_g(\sigma)$. By \cref{keyfunctor}, we have $\phi_g(\rho) = \pi_{\sigma, \rho} (h)$ and
\[
h = \pi_{m,+}(h)\cdot \phi_g(\rho)\cdot \pi_{m,-}(h)
\]
where $m\in \rho^\circ$. Note that $e\in S_{\rho,0}\subset S_{\sigma, 0}$ and $\pi_{m,\pm}(h)$ are supported outside of $S_{\rho, 0}$. Since $e$ is extreme, it is an immediate consequence of the Baker–Campbell–Hausdorff formula that $\log(\phi_g(\rho))_e \neq 0$.
\end{proof}

\begin{proof}[Proof of \cref{minimalsupport}]
Part (i) follows from \cref{relativeopen}. Let $V$ be a connected component of $\phi_g^{-1}(h)$. We know that the closure $\overline V$ is a union of cones in $\mathfrak S_S$. The convexity of $\overline V$ can be proved locally. In fact, it suffices to prove that for any $\sigma\in \mathfrak S_{S}$ in the boundary $\overline V\setminus V$, there exists some $e\in N$ such that any connected component $\tau$ of $|\mathrm{Star}(\sigma)|\cap V$ is contained in one of the two open halves of $\mathrm{Supp}(h)^\perp$ separated by the hyperplane $e^\perp$ which passes through $\sigma$. We prove this by finding some $e$ such that for any $\rho\subset \mathrm{Supp}(h)^\perp \cap e^\perp$ that contains $\sigma$, $\phi_g({\rho})\neq h$, which implies $\tau$ cannot cross the hyperplane $e^\perp$. In fact, since $\sigma\cap V = \emptyset$, $\phi_g(\sigma)\neq \phi_g(\tau) = h$. We just take some extreme element $e$ in $(S_{\sigma,0}\setminus S_{\tau,0})\cap E(\phi_g(\sigma))$. It is clear that $e^\perp \cap \mathrm{Supp}(h)^\perp$ is a hyperplane in $\mathrm{Supp}(h)^\perp$ and we have $e\in \mathrm{Supp}(\phi_g(\rho))$ by \cref{supportoflink}. Therefore $\phi_g(\rho)\neq h$ since $e\notin \mathrm{Supp}(h)$. This proves part (ii).

Let $\delta$ be a face of $\overline {V}$. It is a union of cones in $\mathfrak S_S$. The map $\phi_g$ is constant on the relative interior of $\delta$,  otherwise $\overline{V}$ would be split into two components. Suppose that $\phi_g$ remains constant on some face $\sigma'$ of $\sigma$. By \cref{relativeopen}, it remains constant on $|\mathrm{Star}(\sigma')|\cap \langle\sigma\rangle_\mathbb R$. This means $\phi_g$ also extends constantly from $V$ to some face of $\overline{V}$ that contains $\sigma'$, which contradicts with the assumption of $V$ being a connected component of $\phi_g^{-1}(h)$. Therefore we conclude that the interior of any face of $\overline{V}$ is also a connected component of some preimage of $\phi_g$. This proves part (iii) that the set of all cones of the form $\overline{V}$ form a complete finite cone complex in $M_\mathbb R$.
\end{proof}

\subsubsection{} Now we remove the finiteness restriction of $\mathrm{Supp}(\mathfrak g)$. Fix $g\in \hat G$ and consider the corresponding consistent $\mathfrak g$-SD $\mathfrak D_g$ and the function $\phi_g\colon M_\mathbb R\rightarrow \hat G$.

\begin{proposition}\label{pathconnectedcomponent}
	For any $h\in \hat G$, each path-connected component of the preimage $\phi_g^{-1}(h)$ is a convex cone in $M_\mathbb R$. These cones together give a decomposition of $M_\mathbb R$. 
\end{proposition}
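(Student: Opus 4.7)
The plan is to reduce the statement to the finite-support case \cref{minimalsupport} by working through the defining inverse system. First I would verify the set-theoretic identity
\[
\phi_g^{-1}(h) \;=\; \bigcap_{I\in\mathrm{Cofin}(N^+)} \phi_{g^{<I}}^{-1}\bigl(\rho^{<I}(h)\bigr),
\]
which follows from the compatibility $\rho^{<I}\circ \pi_m = \pi_m\circ \rho^{<I}$ (an immediate consequence of the uniqueness in \cref{factorization} applied to $\rho^{<I}(g)=g^{<I}$) together with $\hat G=\lim_{\underset{I}{\longleftarrow}}G^{<I}$ as a set.

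Next, fix $h\in\hat G$ and let $\Gamma$ be a path-connected component of $\phi_g^{-1}(h)$. For any two points $m_1,m_2\in\Gamma$, a path between them in $\phi_g^{-1}(h)$ is automatically, for every $I$, a connected subset of $\phi_{g^{<I}}^{-1}(\rho^{<I}(h))$; hence $m_1$ and $m_2$ lie in a common connected component $C^{<I}$ of the latter. By \cref{minimalsupport}(ii), $C^{<I}$ is the relative interior of a rational polyhedral convex cone, so it is both convex and stable under positive scaling. Consequently the line segment $[m_1,m_2]$ lies in every $C^{<I}$, hence in the intersection above, hence in $\phi_g^{-1}(h)$; being itself a path, it connects $m_1$ and $m_2$ inside $\Gamma$, which proves that $\Gamma$ is convex. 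The cone property follows by the same device: for any $m\in\Gamma$ and $\lambda>0$ the segment $[m,\lambda m]$ lies in each $C^{<I}$, hence in $\phi_g^{-1}(h)$, so $\lambda m\in\Gamma$. The decomposition statement is then automatic, since the path-connected components of the fibers of $\phi_g$ partition $M_\mathbb R$ as $h$ varies over $\hat G$.

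The only subtle point, and the expected main obstacle, is the realization that path-connected components of $\phi_g^{-1}(h)$ are controlled by the finite approximations: any path in the inverse-limit fiber remains a path in every truncation, so the pair $(m_1,m_2)$ is trapped in a relatively open convex cone at every finite level. Once this observation is in place alongside \cref{minimalsupport}, convexity and the cone property transfer to $\Gamma$ through a single intersection argument, with no need to upgrade the resulting decomposition to a rational polyhedral cone complex, which in general cannot be achieved when $\mathrm{Supp}(\mathfrak g)$ is infinite.
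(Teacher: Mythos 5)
Your proof is correct and takes essentially the same route as the paper: both reduce to the finite-support statement \cref{minimalsupport} by passing through the inverse system of truncations $\mathfrak g^{<I}$, observe that the connected components of the finite-level fibers are relatively open convex cones, and transfer convexity and the cone property to the limit via an intersection argument. The paper packages this slightly differently by defining $\sigma_m^\circ=\bigcap_I(\sigma_m^{<I})^\circ$ and showing $C=\sigma_m^\circ$, while you argue pointwise with segments, but the underlying observation is the same.
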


\begin{proof} For each $I\in \mathrm{Cofin}(N^+)$, the projection $g^{<I}$ of $g$ in $G^{<I}$ determines a consistent $\mathfrak g^{<I}$-SD. The induced cone complex $\mathfrak S_{g^{<I}}$ is a refinement of $\mathfrak S_{g^{<J}}$ if $I\subset J$. Thus we have a directed inverse system of complete finite cone complexes $\mathfrak S_{g^{<I}}$ with indexed by $\mathrm{Cofin}(N^+)$. This gives a decomposition of $M_\mathbb R$ in the limit into convex cones as follows. Let $C$ be a path-connected component of $\phi_g^{-1}(h)$ and $m\in C$. We define
\begin{equation}\label{limitconecomplex}
\sigma_m^\circ \colon = \bigcap_{I\in \mathrm{Cofin}(N^+)} (\sigma_m^{<I})^\circ
\end{equation}
where $\sigma_m^{<I}$ is the unique cone in $\mathfrak S_{g^{<I}}$ whose relative interior contains $m$. Thus $\sigma_m^\circ$ is still a convex cone (but may no longer be the interior of a rational polyhedral cone) and in particular is path-connected. The map $\phi_g\colon M_\mathbb R\rightarrow \hat G$ is obviously constant on $\sigma_m^\circ$ so $\sigma_m^\circ \subset C$. On the other hand,  we have $C\subset (\sigma_m^{<I})^\circ$ for any $I$ since $(\sigma_m^{<I})^\circ$ is the connected component of $\phi_{g^{<I}}^{-1}(h^{<I})$. Then we have $C = \sigma_m^\circ$. Therefore $M_\mathbb R$ is a disjoint union of convex cones of the form $\sigma_m^\circ$. 
\end{proof}

The collection of cones in \cref{pathconnectedcomponent} is now denoted by
\begin{equation}\label{scatteringdecomposition}
    \mathfrak S_g^\circ \colon = \{\sigma_m^\circ\mid m\in M_\mathbb R\}.
\end{equation}
It is a decomposition of the space $M_\mathbb R$ into convex cones. Define $\sigma_m\colon = \overline {\sigma_m^\circ}$. We also the following set of closed cones
\[
\mathfrak S_g \colon =\{\sigma_m\mid m\in M_\mathbb R\}.
\]
Note that in the case of finite support, these two collection of cones coincide with our cone complex $\mathfrak S_g$ in \cref{minimalsupport} and the induced cone decomposition by taking the relative interiors. Here we emphasize that $\mathfrak S_g$ (and thus $\mathfrak S_g^\circ$) also has a poset structure as the projective limit of directed inverse system of posets $\mathfrak S_{g^{<I}}$, i.e.
\[
	\sigma_{m_1}\prec \sigma_{m_2}\Leftrightarrow \sigma_{m_1}\subset \sigma_{m_2}.
\]
However the cones $\sigma_m$ may no longer be rational polyhedral. One should view $\mathfrak S_g$ not only as a set of cones but also recording the cone $\sigma_m^\circ$ for every $\sigma_m$. Note that since $\sigma_m$ may no longer be rational polyhedral, it has no relative interior in general. Therefore in the support-infinite case, $\mathfrak S_g$ is more sophisticated than a finite complete cone complex. But we will still call $\mathfrak S_g$ a cone complex sometimes.

\begin{proposition}\label{rationalconeandfaces}
Let $g\in \hat G$. Suppose $\sigma\in \mathfrak S_g$ is a rational polyhedral cone and it appears in $\mathfrak S_{g^{<I}}$ for some $I$. Then all the faces of $\sigma$ are elements of $\mathfrak S_g$.
\end{proposition}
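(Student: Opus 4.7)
The plan is to exploit the cofinal structure of $\mathrm{Cofin}(N^+)$ together with the characterization (\ref{limitconecomplex}) of cones in $\mathfrak{S}_g$ as intersections of relative interiors of cones in the finite complexes $\mathfrak{S}_{g^{<I}}$. Each of these finite complexes satisfies the cone complex axioms, so faces of any cell are again cells. Thus, if I can show that $\sigma$ itself appears as a cell in $\mathfrak{S}_{g^{<I}}$ for every sufficiently fine $I$, then each face $\tau$ of $\sigma$ will automatically appear in those same finite complexes, and the projective-limit description should then let me recover $\tau\in \mathfrak{S}_g$.

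The main technical step --- and the principal obstacle --- is to upgrade the hypothesis ``$\sigma$ appears in one $\mathfrak{S}_{g^{<I_0}}$'' to the stronger statement that $\sigma$ appears in $\mathfrak{S}_{g^{<I}}$ for every cofinite $I \subset I_0$. My plan is to fix $m\in \sigma^\circ$ and argue as follows. Since $\mathfrak{S}_{g^{<I}}$ refines $\mathfrak{S}_{g^{<I_0}}$ when $I\subset I_0$, the unique cone of the refinement containing $m$ in its relative interior satisfies $\sigma_m^{<I}\subseteq \sigma_m^{<I_0}=\sigma$. On the other hand, from (\ref{limitconecomplex}) I get $\sigma^\circ=\sigma_m^\circ\subseteq (\sigma_m^{<I})^\circ$, so $(\sigma_m^{<I})^\circ$ is a relatively open subset of $\sigma$ containing the dense subset $\sigma^\circ$. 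A dimension/density comparison then forces $(\sigma_m^{<I})^\circ=\sigma^\circ$, and taking closures gives $\sigma_m^{<I}=\sigma$ as rational polyhedral cones.

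With that in hand, the conclusion is routine. Given a face $\tau$ of $\sigma$, axiom (i) of a cone complex applied to each $\mathfrak{S}_{g^{<I}}$ with $I\subset I_0$ yields $\tau\in \mathfrak{S}_{g^{<I}}$. Picking $m'\in \tau^\circ$ I then have $\sigma_{m'}^{<I}=\tau$ for every such $I$. The collection $\{I\in \mathrm{Cofin}(N^+):I\subset I_0\}$ is cofinal in $\mathrm{Cofin}(N^+)$, because for any cofinite $J$ the intersection $J\cap I_0$ is again cofinite, lies in $I_0$, and the corresponding cone refines $\sigma_{m'}^{<J}$; hence the defining intersection (\ref{limitconecomplex}) can be restricted to this cofinal subfamily, giving
\[
\sigma_{m'}^\circ \;=\; \bigcap_{I\subset I_0}(\sigma_{m'}^{<I})^\circ \;=\; \tau^\circ.
\]
Therefore $\sigma_{m'}=\tau$ is a cone of $\mathfrak{S}_g$, as desired.
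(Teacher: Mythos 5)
Your proof is correct and follows essentially the same line as the paper's, which also asserts that $\sigma$ (and hence each of its faces) persists in $\mathfrak S_{g^{<J}}$ for all cofinite $J\subset I_0$ and then passes to the projective limit; you simply spell out that assertion and the cofinality step in more detail. One small imprecision: the equality $\sigma^\circ=\sigma_m^\circ$ does not follow from (\ref{limitconecomplex}) alone — it also uses that $\sigma=\sigma_m=\overline{\sigma_m^\circ}$ because $\sigma\in\mathfrak S_g$, together with the convexity fact that a convex set and its closure have the same relative interior; alternatively one can skip that equality entirely and just take closures in $\sigma_m^\circ\subseteq(\sigma_m^{<I})^\circ\subseteq\sigma$ to get $\sigma_m^{<I}=\sigma$ directly.
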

\begin{proof}
	By \cref{minimalsupport}, $\mathfrak S_{g^{<I}}$ is a cone complex so the faces of $\sigma$ are cones in $\mathfrak S_{g^{<I}}$. Since $\sigma$ is in $ \mathfrak S_g$, $\sigma$ belongs to $\mathfrak S_{g^{<J}}$ for any $J\subset I$ and so do its faces. Therefore the faces are also elements in the projective limit $\mathfrak S_g$.   
\end{proof}

\begin{remark}[Consistent $\mathfrak g$-SD updated]\label{scatteringdiagramnewdef}
	Let $\mathfrak g$ be an $N^+$-graded Lie algebra and $g$ be an element in the corresponding pro-unipotent group $\hat G$. The \textit{consistent} $\mathfrak g$\textit{-SD} corresponding to the group element $g\in \hat G$ now refers to the data $\mathfrak D_g = (\mathfrak S_g,\phi_g)$ consisting of the collection of cones $\mathfrak S_g$ (\ref{scatteringdecomposition}) and the function $\phi_g\colon M_\mathbb R\rightarrow \hat G$ (\ref{thefunction}) which is constant along each cone in $\mathfrak S_g^\circ$. 
\end{remark}

\subsubsection{}\label{mapsbetweensds} We set up some conventions that will be useful later. Let $f\colon \mathfrak g_1\rightarrow \mathfrak g_2$ be a homomorphism of $N^+$-graded Lie algebras. It induces a group homomorphism $F\colon \hat G_1\rightarrow \hat G_2$ and consequently a map
\[
F\colon \mathfrak g_1\text{-}\mathcal {SD}\rightarrow \mathfrak g_2\text{-}\mathcal {SD}, \quad \mathfrak D_g\mapsto \mathfrak D_{F(g)}
\]
for $g\in \hat G_1$. Clearly we have 
\[
\phi_{F(g)} = F\circ \phi_g.
\]

\subsection{Consistency revisited} In this section, we explain the consistency in terms of path-ordered products using $\mathfrak S_g$.
\subsubsection{} First we assume $S = \mathrm{Supp}(\mathfrak g)$ is finite. Then \cref{minimalsupport} applies. In particular, the cone complex $\mathfrak S_g$ is a coarsening of $\mathfrak S_S$. For each $\tau$  in $\mathfrak S_g$ of codimension at least one, there is a relatively positive maximal cell $\tau^+$ in $\mathfrak S_g$ incident to $\tau$ as described in \cref{consistent}. Similarly there is a relatively negative maximal cell $\tau^-$. Then we define $\mathfrak S_g$-paths and path-ordered products with respect to $\mathfrak S_g$ exactly the same way as for $\mathfrak S_S$ in \cref{consistent}. Then the following proposition follows directly from in \cref{thesecondbijection} the consistency of $\mathfrak D_g$ (in terms of $\mathfrak S_S$ in \cref{consistentscatteringdiagram}).

\begin{proposition}\label{consistency}
Let $\gamma$ be a $\mathfrak S_g$-path. Then we have 
\[
\mathfrak p_\gamma (\mathfrak D_g) = \pi_{\gamma(1),+}^{-1}(g)\cdot \pi_{\gamma(0),+}(g).
\]
In particular, it only depends on the end points $\gamma(0)$ and $\gamma (1)$.  
\end{proposition}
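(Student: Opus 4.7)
The plan is to deduce the formula from the $\mathfrak S_S$-version (\ref{pathorderedproductformula}) established in the proof of \cref{thesecondbijection}. The main point is that, in the finite-support setting under consideration, $\mathfrak S_g$ is a coarsening of $\mathfrak S_S$: each cone of $\mathfrak S_g$ is a union of cones of $\mathfrak S_S$, and $\phi_g$ is constant on each $\tau^\circ \in \mathfrak S_g^\circ$.

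First, I would perturb $\gamma$ slightly to a smooth curve $\gamma'$ with the same endpoints that transversally crosses every cone of $\mathfrak S_S$ it meets. Since $\phi_g$ is locally constant on $\mathfrak S_g^\circ$, a sufficiently small perturbation preserves the sequence of $\mathfrak S_g$-crossings and their signs, so $\mathfrak p_{\gamma'}(\mathfrak D_g) = \mathfrak p_\gamma(\mathfrak D_g)$ as $\mathfrak S_g$-path-ordered products. Moreover $\gamma'$ is then a $\mathfrak S_S$-path, so by (\ref{pathorderedproductformula}) its $\mathfrak S_S$-path-ordered product equals $\pi_{\gamma(1),+}^{-1}(g) \cdot \pi_{\gamma(0),+}(g)$.

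Next, I would show the $\mathfrak S_g$- and $\mathfrak S_S$-path-ordered products of $\gamma'$ agree factor-by-factor. At a generic crossing $\gamma'(t)$, let $\sigma \in \mathfrak S_S$ be the unique codimension-one cone with $\gamma'(t) \in \sigma^\circ$, and let $\tau \in \mathfrak S_g$ be the unique cone containing $\sigma^\circ$ in its relative interior. Since $\sigma \subset \tau$ span the same hyperplane, the orientations match in the sense $\sigma^\pm \subset \tau^\pm$, and $\phi_g(\sigma) = \phi_g(\tau)$ by constancy of $\phi_g$ on $\tau^\circ$. If $\tau$ also has codimension one, the $\mathfrak S_S$-crossing coincides with a $\mathfrak S_g$-crossing contributing identical factors to both products. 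If instead $\tau$ has codimension zero, there is no $\mathfrak S_g$-crossing at time $t$, but $\tau^\perp = \{0\}$ forces $\mathfrak g_\tau = 0$, hence $\phi_g(\tau) = 1$, making the $\mathfrak S_S$-crossing trivial. Summing over all crossings, the two path-ordered products coincide, which yields the formula.

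The main technical subtlety is justifying that $\phi_g(\tau) \in \exp(\mathfrak g_\tau)$ for every $\tau \in \mathfrak S_g$, so that the codimension-zero case really gives the trivial factor. For any $m \in \tau^\circ$, $\phi_g(m) = \pi_m(g)$ lies in $\exp(\mathfrak g_{P_{m,0}})$; intersecting this constraint over all $m \in \tau^\circ$ gives $\phi_g(\tau) \in \exp(\mathfrak g_{\tau^\perp \cap S}) = \exp(\mathfrak g_\tau)$, since $\tau^\perp \cap S = \bigcap_{m \in \tau^\circ} P_{m,0}$. Once this is in hand, the codimension-zero case is immediate and the argument closes.
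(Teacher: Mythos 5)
Your approach—reducing to the $\mathfrak S_S$-formula (\ref{pathorderedproductformula}) from the proof of \cref{thesecondbijection}—is exactly the route the paper compresses into ``follows directly,'' and your factor-by-factor comparison for codimension-one crossings, together with the observation that $\phi_g(\tau) \in \exp(\mathfrak g_\tau)$ forces triviality in the codimension-zero case, is correct. However, the first step has a gap: the claim that ``a sufficiently small perturbation preserves the sequence of $\mathfrak S_g$-crossings and their signs'' fails when $\gamma$ has a face-crossing at a cone $\tau \in \mathfrak S_g$ of codimension at least two. Such a crossing requires $\gamma(t) \in \tau^\circ$, which lies in a linear subspace of codimension at least two; any $\gamma'$ meeting only codimension-one cones of $\mathfrak S_S$ necessarily misses $\tau^\circ$ and instead crosses a chain of $\mathfrak S_g$-walls going around $\tau$. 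Thus $\mathfrak p_\gamma(\mathfrak D_g)$ has the single factor $\phi_g(\tau)^{\pm 1}$ while $\mathfrak p_{\gamma'}(\mathfrak D_g)$ has a product of several wall-crossings, and equating the two is precisely the consistency you are in the middle of proving—the reasoning is circular for such $\gamma$.

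A clean patch avoids the perturbation and verifies the formula one crossing at a time. First establish the sub-lemma that $\pi_{m,+}(g)$ is constant as $m$ ranges over the relative interior of any top-dimensional cone $\mathcal C \in \mathfrak S_g$: connect $m_0, m_1 \in \mathcal C^\circ$ by a generic $\mathfrak S_S$-path lying inside $\mathcal C^\circ$ (possible by convexity); each crossing occurs at some $\rho$ with $\rho^\circ \subset \mathcal C^\circ$, where $\phi_g(\rho) = \phi_g(\mathcal C) = 1$ by your codimension-zero argument, so (\ref{pathorderedproductformula}) forces $\pi_{m_1,+}(g) = \pi_{m_0,+}(g)$. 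Now for a crossing at any $\tau \in \mathfrak S_g$ of codimension at least one, pick $m \in \tau^\circ$ lying in an $\mathfrak S_S$-cone $\sigma$ with $\dim\sigma = \dim\tau$; the $\mathfrak S_S$-version gives $\phi_g(\tau) = \phi_g(\sigma) = \pi_{n^-,+}^{-1}(g)\,\pi_{n^+,+}(g)$ for any $n^\pm \in \sigma^\pm$, and since $\sigma^\pm \subset \tau^\pm$ and $\pi_{\cdot,+}(g)$ is constant on $\tau^\pm$ by the sub-lemma, the same identity holds with $n^\pm$ replaced by arbitrary points of $\tau^\pm$. Composing these identities along $\gamma$ yields the stated formula for all $\mathfrak S_g$-paths, face-crossings included.
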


\subsubsection{} Now we do not assume that $\mathrm{Supp}(\mathfrak g)$ is finite. One wishes to define path-ordered products even though the collection $\mathfrak S_g$ may be infinite.

\begin{definition}\label{pathandpathorderedproduct}
A smooth curve $\gamma\colon [0,1]\rightarrow M_\mathbb R$ is said to be a $\mathfrak S_g$-\textit{path} if it is a $\mathfrak S_{g^{<I}}$-path (\cref{path}) for any cofinite ideal $I\subset N^+$.  
\end{definition}

\begin{lemma}
	Let $\mathfrak D_g = (\mathfrak S_g, \phi_g)$ be the consistent $\mathfrak g$-SD corresponding to $g\in \hat G$ and $\gamma$ be a $\mathfrak S_g$-path. Then for any $I\subset J$, we have
	\[
	\rho_{I,J}(\mathfrak p_{\gamma}(\mathfrak D_{g^{<I}})) = \mathfrak p_{\gamma}(\mathfrak D_{g^{<J}}).
	\]
	
\end{lemma}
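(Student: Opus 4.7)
The plan is to reduce the claim to a compatibility statement for the factorization of \cref{factorization}, using the closed-form expression for path-ordered products provided by \cref{consistency}. Since $I,J\in\mathrm{Cofin}(N^+)$, both $\mathfrak g^{<I}$ and $\mathfrak g^{<J}$ have finite support, so \cref{consistency} applies. Moreover, by the very definition of a $\mathfrak S_g$-path, $\gamma$ is also a $\mathfrak S_{g^{<I}}$-path and a $\mathfrak S_{g^{<J}}$-path, and it has only finitely many crossings with respect to each of these finite cone complexes. Hence both path-ordered products are well-defined and one has
\[
\mathfrak p_\gamma(\mathfrak D_{g^{<I}}) = \pi_{\gamma(1),+}(g^{<I})^{-1}\cdot \pi_{\gamma(0),+}(g^{<I}), \qquad \mathfrak p_\gamma(\mathfrak D_{g^{<J}}) = \pi_{\gamma(1),+}(g^{<J})^{-1}\cdot \pi_{\gamma(0),+}(g^{<J}).
\]

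The key step is then to verify that, for any $m\in M_\mathbb R$ and any $\bullet\in\{+,0,-\}$,
\[
\rho_{I,J}\bigl(\pi_{m,\bullet}(g^{<I})\bigr) = \pi_{m,\bullet}(g^{<J}).
\]
This follows because $\rho_{I,J}$ is the exponential of an $N^+$-graded Lie algebra homomorphism $\mathfrak g^{<I}\rightarrow\mathfrak g^{<J}$, and hence sends $G^{<I}_{m,\bullet}$ into $G^{<J}_{m,\bullet}$ for each $\bullet$. Applying $\rho_{I,J}$ to the unique factorization
\[
g^{<I} = \pi_{m,+}(g^{<I})\cdot \pi_{m}(g^{<I})\cdot \pi_{m,-}(g^{<I})
\]
furnished by \cref{factorization} produces a factorization of $\rho_{I,J}(g^{<I}) = g^{<J}$ of the same type, and the uniqueness clause of \cref{factorization} identifies its factors with $\pi_{m,\bullet}(g^{<J})$. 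Combining this identification with the displayed expressions above and the fact that $\rho_{I,J}$ is a group homomorphism yields the desired equality.

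The argument is essentially bookkeeping. The only conceivable obstacle would be to directly match crossings of $\gamma$ through $\mathfrak S_{g^{<I}}$ against crossings through the coarser complex $\mathfrak S_{g^{<J}}$ and group products of wall-crossings appropriately; but by routing through \cref{consistency} this combinatorial difficulty is sidestepped entirely, leaving only the grading-compatibility of $\rho_{I,J}$, which is immediate.
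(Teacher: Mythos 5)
Your proof is correct and follows essentially the same route as the paper: reduce to the closed-form expression for the path-ordered product via \cref{consistency} and then observe that $\rho_{I,J}$ commutes with $\pi_{m,\bullet}$. The only difference is that you spell out why $\rho_{I,J}$ commutes with the projections (by applying $\rho_{I,J}$ to the unique factorization of \cref{factorization} and invoking uniqueness), a step the paper leaves as an assertion.
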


\begin{proof}
	By definition, $\gamma$ is also a $\mathfrak D_{g^{<I}}$-path for any $I\in \mathrm {Cofin}(N^+)$. By \cref{consistency}, it amounts to show that
	\[
	\rho_{I,J}\left(\pi_{\gamma(1),+}^{-1}(g^{<I})\cdot \pi_{\gamma(0),+}(g^{<I})\right) = \pi_{\gamma(1),+}^{-1}(g^{<J})\cdot \pi_{\gamma(0),+}(g^{<J}).
	\]
	This follows from the fact that $\pi_{m,+}$ commutes with $\rho_{I,J}$.
\end{proof}

The above lemma allows us to propose the following definition of the path-ordered product for a $\mathfrak S_g$-path $\gamma$ in general.
\begin{definition}
The \textit{path-ordered product} $\mathfrak p_\gamma (\mathfrak D_g)$ of $\gamma$ is defined to be the projective limit in $\hat G$ of the path-ordered products $\mathfrak p_{\gamma}(\mathfrak D_{g^{<I}})$ for $I\in \mathrm{Cofin}(N^+)$, i.e.
\[
\mathfrak p_\gamma (\mathfrak D_g)\colon = \lim_{\underset{I}{\longleftarrow}}\mathfrak p_{\gamma}(\mathfrak D_{g^{<I}}).
\]
\end{definition}

The consistency of $\mathfrak p_\gamma (\mathfrak D_g)$ follows directly from the definition. The following is the support-infinite version of \cref{consistency}.
\begin{proposition}
	For any $\mathfrak D_g$-path $\gamma$, we have
	\[
	\mathfrak p_\gamma (\mathfrak D_g) = \pi_{\gamma(1),+}^{-1}(g)\cdot \pi_{\gamma(0),+}(g).
	\]
	In particular, it only depends on the end points $\gamma(0)$ and $\gamma (1)$.
\end{proposition}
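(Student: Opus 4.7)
The plan is to reduce to the finite-support case already handled by \cref{consistency} and then pass to the projective limit, using the preceding lemma as the bridge. Concretely, I would start from the definition
\[
\mathfrak p_\gamma(\mathfrak D_g) = \lim_{\underset{I}{\longleftarrow}} \mathfrak p_\gamma(\mathfrak D_{g^{<I}}),
\]
and apply \cref{consistency} to each finite-support scattering diagram $\mathfrak D_{g^{<I}}$, obtaining
\[
\mathfrak p_\gamma(\mathfrak D_{g^{<I}}) = \pi_{\gamma(1),+}^{-1}(g^{<I}) \cdot \pi_{\gamma(0),+}(g^{<I}).
\]

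Next I would observe that, by the argument of the preceding lemma, $\pi_{m,+}$ is compatible with the quotient maps $\rho_{I,J}$, so the assignment $I \mapsto \pi_{m,+}(g^{<I})$ is a compatible family in the inverse system defining $\hat G$. In other words, $\{\pi_{\gamma(0),+}(g^{<I})\}_I$ and $\{\pi_{\gamma(1),+}(g^{<I})\}_I$ assemble into the elements $\pi_{\gamma(0),+}(g), \pi_{\gamma(1),+}(g) \in \hat G$ coming from the factorization of \cref{factorization} applied to the pro-unipotent group $\hat G$. Since taking inverses and products in $\hat G$ is continuous with respect to the projective limit topology (being the limit of the corresponding group operations in $G^{<I}$), we can pass to the limit termwise and obtain
\[
\mathfrak p_\gamma(\mathfrak D_g) = \pi_{\gamma(1),+}^{-1}(g) \cdot \pi_{\gamma(0),+}(g),
\]
as desired. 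The final assertion, that $\mathfrak p_\gamma(\mathfrak D_g)$ depends only on $\gamma(0)$ and $\gamma(1)$, is then immediate from the right-hand side.

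There is no real obstacle here: the only thing worth double-checking is the compatibility of the projections $\pi_{m,\pm}$ with the inverse system, which is exactly the content of the preceding lemma, together with the elementary fact that multiplication and inversion in the inverse limit $\hat G = \varprojlim G^{<I}$ are the limits of the corresponding operations at each finite stage. Thus the statement reduces, cleanly and without extra combinatorics, to the finite-support version \cref{consistency}.
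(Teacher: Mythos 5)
Your proof is correct and takes exactly the approach the paper intends (the paper itself omits an explicit proof, treating the proposition as an immediate consequence of the definition of $\mathfrak p_\gamma(\mathfrak D_g)$ as the projective limit and of Proposition \ref{consistency}, with compatibility of $\pi_{m,\pm}$ with $\rho_{I,J}$ already established in the preceding lemma). Your argument makes this reduction explicit, and there is nothing to add.
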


\section{Review of cluster and stability scattering diagrams}\label{cluster}
In this section, we first review the cluster scattering diagrams defined in \cite{gross2018canonical} and their quantum analogs. Then we review the Hall algebra scattering diagrams and the stability scattering diagrams defined in \cite{bridgeland2016scattering}. 

\subsection{Initial data}\label{indata}

\iffalse
Note a $\mathfrak g$-scattering diagram is determined by the associated map $\phi\colon M_\mathbb R\rightarrow \hat G$ and this map is constant on any positive ray. Define $p^*\colon N\rightarrow M$ by $n\mapsto \{n,\cdot\}$. Consider the following set of \textit{incoming rays} in $M_\mathbb R$
\[
\mathrm{IR}\colon = \{\mathbb R^+p^*(n)\mid n\in N^+\}.
\]
\begin{theorem}\cite{kontsevich2014wall}
Fix an $N^+$-graded skew-symmetric Lie algebra $\mathfrak g$. Then for any map $\varphi\colon \mathrm{IR}\rightarrow \hat G$ such that for any $\gamma\in \mathrm{IR}$
\[
\varphi(\gamma)\in \hat{G}_{\gamma^\perp\cap N^+}\subset \hat G,
\]
there exists a unique $\mathfrak g$-scattering diagram such that $\phi|_\mathrm{IR} = \varphi$.
\end{theorem}
\fi

As in \cref{setting}, we have $N$ a lattice of rank $n$, but now equipped with a $\mathbb Z$-valued skew-symmetric form
\[
\{\ ,\ \}\colon N\times N\rightarrow \mathbb Z.
\]

\begin{definition}\label{skew}
An $N$-graded Lie algebra $\mathfrak g$ is \textit{skew-symmetric} if $\{d_1,d_2\}=0$ implies $[\mathfrak g_{d_1},\mathfrak g_{d_2}]=0$ for any $d_1, d_2$ in $N$.
\end{definition}

This feature ensures that the so-called initial data is able to determine a $\mathfrak g$-scattering diagram (see \cite[section 3]{kontsevich2014wall}). We briefly review this important point of view here. Recall that a consistent $\mathfrak g$-SD is determined by an element in $\hat{G}$. We introduce another way to parametrize elements in $\hat G$ as in \cite[section 3]{kontsevich2014wall} and also in \cite[section 1.2]{gross2018canonical}. Define a linear map $p^*\colon N\rightarrow M$ by $n\mapsto \{n,\cdot\}\in M$. Note that we do not require that the map $p^*$ to be an isomorphism. Let $n$ be a primitive element in $N^+$. Consider the decomposition of $\mathfrak g$ with respect to $p^*(n)$ as in \cref{factorization}
\begin{equation}\label{decomposition}
\mathfrak g = \mathfrak g_{p^*(n),+}\oplus \mathfrak g_{p^*(n),0}\oplus\mathfrak g_{p^*(n),-}    
\end{equation}
which gives a factorization of the corresponding pro-nilpotent group $\hat G = \hat G_{p^*(n),+}\cdot  \hat G_{p^*(n),0}\cdot  \hat G_{p^*(n),-}$. The Lie subalgebra $\mathfrak g_{p^*(n),0}$ further decomposes into 
\[
\mathfrak g_{p^*(n),0} = \mathfrak g_{n}^{||}\oplus \mathfrak g_{p^*(n),0}^{\perp}
\]
where $\mathfrak g_{n}^{||} \colon = \mathfrak g_{\mathbb Z^+n}$. Note that the Lie subalgebra $\mathfrak g_{n}^{||}$ is central in $\mathfrak g_{p^*(n),0}$ and $\mathfrak g_{p^*(n),0}^\perp$ is an ideal of $\mathfrak g_{p^*(n),0}$. This gives a group homomorphism
\[
r_n\colon \hat{G}_{p^*(n),0}\rightarrow \hat{G}_{n}^{||}.
\]
Given an element $g \in \hat G$, for each primitive $n \in N^+$, we define $$\psi_n(g) \colon = r_n(g_{p^*(n),0})\in \hat{G}_n^{||}.$$ This defines a map (of sets)
\begin{equation}\label{initialdata}
\psi \colon \hat G\rightarrow \prod_{\mathrm{primitive}\ n\in N^+}\hat{G}_n^{||} = \prod_{d\in N^+} \mathfrak g_d,\quad \psi (g) = (\psi_n(g))_{\mathrm{primitive}\ n\in N^+}.   
\end{equation}

\begin{proposition}[\cite{kontsevich2014wall}, proposition 3.3.2]
The map $\psi$ is a bijection of sets.   
\end{proposition}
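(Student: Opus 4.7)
The plan is to reduce to the case of finite $S := \mathrm{Supp}(\mathfrak g)$ via the projective limit \eqref{socfi}, and then induct along a chain of central abelian extensions of finite-support quotients. Concretely, fix a $\mathbb Q$-linear functional $\ell$ on $N\otimes\mathbb Q$ strictly positive on $N^+$ (e.g.\ $\ell(s_i)=1$ for all $i$), and list $S = \{d_1,\ldots,d_m\}$ in order of non-decreasing $\ell$-value, breaking ties arbitrarily. Setting $I_k := \{d_{k+1},\ldots,d_m\}\in\mathrm{Cofin}(N^+)$, the graded piece $\mathfrak g_{d_k}$ is a central abelian ideal of $\mathfrak g^{<I_k}$: any bracket $[\mathfrak g_{d_k},\mathfrak g_e]\subset \mathfrak g_{d_k+e}$ vanishes in $\mathfrak g^{<I_k}$, because $\ell(d_k+e) > \ell(d_k)$ forces $d_k+e$ strictly after $d_k$ in the list, and hence into $I_k$.

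I then induct on $k$ to show that the induced $\psi^{<I_k}\colon G^{<I_k}\to \prod_{\text{prim}\, n}(G^{<I_k})_n^{||}$ is a bijection; the case $k=0$ is trivial. The inductive step fits into the commutative diagram of set maps with short exact rows
\[
\begin{tikzcd}
1 \arrow[r] & \mathfrak g_{d_k} \arrow[r] \arrow[d,"\psi_0"] & G^{<I_k} \arrow[r] \arrow[d,"\psi^{<I_k}"] & G^{<I_{k-1}} \arrow[r] \arrow[d,"\psi^{<I_{k-1}}"] & 1 \\
1 \arrow[r] & \mathfrak g_{d_k} \arrow[r] & \prod_{n}(G^{<I_k})_n^{||} \arrow[r] & \prod_{n}(G^{<I_{k-1}})_n^{||} \arrow[r] & 1
\end{tikzcd}
\]
where the products are over primitive $n$ in the relevant supports. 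Writing $d_k = l_k n_k$ with $n_k$ primitive, the kernel of the lower horizontal map is the $d_k$-component sitting inside the $n_k$-factor. Commutativity of the right square follows from naturality of \cref{factorization} under the grading-preserving quotient $G^{<I_k}\twoheadrightarrow G^{<I_{k-1}}$ and of the projection $r_n$. The crucial point is that $\psi_0$ is the identity: the skew-symmetry assumption gives $\{n_k,d_k\}=l_k\{n_k,n_k\}=0$, so for $x\in\mathfrak g_{d_k}$ the element $\exp(x)$ already lies in $\hat G_{p^*(n_k),0}\cap \hat G_{n_k}^{||}$, forcing $\psi_{n_k}(\exp x)=\exp x$; while for any other primitive $n$, one has $x\notin \mathfrak g_n^{||}$, so $r_n$ annihilates it and $\psi_n(\exp x)=1$.

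Since $\mathfrak g_{d_k}$ is central in $G^{<I_k}$, the map $\psi^{<I_k}$ is equivariant under multiplication by $\exp(\mathfrak g_{d_k})$ — one commutes $\exp(x)$ past the factorization $g=g_{+}g_0 g_{-}$ and invokes uniqueness in \cref{factorization} to see that only the $0$-factor absorbs the shift. A direct torsor-style diagram chase then upgrades the bijectivity of $\psi_0$ and $\psi^{<I_{k-1}}$ to that of $\psi^{<I_k}$. Finally, the bijections $\psi^{<I}$ are compatible with the structure maps $\rho_{I,J}$ by construction, so taking the inverse limit over $\mathrm{Cofin}(N^+)$ yields the bijection $\psi$ on $\hat G$.

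The main obstacle I anticipate is verifying commutativity of the right square cleanly, i.e.\ that the $(+,0,-)$-decomposition of $g$ with respect to $p^*(n)$ descends along the quotient $\mathfrak g^{<I_k}\twoheadrightarrow \mathfrak g^{<I_{k-1}}$ for every primitive $n$ and interacts correctly with $r_n$. Since the sign of $\{n,d_k\}$ varies with $n$, one has to treat the cases $\{n,d_k\}>0$, $=0$, $<0$ separately; in each, the uniqueness of \cref{factorization} together with the grading-preservation of the quotient pins down the image unambiguously, but the bookkeeping — especially distinguishing the $n=n_k$ and $n\ne n_k$ behaviour of $r_n$ — is the main thing to get right.
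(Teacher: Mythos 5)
Your argument is correct, and since the paper simply cites \cite[Proposition 3.3.2]{kontsevich2014wall} without reproducing a proof, there is nothing in the paper to compare against; the central-filtration induction you use, with the observation that skew-symmetry forces $\psi_0=\mathrm{id}$ on each graded piece, is the standard route and is essentially the same argument the paper uses for \cref{factorization}. Two small points of hygiene. First, the set $I_k := \{d_{k+1},\ldots,d_m\}$ as written is a finite subset of $S$, not a cofinite ideal of $N^+$; you should instead take a filtration $N^+ = I_0 \supsetneq I_1 \supsetneq \cdots$ of cofinite ideals with $|I_{i-1}\setminus I_i|=1$ as in the proof of \cref{factorization} (one can build it from your $\ell$-ordering by throwing in the elements of $N^+\setminus S$ at the right moments), and note that centrality of $\mathfrak g_{d_k}$ in $\mathfrak g^{<I_k}$ is then automatic: $n_k\in I_{k-1}$ and the ideal property of $I_{k-1}$ give $n_k+e\in I_{k-1}\setminus\{n_k\}=I_k$ for every $e\in N^+$, so $\ell$ is not strictly needed for this step, only for organizing the list. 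Second, in the paragraph establishing $\psi_0=\mathrm{id}$, the phrase ``so $r_n$ annihilates it'' conflates two mechanisms: when $\{n,d_k\}\ne 0$ the element $\exp(x)$ already lives in $\hat G_{p^*(n),\pm}$, so it is the factorization of \cref{factorization} (not $r_n$) that returns $1$ as the middle term; only when $\{n,d_k\}=0$ and $n\ne n_k$ is $r_n$ the operator doing the killing, because $x\in\mathfrak g_{p^*(n),0}^\perp$. You clearly know this (the final paragraph flags exactly this trichotomy), but the sentence as stated reads as a single-case argument. With those two points tightened, the torsor diagram chase goes through exactly as you say, and the passage to the inverse limit over $\mathrm{Cofin}(N^+)$ is unproblematic because $\psi^{<I}$ is defined intrinsically and the right square of your diagram already shows the needed compatibility with $\rho_{I,J}$.
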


This proposition provides another way (other than the bijection $\hat G = \prod_{d\in N^+} \mathfrak g_d$ in \ref{socfi}) to express an element $g$ in $\hat G$ by its components in each $\mathfrak g_d$ under $\psi$. This expression of $g$ is called the \textit{initial data} of the corresponding consistent $\mathfrak g$-SD. The fact that a consistent $\mathfrak g$-SD is determined by its initial data is sometimes known as ``a consistent scattering diagram is determined by its incoming walls'', e.g. see \cite{gross2018canonical}.

\subsection{Cluster scattering diagrams and quantization}\label{clustersdsetting}
\subsubsection{} Consider the $N$-graded Poisson algebra (usually called the torus Lie algebra) defined as follows:
\[T\colon = \mathbb Q[N] = \bigoplus _{d\in N} \mathbb Q \cdot x^d\quad \mathrm{and}\quad [x^{d_1},x^{d_2}]=\{d_1,d_2\}x^{d_1+d_2},\ d_1,d_2\in N.
\]
Let $\mathbf{s}$ be a basis of $N$. The subsemigroup $N^+_{\mathbf{s}}$ is defined as in \cref{setting}. Define the Lie algebra \[\mathfrak g_\mathbf{s}\colon = T_{N^+_\mathbf{s}} \subset T.\] It is clearly $N^+_\mathbf{s}$-graded and skewsymmetric in the sense of \cref{skew}. Now take $\mathfrak g$ to be $\mathfrak g_\mathbf{s}$. \textit{The dilogarithm series} $\mathrm{Li}_2(x)\in \mathbb Q[[x]]$ is defined by
\begin{equation}\label{dilogarithm}
    \mathrm{Li}_2(x)\colon = \sum_{k=1}^\infty \frac{x^{k}}{k^2}.
\end{equation}
We define the following element in $\hat G$ under the identification $\psi$ in  (\ref{initialdata})
\begin{equation}\label{initialelement}
g_\mathbf{s}\colon = (g_{n})_{n\ \mathrm{primitive}}\in \prod_{n\ \mathrm{primitive}} \hat G_n^{||}    
\end{equation}
where
\[
g_{s_i} = \exp\left(-\mathrm{Li}_2(-x^{s_i})\right)  = \exp\left(\sum_{k=1}^\infty \frac{(-1)^{k-1}x^{ks_i}}{k^2}\right) \in \hat{G}_{s_i}^{||}
\]
and $g_n= \mathrm{id}$ for any other primitive $n$. 

\begin{definition}[\cite{gross2018canonical}]\label{clustersd}
The \textit{cluster scattering diagram} $\mathfrak D_\mathbf{s} = (\mathfrak S_\mathbf s, \phi_\mathbf s)$ is defined to be 
\[
\mathfrak D_{\mathfrak g_\mathbf s}= (\mathfrak S_{g_\mathbf s}, \phi_{g_\mathbf s})
\]
i.e. the consistent $\mathfrak g_\mathbf{s}$-SD corresponding to the element $g_\mathbf{s}\in \hat {G}$ (\cref{scatteringdiagramnewdef}).
\end{definition}

\subsubsection{} The $N$-graded poisson algebra $T$ has a quantization as follows. Let $T_q\colon  = \mathbb Q(q^{\frac{1}{2}})[N]$. With the twisted product $*$ defined by
\begin{equation}\label{twistedprod}
x^{d_1} * x^{d_2} = q^{\frac{1}{2}\{d_1,d_2\}} x^{d_1+d_2},    
\end{equation}
the $\mathbb Q(q^\frac{1}{2})$-vector space $T_q$ becomes an associative algebra over $\mathbb Q(q^\frac{1}{2})$. Let $R$ be any $\mathbb Q[q^{\pm\frac{1}{2}}]$-subalgebra of $\mathbb Q(q^\frac{1}{2})$ such that both $q^\frac{1}{2}-1$ and $q^\frac{1}{2}+1$ are non-invertible. This means we can evaluate any element in $R$ at $q^\frac{1}{2} = 1$ or $-1$. Take the subalgebra
\[
R[N]\subset T_q = \mathbb Q(q^\frac{1}{2})[N].
\]
In fact $R[N]$ is an $N$-graded Poisson algebra under the bracket
\[
\{x^{d_1},x^{d_2}\} \colon = \frac{[x^{d_1},x^{d_2}]}{q^{\frac{1}{2}}-q^{\frac{-1}{2}}} \in R[N].
\]
There is an $N$-graded Poisson algebra homomorphism from $R[N]$ to $T$ by evaluating coefficients in $R$ at $q^{\frac{1}{2}}=1$ (getting another Poisson algebra structure on $T$ at $q^{\frac{1}{2}}=-1$).

Consider the normalized basis of $T_q$ given by \[\hat x^d = \frac{x^d}{q^{\frac{1}{2}}-q^{\frac{-1}{2}}}\] and the $R$-submodule $R[\hat x^d,d\in N]$ in $T_q$ generated by this basis. One can check this $R$-submodule is a skewsymmetric $N$-graded Lie algebra under the commutator bracket of $T_q$. In fact, the $R$-linear map
\[
R[\hat x^d,d\in N]\rightarrow R[N],\ \hat x^d\mapsto x^d
\]
is a Lie algebra homomorphism. To summarize we have the following commutative diagram (of Lie algebras).
\[
\begin{tikzcd}
R[N] \arrow[r,"q^{1/2}\mapsto1"]&T = \mathbb Q[N]\\
R[\hat x^d,d\in N] \arrow[u, "\hat x^d\mapsto x^d"]\arrow[ur, swap, "\hat x^d\mapsto x^d\ q^{1/2}\mapsto 1"]
\end{tikzcd}
\]

Now let $R$ be the algebra
\begin{equation}\label{regularalgebra}
R = \mathbb Q_\mathrm{reg}(q^\frac{1}{2})\colon = \mathbb Q[q^{\pm\frac{1}{2}}][(1+q+\cdots+q^{k})^{-1},k\geq 1]    
\end{equation}
and take 
\begin{equation}\label{quantumtorus}
\mathfrak g^q_{\mathbf{s}}\colon = R[\hat x^d, d\in N_\mathbf{s}^+].
\end{equation}
The later is an $N_\mathbf s^+$-graded Lie algebra with commutator bracket. We then have a group homomorphism 
\[
E \colon \exp (\hat{\mathfrak g}_{\mathbf s}^q)\rightarrow \exp (\hat{\mathfrak{g}}_\mathbf{s})
\]
induced by the Lie algebra homomorphism (evaluation at $q^{1/2} = 1$)

\begin{equation}\label{evaluationat1}
e = e_{q^{1/2}=1}\colon \mathfrak g_\mathbf s^q \rightarrow \mathfrak g_\mathbf s,\quad \hat x^d\mapsto x^d,\quad q^{1/2}\mapsto 1. 
\end{equation}

The group elements $g_i$'s in (\ref{dilogarithm}) can be lifted to $\hat {G} = \exp (\hat{\mathfrak g}_{\mathbf s}^q)$. Define 
\begin{equation}\label{quantumdilogarithm}
    g_{i}^q \colon = \exp\left(-\mathrm{Li}_2^q(-x^{s_i})\right)\colon = \exp \left( \sum_{k=1}^\infty \frac{(-1)^{k-1}\hat x^{ks_i}}{k[k]_q}\right)\in \hat{G}^{||}_{s_i}
\end{equation}
where $[k]_q = q^{\frac{k-1}{2}}+\cdots+ q^{-\frac{k-1}{2}}.$ Clearly we have $E(g_i^q) = g_i$ since $$e(-\mathrm{Li}_2^q(-x^{s_i})) = -\mathrm{Li}_2(-x^{s_i}).$$ Similarly, we define $g^q_{\mathbf{s}}$ to be the element in $\hat G$ such that under the identification $\psi$ in (\ref{initialdata}), it has components $\mathrm{id}\in \hat G^{||}_n$ for any primitive $n$ not in $\mathbf{s}$ and $g^q_{i}\in \hat{G}_{s_i}^{||}$ for each $s_i$. We also have
\[
E(g_\mathbf s^q) = g_\mathbf s.
\]

\begin{definition}\label{qcluster}
The \textit{quantum cluster scattering diagram} $\mathfrak D^q_\mathbf{s} = (\mathfrak S^q_\mathbf s, \phi^q_\mathbf s)$ is defined to be 
\[
\mathfrak D_{g_\mathbf s^q} = (\mathfrak S_{g_\mathbf s^q}, \phi_{g_\mathbf s^q})
\]
i.e. the consistent $\mathfrak g^q_\mathbf{s}$-SD corresponding to the element $g^q_\mathbf{s}\in \hat {G}$ (\cref{scatteringdiagramnewdef}).
\end{definition}

Clearly we have $\mathfrak D_{g_\mathbf s^q}$ is a quantization of $\mathfrak D_{g_\mathbf s}$, i.e.
\[
\mathfrak D_{g_\mathbf s} = E(\mathfrak D_{g_\mathbf s^q})
\]
in the language of \cref{mapsbetweensds}.

\begin{remark}\label{associativeembedding}
It is convenient to realize $g_{i}^q$ and $g_\mathbf s^q$ as elements in $\mathbb Q_\mathrm {}(q^{1/2})[[N_\mathbf s^{\oplus}]]$ where the completion is with respect to the grading. In fact, we have an embedding
\[
\exp  \colon \hat{\mathfrak g}^q_{\mathbf{s}} \hookrightarrow 
\mathbb Q(q^{1/2})[[N_\mathbf s^{\oplus}]]
\]
by taking exponentials using the twisted product $*$ in $\mathbb Q(q^{1/2})[[N_\mathbf s^{\oplus}]]$. One can show (using $q$-binomial theorem) that
\begin{equation}\label{expofquantumdilogarithm}
    g^q_{i} \colon = \exp\left(-\mathrm{Li}_2^q(-x^{s_i})\right)  =  \sum_{k=0}^\infty \frac{q^{k^2/2}x^{ks_i}}{[\mathrm{GL}_k]_q}
\end{equation}
where the later is called the \textit{quantum dilogarithm series} $\mathbb E(q^{1/2},x^{s_i})$ and $[\mathrm{GL}_k]_q$ counts the number of points of $\mathrm{GL}_k$ in $\mathbb F_q$.
\end{remark}

\subsection{Hall algebra and stability scattering diagrams}

In this section, we introduce the Hall algebra scattering diagram and the (quantum) stability scattering diagram for a quiver (seed) with potential following \cite{bridgeland2016scattering}.

\subsubsection{}\label{motivichallalgebra}  Let $\mathbf{s}=\{s_1, \dots, s_n\}$ be a basis of $N$ as before. There is a skewsymmetric integer matrix $B(\mathbf{s})$ defined by
\[
B(\mathbf{s})_{ij}= \{s_i,s_j\}
\]
which serves as the \textit{adjacency matrix} of a 2-acyclic quiver $Q=Q(\mathbf{s})$. Denote by $\widehat{\mathbb CQ}$ the completion of the path algebra $\mathbb CQ$ with respect to the ideal generated by paths of positive length. Let $\mathbf{w}\in \widehat{\mathbb CQ}$ be a potential, i.e. a formal sum of cyclic paths. We call the pair $\mathbf{(s,w)}$ a \textit{seed with potential}. The \textit{completed Jacobian algebra} of $(\mathbf{s,w})$ is
\[
J\colon =J(\mathbf s, \mathbf w)\colon = \widehat{\mathbb CQ}/\overline{\langle\partial \mathbf w\rangle}
\]
where $\partial \mathbf{w}\colon = \{\partial_\alpha \mathbf{w}\mid \alpha\in Q_1\}$ is the set of all cyclic derivatives of $\mathbf{w}$ and $\overline{\langle\partial \mathbf w\rangle}$ denotes the closure of the ideal of $\widehat{\mathbb CQ}$ generated by $\partial \mathbf{w}$; see \cite[definition 3.1]{derksen2008quivers}. We denote the category of finite dimensional left $J$-modules by $J$-mod.

Let $\mathfrak M(\mathbf{s,w})$ be the moduli stack of objects in $J$-mod; see \cite[section 6]{nagao2013donaldson} for a construction of this moduli stack. Note that the set of dimension vectors of $J$-mod is canonically identified with $N_\mathbf{s}^\oplus \colon = N_\mathbf{s}^+\cup \{0\}$. We have the following decomposition
\[
\mathfrak M\colon = \mathfrak M(\mathbf{s,w}) = \coprod_{d\in N_\mathbf{s}^\oplus} \mathfrak M_d(\mathbf{s,w}).
\]
where $\mathfrak M_d\colon = \mathfrak M_d(\mathbf{s,w})$ is the moduli stack of $J$-modules of fixed dimension vector $d$. When $d = 0$, the moduli stack $\mathfrak M_0$ is isomorphic to $\mathrm{Spec}(\mathbb C)$. 

We refer to \cite[section 5]{bridgeland2016scattering} and \cite[section 7]{nagao2013donaldson} for details of the following definitions. Let $K(\mathrm{St}/ \mathfrak M)$ be the \textit{relative Grothendieck group of stacks} over $\mathfrak M$. It is naturally a module over $K(\mathrm{St}/\mathbb C)$, the \textit{Grothendieck ring of stacks} over $\mathbb C$. Furthermore, one can define a convolution type product $*$ (\cite[theorem 4.1]{joyce2007configurations}) on $K(\mathrm{St}/\mathfrak M)$ so that it becomes an associative $K(\mathrm{St}/\mathbb C)$-algebra graded by $N_\mathbf{s}^\oplus$; see also \cite[5.4]{bridgeland2016scattering}.

There exists a ring homomorphism
\[
\Upsilon \colon K(\mathrm{St}/\mathbb C)\rightarrow \mathbb Q(q^{\frac{1}{2}})
\]
which takes a smooth projective complex variety $X$ to its Poincaré polynomial
\[
\sum_{k=1}^{2\dim_\mathbb C X}\dim_{\mathbb C}H_k(X_\mathrm{an},\mathbb C)(-q^{1/2})^k.
\]
We define the following $\mathbb Q(q^{\frac{1}{2}})$-vector space using $\Upsilon$:
\[
H(\mathbf{s,w})\colon = K(\mathrm{St}/\mathfrak M)\otimes_{K(\mathrm{St}/\mathbb C)}\mathbb Q (q^{\frac{1}{2}}).
\]
So $H(\mathbf{s,w})$ is then an associative algebra over $\mathbb Q(q^{\frac{1}{2}})$. This is \textit{the motivic Hall algebra} of the abelian category $J$-mod. It is $N^\oplus_\mathbf{s}$-graded. For each $d$, the subspace $H(\mathbf{s,w})_d$ is spanned by the elements of the form $[X\overset{f}{\rightarrow} \mathfrak M]$ which factor through the inclusion $\mathfrak M_d\hookrightarrow \mathfrak M$. Such an element will be simply denoted by $[X\overset{f}{\rightarrow} \mathfrak M_d]$. The unit of $H(\mathbf{s,w})$ is given by $[\mathfrak M_0\overset{\mathrm{id}}{\rightarrow} \mathfrak M_0]$. 

Taking the positively graded part, we have an $N^+_\mathbf{s}$-graded Lie algebra under the commutator bracket 
\[
{\mathfrak g}^{\mathrm{Hall}}_{\mathbf{s,w}}\colon=\bigoplus_{d\in N^+_\mathbf{s}}H(\mathbf{s,w})_d\subset H(\mathbf{s,w}).
\]
By taking the exponentials in the completion $\hat H(\mathbf{s,w})$, we have an embedding of the corresponding pro-unipotent group in the the Hall algebra
\[
\exp(\hat {{\mathfrak g}}^{\mathrm{Hall}}_{\mathbf{s,w}}) \cong  1+\hat{{\mathfrak g}}^{\mathrm{Hall}}_{\mathbf{s,w}}\subset \hat H(\mathbf{s,w}).
\]

\subsubsection{} The identity map from the moduli stack $\mathfrak M$ to itself defines an element in $\hat{H}(\mathbf{s,w})$ 
\[
1_\mathfrak M\colon = [\mathfrak M\rightarrow \mathfrak M]\in  1 + \hat {{\mathfrak g}}^{\mathrm{Hall}}_{\mathbf{s,w}}.
\]
It is in the group $\exp (\hat{{\mathfrak g}}^{\mathrm{Hall}}_{\mathbf{s,w}})$.

\begin{definition}
 The \textit{Hall algebra scattering diagram} $\mathfrak D_{\mathbf{s,w}}^{\mathrm{Hall}} = (\mathfrak S_\mathbf {s,w}^\mathrm{Hall}, \phi_\mathbf {s,w}^\mathrm{Hall})$ for $(\mathbf{s,w})$ is defined to be the consistent ${\mathfrak g}^{\mathrm{Hall}}_{\mathbf{s,w}}$-SD corresponding to the group element $1_{\mathfrak M(\mathbf{s,w})}$ (\cref{scatteringdiagramnewdef}).\end{definition}

Let $m\in M_\mathbb R$. It gives a King's stability condition on $J$-mod (\cref{semistablerep}). Let $\mathfrak M^{\text{ss}}(m)$ be the moduli of $m$-semistable $J$-modules (\cref{semistablerep}) which is an open substack of $\mathfrak M$. This gives us an element in the motivic Hall algebra
\[
1^{\mathrm{ss}}(m)\colon =[\mathfrak M^{\mathrm{ss}}(m)\hookrightarrow \mathfrak M]\in  \exp( \hat{{\mathfrak g}}^{\mathrm{Hall}}_{\mathbf{s,w}}).
\]

The following theorem is an incarnation of the existence and uniqueness of the Harder-Narasimhan filtration of objects in $J$-mod with respect to a stability condition.

\begin{theorem}[{\cite[theorem 6.5]{bridgeland2016scattering}}]\label{hallalgebrafactorization}
For the Hall algebra scattering diagram $\mathfrak D^{\mathrm{Hall}}_{\mathbf{s,w}}$ and any $m\in M_\mathbb R$, we have
\[
\phi_{\mathbf{s,w}}^\mathrm{Hall}(m) = 1^{\mathrm{ss}}(m)\in \exp( \hat{{\mathfrak g}}^{\mathrm{Hall}}_{\mathbf{s,w}}). 
\]
\end{theorem}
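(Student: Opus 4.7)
The plan is to exhibit a factorization of the group element $1_\mathfrak M \in \exp(\hat{\mathfrak g}^\mathrm{Hall}_{\mathbf{s,w}})$ coming from the Harder--Narasimhan filtration with respect to $m$, and then invoke the uniqueness of the $(m,+)/(m,0)/(m,-)$ decomposition (\cref{factorization}) to identify the middle factor with $1^\mathrm{ss}(m)$. By definition of $\mathfrak D^\mathrm{Hall}_{\mathbf{s,w}}$ as the consistent SD associated to $1_\mathfrak M$, we have $\phi^\mathrm{Hall}_{\mathbf{s,w}}(m) = \pi_m(1_\mathfrak M)$, which by \cref{factorization} is precisely the middle factor in any such decomposition. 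So the whole task reduces to producing the right factorization.

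First, I would recall the HN theory for $J$-mod with respect to the slope $\mu_m(E) = m(\underline\dim E)/|\underline\dim E|$ (with $|\cdot|$ the total dimension). Every $E$ admits a unique two-step filtration $0 \subset E^+ \subset E^0 \subset E$ where $E^+$ has all HN factors of strictly positive $m$-slope, $E^0/E^+$ is $m$-semistable of slope zero, and $E/E^0$ has all HN factors of strictly negative slope. Writing $\mathcal T_\pm$ for the full subcategories of modules whose HN factors all have strictly positive (resp. negative) $m$-slope, both categories are closed under subobjects, quotients and extensions, so the substacks $\mathfrak M(\mathcal T_\pm) \subset \mathfrak M$ give well-defined elements $1^{>0}_m$ and $1^{<0}_m$ of $\exp(\hat{\mathfrak g}^\mathrm{Hall}_{\mathbf{s,w}})$. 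The existence and uniqueness of the three-step filtration translate (via the defining convolution of $\ast$) to the Hall algebra identity
\[
1_\mathfrak M \;=\; 1^{>0}_m \,\ast\, 1^\mathrm{ss}(m) \,\ast\, 1^{<0}_m.
\]

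Second, I would check that each factor is supported in the correct graded piece. The support of $1^{>0}_m$ (resp. $1^{<0}_m$) consists of dimension vectors $d$ with $m(d)>0$ (resp. $<0$), i.e.\ $d \in P_{m,+}$ (resp. $P_{m,-}$), so $1^{>0}_m \in \exp(\hat{\mathfrak g}^\mathrm{Hall}_{m,+})$ and $1^{<0}_m \in \exp(\hat{\mathfrak g}^\mathrm{Hall}_{m,-})$. Any $m$-semistable object has $m(\underline\dim)=0$, so $1^\mathrm{ss}(m)\in \exp(\hat{\mathfrak g}^\mathrm{Hall}_{m,0})$. Then \cref{factorization} applied to $g=1_\mathfrak M$ forces $\pi_m(1_\mathfrak M) = 1^\mathrm{ss}(m)$, proving the theorem.

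The main obstacle is giving the identity $1_\mathfrak M = 1^{>0}_m \ast 1^\mathrm{ss}(m) \ast 1^{<0}_m$ a rigorous meaning, since the convolution of three infinite sums of stack classes is a priori ill-defined. This must be interpreted in the inverse system of truncations $\hat H(\mathbf{s,w})/\hat{\mathfrak g}^\mathrm{Hall}_I$ indexed by cofinite ideals $I \in \mathrm{Cofin}(N^+_\mathbf s)$. In each truncation only finitely many dimension vectors, hence finitely many slopes and finitely many possible HN filtration lengths, contribute, so the identity reduces to a finite geometric statement over $\mathfrak M_d$ for $d \notin I$; compatibility across the inverse system is automatic from the functoriality of the characteristic functions of the HN strata. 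Once this bookkeeping is in place, the rest is a direct application of the uniqueness of factorization in \cref{factorization}.
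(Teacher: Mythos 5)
The paper states this theorem as a citation of Bridgeland's theorem 6.5 together with a brief remark that the generic-wall statement there extends to all $m\in M_\mathbb R$; it gives no independent proof. Your argument reproduces Bridgeland's Harder--Narasimhan strategy directly, handles arbitrary $m$ as the paper's remark requires, and is essentially correct: the factorization $1_{\mathfrak M} = 1^{>0}_m \ast 1^{\mathrm{ss}}(m) \ast 1^{<0}_m$ encoding the HN filtration, the check on supports of the three factors, and the appeal to the uniqueness statement in \cref{factorization} to identify the middle factor with $\pi_m(1_{\mathfrak M}) = \phi^{\mathrm{Hall}}_{\mathbf{s,w}}(m)$ together give the desired equality.

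One genuine inaccuracy: the categories $\mathcal T_+$ and $\mathcal T_-$ are \emph{not} both closed under subobjects, quotients and extensions. They form a torsion pair: $\mathcal T_+$ (all HN slopes strictly positive) is closed under quotients and extensions but not, in general, under subobjects, while $\mathcal T_-$ (all HN slopes strictly negative) is closed under subobjects and extensions but not under quotients. If both were Serre subcategories the trichotomy $\mathcal T_+,\, \mathcal A(m),\, \mathcal T_-$ would collapse. The misstatement is harmless for your proof, though, because nothing you do actually requires it: the Hall algebra elements $1^{>0}_m,\, 1^{<0}_m$ are well defined as soon as the HN strata of $\mathfrak M$ are locally closed substacks (a standard fact about HN stratifications), and the convolution identity only uses the existence and uniqueness of the three-step filtration together with closure under extensions. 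You should simply replace the Serre-closure claim with the correct torsion-pair statement; the rest of the argument goes through unchanged.
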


\begin{remark}
In \cite{bridgeland2016scattering}, the above theorem is proven for a generic $m$ on any wall. It can be extended to any $m\in M_\mathbb R$ in our definition of $\phi_\mathbf{s,w}^\mathrm{Hall}$ without any additional effort.
\end{remark}

\subsubsection{} We now review the so called integration map. It is the bridge connecting the Hall algebra scattering diagram and the stability scattering diagram to be defined later.

Define ${\mathfrak g}^{\mathrm{reg}}_{\mathbf{s,w}}$ to be the $\mathbb Q_{\mathrm{reg}}(q^\frac{1}{2})$-submodule of ${\mathfrak g}^{\mathrm{Hall}}_{\mathbf{s,w}}$ generated by the elements of the form $$(q^{1/2}-q^{-1/2})^{-1}[X\rightarrow \mathfrak M]$$ where $X$ is an algebraic variety. Define $H_\mathrm{reg}(\mathbf{s,w})$ to be the $\mathbb Q_{\mathrm{reg}}(q^\frac{1}{2})$-submodule of $H(\mathbf{s,w})$ generated by the elements of the form $[X\rightarrow \mathfrak M]$ where $X$ is an algebraic variety. Then there is a $\mathbb Q_\mathrm{reg}(q^\frac{1}{2})$-linear map
\[
\varphi \colon \mathfrak g^{\mathrm{reg}}_{\mathbf{s,w}}\rightarrow H_\mathrm{reg}(\mathbf{s,w}),\quad (q^{1/2}-q^{-1/2})^{-1}[X\rightarrow \mathfrak M]\mapsto [X\rightarrow \mathfrak M].
\]

\begin{theorem}[\cite{joyce2012theory}, \cite{bridgeland2016scattering}, {\cite[theorem 7.4]{nagao2013donaldson}}]\label{integration}
  We have the following properties regarding $\mathfrak g^{\mathrm{reg}}_{\mathbf{s,w}}$ and $H_\mathrm{reg}(\mathbf{s,w})$. 
  
  \begin{enumerate}[label=(\roman*)]
  	\item The submodule ${\mathfrak g}^{\mathrm{reg}}_{\mathbf{s,w}}$ is an $N^+_\mathbf{s}$-graded Lie subalgebra of $\mathfrak g_{\mathbf{s,w}}^\mathrm{Hall}$.
  	\item The submodule $H_\mathrm{reg}(\mathbf{s,w})$ is a subalgebra of $H(\mathbf{s,w})$. It is a Poisson algebra with the bracket
\[
\{a,b\} = (q^{1/2}-q^{-1/2})^{-1}[a,b]
\]  	
and the map $\varphi$ is a Lie algebra homomorphism.  	 
  	\item There is an $N^\oplus_\mathbf{s}$-graded Poisson homomorphism 
  \[
  I\colon H_\mathrm{reg}(\mathbf{s,w})\rightarrow  \mathbb Q[N^\oplus_\mathbf{s}],\quad I\left({[X\rightarrow \mathfrak M_d]}\right)= e(X)x^d
  \]
where $e(X)$ is the Euler characteristic of $X_\mathrm{an}$.
  \end{enumerate}
\end{theorem}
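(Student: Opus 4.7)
The plan is to establish the three statements in sequence, closely following Joyce's framework for motivic Hall algebras of 3-Calabi--Yau categories as adapted to the Jacobian setting in \cite{bridgeland2016scattering, nagao2013donaldson}. The crux lies in part~(i), which is a ``no poles at $q^{1/2}=1$'' phenomenon; parts (ii) and (iii) then follow formally or by a direct Euler characteristic computation.

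For (i), given two regular generators $a=(q^{1/2}-q^{-1/2})^{-1}[X\to\mathfrak M]$ and $b=(q^{1/2}-q^{-1/2})^{-1}[Y\to\mathfrak M]$ with $X,Y$ varieties, I would expand the Hall product $a*b$ through the stack $\mathrm{Ex}$ parameterising short exact sequences $0\to A\to C\to B\to 0$ in $J$-mod, using the two natural projections $\mathrm{Ex}\to\mathfrak M\times\mathfrak M$ (taking a sequence to $(A,B)$) and $\mathrm{Ex}\to\mathfrak M$ (taking it to $C$). The resulting fibre products introduce $q$-power factors determined by the dimensions of $\mathrm{Hom}$ and $\mathrm{Ext}^1$ groups. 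The main obstacle is to prove that the antisymmetrized combination $a*b-b*a$ acquires an extra factor of $(q^{1/2}-q^{-1/2})$ so that the bracket lands in $\mathfrak g^{\mathrm{reg}}_{\mathbf{s,w}}$; this is Joyce's no-poles theorem. For Jacobian algebras one leverages the 3-Calabi--Yau symmetry (Serre duality pairing $\mathrm{Ext}^i(A,B)$ with $\mathrm{Ext}^{3-i}(B,A)$) to align the $q$-power contributions from the $A$-first and $B$-first orderings, so that the symmetric part cancels and only an antisymmetric $(q^{1/2}-q^{-1/2})$-multiple survives.

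Given (i), part (ii) is essentially formal. The bracket $\{a,b\}=(q^{1/2}-q^{-1/2})^{-1}[a,b]$ is then well defined on $H_\mathrm{reg}(\mathbf{s,w})$ by the divisibility just established; Jacobi and Leibniz are inherited from the associative product on $H(\mathbf{s,w})$. The map $\varphi$ amounts to multiplying by $(q^{1/2}-q^{-1/2})$, and a one-line check shows $\varphi([a,b]_{\mathrm{Lie}})=\{\varphi(a),\varphi(b)\}$.

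For (iii), I would define $I$ on generators by $I([X\to\mathfrak M_d])=e(X)x^d$, extend $\mathbb Q_\mathrm{reg}(q^{1/2})$-linearly via the evaluation $q^{1/2}\mapsto 1$ on coefficients, and verify multiplicativity and Poisson compatibility separately. Multiplicativity reduces to decomposing $[X\to\mathfrak M]*[Y\to\mathfrak M]$ through the extension stack and applying multiplicativity of Euler characteristics in constructible fibrations; at $q^{1/2}=1$ the twisted product~(\ref{twistedprod}) degenerates to the ordinary commutative product on $\mathbb Q[N^\oplus_\mathbf{s}]$, so all the $q$-power factors from $\mathrm{Hom}/\mathrm{Ext}^1$ contributions specialise to $1$. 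The Poisson compatibility $I\{a,b\}=\{I(a),I(b)\}$ is the first-order refinement of the same argument: the coefficient of $(q^{1/2}-q^{-1/2})$ in the twisted product picks up exactly the skew form $\{\,,\,\}$ on $N$, which matches the first-order deviation of the Hall product from commutativity as controlled by the Euler form on the 3-Calabi--Yau category $J$-mod.
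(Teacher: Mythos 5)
The paper does not prove this theorem: it is attributed wholesale to \cite{joyce2012theory}, \cite{bridgeland2016scattering}, and \cite[theorem 7.4]{nagao2013donaldson}, so there is no ``paper's own proof'' to compare against. Evaluating your sketch on its own merits, the overall structure (reduce (i)--(ii) to Joyce's regularity theory, then check (iii) by an Euler-characteristic computation) is the right shape, but there is a genuine conceptual error in where you locate the role of the Calabi--Yau structure.

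Your argument for part (i) claims that the divisibility $a*b-b*a\in(q^{1/2}-q^{-1/2})\,H_\mathrm{reg}$ is obtained by ``leveraging the $3$-Calabi--Yau symmetry'' via a Serre-duality pairing $\mathrm{Ext}^i(A,B)\leftrightarrow\mathrm{Ext}^{3-i}(B,A)$. This is not how Joyce's theorem works, and it would not even be available at the level at which the Hall product is computed: $J\text{-mod}$ is not itself a Calabi--Yau abelian category (the $3$-CY structure lives on the Ginzburg dg algebra / derived category, and $\mathrm{Ext}^{\ge 2}$ in $J\text{-mod}$ is not controlled by Serre duality in the way you describe). Joyce's theorem that $H_\mathrm{reg}$ is a subalgebra of $H(\mathbf{s,w})$ with commutative semi-classical quotient, hence (i) and (ii), is a completely general structural fact about motivic Hall algebras of finitary abelian categories; its proof uses the Zariski-local triviality of the fibrations $\mathrm{Ex}\to\mathfrak M\times\mathfrak M$ (with fibers the stacks $[\mathrm{Ext}^1(B,A)/\mathrm{Hom}(B,A)]$), stratification by the constructible functions $\dim\mathrm{Hom},\dim\mathrm{Ext}^1$, and careful bookkeeping of motivic classes, and it makes no reference to a CY pairing. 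If you try to run your Serre-duality cancellation you will not find the alignment you want, because the Hall product only sees $\mathrm{Hom}$ and $\mathrm{Ext}^1$.

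The place where the Jacobian / Calabi--Yau geometry genuinely enters is part (iii): for $I$ to be a \emph{Poisson} homomorphism (not merely $\mathbb Q$-linear), one needs the antisymmetrization of the truncated Euler form
$\langle A,B\rangle = \dim\mathrm{Hom}(A,B)-\dim\mathrm{Ext}^1(A,B)$ to agree with $\{[A],[B]\}$ on $K_0(J\text{-mod})\cong N$, and this identity is the thing that comes from the CY-3 structure of the associated triangulated category (or equivalently, from the explicit resolution of simples over the completed Jacobian algebra). You do gesture at this in your (iii), so the correct ingredient is present in your sketch---but it has been misattributed in (i). As written, part (i) of your argument would not produce a valid proof, and you would be missing the actual general mechanism that Joyce and Bridgeland use to establish (i) and (ii).
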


We thus also have a homomorphism of $N_\mathbf s ^+$-graded Lie algebras \begin{equation}\label{integrationmap}
\mathcal I= I\circ\varphi\colon \mathfrak g^{\mathrm{reg}}_{\mathbf{s,w}}\rightarrow \mathfrak g_\mathbf{s} = \mathbb Q[N_\mathbf{s}^+].	
\end{equation}

\subsubsection{}There is also a quantized version of the integration map $I$ when the potential $\mathbf w$ is \textit{polynomial}, i.e. $\mathbf w$ is in $\mathbb CQ$ but not a formal sum in $\widehat{\mathbb CQ}$. Let $\mathbb Q((q^\frac{1}{2}))$ be the field of formal Laurent series of the variable $q^{1/2}$. Recall that we can define a twisted product $*$ on $\mathbb Q((q^\frac{1}{2}))[N^\oplus_\mathbf{s}]$ as in (\ref{twistedprod}). For the definition of the \textit{quantum integration map} $I_q$ in the following theorem, see \cite[section 3]{davison2018positivity} and a review in \cite[section 2.6]{cheung2019donaldson}. 

\begin{theorem}[\cite{kontsevich2008stability}, {\cite[section 6.7]{davison2015donaldson}}]\label{qintegration}
Let $(\mathbf{s,w})$ be a seed with polynomial potential. There exists an $N_{\mathbf{s}}^\oplus$-graded algebra homomorphism 
 \[
 I_q\colon H(\mathbf{s,w})\rightarrow \left(\mathbb Q((q^\frac{1}{2}))[N^\oplus_\mathbf{s}],\,*\right).
 \]
 It induces a Poisson $\mathbb Q_\mathrm{reg}(q^\frac{1}{2})$-algebra homomorphism of subalgebras
 \[
 I_q\colon H_\mathrm{reg}(\mathbf{s,w})\rightarrow \left(\mathbb Q_\mathrm{reg}(q^\frac{1}{2})[N^\oplus_\mathbf{s}],\,*\right)
 \]
 whose semi-classical limit at $q^{1/2} = 1$ is exactly $I$ in \cref{integration}, i.e. 
 \[
	I = e_{q^{1/2}=1}\circ I_q. 
 \]
 \end{theorem}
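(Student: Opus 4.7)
The plan is to follow the construction from \cite{kontsevich2008stability} and \cite[section 6.7]{davison2015donaldson}, which defines $I_q$ via motivic vanishing cycles and which reduces to $I$ in the limit $q^{1/2}\to 1$. Since $\mathbf{w}$ is a polynomial potential, for each $d\in N_\mathbf{s}^\oplus$ the trace function $\mathrm{Tr}(\mathbf{w})_d$ is a regular algebraic function on the smooth moduli stack $\mathfrak R_d$ of dimension-$d$ representations of $\mathbb CQ$, whose critical locus is precisely $\mathfrak M_d$. On a generator $[X\overset{f}{\to}\mathfrak M_d]$ of $H(\mathbf{s,w})$ one then defines $I_q([X\overset{f}{\to}\mathfrak M_d])$ to be $x^d$ multiplied by the motivic weight of the pullback to $X$ of the motivic vanishing cycle class of $\mathrm{Tr}(\mathbf{w})_d$ along a lift $X\to\mathfrak R_d$, normalized by a power of $(-q^{1/2})$ determined by $\dim\mathfrak R_d$. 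The normalization is pinned down by the requirement that when $\mathbf{w}=0$ one recovers $\Upsilon(X)\,x^d$, and independence of the lift follows from homotopy invariance of the motivic vanishing cycle.

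Next, I would verify that $I_q$ is an algebra homomorphism into the twisted algebra $(\mathbb Q((q^{1/2}))[N_\mathbf{s}^\oplus],*)$. The Ringel--Hall product stratifies short exact sequences $0\to M_1\to M\to M_2\to 0$ of $J$-modules through an extension stack $\mathfrak E_{d_1,d_2}$, on which the total potential decomposes as a sum of the two sub-potentials. Applying the motivic Thom--Sebastiani theorem on $\mathfrak E_{d_1,d_2}$ factors the vanishing-cycle class as a product of those for the two factors, and a dimension count involving the Euler form $\chi(d_1,d_2)$ on representations of $\mathbb CQ$, via the relation $\chi(d_1,d_2)-\chi(d_2,d_1)=\{d_1,d_2\}$, produces exactly the twist $q^{\frac12\{d_1,d_2\}}$ defining the product $*$.

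Third, I would show that restriction to $H_\mathrm{reg}(\mathbf{s,w})$ lands in $\mathbb Q_\mathrm{reg}(q^{1/2})[N_\mathbf{s}^\oplus]$. For $[X\to\mathfrak M_d]$ with $X$ a variety, the motivic vanishing cycle class of a regular function on a variety decomposes into strata whose Poincar\'e polynomials take the form of polynomials in $q^{1/2}$ divided by factors $1+q+\cdots+q^k$, placing the image into $\mathbb Q_\mathrm{reg}(q^{1/2})$. Poisson compatibility is then automatic since the brackets on both sides are defined as $(q^{1/2}-q^{-1/2})^{-1}[\,\cdot\,,\,\cdot\,]$ applied to the respective associative products, which $I_q$ already respects. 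For the semi-classical statement $I = e_{q^{1/2}=1}\circ I_q$, at $q^{1/2}=1$ the motivic vanishing cycle class specializes to the Euler characteristic of the Milnor fibres, and the identity $e(\phi_{\mathrm{Tr}\mathbf{w}})=e(\mathrm{crit}(\mathrm{Tr}\mathbf{w}))$ applied stratum by stratum on $X$ recovers $e(X)\,x^d$, matching $I$ of \cref{integration}.

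The main obstacle is the multiplicativity step: making the $q^{1/2}$-exponent match $\tfrac12\{d_1,d_2\}$ requires the full strength of the motivic Thom--Sebastiani theorem in the stacky setting together with careful bookkeeping of the normalization factors $(-q^{1/2})^{-\dim\mathfrak R_d}$, which is the technical content of \cite[section 6.7]{davison2015donaldson}. This also explains why the polynomiality hypothesis on $\mathbf{w}$ cannot be obviously relaxed to the formal case, since the construction of the relevant motivic vanishing cycle class uses an honest algebraic function to $\mathbb A^1$.
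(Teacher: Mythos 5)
This theorem is not proved in the paper; it is cited from \cite{kontsevich2008stability} and \cite[section 6.7]{davison2015donaldson}, with the definition of $I_q$ deferred to \cite{davison2018positivity} and \cite{cheung2019donaldson}. So there is no internal proof to compare against; I can only assess your reconstruction against what the references actually do.

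The general shape of your sketch --- realize $\mathfrak M_d$ as the critical locus of $\mathrm{Tr}(\mathbf{w})_d$ on a smooth stack of $\mathbb CQ$-representations, define $I_q$ via a (suitably normalized) motivic vanishing cycle class, use the motivic Thom--Sebastiani theorem to get multiplicativity against the twisted product, and observe that everything takes values in $\mathbb Q_\mathrm{reg}(q^{1/2})$ on $H_\mathrm{reg}$ --- is indeed the approach of Kontsevich--Soibelman and Davison--Meinhardt, and your explanation of why polynomiality of $\mathbf{w}$ is needed is correct. The Poisson-compatibility remark is also fine.

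However, there is a genuine gap in your semi-classical step. You assert the identity $e(\phi_{\mathrm{Tr}\mathbf{w}}) = e(\mathrm{crit}(\mathrm{Tr}\mathbf{w}))$ and use it to conclude that $I_q$ specializes to the plain Euler characteristic $e(X)$ appearing in $I$. That identity is false in general: the Euler characteristic of the vanishing-cycle complex of a regular function is the Behrend-weighted Euler characteristic of the critical locus, $\chi(\mathrm{crit}(f),\nu_f)$, not $e(\mathrm{crit}(f))$. The paper is in fact quite careful to distinguish precisely these two outcomes --- that is the whole point of keeping $I_q$ (\cref{qintegration}, specializing to $I$, plain Euler characteristics, target $\mathfrak g^q_\mathbf{s}$ with twist $q^{\frac12\{\,,\,\}}$) separate from $\bar I_q$ (\cref{qdtintegration}, specializing to $\bar I$, Behrend-weighted, target $\mathfrak h^q_\mathbf{s}$ with twist $(-q^{\frac12})^{\{\,,\,\}}$). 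Your sketch as written constructs what amounts to $\bar I_q$ and then misattributes its semi-classical limit. To recover the claimed specialization $I = e_{q^{1/2}=1}\circ I_q$ one has to account for the sign normalization relating the two targets (the $(-1)^{\{\,,\,\}}$ discrepancy and the $(-q^{1/2})^{\dim}$ factors), which is exactly the bookkeeping that distinguishes the two theorems; this step cannot be waved through with the identity you wrote.
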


It follows that there exists an induced Lie algebra homomorphism
\[
\mathcal I_q\colon \mathfrak g_\mathbf{s,w}^\mathrm{reg}\rightarrow \mathfrak g_\mathbf{s}^q,
\]
which can be viewed as a quantization of $\mathcal I$, i.e.
\[
\mathcal I = e_{q^{1/2} =1} \circ \mathcal I_q.
\]
To summarize, we have the following commutative diagram in the case that both \cref{integration} and \cref{qintegration} apply. Note that all the maps in the diagram extend to the completions.

\[
\begin{tikzcd}[row sep=large, column sep = large]
H_\mathrm{reg}(\mathbf{s,w})\arrow[r,"I_q",swap]\arrow[rr, "I", bend left=20]&\mathbb Q_{\mathrm{reg}}(q^\frac{1}{2})[N_\mathbf{s}^\oplus]\arrow[r,"q^{1/2}=1", swap] & \mathbb Q[N_\mathbf{s}^\oplus]\\
\mathfrak g_\mathbf{s,w}^\mathrm{reg}\ar[rr, "\mathcal I", bend right = 20, swap]\arrow[u,"\varphi"]\arrow[r, "\mathcal I_q"]&\mathfrak g^q_\mathbf{s} = \mathbb Q_{\mathrm{reg}}(q^\frac{1}{2})[\hat x^d, d\in N_\mathbf{s}^+]\arrow[u, "\hat x^d\mapsto x^d"]\ar[r, "q^{1/2}=1"]&\mathfrak g_\mathbf{s} = \mathbb Q[N_\mathbf{s}^+]\ar[u,hookrightarrow]
\end{tikzcd}
\]

\subsubsection{}The following \textit{absence of poles} theorem is due to Joyce; see also section 3.2 and definition 7.15 in \cite{joyce2012theory}.

\begin{theorem}[{\cite[theorem 8.7]{joyce2007configurations}}]\label{joyce}
Let $(\mathbf{s,w})$ be a seed with potential. For any $m \in M_\mathbb R$, we write $1^\mathrm {ss}(m) = 1+ \sigma$ for $\sigma\in \hat{\mathfrak g}^\mathrm{Hall}_{\mathbf{s,w}}$. Then we have that   
\[\log (1^{\mathrm{ss}}(m))\colon = \sigma -\frac{1}{2}\sigma *\sigma +\cdots + \frac{(-1)^{n-1}}{n}\sigma*\cdots *\sigma +\cdots
\]
belongs to the Lie subalgebra $\hat{\mathfrak g}^{\mathrm{reg}}_{\mathbf{s,w}}\subset \hat{\mathfrak g}^\mathrm{Hall}_{\mathbf{s,w}}$.
\end{theorem}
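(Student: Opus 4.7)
The plan is to use the combinatorial structure of the Hall algebra together with the uniqueness of Harder--Narasimhan filtrations, following Joyce's approach in \cite{joyce2007configurations}. First I would expand the $*$-logarithm of $1^{\mathrm{ss}}(m) = 1 + \sigma$ formally:
\[
\log(1^{\mathrm{ss}}(m)) = \sum_{n\geq 1}\frac{(-1)^{n-1}}{n}\,\sigma^{*n}.
\]
By definition of the convolution product $*$ on $H(\mathbf{s,w})$ (via moduli of short exact sequences), the term $\sigma^{*n}$ is represented by the moduli stack $\mathfrak F_n^{\mathrm{ss}}(m)$ of chains $0 = M_0 \subsetneq M_1\subsetneq \cdots \subsetneq M_n$ of finite-dimensional $J$-modules whose nonzero subquotients are all $m$-semistable, equipped with the tautological map sending a flag to its top $M_n \in \mathfrak M$.

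Second, I would reduce regularity to an isomorphism-class-by-isomorphism-class computation. Restricting to a fixed dimension vector $d$, and evaluating each stack class $[X\to \mathfrak M_d]$ over $\mathbb F_q$ via the motivic Poincaré map $\Upsilon$, the coefficient of a point $M \in \mathfrak M_d(\mathbb C)$ (with automorphism group $A = \mathrm{Aut}(M)$) in $\log(1^{\mathrm{ss}}(m))$ becomes
\[
\sum_{n\geq 1}\frac{(-1)^{n-1}}{n}\cdot \frac{N_n(M;m,q)}{|A(\mathbb F_q)|},
\]
where $N_n(M;m,q)$ counts length-$n$ flags of $M$ with $m$-semistable subquotients. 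Membership in $\hat{\mathfrak g}^{\mathrm{reg}}_{\mathbf{s,w}}$ is then equivalent to the assertion that this sum, multiplied by $(q^{1/2}-q^{-1/2})$, becomes a regular function of $q^{1/2}$ in a neighborhood of $q^{1/2} = \pm 1$; equivalently, that the pole of order equal to the reductive rank of $A$ coming from $|A(\mathbb F_q)|^{-1}$ is cancelled by the zeros of the flag-counting numerator.

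The main obstacle is this cancellation of poles, which is the technical heart of Joyce's theorem. The plan is to invoke Joyce's Möbius-type identity on the poset of ordered partitions of $d$ into $m$-semistable pieces: the alternating sum $\sum (-1)^{n-1}/n\cdot N_n$ can be rewritten as a single sum over a single configuration stack weighted by the Möbius function $\mu$ of the ``refinement'' poset, which kills all but the ``irreducible'' contributions. Combined with the fact that for each fixed $M$ the flag-counts $N_n(M;m,q)$ are $q$-polynomial analogues of Gaussian binomials, one checks inductively on $\dim\mathrm{End}(M)$ that the orders of vanishing at $q = 1$ match the orders of pole of $|A|^{-1}$ up to a single factor of $(q^{1/2}-q^{-1/2})$, which is precisely what the definition of $\mathfrak g^{\mathrm{reg}}_{\mathbf{s,w}}$ demands.

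Finally, I would observe that this argument uses only that $J$-$\mathrm{mod}$ is an abelian category of finite-dimensional modules over a finitely generated $\mathbb C$-algebra whose moduli stacks of objects are algebraic and of finite type, together with existence and uniqueness of HN filtrations for King stability. Since all these inputs hold for $J = J(\mathbf{s,w})$ irrespective of the potential $\mathbf w$, Joyce's configurations argument applies verbatim and the conclusion follows.
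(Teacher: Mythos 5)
The paper gives no proof of this theorem: it is imported wholesale from Joyce, citing \cite[theorem 8.7]{joyce2007configurations} and pointing to \cite[section 3.2, definition 7.15]{joyce2012theory} for the Jacobian-algebra formulation, with the preceding sentence simply attributing the ``absence of poles'' phenomenon to Joyce. Your sketch is a plausible high-level reconstruction of Joyce's strategy (expand the $*$-logarithm, interpret $\sigma^{*n}$ via flags with semistable subquotients, reduce to stratum-by-stratum pole cancellation against the automorphism-group denominator, invoke Joyce's inclusion--exclusion on the refinement poset), and your closing paragraph correctly isolates the only verification the paper itself implicitly makes, namely that the structural hypotheses needed by Joyce hold for $J(\mathbf{s,w})$-mod. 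One small imprecision: the completed Jacobian algebra $J(\mathbf{s,w})$ is generally \emph{not} finitely generated as a $\mathbb C$-algebra (it is a quotient of the completed path algebra $\widehat{\mathbb CQ}$), so the hypothesis that actually does the work is the one you give second --- that the moduli stacks $\mathfrak M_d$ of finite-dimensional (hence nilpotent) modules are algebraic and of finite type, together with the existence and uniqueness of HN filtrations for King stability --- not finite generation of $J$ itself.
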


In the last section $\mathfrak D^{\mathrm{Hall}}_{\mathbf{s,w}}$ is defined as a ${\mathfrak g}^{\mathrm{Hall}}_{\mathbf{s,w}}$-SD. The above theorem in particular shows that $\mathfrak D^{\mathrm{Hall}}_{\mathbf{s,w}}$ is also a ${\mathfrak g}^{\mathrm{reg}}_{\mathbf{s,w}}$-SD. Recall that we have two $N^+_\mathbf{s}$-graded Lie algebra homomorphisms
\[
\mathcal I_q\colon {\mathfrak g}^{\mathrm{reg}}_{\mathbf{s,w}} \rightarrow \mathfrak g^q_{\mathbf{s}}\quad \mathrm{and}\quad \mathcal I\colon {\mathfrak g}^{\mathrm{reg}}_{\mathbf{s,w}} \rightarrow \mathfrak g_{\mathbf{s}}.
\]
By abuse of notation, we will denote the maps of corresponding pro-unipotent groups still by $\mathcal I_q$ and $\mathcal I$. Recall the conventions in \cref{mapsbetweensds}.

\begin{definition}[Stability scattering diagrams]\label{qstabilityscatteringdiagram} Let $(\mathbf {s,w})$ be a seed with potential. We define the \textit{stability scattering diagram} of $\mathbf {(s,w)}$ to be
 \[
 \mathfrak D_{\mathbf{s,w}} = (\mathfrak S_\mathbf{s,w}, \phi_\mathbf{s,w})\colon  = \mathcal I(\mathfrak D^{\mathrm{Hall}}_{\mathbf{s,w}}),
 \]
 i.e. the consistent $\mathfrak g_\mathbf{s}$-SD corresponding to  $\mathcal I(1_{\mathfrak{M}(\mathbf{s,w})})$.
If the potential $\mathbf w$ is polynomial, then we define the \textit{quantum stability scattering diagram} of $\mathbf {(s,w)}$ to be
 \[
 \mathfrak D^q_{\mathbf{s,w}} = (\mathfrak S_\mathbf{s,w}^q,\phi_\mathbf{s,w}^q)\colon  = \mathcal I_q(\mathfrak D^{\mathrm{Hall}}_{\mathbf{s,w}}),
 \]
 i.e. the consistent $\mathfrak g^q_{\mathbf{s}}$-SD corresponding to $\mathcal I_q(1_{\mathfrak{M}(\mathbf{s,w})})$. 
\end{definition}

\begin{remark}
The stability scattering diagram $\mathfrak D_{\mathbf{s,w}}$ is defined by Bridgeland in \cite[section 11]{bridgeland2016scattering} for a polynomial potential $\mathbf{w}$. However the definition can be easily extended to any potential using the integration map in \cref{integration}. The quantum integration map $I_q$ though, is only defined and proven to be an algebra homomorphism when the potential $\mathbf{w}$ is polynomial. Therefore whenever we talk about the quantum stability scattering diagram $\mathfrak D_{\mathbf{s,w}}^q$, we always assume $(\mathbf{s,w})$ to be a seed with polynomial potential.
\end{remark}

\begin{example}
Choosing a generic point $m$ in $[S_i]^\perp\subset M_\mathbb R$ for some vertex $i\in\{1,\dots, n\}$ of the quiver, the subcategory of $m$-semistable $J$-modules is generated by the simple module $S_i$. Then we have \[1^{\mathrm{ss}}(m) = \left[\coprod_{k\geq 0} BGL_k\rightarrow \coprod_{k\geq 0}\mathfrak M_{ks_i} \right]\] where the map is an isomorphism. This element is in $1 + \hat H(\mathbf{s,w})_{>0}$. By Joyce's absence of poles \cref{joyce}, we have
\[
I_q(1^{\mathrm{ss}}(m))\in \exp(\hat {\mathfrak g}_\mathbf{s,w}^\mathrm{reg}).
\]
One can show that (\cite[section 6.4]{kontsevich2008stability})
\[
I_q(BGL_k\rightarrow \mathfrak M_{ks_i}) = \frac{q^{k^2/2}x^{ks_i}}{[\mathrm{GL}_k]_q}
\]
and thus $I_q(1^{\mathrm{ss}}(m)) = \mathbb E(q^{1/2}, x^{s_i})$.
In terms of Lie algebra elements, we also have
\[
\mathcal I_q(\log(1^{\mathrm{ss}}(m))) = -\mathrm{Li}^q_2(-x^{s_i})
\quad\mathrm{and}\quad 
\mathcal I(\log(1^{\mathrm{ss}}(m)))= -\mathrm{Li}_2(-x^{s_i}).
\]
\end{example}

\section{Mutations}\label{muta}
In this section, we first define two operations $\mu_{k}^+$ and $\mu_k^-$ on seeds with potentials (SPs for short) which are lifts of the mutations of quivers with potentials (QPs for short) in \cite{derksen2008quivers}. We then define functors relating the module category associated to an SP with the ones associated to the mutations of this SP. In the end, the relations between the Hall algebra scattering diagrams $\mathfrak D^{\mathrm{Hall}}_\mathbf{s,w}$ and $\mathfrak D^{\mathrm{Hall}}_{\mu_k^\pm(\mathbf{s,w})}$ (as well as the stability scattering diagrams) are studied with the help of these functors.

\subsection{Mutations of seeds with potentials}\label{mutationofsp}  

\subsubsection{}\label{mutationofquiver} We first review mutations of quivers.
Let $Q$ be a 2-acyclic quiver with the set of vertices $Q_0$ and the set of arrows $Q_1$. For an arrow $\alpha \in Q_1$, denote its source by $s(\alpha)$ and target by $t(\alpha)$. For a vertex $k\in Q_0$, we define the quiver $\tilde \mu_k(Q)$ obtained from $Q$ as follows:
\begin{enumerate}%[label={(\roman*)}]
\item[\textbf{1.}] For each pair of incoming arrow $\alpha\colon i\rightarrow k$ and outgoing arrow $\beta \colon k\rightarrow j$ at $k$, create one arrow $\overline{\beta \alpha}\colon i\rightarrow j$. 
\item[\textbf{2.}] Replace each incoming arrow $\alpha\colon i\rightarrow k$ with a new outgoing arrow $\bar \alpha\colon k\rightarrow i$; replace each outgoing arrow $\beta \colon k\rightarrow j$ with a new incoming arrow $\bar \beta \colon j\rightarrow k$.
\end{enumerate}
Now we perform the last step to get a 2-acyclic quiver $\mu_k(Q)$ from $\tilde\mu_k(Q)$.
\begin{enumerate}
    \item [\textbf{3.}] Delete a maximal collection of disjoint 2-cycles in $\tilde \mu_k(Q)$.
\end{enumerate}
It is easy to see the mutation $\mu_k$ is an involution, i.e. $\mu_k^2(Q)\cong Q.$
\subsubsection{}\label{mutationofqps} Before defining mutations of SPs, we review the mutations of QPs introduced in \cite{derksen2008quivers}. Let $(Q,\mathbf{w})$ and $(Q',\mathbf{w'})$ be two QPs with the same vertex set $Q_0$. We say that they are \textit{right-equivalent} (see \cite[definition 4.2]{derksen2008quivers}) if there is an isomorphism $\varphi\colon \widehat {\mathbb CQ} \rightarrow \widehat {\mathbb CQ'}$ preserving $Q_0$ and taking $\mathbf{w}$ to a potential $\varphi(\mathbf{w})$ cyclic equivalent to $\mathbf{w'}$. The                                                          isomorphism $\varphi$ also induces an isomorphism of the completed Jacobian algebras $J(Q,\mathbf{w})\cong J(Q',\mathbf{w}')$. Note that here we do not assume 2-acyclicity of the quivers.

A potential is called $\textit{reduced}$ if it contains no cycles of lengths less than 3; it is called $\textit{trivial}$ if the corresponding Jacobian algebra is trivial. For two QPs $(Q_1, \mathbf{w}_1)$ and $(Q_2, \mathbf{w}_2)$ with the identified set of vertices, their \textit{direct sum} is the QP $$(Q_1, \mathbf{w}_1)\oplus (Q_2, \mathbf{w}_2)\colon = (Q_1\oplus Q_2, \mathbf{w}_1+\mathbf{w}_2)$$ where $Q_1\oplus Q_2$ is the quiver on the same set of vertices but taking disjoint union of arrows of $Q_1$ and $Q_2$.

\begin{theorem}[{\cite[theorem 4.6]{derksen2008quivers}}]\label{splitting} Every QP $(Q,\mathbf{w})$  is right-equivalent to the direct sum of a trivial QP and a reduced QP
\[
(Q_\mathrm{triv}, \mathbf{w}_\mathrm{triv})\oplus (Q_\mathrm{red}, \mathbf{w}_\mathrm{red})
\]
where each direct summand is determined up to right-equivalence by the right equivalent class of $(Q,\mathbf{w})$. The Jacobian algebra of $(Q,\mathbf{w})$ is then isomorphic to the Jacobian algebra of the \textit {reduced part} $(Q_\mathrm{red}, \mathbf{w}_\mathrm{red})$ via the embedding of $(Q_\mathrm{red}, \mathbf{w}_\mathrm{red})$ in $(Q_\mathrm{triv}, \mathbf{w}_\mathrm{triv})\oplus (Q_\mathrm{red}, \mathbf{w}_\mathrm{red})$.
\end{theorem}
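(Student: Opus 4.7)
The strategy is to build the desired right-equivalence by explicitly isolating the quadratic part of $\mathbf{w}$ and then eliminating its interactions with the higher-degree part via a convergent sequence of substitutions in $\widehat{\mathbb C Q}$. First, decompose $\mathbf{w} = \mathbf{w}^{(2)} + \mathbf{w}^{(\geq 3)}$, where $\mathbf{w}^{(2)}$ is the sum of length-2 cycles appearing in $\mathbf{w}$ (modulo cyclic equivalence). On each pair of vertices $(i,j)$ that supports a 2-cycle, $\mathbf{w}^{(2)}$ gives a bilinear pairing between arrows $i \to j$ and arrows $j \to i$; after a $\mathbb C$-linear change of basis on the arrows (an automorphism of $\widehat{\mathbb C Q}$ fixing the vertex idempotents), I can normalize it into a sum $\sum_{l=1}^{r} a_l b_l$ for disjoint 2-cycles $(a_l,b_l)$. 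The arrows $a_l,b_l$ generate the candidate trivial quiver $Q_{\mathrm{triv}}$, and the remaining arrows generate the candidate reduced quiver $Q_{\mathrm{red}}$.

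Next, I would eliminate higher-order couplings inductively. Let $\mathfrak m \subset \widehat{\mathbb C Q}$ be the arrow ideal and suppose inductively that we have arranged
\[
\mathbf{w} \equiv \sum_l a_l b_l + \mathbf{w}' \pmod{\mathfrak m^{n+1}},
\]
where $\mathbf{w}'$ is a potential on $Q_{\mathrm{red}}$. Writing the full potential as $\sum_l a_l b_l + \mathbf{w}' + \mathbf{u}$ with $\mathbf{u} \in \mathfrak m^{n+1}$, one can read off from cyclic derivatives $\partial_{b_l}\mathbf{u}$ and $\partial_{a_l}\mathbf{u}$ the "bad" contributions coupling the $a_l,b_l$ to $Q_{\mathrm{red}}$-arrows. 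Apply the unitriangular automorphism
\[
a_l \mapsto a_l - \partial_{b_l}\mathbf{u}, \quad b_l \mapsto b_l - \partial_{a_l}\mathbf{u}, \quad (\text{other arrows fixed}),
\]
which is an automorphism of $\widehat{\mathbb C Q}$ since the substitution is a perturbation by elements of $\mathfrak m^n$. A direct computation using cyclic equivalence shows that this exactly absorbs the degree-$(n+1)$ coupling, so the new potential satisfies the stronger congruence modulo $\mathfrak m^{n+2}$. Since each step is unitriangular, the infinite composition converges in the $\mathfrak m$-adic topology, yielding a right-equivalence $(Q,\mathbf{w}) \cong (Q_{\mathrm{triv}}, \sum a_l b_l) \oplus (Q_{\mathrm{red}}, \mathbf{w}_{\mathrm{red}})$ as desired.

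For uniqueness, any two such decompositions give rise, on the quadratic level, to equivalent skew-symmetric forms on the space of arrows, so the trivial parts coincide up to a right-equivalence; propagating this through the inductive construction of the previous paragraph upgrades it to a right-equivalence of the full splittings. Finally, the Jacobian algebra of $(Q_{\mathrm{triv}}, \sum a_l b_l)$ is computed by $\partial_{a_l} = b_l$ and $\partial_{b_l} = a_l$, which generate the whole arrow ideal, so $J(Q_{\mathrm{triv}}, \mathbf{w}_{\mathrm{triv}}) = \prod_i \mathbb C e_i$; the Jacobian relations of a direct sum are the union of those of the summands, so the inclusion $(Q_{\mathrm{red}},\mathbf{w}_{\mathrm{red}}) \hookrightarrow (Q_{\mathrm{triv}},\mathbf{w}_{\mathrm{triv}}) \oplus (Q_{\mathrm{red}},\mathbf{w}_{\mathrm{red}})$ induces an isomorphism on Jacobian algebras. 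The main obstacle is the convergence and bookkeeping in the iterative step: one must verify that the perturbation really lives in $\mathfrak m^n$, that it preserves the already-normalized lower-degree part, and that the limit automorphism of $\widehat{\mathbb C Q}$ is well-defined and continuous.
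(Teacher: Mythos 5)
The paper does not prove this statement; it is cited verbatim as \cite[Theorem 4.6]{derksen2008quivers}, so there is no ``paper's own proof'' to match against. Your outline does track the high-level structure of the original Derksen--Weyman--Zelevinsky argument (normalize the quadratic part, then kill the coupling by a convergent chain of unitriangular automorphisms), but the central iterative step you write down is not correct, and it is precisely the step you flag as the ``main obstacle.''

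The problem is the substitution
\[
a_l \mapsto a_l - \partial_{b_l}\mathbf{u}, \qquad b_l \mapsto b_l - \partial_{a_l}\mathbf{u}.
\]
Compute the effect on $\sum_l a_l b_l + \mathbf{w}' + \mathbf{u}$ modulo $\mathfrak m^{2n}$. One finds the new potential is cyclically equivalent to
\[
\sum_l a_l b_l + \mathbf{w}' + \Bigl(\mathbf{u} - \sum_l a_l\,\partial_{a_l}\mathbf{u} - \sum_l(\partial_{b_l}\mathbf{u})\,b_l\Bigr) + O(\mathfrak m^{2n}).
\]
By the noncommutative Euler identity, if $c$ is a cycle appearing in $\mathbf{u}$ containing $k(c)$ occurrences (with multiplicity) of the trivial arrows $\{a_l,b_l\}$, then
\[
\sum_l a_l\,\partial_{a_l}c + \sum_l(\partial_{b_l}c)\,b_l \;\sim\; k(c)\cdot c,
\]
so the bracketed term is $\sum_c (1-k(c))\,c$. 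This vanishes only when $k(c)=1$. For $k(c)=2$ the term survives with a flipped sign (so iterating oscillates), and for $k(c)\geq 3$ the coefficient grows and the process diverges; in either case you do \emph{not} land in $\mathfrak m^{n+2}$, and the induction breaks at the first degree where such a cycle appears. This happens already in degree three: for instance on the loop quiver with trivial pair $a,b$ and a loop $c$, take $\mathbf{w}=ab+aca$; here $\partial_a(aca)=ca+ac$, and your substitution sends $ab+aca$ to $ab-aac$, cyclically equivalent to $ab-aca$, so you have gone nowhere. The correct one-step substitution here is $b\mapsto b-ca$, $a\mapsto a$, i.e.\ one must extract a \emph{single} canonical occurrence of a trivial arrow from each cyclic class (writing the coupling as $\sum_l a_l\alpha_l + \sum_l\beta_l b_l$ with a fixed choice of representative), not the full cyclic derivative, which over-counts by the factor $k(c)$. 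Derksen--Weyman--Zelevinsky's proof is organized so as to avoid exactly this double-counting.

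Two smaller remarks. First, the uniqueness argument as stated (``equivalent skew-symmetric forms on the space of arrows'') is not quite the right invariant --- the pairing induced by $\mathbf{w}^{(2)}$ on opposite arrow spaces $e_jQ_1e_i\times e_iQ_1e_j$ is a bilinear form, not a skew-symmetric one, and the DWZ uniqueness proof is again a nontrivial inductive argument rather than something that ``propagates'' formally. Second, the final paragraph on the Jacobian algebra of the direct sum is correct in conclusion but glosses over the fact that the Jacobian ideal of the sum is generated by the cyclic derivatives taken inside $\widehat{\mathbb C(Q_\mathrm{triv}\oplus Q_\mathrm{red})}$, which a priori live in a bigger algebra than those of the summands; one needs the observation that $\partial_{a_l}$ and $\partial_{b_l}$ already generate the two-sided ideal of all trivial arrows, so that modding out collapses to $\widehat{\mathbb C Q_\mathrm{red}}$.
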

%\textcolor{red}
%{
%Note that $Q_\mathrm{triv}$ and $Q_\mathrm{red}$ are canonically constructed from $(Q,\mathbf{w})$; see \cite[4.4]{derksen2008quivers}.
%}

Now let $Q$ be a 2-acyclic quiver and $\mathbf{w}$ be a potential such that no term in its expansion starts at $k$ (if not, replace $\mathbf{w}$ with a cyclic equivalent one). We construct a potential for the quiver $\tilde \mu_k(Q)$ as follows. For each pair of incoming $\alpha$ and outgoing $\beta$ at vertex $k$, replace any occurrences of $\beta\alpha$ in $\mathbf{w}$ with $\overline{\beta\alpha}$. We thus get a potential $\overline {\mathbf{w}}$ in $\widehat{\mathbb C \tilde \mu_k(Q)}$. Define
$$
\tilde \mu_k(\mathbf{w})\colon =\overline {\mathbf{w}}+ \sum_{t(\alpha)= s(\beta) = k} \overline {\beta\alpha}\bar \alpha \bar \beta,
$$
where the sum is taken over all pairs of incoming $\alpha$ and outgoing $\beta$ at $k$. We thus have a QP $(\tilde \mu_k(Q), \tilde \mu_k(\mathbf{w})).$

\begin{definition}\label{mutationofqp}
The mutation $\mu_k(Q,\mathbf{w})$ of a 2-cyclic QP $(Q,\mathbf w)$ is defined to be the \textit{reduced part} of $(\tilde \mu_k(Q), \tilde \mu_k(\mathbf{w}))$ given by \cref{splitting}:
\[
\mu_k(Q,\mathbf{w})\colon = (\tilde \mu_k(Q)_\mathrm{red}, \tilde \mu_k(\mathbf{w})_\mathrm{red}).
\]
We say that a 2-acyclic QP $(Q, \mathbf{w})$ is \textit{mutable} at vertex $k$ (or $k$-\textit{mutable}) if the quiver $\tilde \mu_k(Q)_\mathrm{red}$ is equal to $\mu_k(Q)$ from \cref{mutationofquiver}. Therefore we have the justified notation $(\mu_k(Q),\mu_k(\mathbf{w}))$ of $\mu_k(Q,\mathbf{w})$ for $k$-mutable QPs. 
\end{definition}

\iffalse
\begin{remark}\textcolor{red}{To check if $(Q,\mathbf{w})$ is mutable at $k$ is easy. One should look at 3-cycles in $\mathbf{w}$ adjacent to $k$. They become 2-cycles in $\tilde \mu_k(\mathbf{w})$. One just needs to check if the cyclic derivatives of  these 2-cycles are enough to cancel the created 2-cycles in $\tilde \mu_k(Q)$.}
\end{remark}
\fi

The following proposition is an easy corollary of theorem 4.5 in \cite{derksen2008quivers}.
\begin{proposition}\label{mutationinvolution}
The mutation $\mu_k$ is an involution on the set of right-equivalent classes of $k$-mutable QPs, i.e. $\mu_k(Q,\mathbf{w})$ is also $k$-mutable and $\mu_k^2(Q,\mathbf{w})$ is right-equivalent to $(Q,\mathbf{w}).$
\end{proposition}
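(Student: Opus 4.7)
The plan is to reduce the statement to Theorem 4.5 of \cite{derksen2008quivers}, which asserts that for any reduced QP $(Q,\mathbf{w})$ (\emph{not} required to be 2-acyclic), the double mutation $\mu_k^2(Q,\mathbf{w})$ is right-equivalent to $(Q,\mathbf{w})$. The two things to verify in our setting are: (a) that the DWZ theorem is applicable to $(Q,\mathbf{w})$ and to $\mu_k(Q,\mathbf{w})$; and (b) that $k$-mutability in the sense of \cref{mutationofqp} is preserved by $\mu_k$, so that the iterated expression $\mu_k^2(Q,\mathbf{w})$ in our sense agrees with the one used by DWZ.

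For (a), since $Q$ is 2-acyclic the potential $\mathbf{w}$ is a formal sum of cycles of length at least three, hence $(Q,\mathbf{w})$ is reduced. By the $k$-mutability hypothesis, $\mu_k(Q,\mathbf{w})=(\mu_k(Q),\mu_k(\mathbf{w}))$ has 2-acyclic underlying quiver $\mu_k(Q)$ and is again automatically reduced. Thus \cite[theorem 4.5]{derksen2008quivers} applies both at $(Q,\mathbf{w})$ and at $\mu_k(Q,\mathbf{w})$, yielding a right-equivalence between $\mu_k^2(Q,\mathbf{w})$ and $(Q,\mathbf{w})$.

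For (b), the point to exploit is that right-equivalences preserve the underlying quiver up to a vertex-preserving isomorphism: any $\varphi\colon\widehat{\mathbb{C}Q}\to\widehat{\mathbb{C}Q'}$ fixing vertex idempotents induces an isomorphism on each $e_j(\hat{\mathfrak{m}}/\hat{\mathfrak{m}}^2)e_i$, and these dimensions count the arrows from $i$ to $j$. Since the DWZ right-equivalence identifies $\mu_k^2(Q,\mathbf{w})$ with the 2-acyclic $(Q,\mathbf{w})$, the underlying quiver of $\mu_k^2(Q,\mathbf{w})$ must be $Q$. By definition of $\mu_k^2(Q,\mathbf{w})$ this underlying quiver is the \emph{reduced} quiver of $\tilde\mu_k(\mu_k(Q))$, so that reduced quiver equals $Q=\mu_k(\mu_k(Q))$ (using that combinatorial mutation of quivers is involutive on 2-acyclic quivers, as recalled at the end of \cref{mutationofquiver}). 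This is exactly the assertion that $\mu_k(Q,\mathbf{w})$ is $k$-mutable in the sense of \cref{mutationofqp}.

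The only subtlety worth watching — and essentially the only place where the argument could be mishandled — is the distinction between "reduced" (no 2-cycle summands in $\mathbf{w}$, but possibly 2-cycles in the quiver) and "2-acyclic." The key remark that resolves this is the dimension invariant above: a reduced QP right-equivalent to a 2-acyclic QP is automatically 2-acyclic. Once this is observed, the proposition is a direct unpacking of DWZ's involution theorem, and there is no further nontrivial step.
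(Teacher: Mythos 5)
Your proof is correct and follows essentially the same route as the paper's own (which is commented out in the source but says, in brief: apply \cite[theorem 4.5]{derksen2008quivers} for the right-equivalence, then observe that the quiver of $\mu_k^2(Q,\mathbf{w})$ must match $\mu_k^2(Q)=Q$, whence $k$-mutability of $\mu_k(Q,\mathbf{w})$). Your write-up is somewhat more explicit than the paper's about \emph{why} right-equivalence forces the underlying quiver of $\mu_k^2(Q,\mathbf{w})$ to be $Q$ — via the invariance of $\dim e_j(\hat{\mathfrak m}/\hat{\mathfrak m}^2)e_i$ under vertex-fixing algebra isomorphisms — and about the reduced vs.\ $2$-acyclic distinction; the paper asserts this step without elaboration. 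No gaps.
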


\iffalse
\begin{proof}
In fact, the mutation $\mu_k$ of QPs in \cite{derksen2008quivers} is defined on a larger set than $k$-mutable QPs. The right-equivalence between $\mu_k^2(Q,\mathbf{w})$ and $(Q,\mathbf{w})$ follows from theorem 4.5 in loc. cit. The quiver of $\mu_k(Q,\mathbf{w})$ is $\mu_k(Q)$ so the quiver $Q$ of $\mu_k^2(Q,\mathbf{w})$ matches up with $\mu_k^2(Q)$. Then $\mu_k(Q)$ is also $k$-mutable.
\end{proof}
\fi

\subsubsection{} Denote the set of all seeds of $N$ by $\mathcal S$. Let $\mathbf{s}=\{s_1,\dots, s_n\}$ be a seed. As in \cref{motivichallalgebra}, the \textit{adjacency matrix} $B(\mathbf{s})$ is given by $B_{ij}=\{s_i,s_j\}$ which determines a quiver $Q(\mathbf{s})$. The two kinds of \textit{mutations} of seeds are the following maps from $\mathcal S$ to itself defined by

\[
\mu_k^+(\mathbf{s})_i\colon =
\begin{cases}
s_i+[-B_{ik}]_+s_k\ &\mathrm{for}\ i\neq k,\\
-s_k\ &\mathrm{for}\ i=k;
\end{cases}
\]

\[
\mu_k^-(\mathbf{s})_i\colon =
\begin{cases}
s_i+[B_{ik}]_+s_k\ &\mathrm{for}\ i\neq k,\\
-s_k\ &\mathrm{for}\ i=k.
\end{cases}
\]
Note that $\mu_k^+$ and $\mu_k^-$ are inverses to each other. That is
\begin{equation}\label{involutionseed}
\mu_k^+\circ \mu_k^-=\mu_k^-\circ \mu_k^+=\mathrm{id}.    
\end{equation}
An easy calculation shows $B(\mu_k^+(\mathbf s))=B(\mu_k^-(\mathbf s))$, i.e. two types of mutations of a seed give the same quiver:
\begin{equation}\label{compatible}
\mu_k(Q(\mathbf{s})) \cong Q(\mu_k^+(\mathbf{s})) \cong Q(\mu_k^-(\mathbf{s})).    
\end{equation}
Two SPs $(\mathbf{s,w})$ and $(\mathbf{s',w'})$ are said to be \textit{right-equivalent} if $\mathbf{s} = \mathbf{s'}$ and $(Q(\mathbf{s}),\mathbf{w})$ is right-equivalent to $(Q(\mathbf{s'}),\mathbf{w'})$ respecting the identification on vertices. An SP is said to be $k$-\textit{mutable} if the associated QP is $k$-mutable (\cref{mutationofqp}).

\begin{definition}[Mutation of SP]\label{spmutation} For a $k$-mutable seed with potential $(\mathbf{s,w})$, we define the following two \textit{mutations} at vertex $k$:
\[
\mu_k^+(\mathbf{s,w}) \colon =  (\mu_k^+(\mathbf{s}), \mu_k(\mathbf{w})),\quad \mu_k^-(\mathbf{s,w}) \colon =  (\mu_k^-(\mathbf{s}), \mu_k(\mathbf{w})).
\]
\end{definition}
\begin{proposition}\label{involutionsp}
The mutations of SPs $\mu_k^+$ and $\mu_k^-$ are inverse to each other on the set of right equivalent classes of $k$-mutable SPs.
\end{proposition}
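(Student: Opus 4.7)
The plan is to reduce the proposition to two facts already in hand: the involutivity \eqref{involutionseed} of the seed mutations $\mu_k^\pm$, and the involutivity of QP mutation on right-equivalence classes of $k$-mutable QPs (\cref{mutationinvolution}). Since an SP is, by definition, a seed together with a potential on its associated quiver, and since right-equivalence of SPs is declared in terms of right-equivalence of the underlying QPs with the same seed, the two ingredients should combine directly.

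First I would unpack the definition. Applying $\mu_k^-$ to $\mu_k^+(\mathbf{s},\mathbf{w}) = (\mu_k^+(\mathbf{s}), \mu_k(\mathbf{w}))$ gives
\[
\mu_k^- \circ \mu_k^+(\mathbf{s},\mathbf{w}) \;=\; \bigl(\mu_k^-(\mu_k^+(\mathbf{s})),\; \mu_k(\mu_k(\mathbf{w}))\bigr).
\]
The seed coordinate collapses to $\mathbf{s}$ by \eqref{involutionseed}. For the potential coordinate, I need to justify that the inner $\mu_k$ (viewed as a QP mutation on $(Q(\mathbf{s}),\mathbf{w})$) produces a potential on the quiver $Q(\mu_k^+(\mathbf{s})) = \mu_k(Q(\mathbf{s}))$ (using \eqref{compatible}), which is exactly the quiver on which the outer $\mu_k$ then acts. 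The composite $\mu_k^2(\mathbf{w})$ is a potential on $\mu_k^2(Q(\mathbf{s})) \cong Q(\mathbf{s})$, and by \cref{mutationinvolution} it is right-equivalent to $\mathbf{w}$. The identical argument with the roles of $\mu_k^+$ and $\mu_k^-$ swapped handles $\mu_k^+ \circ \mu_k^- = \mathrm{id}$.

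The only genuine obstacle is bookkeeping: I have to verify that $k$-mutability is preserved by $\mu_k^\pm$, so that the composite is well-defined on the domain claimed. This is immediate from \cref{mutationinvolution} applied at the level of QPs, since that result asserts $\mu_k(Q,\mathbf{w})$ is again $k$-mutable, and $k$-mutability of an SP is defined solely via its QP. I would also briefly remark that right-equivalence of SPs is compatible with the mutation operations: if $(\mathbf{s},\mathbf{w})$ and $(\mathbf{s},\mathbf{w}')$ are right-equivalent via $\varphi\colon \widehat{\mathbb{C}Q(\mathbf{s})} \to \widehat{\mathbb{C}Q(\mathbf{s})}$ fixing vertices, then $\mu_k^\pm$ produces right-equivalent SPs — a fact built into the construction of $\mu_k$ on QPs (it descends to right-equivalence classes, as used throughout \cite{derksen2008quivers}). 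With these observations the proof is essentially a one-line computation plus an appeal to the two cited results.
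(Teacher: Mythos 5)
Your proof is correct and follows the same route as the paper's: both reduce the claim to the involutivity of seed mutations \eqref{involutionseed}, the involutivity of QP mutation on right-equivalence classes of $k$-mutable QPs (\cref{mutationinvolution}), and the compatibility \eqref{compatible}. You simply spell out the bookkeeping — the coordinate-wise computation, preservation of $k$-mutability, and well-definedness on right-equivalence classes — more explicitly than the paper does.
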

\begin{proof}
As we point out in (\ref{compatible}), these mutations of SPs are compatible with mutations of QPs. Then by \cref{mutationinvolution}, both $\mu_k^+(\mathbf{s,w})$ and $\mu_k^-(\mathbf{s,w})$ are $k$-mutable and are inverse to each other by (\ref{involutionseed}).
\end{proof}

\subsection{Generalized reflection functors} 
\subsubsection{} Inspired by the mutations of decorated representations in \cite{derksen2008quivers}, for a $k$-mutable $(Q,\mathbf w)$, we define two functors 
\[
F_k^+,\ F_k^-\colon J(Q,\mathbf{w})\text{-mod}\rightarrow J(\mu_k(Q,\mathbf{w}))\text{-mod}.
\]
between module categories, generalizing the reflection functors of \cite{bernstein1973coxeter}.

Recall that there is an intermediate QP $\tilde \mu_k(Q,\mathbf{w})$ (\cref{mutationofquiver}) in the construction of $\mu_k(Q,\mathbf{w})$. There is an equivalence between module categories of the algebras $J(\tilde\mu_k (Q,\mathbf w))$ and $J(\mu_k (Q,\mathbf w))$ induced by the isomorphism of algebras in \cref{splitting}. The functors $F_k^\pm$ will be defined to be the following functors 
\[
\tilde F_k^\pm \colon J(Q,\mathbf w)\text{-mod} \rightarrow J(\tilde\mu_k (Q,\mathbf w))\text{-mod}
\]
post-composed by this equivalence. The functors $\tilde F_k^\pm$ are constructed as follows. Let $(Q,\mathbf{w})$ be a $k$-mutable QP and $M$ be an object in $J(Q,\mathbf{w})$-mod regarded as a nilpotent representation of the quiver $Q$ annihilated by $\overline{\langle\partial \mathbf{w}\rangle}$ (\cite[definition 10.1]{derksen2008quivers}). We denote by $Q_0$ the set of vertices of $Q$ and by $Q_1$ the set of arrows. We have two maps $s,t\colon Q_1\rightarrow Q_0$ which send an arrow to its source and target respectively.  We have the following diagram of vector spaces:
\[
\begin{tikzcd}
&&M_k\arrow[dr, "\beta_k"]&&\\
&M_{\textmd{in}}\arrow[ur, "\alpha_k"]&&M_{\textmd{out}}\arrow[dl,"q_k"]\arrow[ll, "\gamma_k"]&\\
&&\textmd{coker }\beta_k\arrow[ul,"\phi_k"]&&
\end{tikzcd}
\]
where 
$$M_{\textmd{in}}\colon =\bigoplus_{\alpha \in Q_1,\ t(\alpha)=k}M_{s(\alpha)},\   M_{\textmd{out}}\colon =\bigoplus_{\beta\in Q_1,\ s(\beta)=k}M_{t(\beta)}
$$
and
$$
\alpha_k=\sum_{t(\alpha)=k} M_{\alpha},\ \beta_k=\sum_{s(\beta)=k}M_{\beta},\ \gamma_k=\sum_{t(\alpha)=s( \beta)=k}\partial_{\beta\alpha}\mathbf{w}.$$ 
We have natural embeddings $\iota_\beta\colon M_{t(\beta)}\rightarrow M_\mathrm{out}$ and projections $\pi_\alpha\colon M_\mathrm{in}\rightarrow M_{s(\alpha)}$. One can check that $\alpha_k\circ \gamma_k=0$ and $\gamma_k\circ\beta_k=0$; see \cite[Lemma 10.6]{derksen2008quivers}. This implies $\gamma_k$ factors through $q_k$, i.e. $\gamma_k = \phi_k\circ q_k$ for a unique $\phi_k\colon \mathrm{coker} \beta_k\rightarrow M_\mathrm{in}$. We first define $M'=\tilde F^+_k(M)$ as a representation of $(Q',\mathbf{w'}) = \tilde \mu_k(Q,\mathbf{w})$ as follows.

\begin{enumerate}[label={(\roman*)}]
	\item $M'_k\colon =\textmd{coker } \beta_k$; $M'_i\colon =M_i$ for $i\neq k$.
	\item For each $\bar \beta\colon j\rightarrow k$, let $M'_{\bar \beta}$ be the composition $$M'_{\bar \beta}\colon M_j\xrightarrow {\iota_\beta} M_{\textmd{out}}\xrightarrow{q_k} \textmd{coker }\beta_k;$$ For each $\bar \alpha\colon k\rightarrow i$, let $M'_{\bar \alpha}$ be the composition $$M'_{\bar \alpha}\colon \textmd{coker }\beta_k\xrightarrow{\phi_k} M_{\textmd{in}}\xrightarrow{\pi_\alpha} M_i.$$
	\item For each $\overline{\beta\alpha}\colon i\rightarrow j$, let $M'_{\overline{\beta\alpha}}$ be $M_\beta\circ M_\alpha$. 
	\item For any $\gamma$ not incident to $k$, let $M'_\gamma = M_\gamma$.
\end{enumerate}

\begin{proposition}\label{thea}
	The above construction gives a representation $M'$ of $\tilde \mu_k(Q,\mathbf{w})$, i.e. $M'$ is a $J(\tilde \mu_k(Q,\mathbf{w}))$-module. Moreover, this defines an additive functor
	\[
	 \tilde F^+_k\colon J(Q,\mathbf{w})\text{-}\mathrm{mod}\rightarrow 
J(\tilde\mu_k(Q,\mathbf{w}))\text{-}\mathrm{mod}
	\]
	such that $\tilde F_k^+(M) = M'$.
    %Moreover, if $M$ and $N$ are right equivalent, then $\tilde \mu^{+}_k(M)$ and $\tilde \mu^{+}_k(N)$ are right equivalent.
\end{proposition}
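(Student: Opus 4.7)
My plan is to verify separately that (a) $M'$ satisfies the Jacobian relations coming from $\tilde\mu_k(\mathbf{w})$, and (b) the construction extends functorially to morphisms. Once $M'$ is known to be a representation of $\tilde\mu_k(Q)$ (which is immediate from the explicit definitions (i)--(iv)), being a $J(\tilde\mu_k(Q,\mathbf{w}))$-module is equivalent to annihilation by every cyclic derivative $\partial_\gamma\tilde\mu_k(\mathbf{w})$, so I would go arrow by arrow through the arrows of $\tilde\mu_k(Q)$.

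The arrows split into four classes, and for each I would reduce the vanishing to the two identities $\alpha_k\circ\gamma_k=0$ and $\gamma_k\circ\beta_k=0$ (cited from \cite[Lemma 10.6]{derksen2008quivers}) together with the factorization $\gamma_k=\phi_k\circ q_k$. Concretely: (1) For an arrow $\gamma$ of $Q$ not incident to $k$, the derivative $\partial_\gamma\tilde\mu_k(\mathbf{w})$ is obtained from $\partial_\gamma\mathbf{w}$ by replacing each path $\beta'\alpha'$ through $k$ by $\overline{\beta'\alpha'}$; since $M'_{\overline{\beta'\alpha'}}=M_{\beta'}M_{\alpha'}$ and $M'$ agrees with $M$ at non-$k$ vertices, the evaluation on $M'$ coincides with $\partial_\gamma\mathbf{w}$ on $M$, which vanishes. (2) For $\bar\alpha$, the cyclic derivative $\sum_{\beta'}\bar\beta'\overline{\beta'\alpha}$ acts on $M'$ by $q_k\circ\beta_k\circ M_\alpha$, which is zero because $q_k$ is the cokernel projection onto $\operatorname{coker}\beta_k$. (3) Symmetrically, $\partial_{\bar\beta}\tilde\mu_k(\mathbf{w})$ yields $M_\beta\circ\alpha_k\circ\phi_k$, and $\alpha_k\circ\phi_k=0$ follows from $\alpha_k\circ\gamma_k=\alpha_k\circ\phi_k\circ q_k=0$ combined with surjectivity of $q_k$. (4) For $\overline{\beta\alpha}$, one has $\partial_{\overline{\beta\alpha}}\tilde\mu_k(\mathbf{w})=\partial_{\overline{\beta\alpha}}\overline{\mathbf{w}}\pm\bar\alpha\bar\beta$, and on $M'$ the second summand evaluates to $\pi_\alpha\phi_k q_k\iota_\beta=\pi_\alpha\gamma_k\iota_\beta$, which is precisely the $(i,j)$-block $\partial_{\beta\alpha}\mathbf{w}$; with the sign convention of \cite{derksen2008quivers}, these cancel.

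For the functoriality statement, given $f\colon M\to N$ in $J(Q,\mathbf{w})$-mod, I define $\tilde F_k^+(f)$ to equal $f_i$ at every $i\neq k$, while at vertex $k$ the components of $f$ intertwine the $\beta$-diagrams of $M$ and $N$, so $f_\mathrm{out}\colon M_\mathrm{out}\to N_\mathrm{out}$ descends to a unique map $\operatorname{coker}\beta_k^M\to\operatorname{coker}\beta_k^N$ by the universal property of cokernels; this is the $k$-th component of $\tilde F_k^+(f)$. Compatibility with each $M'_{\bar\beta}$, $M'_{\bar\alpha}$, $M'_{\overline{\beta\alpha}}$ is then straightforward from the universal property together with the fact that $f$ is compatible with $\alpha_k$, $\beta_k$ and $\gamma_k$ (the latter because $\gamma_k$ is a polynomial in the $M_\gamma$). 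Preservation of identities and composition is automatic because both are inherited directly from the identity/composition on the underlying data at each vertex (using again the uniqueness clause of the cokernel construction at $k$).

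I expect the only nontrivial step to be item~(4) above, where the Jacobian relation for $\overline{\beta\alpha}$ requires the derivative of the added cubic term $\overline{\beta\alpha}\bar\alpha\bar\beta$ to cancel the contribution of $\partial_{\beta\alpha}\mathbf{w}$ exactly; this is the point where one must fix signs consistently (either in the definition of $\tilde\mu_k(\mathbf{w})$ or in one of $M'_{\bar\alpha}$, $M'_{\bar\beta}$). The other three cases are essentially formal consequences of $q_k\beta_k=0$ and $\alpha_k\phi_k=0$.
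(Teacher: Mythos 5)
Your proof is correct and follows the same overall strategy as the paper's: verify the Jacobian relations for each of the four classes of arrows of $\tilde\mu_k(Q)$, then construct $\tilde F_k^+(f)$ by keeping $f_i$ at vertices $i\neq k$ and taking the induced map on cokernels at $k$. Where the two differ is in the level of detail for the first half. The paper simply appeals to \cite[proposition 10.7]{derksen2008quivers}, while observing that the construction of $M'_k$ there (as a subquotient rather than $\operatorname{coker}\beta_k$) is slightly different; you instead re-derive the vanishing explicitly, reducing each case to the two identities $q_k\circ\beta_k=0$ and $\alpha_k\circ\phi_k=0$ (the latter coming from $\alpha_k\gamma_k=\alpha_k\phi_k q_k=0$ and surjectivity of $q_k$). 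That is the right computation and it is essentially the content of DWZ's proof transported to this variant of the mutation, so the explicitness is a genuine gain in self-containedness. Two small remarks. First, you omit the (easy but necessary) observation that $M'$ is nilpotent whenever $M$ is; since the paper works with the \emph{completed} Jacobian algebra, nilpotency is part of what makes $M'$ a $J(\tilde\mu_k(Q,\mathbf{w}))$-module, and the paper does state it. Second, your flag about the sign in case (4) is well taken: with the formulas as literally written in the paper ($\tilde\mu_k(\mathbf{w})=\overline{\mathbf{w}}+\sum\overline{\beta\alpha}\bar\alpha\bar\beta$, $\gamma_k=\sum\partial_{\beta\alpha}\mathbf{w}$, $\gamma_k=\phi_k q_k$, $M'_{\bar\alpha}=\pi_\alpha\phi_k$, $M'_{\bar\beta}=q_k\iota_\beta$) the two contributions to $\partial_{\overline{\beta\alpha}}\tilde\mu_k(\mathbf{w})$ evaluated on $M'$ add rather than cancel, so a sign must be introduced somewhere (as it is in DWZ's formulas). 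This is an issue with the paper's stated conventions rather than a gap in your argument; being explicit about where the compensating sign lives (e.g.\ in $\gamma_k$ or in $M'_{\bar\alpha}$) would make the verification airtight.
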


\begin{proof} By construction, $M'$ is a finite dimensional representation of $Q'$. It is also nilpotent since $M$ is nilpotent. One needs to show that $M'$ is annihilated by $\partial_\gamma \mathbf{w'}$ for any arrow $\gamma$ of $Q'$. This is essentially checked in \cite[proposition 10.7]{derksen2008quivers} although the \textit{mutation of decorated representations} there is slightly different from ours at the vector space $M'_k$. To see how $\tilde F_k^+$ acts on the space of morphisms, suppose that $f\colon M\rightarrow V$ is a morphism in $J(Q,\mathbf{w})$-mod and we construct a morphism $\tilde F^+_k(f)\colon \tilde F^+_k(M)\rightarrow \tilde F^+_k(V)$ by giving maps between vector spaces associated to vertices of $Q'$. We keep the maps $f_i\colon M_i\rightarrow V_i$ unchanged if $i\neq k$ and construct a map from $M'_k$ to $V'_k$ by taking the naturally induced map between cokernels. Then by construction these maps intertwine with actions of arrows in $Q'$ and thus form a morphism $\tilde F_k^+(f)$ of representations. Other requirements of an additive functor should be easy to check. 	   
\end{proof}

To define $M^\circ=\tilde F^{-}_k(M) \in J(Q',\mathbf{w'})\textmd{-mod}$, we use the following diagram. 

\[
\begin{tikzcd}
&&M_k\arrow[dr, "\beta_k"]&&\\
&M_{\textmd{in}}\arrow[ur, "\alpha_k"]&&M_{\textmd{out}}\arrow[dl,"\psi_k"]\arrow[ll, "\gamma_k"]&\\
&&\textmd{ker }\alpha_k\arrow[ul,"r_k"]&&
\end{tikzcd}
\]
Note that $\gamma_k = r_k\circ \psi_k$. Define $M^\circ$ as follows.

\begin{enumerate}[label={(\roman*)}]
	\item $M^\circ_k\colon =\ker \alpha_k$; $M^\circ_i\colon =M_i$ for $i\neq k$.
	\item  For each $\bar \alpha\ colon k\rightarrow i$, let $M^\circ_{\bar\alpha}$ be the composition
	$$M^\circ_{\bar\alpha}\colon \ker \alpha_k \xrightarrow {r_k}M_{\textmd{in}}\xrightarrow{\pi_\alpha} M_i;$$
	For each $\bar \beta\colon j\rightarrow k$ in $Q'$, let $M^\circ_{\bar \beta}$ be the composition $$M_{\bar \beta}^\circ\colon M_j\xrightarrow{\iota_\beta} M_{\textmd{out}}\xrightarrow {\psi_k}\ker \alpha_k.$$
	\item For each $\overline{\beta\alpha}\colon i\rightarrow j$, let $M^\circ_{\overline{\beta\alpha}}$ be $M_\beta\circ M_\alpha$.
	\item For any $\gamma$ not incident to $k$, let $M^\circ_\gamma = M_\gamma$.
\end{enumerate}

\begin{proposition}\label{tacgar}
The above construction gives a representation $M^\circ$ of $\tilde \mu_k(Q,\mathbf{w})$, i.e. $M^\circ$ is a $J(\tilde \mu_k(Q,\mathbf{w}))$-module. Moreover, this defines an additive functor
	\[
	 \tilde F^-_k\colon J(Q,\mathbf{w})\text{-}\mathrm{mod}\rightarrow 
J(\tilde\mu_k(Q,\mathbf{w}))\text{-}\mathrm{mod}
	\]
	such that $\tilde F_k^-(M) = M^\circ$.
\end{proposition}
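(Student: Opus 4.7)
The plan is to proceed in complete parallel with the proof of \cref{thea}, exploiting the formal duality between the cokernel construction used for $\tilde F_k^+$ and the kernel construction used for $\tilde F_k^-$. By construction, $M^\circ$ is a finite dimensional representation of $Q' = \tilde\mu_k(Q)$, and nilpotency is inherited from $M$ since $M^\circ_i = M_i$ for $i\neq k$ and $M^\circ_k = \ker\alpha_k$ is a subspace of $M_\mathrm{in}$, which is built out of $M_i$ for $i\neq k$. Thus the only substantive point is to verify that $M^\circ$ is annihilated by $\partial_\gamma \mathbf{w}'$ for every arrow $\gamma$ of $Q'$.

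For this I would split the arrows of $Q'$ into three groups: arrows not incident to $k$, the new composite arrows $\overline{\beta\alpha}$, and the reversed arrows $\bar\alpha$, $\bar\beta$. The relation $\gamma_k = r_k\circ \psi_k$ together with the identities $\alpha_k\circ \gamma_k = 0$ and $\gamma_k\circ \beta_k = 0$ proved in \cite[Lemma 10.6]{derksen2008quivers} plays the same role that the cokernel factorization played in the proof for $\tilde F_k^+$. The verification then follows the same pattern as in \cite[proposition 10.7]{derksen2008quivers}, with the only difference being that at vertex $k$ we replace their construction by our choice $M^\circ_k = \ker\alpha_k$; the identities one needs to check are precisely those that say the compositions arising from cyclic derivatives along paths through $k$ vanish on the subspace $\ker\alpha_k$, and this is immediate from the definitions of $r_k$, $\psi_k$.

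For functoriality, given a morphism $f\colon M\rightarrow V$ in $J(Q,\mathbf{w})\text{-mod}$, I would set $\tilde F_k^-(f)_i = f_i$ for $i\neq k$ and define $\tilde F_k^-(f)_k\colon \ker\alpha_k^M\rightarrow \ker\alpha_k^V$ as the restriction of $f_\mathrm{in} = \bigoplus_{t(\alpha)=k}f_{s(\alpha)}$. This restriction is well-defined because $f$ commutes with the $M_\alpha$'s, so $f_\mathrm{in}(\ker\alpha_k^M)\subset \ker\alpha_k^V$; dually to the cokernel case, one checks that this family of maps intertwines with the new arrows $\bar\alpha$, $\bar\beta$, and $\overline{\beta\alpha}$ using the fact that $r_k$ is the inclusion of $\ker\alpha_k$ and $\psi_k$ is characterized by its composition with $r_k$. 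Composing with the equivalence $J(\tilde\mu_k(Q,\mathbf{w}))\text{-mod}\simeq J(\mu_k(Q,\mathbf{w}))\text{-mod}$ from \cref{splitting} then yields the functor $F_k^-$.

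The main obstacle, as in the $\tilde F_k^+$ case, is the bookkeeping for verifying the Jacobian relations of $\tilde\mathbf{w}' = \overline{\mathbf{w}} + \sum \overline{\beta\alpha}\bar\alpha\bar\beta$ on $M^\circ$, especially the cyclic derivatives with respect to $\bar\alpha$ and $\bar\beta$, which produce compositions mixing $\psi_k$, $r_k$, and the maps $M_\gamma$ for $\gamma$ in $\mathbf{w}$. The key input is that the definition of $\psi_k$ as the factorization of $\gamma_k$ through $\ker\alpha_k$ is precisely designed to make these mixed compositions vanish, dual to the role $q_k$ and $\phi_k$ played in \cref{thea}.
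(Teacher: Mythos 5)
Your proposal is correct and takes essentially the same approach as the paper: the paper's proof is just a one-line remark that the argument is analogous to that of \cref{thea} (which invokes \cite[proposition 10.7]{derksen2008quivers} for the relations and notes that the functor on morphisms is given by the naturally induced map between kernels), and your write-up is a faithful expansion of exactly that argument, correctly noting that $f_\mathrm{in}$ restricts to $\ker\alpha_k^M\rightarrow\ker\alpha_k^V$ because $f$ intertwines the $\alpha_k$'s.
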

\begin{proof}
	The proof is analogous to \cref{thea}. Similarly, this defines a functor since the map between kernels is naturally induced.
\end{proof}

Now we have two functors $\tilde F^{\pm}_k\colon J(Q,\mathbf{w})\text{-}\mathrm{mod}\rightarrow 
J(\tilde\mu_k(Q,\mathbf{w}))\text{-}\mathrm{mod}$. To extend the target of these functors to $J(\mu_k(Q,\mathbf{w}))\text{-}\mathrm{mod}$, one uses the following equivalence
\[
R\colon J(\tilde\mu_k(Q,\mathbf{w}))\text{-}\mathrm{mod} \rightarrow J(\mu_k(Q,\mathbf{w}))\text{-}\mathrm{mod}
\]
described in \cref{splitting}.

\begin{definition}[Generalized reflection functors]\label{reflectionqp}
For a $k$-mutable QP $(Q,\mathbf{w})$, we define two functors between module categories:
\[
F_k^\pm\colon = R\circ \tilde F_k^\pm \colon J(Q, \mathbf {w})\text{-mod} \rightarrow J(\mu_k(Q, \mathbf{w}))\text{-mod}.
\]
These are what we called the \textit{generalized reflection functors} or \textit{mutations of representations}.
\end{definition}

\subsubsection{} In what follows, we work with SPs instead of QPs. Note that two mutations $\mu_k^\pm$ of SPs give the same QP: 
\[
\mu_k(Q(\mathbf{s}),\mathbf{w}) \cong (Q(\mu_k^\pm (\mathbf{s})), \mu_k(\mathbf{w})).
\]
We assign one mutation of representations to one mutation of SPs of the corresponding sign. That is we define
\begin{equation}\label{reflectionsp}
F_k^\pm\colon = R\circ \tilde \mu_k^\pm \colon J(\mathbf {s,w})\text{-mod} \rightarrow J(\mu_k^\pm (\mathbf{s,w}))\text{-mod}.
\end{equation}
The advantage of working with SPs instead of QPs, which we shall explain below, is that it is easier to see how dimension vectors of certain representations get reflected under mutations. For any SP $(\mathbf{s,w})$, the Grothendieck group $K_0(J(\mathbf{s,w})\text{-mod})$ is naturally identified with the lattice $N$ via $[S_i]=s_i$. Let $(\mathbf{s',w'}) = \mu_k^+ (\mathbf{s,w})$ and $(\mathbf{s'',w''}) = \mu_k^- (\mathbf{s,w})$. Straightforward calculations show 
\begin{equation}\label{dimensionvectors}
[S_i]=
\begin{cases}
[F_k^{+}(S_i)]=[F_k^-(S_i)]\ &\mathrm{for}\ i\neq k,\\
-[S'_k]=-[S''_k]\ &\mathrm{for}\ i=k.
\end{cases}
\end{equation}
This observation can be generalized. There are actually subcategories whose objects' dimension vectors in $N$ are invariant under mutations. Denote $J(\mathbf{s,w})\text{-mod}$ and $J(\mathbf{s',w'})\text{-mod}$ by $\mathcal A$, $\mathcal A'$ respectively. We define the following full subcategories of $\mathcal A$ (and of $\mathcal A'$ accordingly)
\[
 \mathcal A_{k,-}= {^{\perp}S_{k}} \colon = \{ M \in \mathcal A\mid \mathrm{Hom}   (M,S_k) =0 \};
\]
\[
\mathcal A_{k,+} = S_k^\perp\colon =\{M\in \mathcal A\mid \mathrm {Hom}(S_k,M)=0\}
\]
and denote by $\langle S_k \rangle$ the full subcategory of $\mathcal A$ consisting of direct sums of $S_k$.

Note that $\mu_k^-(\mathbf{s',w'})$ is right-equivalent to $(\mathbf{s,w})$ by \cref{involutionsp}. So the functor $F_k^-$ can be regarded as from $\mathcal A'$ to $\mathcal A$. 
\begin{theorem}\label{equiv}
The generalized reflection functors $F_k^+\colon \mathcal A\rightarrow \mathcal A'$ and $F_k^- \colon \mathcal A' \rightarrow \mathcal A$ have the following properties.
\begin{enumerate}[label={(\roman*)}]
    \item $F_k^+$ is right exact and $F_k^-$ is left exact. They are adjoint to each other.
    \item $F_k^+(\langle S_k\rangle) = 0$ and $F_k^-(\langle S'_k\rangle) = 0$.
    \item $F_k^+(\mathcal A_{k,+})\subset A'_{k,-}$ and $F_k^-(\mathcal A'_{k,-})\subset A_{k,+}$.
    \item The restrictions $F_k^+\colon \mathcal A_{k,+}\rightarrow \mathcal A_{k,-}^{'}$ and $F_k^- \colon \mathcal A_{k,-}^{'} \rightarrow \mathcal A_{k,+}$ are inverse equivalences. Moreover, these equivalences preserve short exact sequences.
    \item If $V$ is in $\mathcal A_{k,+}$, then $[V] = [F_k^+(V)]\in N$ and if $W$ is in $\mathcal A'_{k,-}$, then $[W] = [F_k^-(W)]\in N$.
\end{enumerate}
\end{theorem}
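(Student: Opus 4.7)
The plan is to reduce every assertion to an analysis of the data at vertex $k$. Because the functors $\tilde F_k^\pm$ leave all vector spaces and arrows not incident to $k$ untouched, and the equivalence $R$ from \cref{splitting} preserves every property in question (exactness, Hom spaces, adjunction, Grothendieck class), it suffices to prove the statements for $\tilde F_k^\pm$ with $\tilde\mu_k(\mathbf{s,w})$ in place of $\mu_k(\mathbf{s,w})$. Throughout I will use the identifications $\mathrm{Hom}(S_k,M)=\ker\beta_k$ and $\mathrm{Hom}(M,S_k)=\mathrm{coker}\,\alpha_k$, both of which follow directly from the definition of a representation homomorphism and the fact that $S_k$ has support only at $k$.

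For parts (i) and (ii): given a short exact sequence $0\to L\to M\to N\to 0$ in $\mathcal A$, the induced diagrams at vertex $k$ are short exact; the snake lemma applied to the three $\beta_\bullet$'s yields right exactness of $\mathrm{coker}\,\beta_k$, and applied to the three $\alpha_\bullet$'s yields left exactness of $\ker\alpha_k$. For the adjunction, away from $k$ a morphism $F_k^+(M)\to N'$ has the same components as a morphism $M\to F_k^-(N')$; at vertex $k$, the universal properties of $\mathrm{coker}\,\beta_k$ and $\ker\alpha'_k$, together with the fact that the arrow-actions of $N'$ are shared between $F_k^\pm$ constructions, exhibit a natural bijection $\mathrm{Hom}(\mathrm{coker}\,\beta_k,N'_k)\leftrightarrow\mathrm{Hom}(M_k,\ker\alpha'_k)$. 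Part (ii) is immediate: for $M=S_k$ one has $M_\mathrm{in}=M_\mathrm{out}=0$, so $\mathrm{coker}\,\beta_k=0$ and all other components already vanish; after passing through $R$ the resulting module is trivial.

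For parts (iii) and (iv): the identifications above show that $M\in\mathcal A_{k,+}$ is equivalent to $\beta_k$ being injective, and $N'\in\mathcal A'_{k,-}$ is equivalent to the analogous $\alpha'_k$ being surjective. If $M\in\mathcal A_{k,+}$, the new incoming map at $k$ in $F_k^+(M)$ is exactly the quotient $q_k\colon M_\mathrm{out}\to\mathrm{coker}\,\beta_k$, which is surjective, proving (iii). Applying $F_k^-$ further, the new vector space at $k$ becomes $\ker q_k=\mathrm{im}\,\beta_k\cong M_k$ by injectivity of $\beta_k$, giving a natural isomorphism $F_k^-F_k^+(M)\cong M$ (identity elsewhere). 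The opposite composition is handled symmetrically. For preservation of short exact sequences: on the restricted subcategory $\mathcal A_{k,+}$ the right-exact functor $F_k^+$ is also left exact, since injectivity of $\beta_k$ on every object eliminates the snake-lemma obstruction, so the equivalence is exact in both directions.

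Part (v) is a routine dimension computation. Setting $d_i=\dim V_i$, injectivity of $\beta_k$ gives $\dim F_k^+(V)_k=\sum_{k\to j}d_j-d_k$. Substituting $s'_i=s_i+[-B_{ik}]_+s_k$ for $i\ne k$ and $s'_k=-s_k$ into $[F_k^+(V)]=\sum_i\dim F_k^+(V)_i\cdot s'_i$, the coefficient of $s_k$ collapses via the identity $\sum_{i\ne k}d_i[-B_{ik}]_+=\sum_{k\to j}d_j$, leaving $\sum_i d_i s_i=[V]$; the other case is symmetric. The hardest step I anticipate is setting up the adjunction in (i) so that the Hom bijection at vertex $k$ is genuinely natural and compatible with the $J$-module structure on both sides; this ultimately rests on the factorizations $\gamma_k=\phi_k\circ q_k=r_k\circ\psi_k$ imposed by the potential relations, so that the cokernel and kernel constructions intertwine correctly with the arrows of the mutated quiver.
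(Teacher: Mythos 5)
Your argument follows the same route as the paper's proof: reduce to $\tilde F_k^\pm$, use the two-term complex at vertex $k$ for exactness, invoke the $\mathrm{coker}\,\beta_k$/$\ker\alpha_k$ picture for (ii)--(v), and exploit injectivity of $\beta_k$ on $\mathcal A_{k,+}$ both to get $F_k^-F_k^+(M)\cong M$ and to kill the long-exact-sequence obstruction to exactness of the restricted equivalence. The one place you are noticeably more compressed than the paper is the adjunction: the paper makes explicit that both Hom-spaces reduce to a family of maps $\{f_i\}_{i\neq k}$ subject to the single constraint $\alpha^W_k\circ f\circ\beta^V_k=0$, and this rests on the Derksen--Weyman--Zelevinsky identification $\bigoplus_{i,j\neq k}e_iJ(\mathbf{s,w})e_j\cong\bigoplus_{i,j\neq k}e_iJ(\mu_k^+(\mathbf{s,w}))e_j$ of the subalgebras away from $k$ --- this is the precise content hiding behind your phrase about the ``arrow-actions being shared between the $F_k^\pm$ constructions'' and should be stated explicitly (also, $\mathrm{Hom}(M,S_k)$ is naturally $(\mathrm{coker}\,\alpha_k)^*$ rather than $\mathrm{coker}\,\alpha_k$, though this is harmless since you only use its vanishing).
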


\begin{proof} For the first part in $(i)$, it suffices to prove $\tilde F_k^+$ is right exact and $\tilde F_k^-$ is left exact. Suppose we have an exact sequence 
\[
U\rightarrow V\rightarrow W\rightarrow 0.
\]
in $\mathcal A$. This implies we have an exact sequence of complexes of vector spaces concentrated on degree $1$ and $0$ which induces an exact sequence on $H_0$'s (the first row of the following diagram).
\[
\begin{tikzcd}
\tilde F_k^+(U)_k\arrow[r]&\tilde F_k^+(V)_k\arrow[r]&\tilde F_k^+(W)_k\arrow[r]&0\\
U_\mathrm{out}\arrow[r]\arrow[u]&V_\mathrm{out}\arrow[r]\arrow[u]&W_\mathrm{out}\arrow[r]\arrow[u]&0\\
U_{k}\arrow[r]\arrow[u,"\beta_k"]&V_{k}\arrow[r]\arrow[u,"\beta'_k"]&W_{k}\arrow[r]\arrow[u,"\beta''_k"]&0\\
\end{tikzcd}
\]
This shows $\tilde F_k^+$ is right exact and so is $F_k^+$. The left exactness of $F_k^-$ is proven similarly. 

For adjointness, let $V$ be in $\mathcal A$ and $W$ be in $\mathcal A'$. We need to show there is a natural isomorphism between
\[
\mathrm{Hom}_\mathcal A(V, F_k^-(W)) \cong \mathrm{Hom}_{\mathcal A'}(F_k^+(V), W). 
\]
$\mathrm{Hom}_\mathcal A(V, F_k^-(W))$ is given by the space of linear maps $\{f_i\colon V_i\rightarrow F_k^-(W)_i\}_{i=1}^n$ intertwining with the action of $J(\mathbf{s,w})$. However, since in representation $F_k^-(W)$, the map $F_k^-(W)_k\rightarrow W_\mathrm{in}$ is injective, $f_k$ is uniquely determined by $f\colon V_\mathrm{out}\rightarrow W_\mathrm{in}$ to let the following diagram commute 
\[
\begin{tikzcd}
V_k\arrow[r,"\beta^V_k"]\arrow[d, "f_k"]&V_\mathrm{out}\arrow[d,"f"]\\
F_k^-(W)_k\arrow[r,"r_k"]&W_\mathrm{in}.
\end{tikzcd}
\]
The existence of $f_k$ is then given by the condition that the following composition
\[
\begin{tikzcd}
V_k\arrow[r,"\beta^V_k"]&V_\mathrm{out}\arrow[r,"f"]&W_\mathrm{in}\arrow[r,"\alpha^W_k"]&\mathrm{coker}(r_k) = W_k
\end{tikzcd}
\]
is zero. Then $\mathrm{Hom}_\mathcal A(V,F_k^-(W)$ is the space of maps $\{f_i\}_{i\neq k}$ intertwining with the action of $J(\mathbf{s,w})$ such that $\alpha^W_k\circ f\circ \beta_k^V$ is zero. Similarly $\mathrm{Hom}_{\mathcal A'}(F_k^+(V),W)$ is the space of maps $\{g_i\colon F_k^+(V)_i\rightarrow W_i \}_{i\neq k}$ intertwining with the action of $J(\mu_k^+(\mathbf{s,w}))$ such that $\alpha^W_k\circ g\circ \beta_k^V$ is zero.

Note that $V_i$ is canonically identified with $F_k^+(V)_i$ if $i\neq k$ and so is for $W$ and $F_k^-(W)$. It is proved in \cite[corollary 6.6]{derksen2008quivers} that the subalgebras are isomorphic 
\[
\bigoplus_{i,j\neq k}e_iJ(\mathbf{s,w})e_j\cong
\bigoplus_{i,j\neq k}e_iJ(\mu_k^+(\mathbf{s,w}))e_j
\]
and by the construction of $F_k^\pm$ on representations, the actions of these subalgebras on $\bigoplus_{i\neq k} V_i$ and $\bigoplus_{i\neq k}F_k^+(V)_i$ are also identified via the isomorphism. The same is true for $W$ and $F_k^-(W)$. It then follows that $\mathrm{Hom}_{\mathcal A'}(F_k^+(V), W)$ is given by the exact same space as $\mathrm{Hom}_\mathcal A(V,F_k^-(W))$. This proves the adjointness of $F_k^+$ and $F_k^-$.

$(ii)$ and $(iii)$ follow directly from the construction of $F^{\pm}_k$.

By the adjointness and $(iii)$, to prove the first part of $(iv)$, we only need to show 
\[
F_k^-(F_k^+(V))\cong V\quad \mathrm{and}\quad
F_k^+(F_k^-(W))\cong V\quad
\]
if $V\in \mathcal A_{k,+}$ and $W\in \mathcal A'_{k,-}$. Already in the argument of proving the adjointness, we know the action of the subalgebra $\bigoplus_{i,j\neq k}e_iJ(\mathbf{s,w})e_j$ does not change on $F_k^-(F_k^+(V))$. Then one can easily check the actions of arrows adjacent to vertex $k$ also do not change. To prove that the exactness is preserved, we use the same diagram as in the proof of $(i)$. Suppose now we have a short exact sequence of objects in $\mathcal A_{k,+}$
\[
0\rightarrow U\rightarrow V \rightarrow W\rightarrow 0.
\]
Note that the exact sequence of complexes induces a long exact sequence also involving $H_1$'s:
\[
\begin{tikzcd}
\dots\arrow[r]&\ker \beta_k''\arrow[r]&\tilde F_k^+(U)_k\arrow[r]&\tilde F_k^+(V)_k\arrow[r]&\tilde F_k^+(W)_k\arrow[r]&0.
\end{tikzcd}
\]
However $\ker \beta_k''$ vanishes by our assumption $W\in \mathcal A_{k,+}$. Then the exactness follows. The proof for $F_k^-$ is similar.

$(v)$ is a direct computation as in (\ref{dimensionvectors}).
\end{proof}

\begin{remark}\label{ses}
The subcategory $\mathcal A_{k,-}$ is closed under taking quotients while the subcategory $\mathcal A_{k,+}$ is closed under taking subobjects.
\end{remark}

\subsection{Semistable representations} As mentioned in last section, the Grothendieck group $K_0(\mathcal A)$ is identified with the lattice $N$ in a natural way where $\mathcal A$ denotes the module category $J(\mathbf{s,w})$-mod. For $m\in M_\mathbb R$, denote the natural pairing $m([V])$ by $m(V)$.

\begin{definition}\label{semistablerep}
Given $m\in M_\mathbb R$, a module $V\in \mathcal A$ is $m$\textit{-semistable} (resp.\textit{-stable}) if
\begin{enumerate}[label={(\roman*)}]
    \item $m(V)=0$;
    \item $m(W)\leq 0$ (resp. $<0$) for any non-zero proper submodule $W\subset V$.
\end{enumerate}
\end{definition}

\begin{lemma}
Let $m\in M_\mathbb R$ such that $m(s_k)<0$ and $V$ be a module in the subcategory $\mathcal A_{k,-}\subset \mathcal A$. Then $V$ is $m$-semistable if and only if 
\begin{enumerate}[label={(\roman*)}]
    \item $m(V)=0$;
    \item for any $W\in \mathcal A_{k,-}$ and $W\subset V$, $m(W)\leq 0$.
\end{enumerate}
\end{lemma}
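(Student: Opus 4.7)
The forward direction is immediate: if $V$ is $m$-semistable then (i) holds by definition, and (ii) is just the defining inequality specialized to submodules that happen to lie in $\mathcal A_{k,-}$.

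For the converse, let $W \subset V$ be an arbitrary submodule; we must show $m(W)\leq 0$. The plan is to replace $W$ by a submodule $W''\subset W$ that already lies in $\mathcal A_{k,-}$, and then control the difference. Concretely, I will define an iterated top-truncation at the vertex $k$. Set $W^{(0)}=W$ and, for each $i\geq 0$, let $W^{(i+1)}$ be the kernel of the canonical surjection
\[
W^{(i)}\longrightarrow \mathrm{top}_k(W^{(i)})\colon=\mathrm{Hom}_{\mathcal A}(W^{(i)},S_k)^*\otimes S_k,
\]
so that each quotient $W^{(i)}/W^{(i+1)}$ is isomorphic to a direct sum of copies of $S_k$. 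Since $W$ is finite-dimensional, this descending chain stabilizes; let $W''$ denote the eventual value. By construction $\mathrm{Hom}_{\mathcal A}(W'',S_k)=0$, i.e.\ $W''\in \mathcal A_{k,-}$, and the successive quotients give a filtration of $W/W''$ whose subquotients are all direct sums of $S_k$. Consequently $[W/W''] = j\cdot s_k$ in $N$ for some integer $j\geq 0$.

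Now compute in $N$:
\[
m(W) \;=\; m(W'') \,+\, j\cdot m(s_k).
\]
Since $W''\subset V$ lies in $\mathcal A_{k,-}$, hypothesis (ii) gives $m(W'')\leq 0$; since $j\geq 0$ and by assumption $m(s_k)<0$, the second term is also $\leq 0$. Therefore $m(W)\leq 0$, which together with (i) shows $V$ is $m$-semistable.

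The only nontrivial step is verifying that the iterated construction of $W''$ works, i.e.\ that each subquotient $W^{(i)}/W^{(i+1)}$ is a sum of copies of $S_k$ and that stabilization yields a submodule in $\mathcal A_{k,-}$. Both are immediate from the finite length of $W$ and the definition of $\mathrm{top}_k$, so there is no real obstacle; the key conceptual input is simply that $m(s_k)<0$ makes adding $S_k$-layers \emph{lower} the value of $m$, so that the worst-case submodule (from the point of view of semistability) can always be taken to lie in $\mathcal A_{k,-}$.
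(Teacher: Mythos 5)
Your proof is correct and follows essentially the same strategy as the paper's: replace an arbitrary submodule $W\subset V$ by a submodule $W''\subset W$ lying in $\mathcal A_{k,-}$, observe that $[W/W'']$ is a non-negative multiple of $s_k$, and use $m(s_k)<0$ to conclude $m(W)\leq m(W'')\leq 0$. The paper simply invokes the existence of a unique maximal such submodule of $W$ (a consequence of $\mathcal A_{k,-}$ being closed under quotients, hence sums), whereas you construct a suitable $W''$ explicitly by iterated top-truncation at $k$; the two are the same idea, with your version supplying the construction the paper leaves implicit.
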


\begin{proof}
The only if part follows from definition. We prove the if part. Let $V$ be a $J$-module in $\mathcal A_{k,-}$. Then every submodule $W$ of $V$ has a unique maximal submodule $W'$ without any quotient isomorphic to $S_k$, i.e. $W'\in \mathcal A_{k,-}$. Since $m(s_k)<0$, we have $m(W')\geq m(W)$. Therefore in order to check the semistability of $V$, it suffices to examine all its subobjects in $\mathcal A_{k,-}$.
\end{proof}

We also have the following analogous lemma for $\mathcal A_{k,+}$. The proof is similar.

\begin{lemma}
Let $m\in M_\mathbb R$ such that $m(s_k)>0$ and $V$ be a module in the subcategory $\mathcal A_{k,+}\subset \mathcal A$. Then $V$ is $m$-semistable if and only if 
\begin{enumerate}[label={(\roman*)}]
    \item $m(V)=0$;
    \item for any $W\in \mathcal A_{k,+}$ being a quotient object of $V$, $m(W)\geq 0$.
\end{enumerate}
\end{lemma}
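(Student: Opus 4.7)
The only if direction is immediate from the definition. For the if direction, the plan is to exploit the dual formulation of semistability: given $m(V)=0$, the module $V$ is $m$-semistable if and only if $m(V/W)\geq 0$ for every submodule $W\subset V$, because $m(V/W)=-m(W)$. This mirrors the previous lemma's strategy but with submodules replaced by quotients, reflecting the sign change $m(s_k)<0$ versus $m(s_k)>0$.

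The core step will be: given an arbitrary submodule $W\subset V$, enlarge $W$ to a minimal $U$ with $W\subset U\subset V$ such that $V/U\in \mathcal A_{k,+}$. I would construct $U$ by iteratively quotienting out the $S_k$-isotypic part of the socle of $V/W$; since $J$-modules are finite dimensional this stabilizes and gives the maximal quotient of $V/W$ in $\mathcal A_{k,+}$. Alternatively, \cref{ses} tells us $\mathcal A_{k,+}$ is closed under subobjects, so whenever $V/U_1,V/U_2\in \mathcal A_{k,+}$ the inclusion $V/(U_1\cap U_2)\hookrightarrow V/U_1\oplus V/U_2$ shows $V/(U_1\cap U_2)\in \mathcal A_{k,+}$, and finite-dimensionality again produces a minimum.

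By construction every composition factor of $U/W$ is isomorphic to $S_k$, so $m(U/W)\geq 0$ since $m(s_k)>0$. As $V/U$ is a quotient of $V$ lying in $\mathcal A_{k,+}$, hypothesis (ii) gives $m(V/U)\geq 0$. Summing via $m(V/W)=m(U/W)+m(V/U)$ then yields $m(V/W)\geq 0$, as required. The main obstacle I anticipate is simply verifying the existence of the maximal quotient of $V/W$ in $\mathcal A_{k,+}$ (equivalently, the minimal $U$); this is essentially a Serre-subcategory argument, and once it is in place the rest of the proof is formally dual to that of the preceding lemma.
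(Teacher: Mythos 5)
Your proof is correct and is exactly the argument the paper has in mind: the paper's proof of this lemma is literally the one-line remark ``The proof is similar,'' referring to the preceding lemma for $\mathcal A_{k,-}$, and what you have written out is precisely the dual of that argument (replacing the maximal $\mathcal A_{k,-}$-submodule of a submodule by the maximal $\mathcal A_{k,+}$-quotient of a quotient, and using $m(s_k)>0$ in place of $m(s_k)<0$). Both your constructions of the minimal $U$ with $V/U\in\mathcal A_{k,+}$ --- the iterated $S_k$-isotypic socle and the intersection argument via closure of $\mathcal A_{k,+}$ under subobjects --- are valid and standard, so there is no gap.
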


By \cref{ses}, one can rephrase above two lemmas in terms of short exact sequences in $\mathcal A_{k,+}$ and $\mathcal A_{k,-}$ as follows.

\begin{lemma}\label{rephrase}
Let $m\in M_\mathbb R$ such that $m(s_k)>0$ (resp. $<0$). Let $V$ be a module in the subcategory $\mathcal A_{k,+}\subset \mathcal A$ (resp. $\mathcal A_{k,-}$). Then $V$ is $m$-semistable if and only if 
\begin{enumerate}[label={(\roman*)}]
    \item $m(V)=0$;
    \item  for any short exact sequence 
$
0\rightarrow V'\rightarrow V\rightarrow V''\rightarrow0
$
in $\mathcal A_{k,+}$ (reps. $\mathcal A_{k,-}$), $m(V')\leq 0$.
\end{enumerate}
\end{lemma}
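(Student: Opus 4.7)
The plan is to derive \cref{rephrase} as an immediate corollary of the two preceding lemmas, invoking \cref{ses} and the additivity of $m$ on short exact sequences. The only content is to check that in each case the family of "test objects" used in the earlier lemma is in natural bijection with the family of short exact sequences whose three terms all lie in the relevant subcategory, and that the inequality translates correctly under $m(V)=m(V')+m(V'')=0$.

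First I would dispatch the $\mathcal{A}_{k,-}$ case. By the preceding lemma, if $m(s_k)<0$ and $V\in\mathcal{A}_{k,-}$, then $V$ is $m$-semistable iff $m(V)=0$ and $m(W)\leq 0$ for every submodule $W\subset V$ with $W\in\mathcal{A}_{k,-}$. Given such a $W$, the short exact sequence $0\to W\to V\to V/W\to 0$ has $V/W\in\mathcal{A}_{k,-}$ because $\mathcal{A}_{k,-}$ is closed under quotients by \cref{ses}; conversely, any short exact sequence $0\to V'\to V\to V''\to 0$ in $\mathcal{A}_{k,-}$ exhibits $V'$ as a submodule of $V$ that already lies in $\mathcal{A}_{k,-}$. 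Thus the indexing sets and the inequalities match tautologically.

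Next the $\mathcal{A}_{k,+}$ case. By the other preceding lemma, if $m(s_k)>0$ and $V\in\mathcal{A}_{k,+}$, then $V$ is $m$-semistable iff $m(V)=0$ and $m(V'')\geq 0$ for every quotient $V''$ of $V$ lying in $\mathcal{A}_{k,+}$. For such $V''$ with kernel $V'\subset V$, the sequence $0\to V'\to V\to V''\to 0$ is a short exact sequence in $\mathcal{A}_{k,+}$ because $\mathcal{A}_{k,+}$ is closed under subobjects by \cref{ses}, so $V'\in\mathcal{A}_{k,+}$ automatically; conversely every short exact sequence in $\mathcal{A}_{k,+}$ produces a quotient $V''\in\mathcal{A}_{k,+}$. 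Finally $m(V)=m(V')+m(V'')=0$ converts $m(V'')\geq 0$ into the equivalent statement $m(V')\leq 0$, identifying the two formulations. There is no real obstacle here: \cref{ses} is exactly what ensures the third term of each relevant short exact sequence lies in the appropriate subcategory, and the only small care needed is to avoid conflating the two opposite directions ("sub" versus "quotient") in the two halves of the lemma.
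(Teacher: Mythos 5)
Your proposal is correct and matches the paper's intent exactly: the paper offers no displayed proof, merely the remark ``By \cref{ses}, one can rephrase above two lemmas in terms of short exact sequences,'' and your write-up supplies precisely the omitted bookkeeping — using the closure of $\mathcal A_{k,-}$ under quotients (resp.\ $\mathcal A_{k,+}$ under subobjects) to identify submodules (resp.\ quotients) in the subcategory with short exact sequences all of whose terms lie in the subcategory, and then using $m(V)=m(V')+m(V'')=0$ to convert $m(V'')\geq 0$ into $m(V')\leq 0$ in the $\mathcal A_{k,+}$ case.
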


Recall that we put $(\mathbf{s''},\mathbf{w''}) = \mu_k^-(\mathbf{s,w})$ and denote $J(\mathbf{s''},\mathbf{w''})$ by $\mathcal A''$. The following is the key proposition that connects semistable representations in $\mathcal A$ with the ones in $\mathcal A'$ and $\mathcal A''$. 

\begin{proposition}\label{ifm}
Let $V$ be in $\mathcal A$ not isomorphic to $S_k$ and $m\in M_\mathbb R$. If $m(s_k)>0$ (resp. $<0$), then $V$ is $m$-semistable if and only if $F_k^+(V)\in \mathcal A'$ (resp. $F_k^-(V)\in \mathcal A''$) is $m$-semistable. 
\end{proposition}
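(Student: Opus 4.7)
The plan is to deduce the proposition by combining two earlier ingredients: the reformulation of $m$-semistability in \cref{rephrase}, and the fact from \cref{equiv}(iv)(v) that $F_k^+$ restricts to an equivalence $\mathcal A_{k,+}\xrightarrow{\sim}\mathcal A'_{k,-}$ preserving short exact sequences and dimension vectors in $N$ (with the symmetric statement for $F_k^-$). I would treat the case $m(s_k)>0$ in detail; the case $m(s_k)<0$ is entirely symmetric, with $F_k^+$ replaced by $F_k^-$, $\mathcal A_{k,+}$ by $\mathcal A_{k,-}$, and $\mathcal A'_{k,-}$ by $\mathcal A''_{k,+}$.

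First I would reduce both sides of the ``iff'' to the appropriate subcategory. If $V\in\mathcal A$ is $m$-semistable with $V\not\cong S_k$, then $V\in \mathcal A_{k,+} = S_k^\perp$: any embedding $S_k\hookrightarrow V$ would make $S_k$ a nonzero proper submodule with $m(S_k)=m(s_k)>0$, contradicting \cref{semistablerep}(ii). Symmetrically, under $\mu_k^+$ one has $s'_k=-s_k$ and hence $m(s'_k)=-m(s_k)<0$, so if $F_k^+(V)\in\mathcal A'$ is $m$-semistable and not isomorphic to $S'_k$, then $F_k^+(V)\in \mathcal A'_{k,-} = {}^{\perp}S'_k$, since any nonzero quotient $F_k^+(V)\twoheadrightarrow S'_k$ would produce a proper submodule of positive $m$-value.

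Next I would apply \cref{rephrase} on both sides. For $V\in\mathcal A_{k,+}$ and $m(s_k)>0$, $m$-semistability is equivalent to $m(V)=0$ together with $m(V')\leq 0$ for every short exact sequence $0\to V'\to V\to V''\to 0$ in $\mathcal A_{k,+}$. For $F_k^+(V)\in\mathcal A'_{k,-}$ and $m(s'_k)<0$, it is equivalent to $m(F_k^+(V))=0$ together with $m(W')\leq 0$ for every short exact sequence $0\to W'\to F_k^+(V)\to W''\to 0$ in $\mathcal A'_{k,-}$.

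Finally these two conditions should match term by term under the equivalence $F_k^+\colon\mathcal A_{k,+}\xrightarrow{\sim}\mathcal A'_{k,-}$: by \cref{equiv}(iv) it gives a bijection between short exact sequences through $V$ in $\mathcal A_{k,+}$ and short exact sequences through $F_k^+(V)$ in $\mathcal A'_{k,-}$, and by \cref{equiv}(v) corresponding terms have the same class in $N$. Hence $m(V)=m(F_k^+(V))$ and $m(V')=m(W')$ for corresponding terms, so the two semistability characterizations coincide. The only real obstacle to watch is the sign bookkeeping $s'_k=-s_k$, which is precisely what pairs each sign case of $m(s_k)$ with the correct ``$+$''/``$-$'' choice of reflection functor and of subcategory.
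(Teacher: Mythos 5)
Your proposal reproduces the paper's own argument almost verbatim: reduce (using $m(s_k)>0$ and $V\not\cong S_k$) to $V\in\mathcal A_{k,+}$, invoke \cref{rephrase} to recast semistability in terms of short exact sequences within $\mathcal A_{k,+}$ (resp.\ $\mathcal A'_{k,-}$), and transport these via the exact equivalence of \cref{equiv}(iv)(v) which preserves classes in $N$. Your added remark that one must also place $F_k^+(V)$ in $\mathcal A'_{k,-}$ before applying \cref{rephrase} on that side is a reasonable, slightly more explicit version of the same step; the paper leaves it implicit.
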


\begin{proof}

First note that we only need to prove the case $m(s_k)>0$ and the other case follows. Now let $V$ be any $m$-semistable object in $\mathcal A$ and $m(s_k)>0$. Immediately, $V\in \mathcal A_{k,+}$ since $V$ has no $S_k$ as subobject. By \cref{rephrase}, $V$ is $m$-semistable if and only if for every short exact sequence $0\rightarrow V'\rightarrow V\rightarrow V''\rightarrow 0$ in $\mathcal A_{k,+}$, $m(V')< 0$. Apply the functor $F_k^+\colon \mathcal A_{k,+}\rightarrow \mathcal A_{k,-}^{'}$ which preserves short exact sequences and also does not change their dimension vectors in $N$ by \cref{equiv}. Therefore $V$ is $m$-semistable if and only if for every short exact sequence $0\rightarrow W'\rightarrow \mu_k^+(V)\rightarrow W''\rightarrow 0$ in $\mathcal A_{k,+}$, $m(W')\leq 0$, which is equivalent to $F_k^+(V)$ being $m$-semistable in $\mathcal A'$ again by \cref{rephrase}.

\end{proof}

Denote the subcategory of $m$-semistable modules in $\mathcal A$ by $\mathcal A(m)$. Summarizing the results in this section and by (iv) of \cref{equiv}, we have the following proposition. 
\begin{proposition}\label{fmg0}
For any $m\in M_\mathbb R$ such that $m(s_k)>0$ (resp. $<0$), the functor $F_k^+$ (resp. $F_k^-$) $\colon \mathcal A(m) \rightarrow \mathcal A'(m)$ (resp. $\mathcal A''(m)$) is an equivalence of abelian categories.
\end{proposition}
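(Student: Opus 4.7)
The plan is that the proposition is essentially a direct assembly of results already established in this section: Theorem \ref{equiv}(iv), which gives the equivalence $F_k^+\colon \mathcal A_{k,+} \to \mathcal A'_{k,-}$ preserving short exact sequences, and Proposition \ref{ifm}, which matches semistability across $F_k^+$. The only work to do is to check that, under the sign hypothesis $m(s_k)>0$, the categories $\mathcal A(m)$ and $\mathcal A'(m)$ land inside the respective ``$\perp$-subcategories'' where those two theorems apply.

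First I would show $\mathcal A(m)\subset \mathcal A_{k,+}$. If $V\in \mathcal A(m)$ admitted a nonzero morphism $S_k\to V$, it would be injective (since $S_k$ is simple), exhibiting $S_k$ as a submodule of $V$; but then $m(S_k)=m(s_k)>0=m(V)$ would violate semistability. Next I would establish the parallel containment $\mathcal A'(m)\subset \mathcal A'_{k,-}$. The subtlety is the sign convention: in the mutated seed $\mathbf{s'}=\mu_k^+(\mathbf{s})$, the $k$-th basis vector is $-s_k$, hence $[S'_k]=-s_k\in N$ and $m([S'_k])=-m(s_k)<0$. A nonzero morphism $W\to S'_k$ for $W\in \mathcal A'(m)$ would give a short exact sequence $0\to W'\to W\to S'_k\to 0$ with $m(W')=m(s_k)>0$, again breaking semistability.

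With both containments in hand, Theorem \ref{equiv}(iv) provides an equivalence $F_k^+\colon \mathcal A_{k,+}\to \mathcal A'_{k,-}$ with inverse $F_k^-$. Any $V\in \mathcal A_{k,+}$ is in particular not isomorphic to $S_k$ (since $\mathrm{Hom}(S_k,S_k)\neq 0$), so Proposition \ref{ifm} applies and gives: $V\in \mathcal A(m)$ iff $F_k^+(V)\in \mathcal A'(m)$. Combined with the two containments, this means $F_k^+$ restricts to a functor $\mathcal A(m)\to \mathcal A'(m)$ whose inverse is the restriction of $F_k^-$, yielding an equivalence of categories. Since $\mathcal A(m)$ and $\mathcal A'(m)$ are abelian (the category of semistable modules of a fixed ``slope'' is standardly abelian) and the restriction preserves short exact sequences by Theorem \ref{equiv}(iv), it is exact and hence an equivalence of abelian categories. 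The case $m(s_k)<0$ is entirely symmetric, with $F_k^-$, $\mathcal A''$, and $\mathcal A_{k,-}$ in place of $F_k^+$, $\mathcal A'$, and $\mathcal A_{k,+}$.

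There is no real obstacle here: the proof is a short diagram-chase through the previously established structural results. The only point that deserves care is the sign flip $s_k\mapsto -s_k$ under $\mu_k^+$, which is precisely what converts the hypothesis $m(s_k)>0$ into the matching containment $\mathcal A'(m)\subset \mathcal A'_{k,-}$ on the mutated side.
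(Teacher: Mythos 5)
Your proof is correct and takes essentially the same approach as the paper, which simply remarks that the proposition follows by combining Proposition \ref{ifm} and Theorem \ref{equiv}(iv). You have just made explicit the two containments $\mathcal A(m)\subset\mathcal A_{k,+}$ and $\mathcal A'(m)\subset\mathcal A'_{k,-}$ (including the sign flip $[S'_k]=-s_k$), which the paper leaves implicit.
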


\subsection{Scattering diagrams under mutations} 

\subsubsection{} Let $(\mathbf{s,w})$ be a $k$-mutable SP for some vertex $k$ and $(\mathbf{s',w'})=\mu_k^+(\mathbf{s,w})$. Now we study the relation between $\mathfrak D_{\mathbf{s,w}}^\mathrm{Hall}$ and $\mathfrak D_{\mathbf{s',w'}}^\mathrm{Hall}$. 

As in last section, let $\mathcal A = J(\mathbf{s,w})$-mod and denote the motivic Hall algebra $H(\mathbf{s,w})$ by $H(\mathcal A)$. The subcategory $\mathcal A_{k,+}\subset \mathcal A$ is closed under extension. Then inside the algebra $H(\mathcal{A})$, we have a subalgebra $H(\mathcal A_{k,+})$ as the Hall algebra of $\mathcal A_{k,+}$. The equivalence $F_k^+\colon \mathcal A_{k,+}\rightarrow \mathcal A'_{k,-}$ induces an isomorphism of Hall algebras (which extends to completions)
\[
f_k^+\colon H(\mathcal A_{k,+})\longrightarrow H(\mathcal A'_{k,-}). 
\]
We are going to compare the induced functions
\[
\phi_\mathbf{s,w}^\mathrm{Hall}\colon M_\mathbb R\rightarrow \exp(\hat {\mathfrak g}^{\mathrm{Hall}}_{\mathbf{s,w}})\subset \hat H(\mathcal A)\quad 
\mathrm{and}\quad \phi^\mathrm{Hall}_\mathbf{s',w'}\colon M_\mathbb R\rightarrow \exp(\hat {\mathfrak g}^{\mathrm{Hall}}_{\mathbf{s',w'}})\subset \hat H(\mathcal A').
\]
Note that the vector space $M_\mathbb R$ is divided by $s_k$ into two open half spaces
 \[
 \mathcal H_{\mathbf{s}}^{k,+}=\{m\ |\ m(s_k)> 0\},\   \mathcal H_{\mathbf{s}}^{k,-}=\{m\ |\ m(s_k)< 0\}.
 \]
It is clear that $\mathcal H_{\mathbf{s}}^{k,\bullet}$ is constructible with respect to the decomposition $\mathfrak S^{\mathrm{Hall}}_\mathbf{s,w}$, and also to $\mathfrak S^{\mathrm{Hall}}_\mathbf{s',w'}$ since a cone in any of these two decompositions never crosses $s_k^\perp$.
\begin{theorem}\label{hallalgebramutation}
For any $m\in \mathcal H_\mathbf{s}^{k,+}$, we have that $\phi^\mathrm{Hall}_\mathbf{s,w}(m)$ is in $H(\mathcal A_{k,+})$ and $\phi^\mathrm{Hall}_\mathbf{s',w'}(m)$ is in $H(\mathcal A'_{k,-})$ and
\[
f_k^+(\phi^\mathrm{Hall}_\mathbf{s,w}(m)) = \phi^\mathrm{Hall}_\mathbf{s',w'}(m)
\]
where $f_k^+\colon H(\mathcal A_{k,+})\rightarrow H(\mathcal A'_{k,-})$ is the algebra isomorphism induced by the equivalence $F_k^+\colon \mathcal A_{k,+}\rightarrow \mathcal A_{k,-}'$. We thus say that the scattering diagrams $\mathfrak D_{\mathbf{s,w}}$ and $\mathfrak D_{\mathbf{s',w'}}$ are \textit{equal} on the half space $\mathcal H^{k,+}_{\mathbf{s}}.$
\end{theorem}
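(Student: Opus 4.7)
The plan is to invoke \cref{hallalgebrafactorization} to rewrite both sides as classes of moduli stacks of semistable modules, and then to transport these classes through the equivalence $F_k^+$ realized geometrically as an isomorphism of moduli substacks.

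First I would verify the two containment claims. Suppose $m(s_k)>0$. If $V\in\mathcal A$ is $m$-semistable and $W\subset V$ is isomorphic to $S_k$, then $m(W)=m(s_k)>0$ contradicts the semistability inequality $m(W)\leq 0$, so every $m$-semistable $V$ lies in $\mathcal A_{k,+}$. Dually, since $s'_k=-s_k$ we have $m(s'_k)<0$; if $V'\twoheadrightarrow S'_k$ is a quotient of an $m$-semistable $V'\in\mathcal A'$ with kernel $K$, then $m(K)=m(V')-m(s'_k)=m(s_k)>0$, again contradicting semistability, so every $m$-semistable $V'$ lies in $\mathcal A'_{k,-}$. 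Because both $\mathcal A_{k,+}$ and $\mathcal A'_{k,-}$ are extension-closed, the elements $\phi^{\mathrm{Hall}}_{\mathbf{s,w}}(m)$ and $\phi^{\mathrm{Hall}}_{\mathbf{s',w'}}(m)$, identified via \cref{hallalgebrafactorization} with the classes of the respective semistable moduli substacks, lie in $H(\mathcal A_{k,+})$ and $H(\mathcal A'_{k,-})$ respectively.

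Next I would promote the categorical equivalence $F_k^+$ to an isomorphism of algebraic stacks $\mathfrak M_{k,+}\to\mathfrak M'_{k,-}$, where $\mathfrak M_{k,+}\subset\mathfrak M$ and $\mathfrak M'_{k,-}\subset\mathfrak M'$ denote the open substacks parameterizing objects in $\mathcal A_{k,+}$ and $\mathcal A'_{k,-}$. The key point is that on $\mathcal A_{k,+}$ the map $\beta_k\colon V_k\to V_\mathrm{out}$ is fibrewise injective, since any vector in its kernel would generate an $S_k$-subobject. Consequently, over a family of objects of fixed dimension vector, $\mathrm{coker}(\beta_k)$ is a vector bundle of locally constant rank $\dim V_\mathrm{out}-\dim V_k$, and the remaining arrow data of $F_k^+(V)$ is built from polynomial operations on the input. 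This upgrades $F_k^+$ to a morphism of algebraic stacks whose inverse is produced by $F_k^-$ applied to $\mu_k^-(\mathbf{s',w'})$, identified with $(\mathbf{s,w})$ through \cref{involutionsp}; the two compositions are naturally isomorphic to the identity by part (iv) of \cref{equiv}. By \cref{ifm} together with \cref{fmg0}, this stack isomorphism restricts to $\mathfrak M^{\mathrm{ss}}(m)\xrightarrow{\sim}\mathfrak M'^{\mathrm{ss}}(m)$.

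Since $f_k^+$ is by construction the Hall algebra map induced by the underlying stack morphism, it carries $[\mathfrak M^{\mathrm{ss}}(m)\hookrightarrow\mathfrak M]$ to $[\mathfrak M'^{\mathrm{ss}}(m)\hookrightarrow\mathfrak M']$, giving the claimed equality $f_k^+(\phi^{\mathrm{Hall}}_{\mathbf{s,w}}(m))=\phi^{\mathrm{Hall}}_{\mathbf{s',w'}}(m)$ after a final application of \cref{hallalgebrafactorization}. I expect the main technical obstacle to lie in the geometrization step: verifying that the pointwise cokernel construction defining $F_k^+$ commutes with arbitrary base change on the open substack $\mathfrak M_{k,+}$, so that one obtains a genuine morphism of stacks rather than merely a functor between groupoids of points, and that this morphism pulls back the universal semistable family correctly.
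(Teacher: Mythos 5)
Your proof follows essentially the same route as the paper's: both verify that $m$-semistable objects land in $\mathcal A_{k,+}$ (resp. $\mathcal A'_{k,-}$) when $m(s_k)>0$, invoke \cref{hallalgebrafactorization} to identify $\phi^{\mathrm{Hall}}(m)$ with $1^{\mathrm{ss}}(m)$, and then use \cref{ifm} together with \cref{fmg0} to transport the semistable subcategory through $F_k^+$. The paper's proof is a terse three lines that takes all of this for granted; you spell out the containment arguments explicitly, which is a genuine improvement in rigor.

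The one place where you add real content beyond the paper is the geometrization step. The paper simply declares, in the paragraph preceding the theorem, that ``the equivalence $F_k^+\colon\mathcal A_{k,+}\to\mathcal A'_{k,-}$ induces an isomorphism of Hall algebras $f_k^+$'' without comment. You correctly flag this as a nontrivial point: an equivalence of abelian categories does not automatically upgrade to an isomorphism of moduli stacks, and it is the stack-level map that is needed to push forward classes $[X\to\mathfrak M]$. Your sketch of why it works here is the right one: on $\mathcal A_{k,+}$ the map $\beta_k$ is injective, so in a flat family $\mathrm{coker}\,\beta_k$ is a subbundle-quotient of locally constant rank, the remaining arrows of $F_k^+(V)$ are built by composing and factoring universal morphisms, and the inverse $F_k^-$ is constructed symmetrically, so one does get a morphism of stacks (not merely a map on $\mathbb C$-points) with two-sided inverse. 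Making this precise requires checking compatibility with base change, which you honestly note; but the structure of the argument is sound and it genuinely fills a gap the paper leaves implicit.
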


\begin{proof}
By \cref{ifm} and \cref{fmg0}, the equivalence $F_k^+\colon \mathcal A_{k,+}\rightarrow \mathcal A'_{k,-}$ restricts to equivalence $\mathcal A(m) \cong \mathcal A'(m)$ where $m$ is any point in $\mathcal H_\mathbf{s}^{k,+}$. These two subcategories represent $1_\mathbf{s,w}^\mathrm{ss}(m)\in H(\mathcal A_{k,+})$ and $1_\mathbf{s',w'}^\mathrm{ss}(m)\in H(\mathcal A'_{k,-})$ respectively. Thus by \ref{hallalgebrafactorization}, we have
\[
f_k^+(\phi^\mathrm{Hall}_\mathbf{s,w}(m)) = f_k^+(1_\mathbf{s,w}^\mathrm{ss}(m)) = 1_\mathbf{s',w'}^\mathrm{ss}(m) = \phi^\mathrm{Hall}_\mathbf{s',w'}(m).
\]
Note that by \cref{pathconnectedcomponent}, the cone decomposition $\mathfrak S_{\mathbf {s,w}}^\mathrm{Hall}$ is determined by the function $\phi_\mathbf{s,w}^\mathrm{Hall}$. Thus $\mathfrak S_{\mathbf {s,w}}^\mathrm{Hall}$ and $\mathfrak S_{\mathbf {s',w'}}^\mathrm{Hall}$ restrict to the same cone decomposition on $\mathcal H_\mathbf {s}^{k,+}$.
\end{proof}
Recall that we have another mutated SP $\mu_k^-(\mathbf{s,w})$. Since $F_k^+$ and $F_k^-$ are inverse to each other, the following corollary is immediate.

\begin{corollary}\label{hallalgebramutation-}
In the sense of \cref{hallalgebramutation}, the scattering diagrams ${\mathfrak D}_\mathbf{s,w}$ and ${\mathfrak D}_{\mu_k^-(\mathbf{s,w})}$ are equal on the half space $\mathcal H_{\mathbf{s}}^{k,-}$.
\end{corollary}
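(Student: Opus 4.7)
The plan is to reduce the corollary to \cref{hallalgebramutation} by applying that theorem to the mutated seed with potential itself. Set $(\tilde{\mathbf s},\tilde{\mathbf w}):=\mu_k^-(\mathbf{s,w})$. By \cref{involutionsp}, the mutations $\mu_k^\pm$ are mutually inverse on $k$-mutable SPs, so $(\tilde{\mathbf s},\tilde{\mathbf w})$ is again $k$-mutable and $\mu_k^+(\tilde{\mathbf s},\tilde{\mathbf w})$ is right equivalent to $(\mathbf{s,w})$. Applying \cref{hallalgebramutation} with $(\tilde{\mathbf s},\tilde{\mathbf w})$ playing the role of $(\mathbf{s,w})$ yields, for every $m\in \mathcal H^{k,+}_{\tilde{\mathbf s}}$, an equality
\[
\tilde f^+_k\bigl(\phi^{\mathrm{Hall}}_{\tilde{\mathbf s},\tilde{\mathbf w}}(m)\bigr)=\phi^{\mathrm{Hall}}_{\mathbf{s,w}}(m),
\]
where $\tilde f^+_k$ is the Hall algebra isomorphism induced by the restricted equivalence $F_k^+\colon \mathcal A''_{k,+}\to \mathcal A_{k,-}$ (with $\mathcal A''=J(\tilde{\mathbf s},\tilde{\mathbf w})\text{-mod}$, matching the notation of the body of the excerpt).

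Next I would identify the two half spaces in question. Since the $k$-th basis vector of the mutated seed is $\tilde s_k=-s_k$, the condition $m(\tilde s_k)>0$ is equivalent to $m(s_k)<0$, so $\mathcal H^{k,+}_{\tilde{\mathbf s}}=\mathcal H^{k,-}_{\mathbf s}$. Moreover, part (iv) of \cref{equiv} asserts that $F_k^+\colon \mathcal A''_{k,+}\to \mathcal A_{k,-}$ and $F_k^-\colon \mathcal A_{k,-}\to \mathcal A''_{k,+}$ are mutually inverse equivalences preserving short exact sequences, so the induced Hall algebra isomorphisms $\tilde f^+_k$ and $f^-_k$ are inverse to each other. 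Applying $f^-_k$ to both sides of the identity above produces
\[
\phi^{\mathrm{Hall}}_{\mu_k^-(\mathbf{s,w})}(m)=f^-_k\bigl(\phi^{\mathrm{Hall}}_{\mathbf{s,w}}(m)\bigr)
\]
for every $m\in\mathcal H^{k,-}_{\mathbf s}$, which is exactly equality of the scattering diagrams on $\mathcal H^{k,-}_{\mathbf s}$ in the sense made precise in \cref{hallalgebramutation}. The induced cone decompositions then agree on this half space by the same invocation of \cref{pathconnectedcomponent} as in the proof of \cref{hallalgebramutation}.

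There is essentially no technical obstacle: the argument is a formal transposition of \cref{hallalgebramutation} through the involution $\mu_k^+\circ\mu_k^-=\mathrm{id}$. The only point requiring care is verifying that the Hall algebra isomorphism produced by the substitution is indeed the inverse of the one we want, but this is immediate from part (iv) of \cref{equiv}. As an alternative one could re-prove \cref{hallalgebramutation} line by line with all signs switched, appealing directly to the $m(s_k)<0$ branches of \cref{ifm} and \cref{fmg0} together with the equivalence $F_k^-\colon \mathcal A_{k,-}\to \mathcal A''_{k,+}$; this would bypass the bookkeeping of the involution at the cost of some redundancy.
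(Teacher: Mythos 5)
Your proposal is correct and amounts to making explicit the paper's one-line argument ("Since $F_k^+$ and $F_k^-$ are inverse to each other, the following corollary is immediate"). Both your main route (apply \cref{hallalgebramutation} to $\mu_k^-(\mathbf{s,w})$, use $\mu_k^+\circ\mu_k^-=\mathrm{id}$ and part (iv) of \cref{equiv} to invert the induced Hall-algebra isomorphism) and the alternative you sketch (re-run the proof of \cref{hallalgebramutation} directly in the $m(s_k)<0$ branches of \cref{ifm} and \cref{fmg0}) are valid and essentially the same argument the paper has in mind.
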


\subsubsection{} 

Let us now consider the stability scattering diagrams $\mathfrak D_{\mathbf{s,w}}$ and $\mathfrak D_{\mathbf{s',w'}}$. They have the associated functions
\[
\phi_\mathbf{s,w}\colon M_\mathbb R\rightarrow \exp(\hat {\mathfrak g}_{\mathbf{s}})\quad 
\mathrm{and}\quad \phi_\mathbf{s',w'}\colon M_\mathbb R\rightarrow \exp(\hat {\mathfrak g}_{\mathbf{s'}}).
\]
Denote $N_\mathbf{s}^+\cap N_\mathbf{s'}^+$ by $N^+_{\mathbf{s}\cap \mathbf{s'}}$ and define
\[
\mathfrak g_{\mathbf{s}\cap\mathbf{s'}}\colon = \mathbb Q[x^d, d\in N^+_{\mathbf{s}\cap\mathbf{s'}}].
\]
This is a common Lie subalgebra of ${\mathfrak g}_{\mathbf{s}}$ and ${\mathfrak g}_{\mathbf{s'}}$. As in the case of the Hall algebra scattering diagrams, the two half spaces $\mathcal H_{\mathbf{s}}^{k,\pm}$ are also constructible with respect to $\mathfrak S_\mathbf{s,w}$ and to $\mathfrak S_\mathbf{s',w'}$.

\begin{theorem}\label{stabilitymutation}
For any $m\in \mathcal H_\mathbf{s}^{k,+}$, we have $\phi_\mathbf{s,w}(m) = \phi_\mathbf{s',w'}(m)\in \exp(\hat {\mathfrak g}_{\mathbf{s}\cap \mathbf{s'}})$. We thus say $\mathfrak D_{\mathbf{s,w}}$ and $\mathfrak D_{\mathbf{s',w'}}$ are \textit{equal} on $\mathcal H^{k,+}_{\mathbf{s}}.$\end{theorem}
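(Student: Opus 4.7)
The plan is to bootstrap the desired identity from its Hall-algebra counterpart \cref{hallalgebramutation} via the integration map of \cref{integration}, using the crucial fact from \cref{equiv}(v) that the generalized reflection functor $F_k^+$ preserves the class in $N = K_0$ of any object of $\mathcal A_{k,+}$. Recall from \cref{qstabilityscatteringdiagram} that $\mathfrak D_{\mathbf{s,w}}$ and $\mathfrak D_{\mathbf{s',w'}}$ are obtained from $\mathfrak D^{\mathrm{Hall}}_{\mathbf{s,w}}$ and $\mathfrak D^{\mathrm{Hall}}_{\mathbf{s',w'}}$ by applying the integration maps $\mathcal I_{\mathbf{s,w}}$ and $\mathcal I_{\mathbf{s',w'}}$.

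First I would invoke \cref{hallalgebramutation} to write, for any $m \in \mathcal H_{\mathbf s}^{k,+}$,
\[
\phi^{\mathrm{Hall}}_{\mathbf{s',w'}}(m) = f_k^+\bigl(\phi^{\mathrm{Hall}}_{\mathbf{s,w}}(m)\bigr),
\]
with the former lying in $\exp(\hat{\mathfrak g}^{\mathrm{Hall}}_{\mathbf{s,w}}) \cap (1 + \hat H(\mathcal A_{k,+}))$ and the latter in $1 + \hat H(\mathcal A'_{k,-})$ (both subalgebras being closed under extensions). Joyce's absence-of-poles theorem \cref{joyce} guarantees that their logarithms lie in the respective $\hat{\mathfrak g}^{\mathrm{reg}}$, so the statement reduces to verifying
\[
\mathcal I_{\mathbf{s,w}}\bigl(\phi^{\mathrm{Hall}}_{\mathbf{s,w}}(m)\bigr) = \mathcal I_{\mathbf{s',w'}}\bigl(f_k^+(\phi^{\mathrm{Hall}}_{\mathbf{s,w}}(m))\bigr),
\]
and that both sides actually live in $\exp(\hat{\mathfrak g}_{\mathbf{s} \cap \mathbf{s'}})$.

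The core step is a check on generators. For $V \in \mathcal A_{k,+}$ with $d := [V] \in N$ and a motivic class $\alpha = (q^{1/2} - q^{-1/2})^{-1}[X \to \mathfrak M_V] \in \mathfrak g^{\mathrm{reg}}_{\mathbf{s,w}}$, the categorical equivalence $F_k^+\colon \mathcal A_{k,+} \to \mathcal A'_{k,-}$ induces an isomorphism of the corresponding open substacks of $\mathfrak M$ and $\mathfrak M'$, so $f_k^+(\alpha) = (q^{1/2} - q^{-1/2})^{-1}[X \to \mathfrak M'_{F_k^+(V)}]$ with the \emph{same} variety $X$. By the definition of the integration maps, $\mathcal I_{\mathbf{s,w}}(\alpha) = e(X)\, x^d$ in $\mathfrak g_{\mathbf{s}}$ and $\mathcal I_{\mathbf{s',w'}}(f_k^+(\alpha)) = e(X)\, x^{[F_k^+(V)]}$ in $\mathfrak g_{\mathbf{s'}}$. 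By \cref{equiv}(v), $[F_k^+(V)] = d$ as elements of $N$, and since $V \in \mathcal A$ and $F_k^+(V) \in \mathcal A'$ we have $d \in N^+_{\mathbf{s}} \cap N^+_{\mathbf{s'}} = N^+_{\mathbf{s} \cap \mathbf{s'}}$. Hence both sides equal $e(X)\, x^d$ as an element of $\mathfrak g_{\mathbf{s} \cap \mathbf{s'}}$. The theorem then follows after passing to exponentials and to the completions, exploiting that $f_k^+$, $\mathcal I_{\mathbf{s,w}}$, and $\mathcal I_{\mathbf{s',w'}}$ all respect the grading.

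The only real subtlety --- not so much an obstacle as a detail to verify --- is that the abelian-category equivalence $F_k^+$ genuinely lifts to a stack-theoretic isomorphism of the relevant open substacks of $\mathfrak M$ and $\mathfrak M'$, so that the motivic classes $[X \to \mathfrak M_V]$ transform as claimed. Given the explicit family-wise construction of $F_k^+$ in \cref{reflectionqp} (built from kernels and cokernels of linear maps depending algebraically on $V$), this is straightforward, and the remainder of the argument is a matter of bookkeeping against the definitions.
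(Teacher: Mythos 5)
Your proof is correct and follows the paper's approach essentially verbatim: the paper likewise deduces the statement from \cref{hallalgebramutation} by passing through the integration map $\mathcal I$, writing $\phi_\mathbf{s,w}(m) = \exp(\mathcal I(\log 1_{\mathbf{s,w}}^\mathrm{ss}(m))) = \exp(\mathcal I(\log 1_{\mathbf{s',w'}}^\mathrm{ss}(m))) = \phi_\mathbf{s',w'}(m)$. Your generator-level check that $\mathcal I_{\mathbf{s,w}} = \mathcal I_{\mathbf{s',w'}}\circ f_k^+$ (via \cref{equiv}(v) and the fact that $f_k^+$ only changes the target stack, not the source variety $X$) is a useful expansion of a step the paper treats as immediate.
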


\begin{proof}
This immediately follows from \cref{hallalgebramutation} and the integration map $\mathcal I$ in \cref{integration}. In fact, 
\[
\phi_\mathbf{s,w}(m) = \exp(\mathcal I(\log (1_{\mathbf{s,w}}^\mathrm{ss}(m)))) = \exp(\mathcal I(\log (1_{\mathbf{s',w'}}^\mathrm{ss}(m)))) = \phi_\mathbf{s',w'}(m).
\]
\end{proof}

\begin{corollary}\label{stabilitymutation-}
In the sense of \cref{stabilitymutation}, the scattering diagrams ${\mathfrak D}_\mathbf{s,w}$ and ${\mathfrak D}_{\mu_k^-(\mathbf{s,w})}$ are equal  on $\mathcal H_{\mathbf{s}}^{k,-}$
\end{corollary}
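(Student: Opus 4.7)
The plan is to mirror exactly the passage from Theorem \ref{hallalgebramutation} to Theorem \ref{stabilitymutation}, applying the integration map to the Hall-level statement already established as Corollary \ref{hallalgebramutation-}. Set $(\mathbf{s'',w''}) = \mu_k^-(\mathbf{s,w})$, and let $m \in \mathcal{H}^{k,-}_{\mathbf{s}}$. By Corollary \ref{hallalgebramutation-} and Theorem \ref{hallalgebrafactorization}, the Hall algebra group elements
\[
1^{\mathrm{ss}}_{\mathbf{s,w}}(m) = \phi_{\mathbf{s,w}}^{\mathrm{Hall}}(m), \qquad 1^{\mathrm{ss}}_{\mathbf{s'',w''}}(m) = \phi^{\mathrm{Hall}}_{\mathbf{s'',w''}}(m)
\]
are intertwined by the Hall algebra isomorphism $f_k^-$ induced by the equivalence $F_k^- \colon \mathcal{A}_{k,-} \to \mathcal{A}''_{k,+}$ of Proposition \ref{fmg0}, since both record the classes of the corresponding subcategories of $m$-semistable modules, which are identified under $F_k^-$.

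Applying Joyce's absence-of-poles theorem (Theorem \ref{joyce}) to both sides, $\log(1^{\mathrm{ss}}_{\mathbf{s,w}}(m))$ and $\log(1^{\mathrm{ss}}_{\mathbf{s'',w''}}(m))$ belong to $\hat{\mathfrak{g}}^{\mathrm{reg}}$, and the Lie algebra homomorphism $\mathcal{I}$ of \eqref{integrationmap} produces
\[
\phi_{\mathbf{s,w}}(m) = \exp\bigl(\mathcal{I}(\log 1^{\mathrm{ss}}_{\mathbf{s,w}}(m))\bigr) = \exp\bigl(\mathcal{I}(\log 1^{\mathrm{ss}}_{\mathbf{s'',w''}}(m))\bigr) = \phi_{\mathbf{s'',w''}}(m),
\]
where the middle equality uses that the integration map $\mathcal{I}$ is defined purely from the moduli stack $\mathfrak{M}$ and the Euler characteristic, so that it does not see the choice of basis $\mathbf{s}$ versus $\mathbf{s''}$; in particular $\mathcal{I}$ carries the intertwining under $f_k^-$ to an equality of elements in the common subgroup $\exp(\hat{\mathfrak{g}}_{\mathbf{s} \cap \mathbf{s''}})$, noting that a module whose class lies in $N^+_{\mathbf{s}} \cap N^+_{\mathbf{s''}}$ has image supported in that cone under $\mathcal{I}$.

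Equivalently, and perhaps more cleanly, one may invoke the involution Proposition \ref{involutionsp}: writing $(\mathbf{s,w}) = \mu_k^+(\mathbf{s'',w''})$ and applying Theorem \ref{stabilitymutation} directly with input seed $(\mathbf{s'',w''})$ yields equality of $\mathfrak{D}_{\mathbf{s'',w''}}$ and $\mathfrak{D}_{\mathbf{s,w}}$ on $\mathcal{H}^{k,+}_{\mathbf{s''}}$. Since $s''_k = -s_k$ by the definition of $\mu_k^-$ on seeds, the half space $\mathcal{H}^{k,+}_{\mathbf{s''}}$ is precisely $\mathcal{H}^{k,-}_{\mathbf{s}}$, and the corollary follows. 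There is essentially no obstacle: both routes package Theorem \ref{stabilitymutation} (or Corollary \ref{hallalgebramutation-}) and the involutive symmetry of $\mu_k^\pm$; the only minor point to verify is the compatibility of the Hall isomorphism $f_k^-$ with the integration map, which is automatic because $\mathcal{I}$ only uses the underlying moduli data common to $\mathcal{A}$ and $\mathcal{A}''$.
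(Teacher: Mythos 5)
Your proposal is correct, and both of the routes you offer are essentially the paper's intended argument: the paper states \cref{stabilitymutation-} without proof precisely because it follows by mirroring the proof of \cref{stabilitymutation} against \cref{hallalgebramutation-} (your first route), which in turn was stated as ``immediate'' because $F_k^+$ and $F_k^-$ are inverses (your second, involutive route, noting $s''_k=-s_k$ so $\mathcal H^{k,+}_{\mathbf{s''}}=\mathcal H^{k,-}_{\mathbf{s}}$). The only stylistic remark is that your second route must also silently invoke that right-equivalent SPs give the same stability scattering diagram (since $\mu_k^+\mu_k^-(\mathbf{s,w})$ is only right-equivalent to, not literally equal to, $(\mathbf{s,w})$), but this is harmless as the Jacobian algebras and hence the moduli stacks are isomorphic.
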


The above discussion can be extended to the quantum stability scattering diagrams $\mathfrak D^q_{\mathbf{s,w}}$ and $\mathfrak D^q_{\mathbf{s',w'}}$ (assuming polynomial potentials). They have the associated functions
\[
\phi_\mathbf{s,w}^q\colon M_\mathbb R\rightarrow \exp(\hat {\mathfrak g}^q_{\mathbf{s}})\quad 
\mathrm{and}\quad \phi^q_\mathbf{s',w'}\colon M_\mathbb R\rightarrow \exp(\hat {\mathfrak g}^q_{\mathbf{s'}}).
\]
We define
\[
\mathfrak g_{\mathbf{s}\cap\mathbf{s'}}^q\colon = \mathbb Q_\mathrm{reg}(q^{1/2})[\hat x^d, d\in N^+_{\mathbf{s}\cap\mathbf{s'}}].
\]
This is a common Lie subalgebra of ${\mathfrak g}^q_{\mathbf{s}}$ and ${\mathfrak g}^q_{\mathbf{s'}}$. Thanks to the existence of the quantum integration map (\cref{qintegration}), immediately we have the following:

\begin{theorem}\label{qstabilitymutation}
For any $m\in \mathcal H_\mathbf{s}^{k,+}$, we have $\phi^q_\mathbf{s,w}(m) = \phi^q_\mathbf{s',w'}(m)\in \exp(\hat {\mathfrak g}^q_{\mathbf{s}\cap \mathbf{s'}})$, i.e. $\mathfrak D^q_{\mathbf{s,w}}$ and $\mathfrak D^q_{\mathbf{s',w'}}$ are \textit{equal} on $\mathcal H^{k,+}_{\mathbf{s}}.$ Similarly, ${\mathfrak D}^q_\mathbf{s,w}$ and ${\mathfrak D}^q_{\mu_k^-(\mathbf{s,w})}$ are equal  on $\mathcal H_{\mathbf{s}}^{k,-}$.
\end{theorem}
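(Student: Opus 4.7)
The plan is to follow the argument for Theorem \ref{stabilitymutation} verbatim, but with the quantum integration map $\mathcal I_q$ of Theorem \ref{qintegration} replacing the classical $\mathcal I$. This is legitimate because the hypothesis that $\mathbf w$ is polynomial is built into the definition of the quantum stability scattering diagram $\mathfrak D^q_{\mathbf{s,w}}$ (Definition \ref{qstabilityscatteringdiagram}), and by Proposition \ref{involutionsp} the polynomiality is preserved by the mutations $\mu_k^{\pm}$.

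Concretely, I would first invoke Theorem \ref{hallalgebramutation} to obtain for $m\in\mathcal H^{k,+}_\mathbf s$ the Hall-algebra identity
\[
f_k^+\bigl(\phi^{\mathrm{Hall}}_{\mathbf{s,w}}(m)\bigr)=\phi^{\mathrm{Hall}}_{\mathbf{s',w'}}(m),
\]
together with the containments $\phi^{\mathrm{Hall}}_{\mathbf{s,w}}(m)\in H(\mathcal A_{k,+})$ and $\phi^{\mathrm{Hall}}_{\mathbf{s',w'}}(m)\in H(\mathcal A'_{k,-})$. Joyce's absence-of-poles Theorem \ref{joyce} ensures both sides lie in the regular subalgebras, so applying $\mathcal I_q$ is well-defined. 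By Definition \ref{qstabilityscatteringdiagram} the right hand side becomes $\phi^q_{\mathbf{s',w'}}(m)$, so the theorem reduces to showing
\[
\mathcal I_q\bigl(f_k^+(\sigma)\bigr)=\mathcal I_q(\sigma)\quad \text{in } \hat{\mathfrak g}^q_{\mathbf s\cap \mathbf{s'}}
\]
for $\sigma\in H(\mathcal A_{k,+})_{\mathrm{reg}}$, where the left-hand $\mathcal I_q$ is defined for $(\mathbf{s',w'})$ and the right-hand one for $(\mathbf{s,w})$. Since the equivalence $F_k^+\colon \mathcal A_{k,+}\rightarrow \mathcal A'_{k,-}$ preserves dimension vectors in $N$ by part (v) of Theorem \ref{equiv}, the stack map $[X\rightarrow \mathfrak M_d]$ with $d\in N_{\mathbf s\cap\mathbf{s'}}^+$ is sent by $f_k^+$ to $[X\rightarrow \mathfrak M'_d]$, and both sides of the desired identity evaluate on such a generator to the same element of $\mathfrak g^q_{\mathbf s\cap\mathbf{s'}}$ supported on $\hat x^d$ (the coefficient depends only on the motive of $X$ and the fibration data, which are identical on the two sides). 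Taking exponentials then gives the equality of group elements and the claimed containment in $\exp(\hat{\mathfrak g}^q_{\mathbf s\cap \mathbf{s'}})$. The assertion for $\mu_k^-$ follows by the symmetric argument using Corollary \ref{hallalgebramutation-} and $F_k^-$ (alternatively, by applying the $\mu_k^+$ statement to $\mu_k^-(\mathbf{s,w})$ and using Proposition \ref{involutionsp}).

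The main obstacle, and the only place where the quantum case demands more care than the classical one, is verifying that $\mathcal I_q\circ f_k^+$ and $\mathcal I_q$ really agree on $\hat x^d$ with the correct $q^{1/2}$-normalization; this needs the twisted product on $\mathbb Q_{\mathrm{reg}}(q^{1/2})[N^\oplus]$, which is defined via the pairing $\{\,,\,\}$ on $N$, to restrict to the same twisted product on the common sublattice generated by $N^+_{\mathbf s\cap\mathbf{s'}}$. Since the skew-symmetric form $\{\,,\,\}$ is intrinsic to $N$ and independent of the choice of seed, this compatibility is automatic, and the diagram at the end of Section \ref{motivichallalgebra} shows that $\mathcal I_q$ is the canonical quantum lift of $\mathcal I$, so the argument for Theorem \ref{stabilitymutation} carries over without change.
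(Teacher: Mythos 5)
Your proposal follows the same route as the paper, which proves the quantum theorem by the exact same mechanism as the classical Theorem \ref{stabilitymutation}: push the Hall-algebra identity of Theorem \ref{hallalgebramutation} through the quantum integration map $\mathcal I_q$ of Theorem \ref{qintegration} (the paper is terse here, asserting the result is ``immediate'' once $I_q$ exists, so your write-up actually supplies more of the mechanism than the source).

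One caveat on the step you flag as the ``main obstacle'': you reduce the needed compatibility $\mathcal I_q\circ f_k^+=\mathcal I_q$ to dimension-vector preservation plus the claim that ``the coefficient depends only on the motive of $X$ and the fibration data.'' This is exactly right for the classical $\mathcal I$, where $I([X\to\mathfrak M_d])=e(X)x^d$ manifestly depends only on $e(X)$ and $d$. For the quantum $I_q$, however, the coefficient is built from vanishing-cycle data attached to the superpotential $\mathrm{Tr}(\mathbf w)$, and mutation replaces $\mathbf w$ by $\mathbf w'$. The real content of the compatibility is that the stack isomorphism induced by the equivalence $F_k^+\colon \mathcal A_{k,+}\to\mathcal A'_{k,-}$ identifies the two vanishing-cycle weights; this holds, but it is not automatic from dimension-vector preservation nor from the skew-form $\{\,,\,\}$ being seed-independent. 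The paper elides this point as well, so your argument is at the same level of rigor, but it is worth naming the actual source of the compatibility rather than attributing it to the twisted product restricting correctly (which only handles the $q$-normalization, not the weight itself).
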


\subsubsection{} Recall we let $(\mathbf{s',w'}) =\mu_k^+(\mathbf{s,w})$ and $(\mathbf{s'',w''}) =\mu_k^-(\mathbf{s,w})$. Combining \cref{stabilitymutation} and \cref{stabilitymutation-}, one can view that $\mathfrak D_{\mathbf{s,w}}$ is glued together by the negative half of $\mathfrak D_{\mu_k^+(\mathbf{s,w})}$ and the positive half of $\mathfrak D_{\mu_k^-(\mathbf{s,w})}$ along the hyperplane $s_k^\perp$. The QPs associated to $\mu_k^+(\mathbf{s,w})$ and $\mu_k^-(\mathbf{s,w})$ are identified and thus have equivalent module categories. The natural identifications of $K_0(\mathcal A')$ and $K_0(\mathcal A'')$ with $N$ differ by an automorphism $T_k^\vee$ of $N$:
\[
 T_k^\vee(s''_i)=s'_i,\ \mathrm{for}\ i\in \{1,\dots, n\}.
\]
We call the dual linear transformation $ T_k\colon M_\mathbb R\rightarrow M_\mathbb R$. Concretely, we have
\[
T_k(m) = m + p^*(s_k)\langle s_k, m \rangle
\]
and in particular
\[
 T_k(f'_i)=f''_i,\ \mathrm{for}\ i\in \{1,\dots, n\}
\]
where $\mathbf{f} = \{f_1,\dots, f_n\}$ is the dual basis of $\mathbf{s}$ ($\mathbf{f'}$ and $\mathbf{f''}$ similarly defined). The linear transformation $T_k$ sends cones in the cone decomposition $\mathfrak S_{\mathbf{s',w'}}$ to the ones in $\mathfrak S_{\mathbf{s'',w''}}$, i.e.
\[
\mathfrak S_{\mathbf{s'',w''}} = T_k (\mathfrak S_{\mathbf{s',w'}})
\]
and also we have the following relation on associated functions
\[
\phi_{\mathbf{s'',w''}} = (T_k^\vee)^{-1}\circ \phi_{\mathbf{s',w'}}\circ T_k^{-1}
\]
where $(T_k^\vee)^{-1}$ denotes the induced isomorphism from $\exp(\hat T_{\mathbf{s',w'}})$ to $\exp(\hat T_{\mathbf{s'',w''}})$.

Note that the linear transformation $T_k$ fixes the hyperplane $s_k^\perp$ and preserves $\mathcal H^{k,+}_\mathbf{s}$ and $\mathcal H^{k,-}_\mathbf{s}$ respectively. We define the following piecewise linear transformation $T_k^-$ of $M_\mathbb R$
\[
T_k^-(m)\colon = 
\begin{cases}
 T_k(m) = m + p^*(s_k)\langle s_k, m \rangle\ &\mathrm{if}\ m\in \mathcal H^{k,+}_\mathbf{s},\\
m\ &\mathrm{if}\ m\in \mathcal H^{k,-}_\mathbf{s}.
\end{cases}
\]

To summarize, as a rephrasing of \cref{stabilitymutation} and \cref{stabilitymutation-}, we have the following description of mutations for stability scattering diagrams. This theorem should be compared with mutations of cluster scattering diagrams described in \cite[theorem 1.24]{gross2018canonical} where their mutation of seed $\mu_k$ is our $\mu_k^-$ and their piecewise linear map $T_k$ is our $T_k^-$. Note that the above discussion also applies with little change to Hall algebra and quantum stability scattering diagrams. 

\begin{theorem}[mutation of stability scattering diagrams]\label{scatmutation}
Let $(\mathbf{s,w})$ be a $k$-mutable SP. Restricted on the subset $\mathcal H_\mathbf{s}^{k,-}\sqcup \mathcal H_\mathbf{s}^{k,+} = M_\mathbb R\setminus s_k^\perp\subset M_\mathbb R$, we have the following transformation of cone decompositions
\[
\mathfrak S^\bullet _{\mu_k^-(\mathbf{s,w})} = T^-_k(\mathfrak S^\bullet_{\mathbf{s,w}})
\]
and
\[
\phi^\bullet_{\mu_k^-(\mathbf{s,w})} = ((T_k^{-})^{\vee})^{-1}\circ \phi^\bullet_{\mathbf{s,w}}\circ (T_k^{-})^{-1}
\]
where $\bullet$ refers to either Hall algebra, quantum stability or stability scattering diagram.
Let $m$ be a generic point on the hyperplane $s_k^\perp$, then we have 
\[
\phi^{q}_{\mu_k^-(\mathbf{s,w})}(m) = \mathbb E(q^{1/2},x^{-s_k})\quad\mathrm{and}\quad\phi^q_{\mathbf{s,w}}(m) = \mathbb E(q^{1/2},x^{s_k});
\]
\[
\phi_{\mu_k^-(\mathbf{s,w})}(m) = \exp(-\mathrm{Li}_2(-x^{-s_k}))\quad\mathrm{and}\quad\phi_{\mathbf{s,w}}(m) = \exp(-\mathrm{Li}_2(-x^{s_k})).
\]
\end{theorem}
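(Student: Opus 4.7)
The plan is to split $M_\mathbb R\setminus s_k^\perp$ into the two half-spaces $\mathcal H^{k,-}_\mathbf{s}$ and $\mathcal H^{k,+}_\mathbf{s}$, on which $T_k^-$ acts as the identity and as the linear $T_k$ respectively, and then treat the generic point on $s_k^\perp$ by a direct computation. The whole argument is phrased so that it runs identically for the Hall, quantum, and classical versions; I describe it for the classical $\mathfrak D_{\mathbf{s,w}}$.

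On $\mathcal H^{k,-}_\mathbf{s}$ the maps $T_k^-$ and $(T_k^-)^\vee$ are the identity, so the stated formula reduces to $\phi_{\mu_k^-(\mathbf{s,w})} = \phi_{\mathbf{s,w}}$ on this half-space, which is exactly \cref{stabilitymutation-}. On $\mathcal H^{k,+}_\mathbf{s}$ I would bridge through $\mu_k^+(\mathbf{s,w})$. The linear $T_k$ preserves this half-space because $\langle s_k, T_k(m)\rangle = \langle s_k, m\rangle$, so by \cref{stabilitymutation} we have $\phi_{\mathbf{s,w}}(T_k^{-1}(m)) = \phi_{\mu_k^+(\mathbf{s,w})}(T_k^{-1}(m))$ for all $m\in \mathcal H^{k,+}_\mathbf{s}$. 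Now the two SPs $\mu_k^+(\mathbf{s,w})$ and $\mu_k^-(\mathbf{s,w})$ carry literally the same quiver with potential, hence the same Jacobian algebra, the same module category $\mathcal A$, and the same Hall algebra; only the identifications $K_0(\mathcal A)\cong N$ differ, by the linear automorphism $T_k^\vee$ sending $s''_i$ to $s'_i$. A direct computation gives $\langle m, [M]_{\mathbf{s'}}\rangle = \langle m, T_k^\vee([M]_{\mathbf{s''}})\rangle = \langle T_k(m), [M]_{\mathbf{s''}}\rangle$, so a module is $m$-semistable under the $\mathbf{s'}$-identification iff it is $T_k(m)$-semistable under the $\mathbf{s''}$-identification; equivalently, $1^{\mathrm{ss}}_{\mu_k^+(\mathbf{s,w})}(T_k^{-1}(m)) = 1^{\mathrm{ss}}_{\mu_k^-(\mathbf{s,w})}(m)$ in the common Hall algebra. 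Because $T_k^\vee$ preserves the skew form $\{\,,\,\}$, it induces a Lie/Poisson isomorphism $(T_k^\vee)_*\colon \mathfrak g_{\mathbf{s''}}\to \mathfrak g_{\mathbf{s'}}$, and the integration maps for the two seeds tautologically satisfy $\mathcal I_{\mathbf{s'}} = (T_k^\vee)_*\circ \mathcal I_{\mathbf{s''}}$ (both send $[X\to \mathfrak M_d]$ to $x^d$, but in the respective basis on $N$). Combining the three equalities gives $\phi_{\mu_k^-(\mathbf{s,w})}(m) = ((T_k^\vee)_*)^{-1}\phi_{\mathbf{s,w}}(T_k^{-1}(m))$ on $\mathcal H^{k,+}_\mathbf{s}$, which is the claim since $T_k^- = T_k$ there. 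The cone-complex identity $\mathfrak S_{\mu_k^-(\mathbf{s,w})} = T_k^-(\mathfrak S_{\mathbf{s,w}})$ then follows from \cref{pathconnectedcomponent}, since cells are path components of level sets of $\phi$ and $(T_k^-)^\vee$ is a bijection.

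For the value on the wall, pick $m\in s_k^\perp$ with $m(s_i)\neq 0$ for all $i\neq k$. Then the only simple $J(\mathbf{s,w})$-module on which $m$ vanishes is $S_k$, and a Jordan--Hölder argument shows that the subcategory of $m$-semistable objects is $\langle S_k\rangle$, so $1^{\mathrm{ss}}_{\mathbf{s,w}}(m) = [\coprod_{k\geq 0} BGL_k\to \mathfrak M]$. Applying the (quantum) integration map exactly as in the example closing \cref{cluster} produces $\phi^q_{\mathbf{s,w}}(m) = \mathbb E(q^{1/2},x^{s_k})$ and $\phi_{\mathbf{s,w}}(m) = \exp(-\mathrm{Li}_2(-x^{s_k}))$. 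The same argument applied to $\mu_k^-(\mathbf{s,w})$, whose simple $S''_k$ has class $s''_k = -s_k$, yields the formulas with $-s_k$.

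The main obstacle I anticipate is bookkeeping: keeping the module-theoretic change coming from replacing $(\mathbf{s,w})$ by its mutation cleanly separated from the purely lattice-theoretic change of basis $T_k^\vee$ between $\mu_k^+(\mathbf{s,w})$ and $\mu_k^-(\mathbf{s,w})$, and checking that the compatibility $\mathcal I_{\mathbf{s'}} = (T_k^\vee)_*\circ \mathcal I_{\mathbf{s''}}$ really does intertwine all three flavors (Hall, quantum, classical) uniformly so that one gets the piecewise linear $T_k^-$ rather than some sign-twisted variant.
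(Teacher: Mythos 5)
Your argument runs along the same lines as the paper's: on $\mathcal H^{k,-}_\mathbf{s}$ the claim is exactly \cref{stabilitymutation-}; on $\mathcal H^{k,+}_\mathbf{s}$ you bridge through $\mu_k^+(\mathbf{s,w})$ via \cref{stabilitymutation}, then identify $\mathfrak D_{\mu_k^+(\mathbf{s,w})}$ with $\mathfrak D_{\mu_k^-(\mathbf{s,w})}$ through the linear change of $K_0$-identification $T_k^\vee$ (the same QP, the same Hall algebra, only the labelling of $N$ changes). That is precisely the content of the discussion preceding \cref{scatmutation} in the paper, which presents the theorem as ``a rephrasing of \cref{stabilitymutation} and \cref{stabilitymutation-}.'' Your verification that $T_k^\vee$ preserves $\{\,,\,\}$ (so that $(T_k^\vee)_*$ is a Lie/Poisson isomorphism) and that the integration maps intertwine with $(T_k^\vee)_*$ is the right bookkeeping and handles all three flavors uniformly.

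One small gap: for the computation on the wall, the genericity you impose --- $m\in s_k^\perp$ with $m(s_i)\neq 0$ for $i\neq k$ --- is not strong enough to force the $m$-semistable subcategory to be $\langle S_k\rangle$. For instance if $m(s_1)>0>m(s_2)$ and there is an arrow inducing a nonsplit extension of $S_1$ by $S_2$, that extension is $m$-stable of class $s_1+s_2$, and a Jordan--H\"older argument alone will not rule it out. The correct genericity (the one implicit in the paper and in the example closing \cref{cluster}) is that $m$ kill no element of $N^+_\mathbf{s}$ except positive multiples of $s_k$; then any $m$-semistable module has dimension vector $ns_k$, hence is supported only at vertex $k$, and 2-acyclicity (no loop at $k$) forces it to be $S_k^{\oplus n}$. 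With that strengthening your wall computation is exactly the paper's, and the identification $s''_k=-s_k$ gives the $\mu_k^-$ version as you say.
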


Similarly, one can define the following piecewise linear transformation
\[
T_k^+(m)\colon = 
\begin{cases}
 T_k^{-1}(m)\ &\mathrm{if}\ m\in \mathcal H^{k,-}_\mathbf{s},\\
m\ &\mathrm{if}\ m\in \mathcal H^{k,+}_\mathbf{s}.
\end{cases}
\]
Then the above theorem is also true when $-$ is replaced by $+$.

\subsection{The cluster chamber structure} 
In this section, we build the chamber structure for scattering diagrams associated to a non-degenerate SP.
\begin{lemma}\label{positivechamber}
For any SP $(\mathbf{s,w})$, the cones
\[
\mathcal C_\mathbf{s}^{\pm}\colon = \{m\in M_{\mathbb R}|\ m(s_i)> 0\ (< 0)\ ,\ i=1,\dots, n\}
\]
and all their faces are elements in $(\mathfrak S_{\mathbf{s,w}}^{\mathrm{Hall}})^\circ$, $(\mathfrak S_{\mathbf{s,w}}^q)^\circ$ and $\mathfrak S_\mathbf{s,w}^\circ$. We call $\mathcal C_\mathbf{s}^+$ and $\mathcal C_\mathbf{s}^-$ the positive and negative chambers.
\end{lemma}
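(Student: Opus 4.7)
The plan is to reduce to a single computation at the Hall algebra level and then push the result forward via the Lie algebra homomorphisms $\mathcal I$ and $\mathcal I_q$ of \cref{integration} and \cref{qintegration}. For $m \in \mathcal C_{\mathbf s}^-$, every nonzero $J(\mathbf{s,w})$-module $V$ has dimension vector $d = \sum_i d_i s_i$ with $d_i \in \mathbb Z_{\geq 0}$ not all zero, hence $m(V) = \sum_i d_i m(s_i) < 0$, which violates condition (i) of $m$-semistability (\cref{semistablerep}). So $\mathfrak M^{\mathrm{ss}}(m) = \mathfrak M_0$ and $1^{\mathrm{ss}}(m)$ is the identity of $\exp(\hat{\mathfrak g}^{\mathrm{Hall}}_{\mathbf{s,w}})$; by \cref{hallalgebrafactorization}, $\phi^{\mathrm{Hall}}_{\mathbf{s,w}} \equiv 1$ on $\mathcal C_{\mathbf s}^-$. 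The symmetric argument with $m(V) > 0$ handles $\mathcal C_{\mathbf s}^+$. Pushing forward by $\mathcal I$ and $\mathcal I_q$ then gives the same vanishing for $\phi_{\mathbf{s,w}}$ and $\phi^q_{\mathbf{s,w}}$ on the interiors of both chambers.

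By \cref{pathconnectedcomponent}, each open chamber is then contained in a unique element of $\mathfrak S^\circ$. To upgrade containment to equality I would examine the relative interior of a codimension-one facet $F_i = s_i^\perp \cap \overline{\mathcal C_{\mathbf s}^+}$: the conditions $m(s_j) > 0$ for $j \neq i$ force any $m$-semistable module to have dimension vector a non-negative multiple of $s_i$, so $1^{\mathrm{ss}}(m)$ is nontrivial and its image under $\mathcal I_q$ (resp. $\mathcal I$) is $\mathbb E(q^{1/2},x^{s_i})$ (resp. $\exp(-\mathrm{Li}_2(-x^{s_i}))$), exactly as computed in the example at the end of \cref{qstabilityscatteringdiagram}. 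Hence no path-connected component of $\phi^{-1}(1)$ extends across a facet, which yields $\sigma_m^\circ = \mathcal C_{\mathbf s}^+$ for every $m$ in the chamber and thus $\overline{\mathcal C_{\mathbf s}^\pm} \in \mathfrak S$.

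For the faces, I would invoke \cref{rationalconeandfaces} with the cofinite ideal $I = N^+_{\mathbf s}\setminus\{s_1,\dots,s_n\}$: the truncated Lie algebra is supported only on $\{s_1,\dots,s_n\}$ and is abelian, since $[\mathfrak g_{s_i}, \mathfrak g_{s_j}] \subset \mathfrak g_{s_i+s_j} \subset \mathfrak g_I$. Consequently $\mathfrak S_{g^{<I}}$ reduces to the coordinate hyperplane arrangement of \cref{hyperplanearrangements}, in which $\overline{\mathcal C_{\mathbf s}^\pm}$ and all their faces appear as cones. Then \cref{rationalconeandfaces} concludes that every face of $\overline{\mathcal C_{\mathbf s}^\pm}$ belongs to $\mathfrak S$. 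The main obstacle is step two, namely ruling out that some connected component of $\phi^{-1}(1)$ strictly contains a chamber; this relies on the fact that each simple $S_i$ yields a genuinely nontrivial element in the Hall algebra whose image under $\mathcal I$ and $\mathcal I_q$ is a nonzero (quantum) dilogarithm, so that the $s_i^\perp$-walls survive integration.
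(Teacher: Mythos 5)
Your proof is correct, but it follows a different route from the paper's. The paper proves the claim in a single uniform step: for any face of $\overline{\mathcal C_\mathbf{s}^\pm}$, determined by a partition $S \sqcup P = \{1,\dots,n\}$, it identifies the $m$-semistable subcategory on the relative interior as exactly the full subcategory of $J$-modules supported on the vertices in $P$ (independent of $m$ in that face interior), observes that these subcategories yield pairwise distinct elements of the Hall algebra which remain distinct after the integration maps, and concludes directly that each face interior is a connected component of a fiber of $\phi$. Your argument is more modular: you first compute $\phi$ on the open chambers (only the zero module is semistable), then on the codimension-one facets (the computation of $\mathbb E(q^{1/2}, x^{s_i})$ from the example following \cref{qstabilityscatteringdiagram}), and combine these with the convexity of the components from \cref{pathconnectedcomponent} to get the chambers into $\mathfrak S^\circ$; you then deal with all lower-dimensional faces at once by applying \cref{rationalconeandfaces} to the cofinite ideal $I = N^+_\mathbf{s}\setminus\{s_1,\dots,s_n\}$.

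A couple of observations on your version. First, the abelian-truncation observation in your step three is the cleanest way to see that $\mathfrak S_{g^{<I}}$ is the coordinate hyperplane arrangement; once you have this, it already gives $\sigma_m^\circ \subset (\sigma_m^{<I})^\circ = \mathcal C_\mathbf{s}^+$ for $m$ in the chamber, which combined with your step one ($\phi \equiv 1$ on $\mathcal C_\mathbf{s}^+$, hence $\mathcal C_\mathbf{s}^+ \subset \sigma_m^\circ$) yields the chamber equality outright. So your step two is logically redundant once step three is in place, though it supplies useful geometric intuition. Second, in step two the phrase ``no path-connected component extends across a facet'' does the work only because $\sigma_m^\circ$ is convex: a convex open cone strictly containing $\mathcal C_\mathbf{s}^+$ must meet some facet interior (take a generic segment from a new point to the chamber), so it is worth spelling this genericity argument out. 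Neither point is a gap, only a matter of exposition. The paper's approach is shorter and more symmetric; yours isolates more explicitly the roles of the degree-one components of $1_{\mathfrak M}$ (why the hyperplanes $s_i^\perp$ survive integration) and of \cref{rationalconeandfaces}, which is a reasonable trade.
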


\begin{proof}
Let $\sigma$ be $\mathcal C_\mathbf{s}^+$ or one of its faces. Then $\sigma$ must be of the form $$\{m\in M_\mathbb R\ |\ m(s_i) = 0\ \mathrm{if}\ i\in P\  \mathrm{and}\ m(s_j)> 0\ \mathrm{if}\ j\in S\}$$ where $S\sqcup P = \{1,\dots, n\}$. It is clear that the $\theta$-semistable subcategory is independent of the choice of $\theta$ in $\sigma$: it is the full subcategory of modules supported on vertices in $P$. These subcategories represent pairwise different elements in the Hall algebra and also stay distinguished after the integration map. So these cones are in fact elements in $(\mathfrak S_{\mathbf{s,w}}^\mathrm{Hall})^\circ$ and also elements in $(\mathfrak S_{\mathbf{s,w}}^q)^\circ$ and $(\mathfrak S_{\mathbf{s,w}})^\circ$. Same argument applies for $\mathcal C_\mathbf{s}^{-}$ and its faces.
\end{proof}

We from now on omit the superscript of $\mathfrak S_{\mathbf{s,w}}^\bullet$ since the rest of the section applies to Hall algebra, quantum stability and stability scattering diagrams.  

\begin{definition}\label{nondegeneracyofpotential}
Let $\mathbf{k} = \{k_1,\dots, k_l\}$ where $k_i\in \{1,\dots, n\}$ be a sequence of indices. We say a QP $(Q,\mathbf{w})$ to be $\mathbf{k}$-\textit{mutable} if for any $
1 \leq l'\leq l$, the QP $\mu_{k_{l'}}\dots\mu_{k_1}(Q,\mathbf{w})$ is $k_{l'}$-mutable. We say $(Q,\mathbf{w})$ to be \textit{non-degenerate} if it is $\mathbf{k}$-mutable for any sequence $\mathbf{k}$. An SP is said to be $\mathbf{k}$-mutable or non-degenerate if the associated QP is.
\end{definition}

Let $(\mathbf{s,w})$ be a non-degenerate SP. By \cref{positivechamber}, the cone $\mathcal C_{\mu_k^-(\mathbf{s})}^+$ is in the cone decomposition $\mathfrak S_{\mu_k^-(\mathbf{s,w})}^\circ$. Note that $\mathcal C_{\mu_k^-(\mathbf{s})}^+$ is contained in the half space $\mathcal H_{\mathbf{s}}^{k,-}$ where the scattering diagrams ${\mathfrak D}_\mathbf{s,w}$ and ${\mathfrak D}_{\mu_k^-(\mathbf{s,w})}$ are equal so the cone $\mathcal C_{\mu_k^-(\mathbf{s})}^+$ is also an element of $\mathfrak S_{\mathbf{s,w}}^\circ$. It is easy to check all faces of $\mathcal C_{\mu_k^-(\mathbf{s})}^+$ are also in $\mathfrak S_{\mathbf{s,w}}^\circ$ by \cref{positivechamber} and \cref{stabilitymutation-}. Thus we have a cone subcomplex of $\mathfrak S_{\mathbf{s,w}}$ consisting of (the closure of) $\mathcal C_{\mu_k^-(\mathbf{s})}^+$ and $\mathcal C_{\mathbf{s}}^+$ and all their faces. In fact $\mathcal C_{\mu_k^-(\mathbf{s})}^+$ and $\mathcal C_{\mathbf{s}}^+$ share a common facet that is contained in $s_k^\perp$, i.e. the cone in $M_\mathbb R$ generated by
\[
\{f_1, \dots, f_n\}\setminus \{f_k\}.
\]
One can continue on performing mutations to expand this cone complex as follows.

Suppose there is a sequence of vertices $\mathbf{k} = \{k_1,\dots, k_l\}$ where $k_i\in \{1,\dots, n\}$. For each $k_i$, we choose a mutation $\mu_{k_i}^{\epsilon_i}$ of SP where $\epsilon_i\in\{+,-\}$, i.e. we choose a sequence of signs $$\epsilon = \{\epsilon_1, \dots, \epsilon_l\}.$$ Since our SP $(\mathbf{s,w})$ is non-degenerate, mutation at each vertex in the sequence is well-defined. Then there is a generated sequence of SPs:
\[
\{(\mathbf{s,w}),\ \mu_{k_1}^{\epsilon_1}(\mathbf{s,w}),\ \dots,\ \mu_\mathbf k^\epsilon(\mathbf{s,w}) \colon = \mu_{k_l}^{\epsilon_l}\cdots\mu_{k_1}^{\epsilon_1}(\mathbf{s,w})\}.
\]
Now we have the positive chamber $\mathcal C^+_{\mu_\mathbf k^\epsilon(\mathbf{s})}\in \mathfrak S_{{\mu_\mathbf k^\epsilon}(\mathbf{s,w})}^\circ$. Consider the piecewise linear map 
\[
T_{\mathbf{k}}^\epsilon = T_{k_l}^{\epsilon_l}\cdots T_{k_1}^{\epsilon_1}\colon M_\mathbb R\rightarrow M_\mathbb R 
\]
which connects two scattering diagrams $\mathfrak D_{\mathbf{s,w}}$ and $\mathfrak D_{\mu_{\mathbf{k,\epsilon}}(\mathbf{s,w})}$. Note that the map $T_{\mathbf{k}}^\epsilon$ preserves top-dimensional cones by \cref{scatmutation}. Pulling back $\mathcal C^+_{\mu_{\mathbf k,\epsilon}(\mathbf{s})}$ by $(T_{\mathbf{k}}^\epsilon)^{-1}$, we have a top-dimensional simplicial cone in the cone decomposition $\mathfrak S_\mathbf{s,w}^\circ$, i.e. we have 
\[
\mathcal C^+_{\mathbf{s,k}}\colon = \mathcal (T_{\mathbf{k}}^\epsilon)^{-1}(\mathcal C^+_{\mu_{\mathbf k,\epsilon}(\mathbf{s})})\in \mathfrak S_\mathbf{s,w}^\circ.
\]
By induction on the length of $\mathbf k$, this cone is independent of the sequence of signs $\epsilon$, which justifies the notation $\mathcal C^+_{\mathbf{s,k}}$. In addition, all the faces of $\mathcal C^+_{\mu_\mathbf k(\mathbf{s})}$ belong to $\mathfrak S_\mathbf {s,w}^\circ$ by \cref{rationalconeandfaces}.

We define an infinite oriented rooted tree $\mathfrak T$ as in \cite[Appendix A]{gross2018canonical}. For each vertex in $\mathfrak T$, there are $n$ outgoing edges labeled by $\{1,\dots, n\}$. We assign the initial SP $(\mathbf{s,w})$ to the root and thus denote the tree by $\mathfrak T_\mathbf{s}$. Given a vertex $v\in \mathfrak T_\mathbf{s}$, take the unique oriented path from the root to $v$. This gives a sequence $\mathbf k(v)$ and by the above discussion there is a well-defined cone 
\[
\mathcal C^+_v\colon = \mathcal C^+_{\mathbf{s,k}(v)}\in \mathfrak S_{\mathbf{s,w}}^\circ.
\]
We can similarly define the negative version $\mathcal C_v^-$ by pulling back $\mathcal C^-_{\mu_{\mathbf k,\epsilon}(\mathbf{s})}$ using the same piecewise linear transformation $\mathcal (T_{\mathbf{k}}^\epsilon)^{-1}$.

\begin{theorem}[The cluster chamber structure]\label{chamberstructure}
For any non-degenerate seed with potential $(\mathbf{s,w})$, the set $\Delta_\mathbf{s,w}^+$ consisting of cones $\overline{\mathcal C^+_{v}}$ where $v$ runs through the set of vertices of $\mathfrak T_\mathbf{s}$ and their faces form a simplicial cone complex. It is a sub poset of $\mathfrak S_{\mathbf{s,w}}^\bullet$ where $\bullet$ represents either Hall algebra, quantum stability or stability scattering diagram. Moreover, for any facet $\sigma$ of any $\overline{\mathcal C_{v}^+}$ with primitive normal vector $n_0\in N_\mathbf{s}^+$, the wall-crossings are given by
\[
\phi_{\mathbf{s,w}}^q(\sigma) = \mathbb E(q^{1/2},x^{n_0}) = \exp\left(\sum_{k\geq 1}\frac{(-1)^{k-1}\hat x^{kn_0}}{k[k]_q}\right)
\]
and
\[
\phi_{\mathbf{s,w}}(\sigma) = \exp(-\mathrm{Li}_2(-x^{n_0})) = \exp\left(\sum_{k\geq 1}\frac{(-1)^{k-1} x^{kn_0}}{k^2}\right).
\]
The same is true for the similarly defined $\Delta_{\mathbf{s,w}}^-$.
\end{theorem}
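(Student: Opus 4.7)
The plan is to prove the theorem by induction on the depth $l$ of a vertex $v \in \mathfrak{T}_\mathbf{s}$ with associated mutation sequence $\mathbf{k} = (k_1,\dots,k_l)$, establishing simultaneously that (a) $\overline{\mathcal{C}^+_v}$ and all its faces belong to $\mathfrak{S}^\bullet_{\mathbf{s,w}}$; (b) $\overline{\mathcal{C}^+_v}$ is a full-dimensional simplicial cone; and (c) each facet $\sigma$ of $\overline{\mathcal{C}^+_v}$ has a primitive normal vector $n_0 \in N^+_\mathbf{s}$ with wall-crossing given by the stated quantum dilogarithm (respectively dilogarithm) formula. Once (a)--(c) are in hand, the remaining assertion that $\Delta^+_{\mathbf{s,w}}$ is a sub poset of $\mathfrak{S}^\bullet_{\mathbf{s,w}}$ is automatic: pairwise intersections are common faces because $\mathfrak{S}^\bullet_{\mathbf{s,w}}$ is itself a cone complex by \cref{minimalsupport}, and $\Delta^+_{\mathbf{s,w}}$ is closed under faces by construction.

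For the base case $l=0$, the cone $\overline{\mathcal{C}^+_\mathbf{s}}$ is the simplicial cone spanned by the dual basis $\mathbf{f}$; it and all its faces belong to $\mathfrak{S}^\bullet_{\mathbf{s,w}}$ by \cref{positivechamber}. At a generic point $m$ on the facet $\overline{\mathcal{C}^+_\mathbf{s}} \cap s_i^\perp$ one has $m(s_j)>0$ for $j\ne i$ and $m(s_i)=0$, so the subcategory of $m$-semistable $J(\mathbf{s,w})$-modules is generated by $S_i$. Hence $1^{\mathrm{ss}}(m) = [\coprod_{k\geq 0} BGL_k \to \mathfrak{M}]$, and applying the quantum and classical integration maps exactly as in the example closing \cref{qstabilityscatteringdiagram} gives $\phi^q_{\mathbf{s,w}}(\sigma) = \mathbb{E}(q^{1/2},x^{s_i})$ and $\phi_{\mathbf{s,w}}(\sigma) = \exp(-\mathrm{Li}_2(-x^{s_i}))$, with primitive normal $s_i \in N^+_\mathbf{s}$.

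For the inductive step at depth $l \geq 1$, choose any sign sequence $\epsilon$ so that $\mathcal{C}^+_v = (T^\epsilon_\mathbf{k})^{-1}(\mathcal{C}^+_{\mu^\epsilon_\mathbf{k}(\mathbf{s})})$, using independence of $\epsilon$ noted in the paragraph preceding the theorem. The inductive hypothesis applied to the mutated SP $\mu^\epsilon_\mathbf{k}(\mathbf{s,w})$ at its own root supplies the simplicial chamber $\overline{\mathcal{C}^+_{\mu^\epsilon_\mathbf{k}(\mathbf{s})}}$ in $\mathfrak{S}^\bullet_{\mu^\epsilon_\mathbf{k}(\mathbf{s,w})}$ with facet wall-crossings $\mathbb{E}(q^{1/2}, x^{\mu^\epsilon_\mathbf{k}(\mathbf{s})_i})$. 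Iterating the mutation law of \cref{scatmutation}, the piecewise linear composite $T^\epsilon_\mathbf{k}$ identifies the cone structures and the functions $\phi^\bullet$ of $\mathfrak{S}^\bullet_{\mathbf{s,w}}$ and $\mathfrak{S}^\bullet_{\mu^\epsilon_\mathbf{k}(\mathbf{s,w})}$ away from the relevant hyperplanes. Because $\overline{\mathcal{C}^+_{\mu^\epsilon_\mathbf{k}(\mathbf{s})}}$ lies in a single cone of linearity of $(T^\epsilon_\mathbf{k})^{-1}$, its preimage $\overline{\mathcal{C}^+_v}$ is the image of a simplicial cone under an invertible linear map and is therefore simplicial; the wall-crossing on a facet $\sigma$ of $\overline{\mathcal{C}^+_v}$ is obtained by applying the induced Lie algebra automorphism to the corresponding wall-crossing on $T^\epsilon_\mathbf{k}(\sigma)$, giving $\mathbb{E}(q^{1/2}, x^{n_0})$ with $n_0 = (T^\epsilon_\mathbf{k})^\vee(\mu^\epsilon_\mathbf{k}(\mathbf{s})_i)$, which is the primitive normal of $\sigma$ in $N$. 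The classical formula follows by applying $e_{q^{1/2}=1}$.

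The main obstacle is verifying the positivity $n_0 \in N^+_\mathbf{s}$, i.e., the sign-coherence of the c-vectors attached to the cluster chambers. This is however forced by the support condition built into the definition: the wall-crossing $\phi^q_{\mathbf{s,w}}(\sigma) = \mathbb{E}(q^{1/2}, x^{\pm n_0})$ is a nontrivial element of $\exp(\hat{\mathfrak{g}}^q_\mathbf{s})$, and $\hat{\mathfrak{g}}^q_\mathbf{s}$ is supported on $N^+_\mathbf{s}$ alone; the formula $\mathbb{E}(q^{1/2}, x^{\pm n_0})$ is supported on the ray $\mathbb{Z}_{>0}\cdot(\pm n_0)$, so compatibility with this support forces the sign to be $+$. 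The analogous argument handles $\phi_{\mathbf{s,w}}(\sigma) \in \exp(\hat{\mathfrak{g}}_\mathbf{s})$. Once this positivity is established, the proof is complete.
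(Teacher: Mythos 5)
Your proof is correct and follows essentially the same approach as the paper: it builds $\Delta^+_{\mathbf{s,w}}$ by pulling back the positive chamber and its wall-crossings along the piecewise linear transformations $T^\epsilon_\mathbf{k}$ supplied by \cref{scatmutation}, exactly as in the constructions preceding the theorem. One nice feature of your writeup is that the positivity $n_0 \in N^+_\mathbf{s}$ (sign-coherence of $c$-vectors), which the paper asserts without comment, is justified explicitly via the support constraint $\phi^q_{\mathbf{s,w}}(\sigma) \in \exp(\hat{\mathfrak{g}}^q_\mathbf{s})$ together with the form $\mathbb{E}(q^{1/2},x^{n_0})$ furnished by the mutation law.
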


\begin{proof}
This is simply a consequence of above constructions of $\Delta^\pm_{\mathbf{s,w}}$ and \cref{scatmutation}. Each $\overline{\mathcal C_v^+}$ is a top-dimensional simplicial cone. If two vertices $v$ and $u$ are adjacent in the tree $\mathfrak T_\mathbf{s}$, the cones $\overline{\mathcal C_v^+}$ and $\overline{\mathcal C_u^+}$ intersect at their common facet. Thus we obtain a simplicial cone complex. The wall-crossings at facets are governed by \cref{scatmutation}: it is always of the form $\mathbb E(q^{1/2},x^{n_0})$ in the quantum version or $-\mathrm{Li}_2(-x^{n_0})$ in the semi-classical limit where $n_0$ is the primitive normal vector in $N_\mathbf{s}^+$.
\end{proof}

\begin{remark}\label{coherence}
We see implicitly from the above theorem that for any codimension one cone in $\Delta_\mathbf{s,w}^\pm$, it always has a normal vector in the positive cone $N_\mathbf{s}^+$. The cone complexes $\Delta_\mathbf{s,w}^\pm$ are also independent of the choice of a non-degenerate potential $\mathbf{w}$. Any non-degenerate potential for $\mathbf{s}$ will lead to the same simplicial cone complex. 
\end{remark}

\section{Applications}\label{application}
In this section, we explain some applications of the results from previous sections in cluster theory and Donaldson-Thomas theory.

\subsection{In cluster theory}
The cluster scattering diagram $\mathfrak D_\mathbf{s}$ (see \cref{clustersd}) is used in \cite{gross2018canonical} to study the cluster algebra $\mathcal A(\mathbf{s})$ associated to the quiver $Q(\mathbf{s})$. For the definition and related notions of cluster algebras, we refer the readers to \cite{fomin2002cluster} and \cite{gross2018canonical}.

\subsubsection{} Similar to the stability scattering diagrams (\cref{scatmutation}), cluster scattering diagrams also have the following description of mutations.

\begin{theorem}[{\cite[theorem 1.24]{gross2018canonical}}] \label{ghkkmutation}
Let $\mathbf s$ be a seed of $N$. Restricted on the subset $\mathcal H_\mathbf{s}^{k,-}\sqcup \mathcal H_\mathbf{s}^{k,+} = M_\mathbb R\setminus s_k^\perp\subset M_\mathbb R$, we have the following transformation of cone decompositions
\[
\mathfrak S _{\mu_k^-(\mathbf{s})} = T^-_k(\mathfrak S_{\mathbf{s}})
\]
and
\[
\phi_{\mu_k^-(\mathbf{s})} = ((T_k^{-})^{\vee})^{-1}\circ \phi_{\mathbf{s}}\circ (T_k^{-})^{-1}
\]
Let $m$ be a generic point on the hyperplane $s_k^\perp$. Then we have
\[
\phi_{\mu_k^-(\mathbf{s})}(m) = \exp(-\mathrm{Li}_2(-x^{-s_k}))\quad\mathrm{and}\quad\phi_{\mathbf{s}}(m) = \exp(-\mathrm{Li}_2(-x^{s_k})).
\]

\end{theorem}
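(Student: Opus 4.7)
The plan is to invoke the uniqueness of consistent $\mathfrak{g}$-SDs from their initial data, as developed in Section~\ref{indata}, and to construct a candidate consistent $\mathfrak{g}_{\mu_k^-(\mathbf{s})}$-SD by gluing the two halves of $T_k^-(\mathfrak{D}_\mathbf{s})$ across $s_k^\perp$, then to match initial data on both sides. This parallels the strategy used for stability scattering diagrams in \cref{scatmutation}, but replaces the Hall-algebra input by a direct local computation at the mutation hyperplane.

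First I would define a candidate $\mathfrak{D}'$ on $M_\mathbb{R}$ as follows: on $\mathcal{H}_{\mathbf{s}}^{k,-}$ set $\mathfrak{D}' = \mathfrak{D}_\mathbf{s}$, since $T_k^-$ is the identity there; on $\mathcal{H}_{\mathbf{s}}^{k,+}$ set $\mathfrak{D}' = T_k(\mathfrak{D}_\mathbf{s})$ with wall-crossings pushed forward by the Lie algebra map induced by $(T_k^\vee)^{-1}$; and on $s_k^\perp$ insert the single wall with wall-crossing $\exp(-\mathrm{Li}_2(-x^{-s_k}))$. Away from $s_k^\perp$, consistency of $\mathfrak{D}'$ is inherited from the consistency of $\mathfrak{D}_\mathbf{s}$ via the linearity of $T_k^-$ on each half-space. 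I would then verify that the initial walls of $\mathfrak{D}'$ match those of $\mathfrak{D}_{\mu_k^-(\mathbf{s})}$: the wall on $s_k^\perp = \mu_k^-(\mathbf{s})_k^\perp$ carries $\exp(-\mathrm{Li}_2(-x^{-s_k}))$ by construction, and for $i\neq k$ the bent wall obtained from $s_i^\perp$ under $T_k^-$, combined with the central wall on $s_k^\perp$, equates to the straight initial wall on $\mu_k^-(\mathbf{s})_i^\perp$ with wall-crossing $\exp(-\mathrm{Li}_2(-x^{\mu_k^-(\mathbf{s})_i}))$. Uniqueness of the consistent scattering diagram with prescribed initial data then yields $\mathfrak{D}' = \mathfrak{D}_{\mu_k^-(\mathbf{s})}$.

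The hard part will be the local consistency check of $\mathfrak{D}'$ at generic points of $s_k^\perp$, where $T_k^-$ fails to be smooth. Concretely, a small loop crossing $s_k^\perp$ twice must produce the trivial path-ordered product once the central wall $\exp(-\mathrm{Li}_2(-x^{-s_k}))$ is inserted. This reduces to a rank-two scattering-diagram computation in the sub-Lie algebra generated by $s_k$ and a direction transverse to $s_k^\perp$, and is controlled by the classical pentagon identity for the dilogarithm together with its higher-order analogs; equivalently, this is Reineke's formula applied to rank-two quivers, as detailed in \cite[Section~1.4]{gross2018canonical}.
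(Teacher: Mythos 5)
The paper does not prove \cref{ghkkmutation}: it is imported verbatim from Gross--Hacking--Keel--Kontsevich, cited as \cite[theorem 1.24]{gross2018canonical}, and the quantum version is likewise deferred to Appendix A of \cite{davison2019strong}. So there is no internal proof to compare your attempt against; what you have written is a reconstruction of the GHKK argument, and it does track the broad strategy of that proof: build a candidate $\mathfrak D'$ by applying $T_k^-$ piecewise and inserting a wall along $s_k^\perp$, check consistency, then invoke the bijection of \cref{indata} between consistent $\mathfrak g$-SDs and their initial data. Three points worth flagging, though.

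First, you locate "the hard part" at \emph{generic} points of $s_k^\perp$. But $s_k^\perp$ has codimension one, so a transverse slice at a generic point is one-dimensional and a path crossing it meets only the single wall $s_k^\perp$; the path-ordered product is trivially well-defined, whatever wall-crossing you put there. The genuinely nontrivial check is at codimension-two joints $\mathfrak j \subset s_k^\perp$ where other walls land: in the two-dimensional transverse slice one must show that the wall-crossings arriving from $\mathcal H_{\mathbf s}^{k,-}$, the $(T_k^\vee)^{-1}$-conjugated ones arriving from $\mathcal H_{\mathbf s}^{k,+}$, and the central term compose to the identity around a small loop. Your appeal to rank-two sub-diagrams and the pentagon identity is aimed at exactly this step, so the instinct is right; the wording should say "joints," not "generic points." And since those rank-two local diagrams can have infinitely many walls, the finishing move in GHKK is a uniqueness statement for rank-two consistent diagrams (via perturbation / change of lattice), not a closed-form dilogarithm identity.

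Second, the initial-data matching is more delicate than "the bent wall from $s_i^\perp$ combined with the central wall equates to the straight wall on $\mu_k^-(\mathbf s)_i^\perp$." For $i\neq k$ one has $\mu_k^-(\mathbf s)_i = s_i + [B_{ik}]_+ s_k$, and the hyperplane $\mu_k^-(\mathbf s)_i^\perp$ does \emph{not} coincide with the piece $s_i^\perp \cap \mathcal H_{\mathbf s}^{k,-}$ left untouched by $T_k^-$. The matching should be phrased at the level of the group element: compute $\psi(\phi_{\mathfrak D'}(0))$ using the factorization of \cref{indata} and show its only nontrivial components sit at the primitive directions $\mu_k^-(\mathbf s)_i$ with the prescribed dilogarithm values. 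That calculation is where the actual content lies, and it is not just a cone-by-cone identification of walls.

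Third, a bookkeeping point: $\mathfrak D_\mathbf{s}$ is a $\mathfrak g_{\mathbf s}$-SD while $\mathfrak D_{\mu_k^-(\mathbf s)}$ is a $\mathfrak g_{\mu_k^-(\mathbf s)}$-SD, and the gluing of the two halves of $\mathfrak D'$ only makes sense because the wall-crossings on each half land in the common subalgebra supported on $N^+_{\mathbf s}\cap N^+_{\mu_k^-(\mathbf s)}$. This is exactly what the paper makes explicit for the stability analogue in \cref{stabilitymutation} via the subalgebra $\mathfrak g_{\mathbf s\cap\mathbf s'}$; your sketch should say this too. Note also that the paper's own proof of the stability analogue (\cref{scatmutation}) proceeds very differently, by an equivalence of semistable subcategories under reflection functors rather than by any joint-level consistency computation; that route is unavailable for $\mathfrak D_\mathbf s$, which is defined by initial data rather than by a moduli stack, so your consistency-based reconstruction is the only viable path for this particular theorem.
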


The following is a direct corollary of \cref{scatmutation} and \cref{ghkkmutation}.
\begin{corollary}\label{mutationinvariance}
Let $(\mathbf{s,w})$ be a $k$-mutable SP. Then the scattering diagrams $\mathfrak D_\mathbf{s,w}$ and $\mathfrak D_\mathbf{s}$ are equal if and only if their mutations $\mathfrak D_{\mu_k^-(\mathbf{s,w})}$ and $\mathfrak D_{\mu_k^-(\mathbf{s})}$ are equal.
\end{corollary}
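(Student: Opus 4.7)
The plan is to observe that \cref{scatmutation} and \cref{ghkkmutation} have identical form: both describe the mutation at $k$ via the same piecewise linear map $T_k^-$ (and its dual $(T_k^-)^\vee$) on the complement $M_\mathbb R\setminus s_k^\perp$, and both prescribe the same generic wall-crossing $\exp(-\mathrm{Li}_2(-x^{-s_k}))$ along the separating hyperplane $s_k^\perp$. The corollary should follow because this mutation recipe transforms $\mathfrak D_\mathbf{s,w}$ and $\mathfrak D_\mathbf{s}$ in lockstep.

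For the forward implication I would assume $\mathfrak D_\mathbf{s,w} = \mathfrak D_\mathbf{s}$, i.e.\ $\phi_\mathbf{s,w} = \phi_\mathbf{s}$ as maps $M_\mathbb R \to \hat G_\mathbf{s}$. Applying \cref{scatmutation} and \cref{ghkkmutation} simultaneously yields
\[
\phi_{\mu_k^-(\mathbf{s,w})} = ((T_k^-)^\vee)^{-1}\circ \phi_\mathbf{s,w}\circ (T_k^-)^{-1} = ((T_k^-)^\vee)^{-1}\circ \phi_\mathbf{s}\circ (T_k^-)^{-1} = \phi_{\mu_k^-(\mathbf{s})}
\]
on $M_\mathbb R\setminus s_k^\perp$, while at generic points of $s_k^\perp$ both mutated diagrams carry the wall-crossing $\exp(-\mathrm{Li}_2(-x^{-s_k}))$. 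To deduce $\mathfrak D_{\mu_k^-(\mathbf{s,w})} = \mathfrak D_{\mu_k^-(\mathbf{s})}$, I would pick any smooth path from the positive to the negative chamber of the mutated seed that crosses every codimension-one wall at a generic point; the wall-crossings along it agree for both diagrams by what has just been established, so the path-ordered products $\mathfrak p_{+,-}$ coincide, and \cref{thesecondbijection} (in its projective-limit form for infinite support, cf.\ the proposition after \cref{sdwithinfinitesupport}) then forces the equality.

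The converse is symmetric: I would invoke the $\mu_k^+$ version of \cref{scatmutation} noted at the end of the subsection, and the analogous $\mu_k^+$ version of \cref{ghkkmutation}, both involving the piecewise linear map $T_k^+$. Since $\mu_k^+\circ\mu_k^-=\mathrm{id}$ on $k$-mutable SPs by \cref{involutionsp} and on seeds by (\ref{involutionseed}), applying this $\mu_k^+$-mutation to the assumed equality of mutated diagrams recovers $\mathfrak D_\mathbf{s,w} = \mathfrak D_\mathbf{s}$. No serious obstacle appears; the entire content of the corollary is the formal coincidence of the stability and cluster mutation formulas, with the only mild check being that agreement of $\phi$ on the open dense set $M_\mathbb R\setminus s_k^\perp$ together with agreement of the generic wall-crossing on $s_k^\perp$ forces agreement of the full consistent scattering diagrams.
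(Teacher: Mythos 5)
Your proposal is correct and takes essentially the same route as the paper, which states the corollary as a direct consequence of Theorems \ref{scatmutation} and \ref{ghkkmutation}; you have simply spelled out what the paper leaves implicit, namely that agreement of the two associated functions on $M_\mathbb R\setminus s_k^\perp$ together with agreement of the generic wall-crossing on $s_k^\perp$ forces agreement of the group elements $\phi(0)$ (via a path-ordered product from $\mathcal C^+_{\mu_k^-(\mathbf s)}$ to $\mathcal C^-_{\mu_k^-(\mathbf s)}$ crossing walls only at generic points), hence of the full consistent scattering diagrams.
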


Using \cref{ghkkmutation}, one can build a simplicial cone complex $\Delta_{\mathbf{s}}^+$ as in \cref{chamberstructure} (see \cite[1.30]{gross2018canonical}). Then $\Delta_{\mathbf{s}}^+$ is exactly the same as $\Delta_{\mathbf{s,w}}^+$ for any non-degenerate $(\mathbf{s,w})$. In particular, as already mentioned in \cref{coherence}, the cone complex $\Delta_{\mathbf{s,w}}^+$ does not depend on the choice of a non-degenerate $\mathbf{w}$, so we denote it simply by $\Delta_\mathbf{s}^+$ and we call it \textit{the (positive) cluster complex} (similarly for the negative cluster complex $\Delta_\mathbf{s}^-$). To summarize, we have the following theorem as a corollary of \cref{scatmutation} and \cref{ghkkmutation}.

\begin{theorem}\label{agreecc}
Let $(\mathbf{s,w})$ be a non-degenerate SP. Then the associated cluster scattering diagram $\mathfrak D_\mathbf{s}$ and the stability scattering diagram $\mathfrak D_\mathbf{s,w}$ have the same wall-crossings in the positive and negative cluster complex $\Delta_\mathbf{s}^\pm$. More precisely, the cone complexes $\Delta^\pm_\mathbf s$ are sub posets simultaneously of $\mathfrak S_\mathbf{s}$ and of $\mathfrak S_\mathbf{s,w}$. The associated functions $\phi_\mathbf{s}$ and $\phi_\mathbf{s,w}$ have the same value on any codimension one cone of $\Delta_\mathbf{s}^\pm$.
\end{theorem}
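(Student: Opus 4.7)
The plan is to compare, step by step, the inductive constructions of $\Delta_\mathbf{s}^+$ (as built in \cite[Construction 1.30]{gross2018canonical} using \cref{ghkkmutation}) and $\Delta_{\mathbf{s,w}}^+$ (as built in \cref{chamberstructure} using \cref{scatmutation}), and to show that these constructions are literally the same recipe, so the resulting data match inside both $\mathfrak S_\mathbf{s}$ and $\mathfrak S_{\mathbf{s,w}}$. Then the matching wall-crossing formulas on facets will give the theorem directly.

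First, I would set up the parallel. For the cluster side, \cref{ghkkmutation} says: under $T_k^-$ the cone complex $\mathfrak S_{\mathbf s}$ maps to $\mathfrak S_{\mu_k^-(\mathbf s)}$, with a single wall on $s_k^\perp$ carrying the wall-crossing $\exp(-\mathrm{Li}_2(-x^{s_k}))$. For the stability side, \cref{scatmutation} says exactly the same with $\mathfrak S_{\mathbf s}$ and $\phi_{\mathbf s}$ replaced by $\mathfrak S_{\mathbf{s,w}}$ and $\phi_{\mathbf{s,w}}$, and with the wall-crossing given by the same formula $\exp(-\mathrm{Li}_2(-x^{s_k}))$. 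Combined with \cref{positivechamber}, the positive and negative chambers $\mathcal C_\mathbf{s}^\pm$ sit inside $\mathfrak S_\mathbf{s}^\circ$ and inside $\mathfrak S_{\mathbf{s,w}}^\circ$ simultaneously.

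Next I would carry out the induction along the oriented tree $\mathfrak T_\mathbf{s}$ used in \cref{chamberstructure}. For a vertex $v\in\mathfrak T_\mathbf{s}$ with associated sign sequence $(\mathbf k(v),\epsilon)$, the candidate chamber is $\mathcal C_v^+ = (T_{\mathbf k}^\epsilon)^{-1}(\mathcal C^+_{\mu_{\mathbf k}^\epsilon(\mathbf s)})$; this formula is defined purely in terms of $\mathbf s$ and the mutation sequence and is the same on the cluster side and on the stability side. Inductively, assuming $\mathcal C_u^+$ with all its faces belongs to $\mathfrak S_{\mathbf s}^\circ\cap\mathfrak S_{\mathbf{s,w}}^\circ$ for every predecessor $u$ of $v$, I apply \cref{ghkkmutation} (respectively \cref{scatmutation}) at the vertex $k_i$ of mutation: each step either transports a chamber across $s_{k_i}^\perp$ under the same piecewise-linear $T_{k_i}^-$, or confirms that the chamber lies in the half-space where the two scattering diagrams on either side of the mutation coincide. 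The bookkeeping is identical to the construction in \cref{chamberstructure} and uses \cref{rationalconeandfaces} to propagate membership to faces, so $\mathcal C_v^+$ and all its faces lie in both $\mathfrak S_{\mathbf s}^\circ$ and $\mathfrak S_{\mathbf{s,w}}^\circ$.

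Finally, I would read off the wall-crossings on codimension-one cones of $\Delta_\mathbf{s}^\pm$. By construction any such facet is obtained from $s_{k_l}^\perp$ inside some mutated seed by applying the inverse piecewise-linear map $(T_{\mathbf k}^\epsilon)^{-1}$, so the normal vector becomes some primitive $n_0\in N_\mathbf{s}^+$ (this is the statement recalled in \cref{coherence}). The wall-crossing formulas in \cref{chamberstructure} and in \cref{ghkkmutation} both evaluate to $\exp(-\mathrm{Li}_2(-x^{n_0}))$ on such a facet, so $\phi_\mathbf{s}$ and $\phi_{\mathbf{s,w}}$ agree there. The argument for $\Delta_\mathbf{s}^-$ is identical after swapping $+\leftrightarrow -$. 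The content is essentially a translation exercise between two parallel mutation theorems; the only place where care is needed is checking that the pulled-back normal vector lies in $N_\mathbf{s}^+$ so that the Lie algebras $\mathfrak g_\mathbf{s}$ and $\mathfrak g_\mathbf{s,w}$ both see the same wall-crossing, but this is exactly \cref{coherence}, which was already in hand from the construction.
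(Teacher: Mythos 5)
Your proposal is correct and follows precisely the route the paper intends: the theorem is presented there as a corollary of \cref{scatmutation} and \cref{ghkkmutation}, obtained by running the inductive construction of \cref{chamberstructure} in parallel for the two scattering diagrams along $\mathfrak T_\mathbf s$, and you simply spell out that parallel induction and read off the matching wall-crossing formulas. One small citation slip: \cref{positivechamber} in the paper covers the Hall algebra, quantum, and stability scattering diagrams, not $\mathfrak D_\mathbf s$; the fact that $\mathcal C_\mathbf{s}^\pm$ and their faces belong to $\mathfrak S_\mathbf s^\circ$ for the cluster side comes instead from \cite{gross2018canonical}, which the paper references for building $\Delta_\mathbf s^+$ on the cluster side.
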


\begin{definition}
We say that the quiver $Q(\mathbf {s})$ has a green-to-red sequence (see also equivalent definitions in \cite{muller2015existence} and \cite{keller2011cluster}) if (the closure of) the negative cluster chamber $\mathcal C_\mathbf{s}^-$ belongs to $\Delta_\mathbf{s}^+$. It is clear that this is a property independent of the seed but only of the quiver.
\end{definition}

\begin{corollary}\label{reddening}
Let $(\mathbf{s,w})$ be a non-degenerate SP. If $Q(\mathbf{s})$ has a green-to-red sequence, then the scattering diagrams $\mathfrak D_\mathbf{s}$ and $\mathfrak D_\mathbf{s,w}$ are equal, i.e. the cluster scattering diagram of $\mathbf{s}$ and the stability scattering diagram of $(\mathbf{s,w})$ are exactly the same.
\end{corollary}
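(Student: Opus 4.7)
The plan is to reduce the corollary to an application of the uniqueness part of \cref{thesecondbijection} (or its projective-limit version), using \cref{agreecc} as the bridge. Concretely, \cref{agreecc} already tells us that the two scattering diagrams $\mathfrak D_\mathbf{s}$ and $\mathfrak D_{\mathbf{s,w}}$ carry identical wall-crossings on every codimension-one cone of the cluster complex $\Delta_\mathbf{s}^+$. Thus the only remaining task is to exhibit one $\mathfrak S$-path whose path-ordered product determines the whole scattering diagram and which lies entirely inside $\Delta_\mathbf{s}^+$, so that the shared wall-crossings force the two path-ordered products to agree.

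First I would use the hypothesis that $Q(\mathbf{s})$ admits a green-to-red sequence to guarantee that \textit{both} maximal chambers $\mathcal C_\mathbf{s}^+$ and $\mathcal C_\mathbf{s}^-$ belong to $\Delta_\mathbf{s}^+$: the positive chamber is the root of the cluster tree $\mathfrak T_\mathbf{s}$, and by the definition recalled just before \cref{reddening}, the negative chamber appears as some $\overline{\mathcal C_v^+}$ along the sequence. Next, following the chain of mutations $\{v_0,v_1,\dots,v_\ell\}$ in $\mathfrak T_\mathbf{s}$ with $\overline{\mathcal C_{v_0}^+}=\overline{\mathcal C_\mathbf{s}^+}$ and $\overline{\mathcal C_{v_\ell}^+}=\overline{\mathcal C_\mathbf{s}^-}$, I would construct a generic $\mathfrak S_\mathbf{s,w}$-path $\gamma$ from the interior of $\mathcal C_\mathbf{s}^+$ to the interior of $\mathcal C_\mathbf{s}^-$ that, at each transition, crosses exactly the shared facet of $\overline{\mathcal C_{v_{i-1}}^+}$ and $\overline{\mathcal C_{v_i}^+}$. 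Since adjacent simplicial chambers meet in a common codimension-one cone of $\Delta_\mathbf{s}^+$, the path $\gamma$ can be taken to meet only walls of $\Delta_\mathbf{s}^+$ (after a small perturbation if necessary so that it avoids higher-codimension strata). This $\gamma$ is simultaneously a $\mathfrak S_\mathbf{s}$-path and a $\mathfrak S_\mathbf{s,w}$-path.

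Now I would invoke \cref{agreecc}: at every crossing of $\gamma$, the wall-crossing $\phi_\mathbf{s}(\sigma_i)$ agrees with $\phi_\mathbf{s,w}(\sigma_i)$. Consequently, the path-ordered products satisfy
\[
\mathfrak p_\gamma(\mathfrak D_\mathbf{s}) \;=\; \mathfrak p_\gamma(\mathfrak D_{\mathbf{s,w}}) \;\in\; \hat G.
\]
Since $\gamma(0)\in \mathcal C_\mathbf{s}^+$ and $\gamma(1)\in \mathcal C_\mathbf{s}^-$, this common value is precisely $\mathfrak p_{+,-}(\mathfrak D_\mathbf{s})=\mathfrak p_{+,-}(\mathfrak D_{\mathbf{s,w}})$. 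To handle the fact that $\mathrm{Supp}(\mathfrak g_\mathbf{s})$ is infinite, I would work one cofinite ideal $I\subset N^+_\mathbf{s}$ at a time: the truncations $\mathfrak D_\mathbf{s}^{<I}$ and $\mathfrak D_{\mathbf{s,w}}^{<I}$ are consistent $\mathfrak g_\mathbf{s}^{<I}$-SDs with finite support, so the injectivity half of \cref{thesecondbijection} yields $\mathfrak D_\mathbf{s}^{<I}=\mathfrak D_{\mathbf{s,w}}^{<I}$. Taking the projective limit over $\mathrm{Cofin}(N^+)$ via \cref{sdwithinfinitesupport} then gives the desired equality $\mathfrak D_\mathbf{s}=\mathfrak D_{\mathbf{s,w}}$.

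The main obstacle I expect is the construction of the path $\gamma$ staying inside $\Delta_\mathbf{s}^+$ while crossing only codimension-one cones of the cluster complex. A priori $\Delta_\mathbf{s}^+$ is only a sub-cone-complex of $\mathfrak S_{\mathbf{s,w}}$, and the straight-line segment between two chambers could pass through cones of higher codimension or through walls of $\mathfrak S_{\mathbf{s,w}}$ that lie outside $\Delta_\mathbf{s}^+$; the key point is that consecutive chambers in the mutation tree share a codimension-one face, so a short smooth path crossing each such face transversally can be concatenated into the desired $\gamma$. Once this combinatorial observation is made precise, the rest of the argument is essentially a formal application of the two cited theorems.
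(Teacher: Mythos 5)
Your proposal is correct and follows essentially the same route as the paper's own proof: use the green-to-red sequence to exhibit a path in $M_\mathbb R$ from $\mathcal C^+_\mathbf{s}$ to $\mathcal C^-_\mathbf{s}$ crossing only codimension-one cones of $\Delta_\mathbf{s}^+$, invoke \cref{agreecc} to identify the two path-ordered products, and conclude that the scattering diagrams correspond to the same element of $\hat G$. The extra details you supply (perturbing the path to cross facets transversally, reducing to finite truncations via cofinite ideals before passing to the projective limit) are the right way to make the paper's terse argument fully precise.
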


\begin{proof}
The existence of a green-to-red sequence gives a path on $\mathfrak T_\mathbf{s}$ from the root to a vertex $v$ such that $\mathcal C^+_v = \mathcal C^-_\mathbf{s}$. This path on $\mathfrak T_\mathbf{s}$ gives a path in $M_\mathbb R$ from $\mathcal C^+_\mathbf{s}$ to $\mathcal C^-_\mathbf{s}$ only crossing codimension one cones in $\Delta_\mathbf{s}^+$. These wall-crossings are the same for $\phi_\mathbf{s}$ and $\phi_\mathbf{s,w}$ by \cref{agreecc}. Therefore, $\phi_\mathbf{s}(0)$ and $\phi_\mathbf{s,w}(0)$ as path-ordered products of wall-crossings are equal. That is, they correspond to the same element in $\exp(\hat {\mathfrak g}_\mathbf{s})$ thus defining the same consistent $\mathfrak g_\mathbf{s}$-scattering diagram.
\end{proof}

\begin{remark}
Bridgeland proves in \cite{bridgeland2016scattering} that $\mathfrak D_\mathbf{s} = \mathfrak D_{\mathbf{s},0}$ when $Q(\mathbf{s})$ is acyclic (thus only $\mathbf w = 0$ is possible) with the assumption that the map $p^*$ in \cref{indata} is non-degenerate. The non-degeneracy is not important here since it is not required in the definition of cluster scattering diagrams in \cref{clustersd} whereas it is required in \cite{gross2018canonical} (if not, add principle coefficients). Acyclic quivers are known to have green-to-red sequences. So \cref{reddening} is a generalization of Bridgeland's result to the class of quivers with green-to-red sequences.
\end{remark}

\begin{remark}
In \cite{gross2018canonical}, a distinguished set of elements of the cluster algebra $\mathcal A(\mathbf s)$ indexed by the lattice points in the cluster complex $\Delta_\mathbf{s}^+$ are constructed using the cluster scattering diagram $\mathfrak D_\mathbf{s}$. They are proven to correspond to the cluster monomials of $\mathcal A(\mathbf{s})$. Since the scattering diagrams $\mathfrak D_\mathbf{s,w}$ and $\mathfrak D_\mathbf{s}$ agree on the cluster complex $\Delta_\mathbf{s}^+$, similar construction is valid for the stability scattering diagram $\mathfrak D_\mathbf{s,w}$. Then one recovers the Caldero-Chapoton formula for cluster monomials as in \cite{nagao2013donaldson}.
\end{remark}

\subsubsection{} The quantum version of \cref{ghkkmutation} is still true for $\mathfrak D^q_\mathbf{s}$ using almost the same argument as in \cite{gross2018canonical}; see Appendix A of \cite{davison2019strong} for a detailed proof. In particular, for a generic point $m$ in $s_k^\perp$, we have
\[
\phi^{q}_{\mu_k^-(\mathbf{s})}(m) = \mathbb E(q^{1/2},x^{-s_k})\quad\mathrm{and}\quad\phi^q_{\mathbf{s}}(m) = \mathbb E(q^{1/2},x^{s_k}).
\]

If we assume the non-degenerate SP $(\mathbf{s,w})$ to be polynomial, the quantum stability scattering diagram $\mathfrak D_\mathbf{s,w}^q$ is well-defined. Then \cref{mutationinvariance}, \cref{agreecc} and \cref{reddening} are all valid for $\mathfrak D_\mathbf s^q$ and $\mathfrak D_\mathbf{s,w}^q$.

\subsection{In Donaldson-Thomas theory}\label{dtscattering} There is a slightly different torus Lie algebra
\[
\mathfrak h \colon = \bigoplus_{d\in N}\mathbb Q\cdot x^d,\quad [x^{d_1},x^{d_2}] =  (-1)^{\{d_1,d_2\}}\{d_1,d_2\}x^{d_1+d_2}.
\]
With the multiplication $x^{d_1}\cdot x^{d_2} = (-1)^{\{d_1,d_2\}}x^{d_1+d_2}$, we have a Poisson algebra structure on $\mathfrak h$. We define a twisted product $*$ different from (\ref{twistedprod}) on $T_q$ by setting
\[
x^{d_1}*x^{d_2} = (-q^{\frac{1}{2}})^{\{d_1,d_2\}}x^{d_1+d_2}.
\]
The Poisson algebra $\mathfrak h$ can be considered as the semi-classical limit of associative algebra $(T_q,*)$ at $q^{1/2} = 1$. We also have graded Lie algebras $\mathfrak h_\mathbf{s}$ and $\mathfrak h_{\mathbf s}^q$ as before with a choice of seed $\mathbf s$.

\begin{theorem}[\cite{joyce2012theory}, \cite{bridgeland2012introduction}]\label{dtintegration}
There is an $N^\oplus_\mathbf{s}$-graded Poisson homomorphism 
  \[
  \bar I\colon H_\mathrm{reg}(\mathbf{s,w})\rightarrow  \mathbb Q[N^\oplus_\mathbf{s}],\quad \bar I\left({[X\overset{f}{\rightarrow} \mathfrak M_d]}\right)= \chi(X, f^*(\nu_{\mathfrak M_d}))x^d
  \]
where $\chi(X, f^*(\nu_{\mathfrak M_d}))$ is the weighted Euler characteristic of $X_\mathrm{an}$ by the pull back of the Behrend function on $\mathfrak M_d$.
\end{theorem}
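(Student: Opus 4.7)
The plan is to construct $\bar I$ as a Behrend-weighted Euler characteristic and to verify the Poisson homomorphism property using the Calabi--Yau-3 structure of the Jacobian algebra $J = J(\mathbf{s,w})$. Since $J$ is the Jacobian algebra of a quiver with potential, its derived category is 3-Calabi--Yau, so each moduli stack $\mathfrak M_d$ carries a canonical symmetric obstruction theory together with the associated Behrend function $\nu_{\mathfrak M_d}\colon \mathfrak M_d(\mathbb C)\to \mathbb Z$. For a representable morphism $f\colon X\to \mathfrak M_d$ with $X$ a variety, the pullback $f^*\nu_{\mathfrak M_d}$ is a constructible $\mathbb Z$-valued function, and its weighted Euler characteristic $\chi(X,f^*\nu_{\mathfrak M_d})$ is additive under stratification and invariant under piecewise isomorphism. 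These two properties ensure that the rule $[X\xrightarrow{f}\mathfrak M_d]\mapsto \chi(X,f^*\nu_{\mathfrak M_d})x^d$ descends to a well-defined $\mathbb Q$-linear map on $K(\mathrm{St}/\mathfrak M)$; the regularity condition defining $H_\mathrm{reg}$ is precisely what ensures that scalars may be specialized at $q^{1/2}=1$, producing $\bar I$.

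Next, to show $\bar I$ is a ring homomorphism with respect to the sign-twisted multiplication $x^{d_1}\cdot x^{d_2} = (-1)^{\{d_1,d_2\}}x^{d_1+d_2}$ on $\mathbb Q[N^\oplus_\mathbf s]$, I would expand the Hall-algebra product using the stack of short exact sequences and apply Joyce's multiplicative identity for Behrend functions in a 3-CY category: for $A,B$ of classes $d_1,d_2$,
\[
\sum_{[E]}\chi\bigl(\mathrm{Ext}^1(B,A)_E\bigr)\,\nu(E) - \chi\bigl(\mathrm{Ext}^1(B,A)\bigr)\nu(A\oplus B) = (-1)^{\chi(B,A)}\nu(A)\nu(B)\chi\bigl(\mathrm{Hom}(B,A)\bigr),
\]
where the sum is over iso classes of extensions. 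Integrating this identity against fiber classes of $f$ translates the convolution product on $H_\mathrm{reg}(\mathbf{s,w})$ into the sign-twisted multiplication on $\mathbb Q[N^\oplus_\mathbf s]$, since Serre duality in the 3-CY category reduces the Euler form modulo $2$ to the skew-symmetric pairing $\{d_1,d_2\}$. The Poisson compatibility is then automatic: because the target is commutative, one has
\[
\bar I(\{a,b\}) = \bar I\!\left(\frac{[a,b]}{q^{1/2}-q^{-1/2}}\right) = \lim_{q^{1/2}\to 1}\frac{\bar I(a*b)-\bar I(b*a)}{q^{1/2}-q^{-1/2}},
\]
which matches $\{\bar I(a),\bar I(b)\}$ on $\mathbb Q[N^\oplus_\mathbf s]$ by the definition of the signed Poisson bracket.

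The main obstacle is the Behrend-function identity. It requires controlling the local structure of $\mathfrak M_{d_1+d_2}$ near a point representing an extension $0\to A\to E\to B\to 0$, which in the CY-3 setting reduces to a critical-locus computation for the relevant formal potential and a Gauss--Bonnet-type formula for vanishing cycles. This was established by Joyce--Song as part of the foundations of generalized DT theory, and reformulated in the Hall-algebra framework by Bridgeland; I would cite these results rather than reprove them. A secondary technical point is verifying that $\bar I$ is well-defined on $H_\mathrm{reg}$ (not merely on a formal localization), which follows from the absence-of-poles results for constructible functions of Joyce that also underpin \cref{joyce}.
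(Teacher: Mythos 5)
The paper does not prove this statement: it is presented as a cited theorem, attributed to Joyce (and Joyce--Song) and Bridgeland's survey, with no argument in the text. Your sketch is a reasonable reconstruction of what lies behind the citation -- the CY-3 / critical-locus structure of $\mathfrak M_d$, Joyce--Song's Behrend function identities, and the reduction of the Euler pairing to the skew form modulo $2$ -- and you rightly conclude by citing these results rather than reproving them. So on the whole you take the same route as the paper.

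One step, though, is not just informal but incorrect as written. In the Poisson-compatibility paragraph you claim
\[
\bar I(\{a,b\}) = \lim_{q^{1/2}\to 1}\frac{\bar I(a*b)-\bar I(b*a)}{q^{1/2}-q^{-1/2}}.
\]
But $\bar I$ has no $q$-dependence in its target: $\bar I(a*b)$ and $\bar I(b*a)$ are fixed elements of $\mathbb Q[N^\oplus_\mathbf s]$. Since you have just argued that $\bar I$ is a (sign-twisted) ring homomorphism to a commutative target, $\bar I(a*b)=\bar I(b*a)$, so the numerator vanishes identically and the limit gives $0$, not the expected bracket $\{\bar I(a),\bar I(b)\}$. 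The issue is that $\bar I$ is not $\mathbb Q_\mathrm{reg}(q^{1/2})$-linear in the naive sense -- you cannot pull the scalar $(q^{1/2}-q^{-1/2})^{-1}$ past $\bar I$. The correct argument either works directly from the Behrend-function identity and the structure of $H_\mathrm{reg}$ (this is how Joyce--Song establish the Lie-algebra property, without passing through a $q$-deformation), or goes through the quantum map $\bar I_q$ of the next theorem and specializes -- but the latter would be circular here, since $\bar I_q$ is a separate, later result. So the identity you want cannot be "automatic from commutativity"; you need to invoke the Joyce--Song computation one more time at the Lie-algebra level.
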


\begin{theorem}[\cite{davison2015donaldson}]\label{qdtintegration}
Let $(\mathbf{s,w})$ be a seed with polynomial potential. There exists an $N_{\mathbf{s}}^\oplus$-graded algebra homomorphism 
 \[
 \bar I_q\colon H(\mathbf{s,w})\rightarrow \left(\mathbb Q((q^\frac{1}{2}))[N^\oplus_\mathbf{s}],\,*\right).
 \]
 It induces an algebra homomorphism of subalgebras
 \[
 \bar I_q\colon H_\mathrm{reg}(\mathbf{s,w})\rightarrow \left(\mathbb Q_\mathrm{reg}(q^\frac{1}{2})[N^\oplus_\mathbf{s}],\,*\right)
 \]
 whose semi-classical limit at $q^{1/2} = 1$ is exactly $\bar I$ in \cref{dtintegration}.
\end{theorem}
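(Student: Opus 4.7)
The plan is to construct $\bar{I}_q$ via a motivic integration against the (relative) vanishing cycle sheaf of the trace of the potential on the moduli stack $\mathfrak{M}$. For each $d\in N^\oplus_\mathbf{s}$, there is a smooth ambient stack $\mathfrak{R}_d$ of representations of $Q$ of dimension $d$ carrying a regular function $\mathrm{tr}(\mathbf{w})_d$ whose critical locus is $\mathfrak{M}_d\hookrightarrow \mathfrak{R}_d$. The motivic vanishing cycle $\phi_{\mathrm{tr}(\mathbf{w})_d}$ is an element in a suitable equivariant Grothendieck ring of varieties over $\mathfrak{M}_d$, normalized by a Tate twist so that its motivic weight lives in $\mathbb{Q}((q^{1/2}))$. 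I would then define, for a generator $[X\overset{f}{\to}\mathfrak{M}_d]$ of $H(\mathbf{s,w})_d$, the value
\[
\bar I_q\bigl([X\overset{f}{\to}\mathfrak M_d]\bigr) \;=\; (-q^{1/2})^{-\chi(d,d)/2}\,\mathrm{w}\!\left(\int_{X} f^*\phi_{\mathrm{tr}(\mathbf{w})_d}\right)\cdot x^d,
\]
where $\mathrm{w}$ denotes the motivic weight and $\chi$ is the Euler form of $Q$; the normalization is fixed so as to produce the desired twist in the target algebra.

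Next I would verify that $\bar I_q$ is a well-defined $\mathbb{Q}(q^{1/2})$-linear map: this uses that $\phi_{\mathrm{tr}(\mathbf{w})_d}$ is constant along the fibers of the projection $\mathfrak{R}_d\to\mathfrak{M}_d$ once the $\mathrm{GL}_d$-equivariant structure is accounted for, so that scissor relations and $\mathrm{GL}_d$-torsor relations defining the Hall algebra are respected. The main obstacle, and the most substantial step, is proving multiplicativity: the Hall product on $H(\mathbf{s,w})$ is built from the stack of short exact sequences, and its image in the twisted torus algebra must match $*$. This is where the motivic Thom--Sebastiani theorem enters: for the extension stack, the relevant function $\mathrm{tr}(\mathbf{w})$ decomposes additively (up to a contribution from the extension bundle), so $\phi_{\mathrm{tr}(\mathbf{w})}$ factors as a convolution of vanishing cycles, and integrating along the stack of extensions produces the sign $(-q^{1/2})^{\{d_1,d_2\}}$ from the skew-symmetrized Euler form. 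I would carry this out following Davison's dimensional reduction, so that one never has to manipulate the full motivic vanishing cycle but rather its restriction to an ordinary motive of a critical locus, which is what makes the multiplicativity tractable.

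Finally I would show the two compatibilities. For the restriction to $H_\mathrm{reg}(\mathbf{s,w})$, the point is that motivic weights of representable maps $X\to\mathfrak{M}_d$ with $X$ a variety land in $\mathbb{Q}_\mathrm{reg}(q^{1/2})$ after incorporating the absence-of-poles theorem (\cref{joyce}) and the standard computation that $\phi$ of a quadratic form has motivic weight a monomial in $q^{1/2}$; this ensures no denominators of the form $1-q^k$ survive. The claim that $e_{q^{1/2}=1}\circ \bar I_q = \bar I$ then follows from Behrend's identification of the Euler characteristic weighted by the Behrend function with the pointwise Euler characteristic of the vanishing cycle sheaf: specializing $q^{1/2}$ to $1$ collapses motivic weights to ordinary Euler characteristics, and the equivariant normalization above is arranged precisely so that the resulting formula reproduces $\chi(X,f^*\nu_{\mathfrak{M}_d})\,x^d$. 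I expect the Thom--Sebastiani step to be by far the hardest, both because of the equivariant subtleties and because one must carefully track the Tate twists to obtain the correct sign conventions; the remaining steps are essentially bookkeeping once the key multiplicativity is in place.
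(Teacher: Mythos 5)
The paper does not prove this theorem: it states it with the attribution $\cite{davison2015donaldson}$ and treats it as a black box (as it does for the closely related \cref{qintegration}, which cites $\cite{kontsevich2008stability}$ and Davison--Meinhardt). So there is no ``paper's own proof'' to compare against; the theorem is an imported ingredient.

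That said, your sketch does land on the main conceptual skeleton that actually appears in the literature: present $\mathfrak M_d$ as the critical locus of $\mathrm{tr}(\mathbf w)_d$ on a smooth ambient stack, pull back the motivic vanishing cycle, take its weight to get an element of $\mathbb Q((q^{1/2}))$, and use Thom--Sebastiani on the stack of extensions to match the Hall product with the twist $(-q^{1/2})^{\{d_1,d_2\}}$. Two parts of your argument, however, do not hold up. First, invoking \cref{joyce} (Joyce's absence of poles) to show that $\bar I_q$ carries $H_\mathrm{reg}(\mathbf{s,w})$ into $\mathbb Q_\mathrm{reg}(q^{1/2})[N^\oplus_\mathbf{s}]$ is off-target: that theorem asserts that $\log(1^{\mathrm{ss}}(m))$ lies in $\hat{\mathfrak g}^{\mathrm{reg}}_{\mathbf{s,w}}$, i.e.\ it is a statement about \emph{specific elements of the source Hall algebra}, not about the integration map. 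What you actually need is that the motivic weight of a vanishing cycle restricted along a representable map $X\to\mathfrak M_d$ with $X$ a genuine variety lands in $\mathbb Q_\mathrm{reg}(q^{1/2})$; this is a statement about the image and has nothing to do with Joyce's theorem. Second, the claim that ``Davison's dimensional reduction'' reduces multiplicativity to a computation with ordinary motives of critical loci is dubious in this generality. Dimensional reduction applies when the potential is linear in a set of variables (e.g.\ for preprojective algebras); for an arbitrary polynomial potential one cannot escape manipulating the full motivic vanishing cycle, and the multiplicativity is genuinely carried by the (hard) motivic Thom--Sebastiani theorem plus a careful analysis of the extension bundle over the correspondence stack. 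These two mis-attributions don't wreck the overall strategy, but they would need to be replaced with the correct inputs if you intended to turn this sketch into a proof.
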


\begin{remark}
The algebra homomorphisms in the above two theorems induce two Lie algebra homomorphisms
\[
\bar {\mathcal I} \colon \mathfrak g^{\mathrm{reg}}_{\mathbf{s,w}}\rightarrow \mathfrak h_\mathbf{s}
\quad
\mathrm{and}
\quad
\bar {\mathcal I}_q\colon \mathfrak g_\mathbf{s,w}^\mathrm{reg}\rightarrow \mathfrak h_\mathbf{s}^q.
\]
\end{remark}

Applying the integration maps $\bar {\mathcal I}$ and $\bar {\mathcal I}_q$ to the Hall algebra scattering diagram $\mathfrak D_{\mathbf{s,w}}^\mathrm{Hall}$, we have an $\mathfrak h_\mathbf{s}$-scattering diagram denoted by ${\mathfrak D}_\mathbf{s,w}^\mathrm{DT}$ and an $\mathfrak h^q_\mathbf{s}$-scattering diagram denoted by ${\mathfrak D}_{\mathbf{s,w}}^{q\mathrm{DT}}$. It is reasonable to call them \textit{the Donaldson-Thomas scattering diagram} and \textit{the refined or quantum Donaldson-Thomas scattering diagram} since they encode the DT invariants and the refined DT invariants for $(\mathbf{s,w})$ respectively.

On the other hand, there is a similar construction to the cluster scattering diagram $\mathfrak D_\mathbf{s}$ by using the initial data 
\[
\bar g_{i} = \exp\left(\sum_{k=1}^\infty \frac{x^{ks_i}}{k^2}\right)\in \exp(\hat{\mathfrak h}_{s_i}^{||})
\]
for each $s_i$ as in (\ref{initialdata}). We denoted this $\mathfrak h_\mathbf{s}$-scattering diagram by ${\mathfrak D}_\mathbf{s}^\mathrm{In}$. Similarly we have a quantum version corresponding to the initial data
\[
\bar g_{i,q} = \exp\left(\sum_{k=1}^\infty \frac{x^{k s_i}}{k(q^{k/2}-q^{-k/2})}\right) = \sum_{k=0}^\infty \frac{(-q^{\frac{1}{2}})^{k^2}x^{ks_i}}{[\mathrm{GL}_k]_q}\in \exp(\hat{\mathfrak h}_{s_i}^{q,||})
\]
for each $s_i$. We denote this scattering diagram by ${\mathfrak D}^{q\mathrm{In}}_{\mathbf{s}}$.

\begin{conjecture}[{\cite[conjecture 3.3.4]{kontsevich2014wall}}]\label{ksconjecture} Let $(\mathbf{s,w})$ be a non-degenerate seed with polynomial potential. Then we have
\begin{enumerate}
    \item The scattering diagrams $ {\mathfrak D}_\mathbf{s,w}^\mathrm{DT}$ and $ {\mathfrak D}_\mathbf{s}^\mathrm{In}$ only differ by a central element in $\exp(\hat {\mathfrak h}_\mathbf{s})$.
    \item The scattering diagrams ${\mathfrak D}^{q\mathrm{DT}}_{\mathbf{s,w}}$ and $ {\mathfrak D}^{q\mathrm{In}}_{\mathbf{s}}$ only differ by a central element in $\exp(\hat {\mathfrak h}_\mathbf{s}^q)$.
\end{enumerate}
\end{conjecture}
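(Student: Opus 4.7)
The plan is to reduce the conjecture, via mutation invariance, to two identifiable base cases: quivers with green-to-red sequences and the Markov quiver. First I would extend Theorem \ref{scatmutation} to the DT setting by composing the Hall-algebra mutation formula with the DT integration maps $\bar{\mathcal I}$ and $\bar{\mathcal I}_q$ of Theorems \ref{dtintegration} and \ref{qdtintegration}. This produces mutation formulas for $\mathfrak D^{\mathrm{DT}}_{\mathbf{s,w}}$ and $\mathfrak D^{q\mathrm{DT}}_{\mathbf{s,w}}$ exactly analogous to those for $\mathfrak D_{\mathbf{s,w}}$: away from $s_k^\perp$ they transform by the piecewise linear map $T_k^-$, and the wall-crossing on the hyperplane is precisely the same explicit exponential as the wall-crossing of $\mathfrak D^{\mathrm{In}}_{\mathbf{s}}$ on $s_k^\perp$, computed directly from the initial data $\bar g_i$, $\bar g_{i,q}$ defining $\mathfrak D^{\mathrm{In}}_{\mathbf{s}}$ and $\mathfrak D^{q\mathrm{In}}_{\mathbf{s}}$. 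An analogous mutation formula for $\mathfrak D^{\mathrm{In}}_{\mathbf{s}}$ itself follows from the GHKK argument of Theorem \ref{ghkkmutation}, since its initial data transforms like that of the cluster scattering diagram.

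With both mutation formulas in hand, mutation invariance of the conjecture is then formal: if $c$ denotes the putative central element relating $\mathfrak D^{\mathrm{DT}}_{\mathbf{s,w}}$ and $\mathfrak D^{\mathrm{In}}_{\mathbf{s}}$, after applying $\mu_k^-$ both diagrams transform by the \emph{same} $T_k^-$ with matching wall-crossings on $s_k^\perp$, so the new discrepancy is the image of $c$ under the induced automorphism of $\exp(\hat{\mathfrak h}_{\mathbf{s}})$, and automorphisms preserve the center. This is the DT analogue of the mutation-invariance statement behind Corollary \ref{mutationinvariance}.

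Next I would import the cluster chamber structure of Theorem \ref{chamberstructure} into both the DT and initial diagrams: the same $\Delta_{\mathbf{s}}^\pm$ sits as a simplicial sub-poset inside each of $\mathfrak S^{\mathrm{DT}}_{\mathbf{s,w}}$ and $\mathfrak S^{\mathrm{In}}_{\mathbf{s}}$, with identical wall-crossings on codimension-one cones, since along the branches of the tree $\mathfrak T_{\mathbf{s}}$ these wall-crossings are dictated entirely by the mutation formula. In the green-to-red case, some $\mathcal C^+_v$ equals $\mathcal C^-_{\mathbf{s}}$, so a path from $\mathcal C^+_{\mathbf{s}}$ to $\mathcal C^-_{\mathbf{s}}$ can be chosen to cross only walls of $\Delta_{\mathbf{s}}^+$ and the two path-ordered products agree, forcing $c = 1$. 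For the Markov quiver, the additional walls all lie on the single separating hyperplane, and a direct computation of the refined DT generating series along that hyperplane, compared with the contribution coming from the initial data, produces the explicit central element.

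The main obstacle is the general case: outside the cluster complex $\Delta_{\mathbf{s}}^\pm$, the wall structure is governed by genuine refined DT invariants for which the mutation framework alone provides no handle. Showing that these invariants, packaged into one element of $\exp(\hat{\mathfrak h}_{\mathbf{s}})$, differ from the initial-data element by a central factor requires an extrinsic input, such as a cohomological BPS Lie algebra splitting or a Kontsevich--Soibelman integrality structure, and the mutation-based strategy here can only reduce the full conjecture to such an invariant-theoretic statement.
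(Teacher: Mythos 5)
Your proposal follows essentially the same route as the paper: push the Hall-algebra mutation formula through the DT integration maps $\bar{\mathcal I}$, $\bar{\mathcal I}_q$ to get mutation formulas for $\mathfrak D^{\mathrm{DT}}_{\mathbf{s,w}}$ and $\mathfrak D^{q\mathrm{DT}}_{\mathbf{s,w}}$, match these against the GHKK-style mutation of $\mathfrak D^{\mathrm{In}}_{\mathbf s}$ to get mutation invariance of the conjecture, import the cluster chamber structure into both diagrams so they agree on $\Delta_{\mathbf s}^\pm$, deduce the green-to-red case by comparing path-ordered products from $\mathcal C^+_{\mathbf s}$ to $\mathcal C^-_{\mathbf s}$ inside the cluster complex, and reduce the Markov case to a centrality statement on the separating hyperplane. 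You also correctly flag that the full conjecture is not obtained this way and that the paper only verifies it in these two families.

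One place where you diverge from the paper is the Markov step. You propose a direct computation of the refined DT generating series along the separating hyperplane, compared with the contribution from the initial data. The paper does not and cannot do such a computation; instead it argues centrality structurally. After showing the two scattering diagrams agree on $M_{\mathbb R}\setminus H$ with $H = n_0^\perp$ and $n_0 = (1,1,1)$, the only possible discrepancy in the path-ordered product from $\mathcal C_{\mathbf s}^+$ to $\mathcal C_{\mathbf s}^-$ is a wall-crossing supported on $H$, hence on dimension vectors proportional to $n_0$. Since $n_0 \in \ker p^*$ for the Markov form, any such element is automatically central, with no need to identify it. This is cleaner than your proposed explicit DT computation and, more importantly, it is actually available: the DT series on that hyperplane for the Markov quiver is not known in closed form, so a genuinely ``direct computation'' step would leave a gap. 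Replacing it with the kernel-of-$p^*$ argument closes that gap.
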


\begin{theorem}\label{dtreddening}
The above conjecture is true for any non-degenerate seed with (polynomial for the quantum case) potential $(\mathbf{s,w})$ with a green-to-red sequence.
\end{theorem}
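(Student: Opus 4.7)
The plan is to mirror the proof of \cref{reddening} in the Donaldson--Thomas setting. The key observation is that the entire machinery of \cref{muta}---the mutation formula \cref{scatmutation} and the cluster chamber structure \cref{chamberstructure}---depends only on the existence of an $N^+_\mathbf{s}$-graded Lie algebra homomorphism out of $\mathfrak{g}^\mathrm{reg}_{\mathbf{s,w}}$ together with its application to the Hall algebra scattering diagram $\mathfrak{D}^\mathrm{Hall}_{\mathbf{s,w}}$. Replacing the integration map $\mathcal{I}$ of \cref{integration} by the DT integration map $\bar{\mathcal{I}}$ of \cref{dtintegration} (and respectively $\mathcal{I}_q$ by $\bar{\mathcal{I}}_q$ of \cref{qdtintegration}), the arguments of \cref{muta} transport verbatim to yield the DT analogues of \cref{scatmutation} and \cref{chamberstructure}. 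In particular, $\mathfrak{D}^\mathrm{DT}_{\mathbf{s,w}}$ admits the cluster complex $\Delta_\mathbf{s}^\pm$ as a subposet of its cone decomposition, and the wall-crossings on cluster walls are of the expected ``dilogarithm'' type.

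Next I would perform the analogous analysis for the purely combinatorial diagram $\mathfrak{D}^\mathrm{In}_\mathbf{s}$. Its construction parallels that of $\mathfrak{D}_\mathbf{s}$ in \cref{clustersd}: same lattice data and same shape of initial rays, only with different incoming wall-crossings and the sign-twisted ambient Lie algebra $\mathfrak{h}$. Hence the mutation description \cref{ghkkmutation} carries over after routine bookkeeping of the sign change, and the cluster complex $\Delta^\pm_\mathbf{s}$ appears as a subposet of the cone decomposition of $\mathfrak{D}^\mathrm{In}_\mathbf{s}$, with wall-crossing on any codimension-one face whose primitive normal is $n_0\in N^+_\mathbf{s}$ given by $\exp(\sum_{k\geq 1} x^{kn_0}/k^2)$.

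The decisive comparison is the DT analogue of \cref{agreecc}: the wall-crossings of $\mathfrak{D}^\mathrm{DT}_{\mathbf{s,w}}$ and $\mathfrak{D}^\mathrm{In}_\mathbf{s}$ coincide on every wall of the cluster complex. By mutation invariance and the inductive construction of $\Delta^\pm_\mathbf{s}$, this reduces to checking agreement on the incoming walls $s_i^\perp$. On $\mathfrak{D}^\mathrm{In}_\mathbf{s}$ the wall-crossing there is $\exp(\sum_{k\geq 1} x^{ks_i}/k^2)$ by definition, while on $\mathfrak{D}^\mathrm{DT}_{\mathbf{s,w}}$ it equals $\bar{\mathcal{I}}(1^\mathrm{ss}(m))$ for $m$ generic on $s_i^\perp$. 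Since the $m$-semistable subcategory is $\langle S_i\rangle$, one has $1^\mathrm{ss}(m) = \coprod_{k\geq 0}[BGL_k\to\mathfrak{M}_{ks_i}]$, and evaluating $\bar{\mathcal{I}}$ using the Behrend function of $\mathfrak{M}_{ks_i}$---which is controlled because $S_i$ has no self-extensions under our standing 2-acyclic, loop-free hypothesis on $Q(\mathbf{s})$---yields exactly the same exponential.

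With the wall-crossings on $\Delta^+_\mathbf{s}$ identified, the rest of the argument duplicates the proof of \cref{reddening}: the green-to-red sequence produces an $\mathfrak{S}$-path from $\mathcal{C}^+_\mathbf{s}$ to $\mathcal{C}^-_\mathbf{s}$ crossing only walls of $\Delta^+_\mathbf{s}$; the path-ordered products of $\mathfrak{D}^\mathrm{DT}_{\mathbf{s,w}}$ and $\mathfrak{D}^\mathrm{In}_\mathbf{s}$ along this path therefore coincide, and by the bijection $\mathfrak{p}_{+,-}$ of \cref{thesecondbijection} the two scattering diagrams are equal. This confirms \cref{ksconjecture} with the trivial central element; the quantum case follows identically, using $\bar{\mathcal{I}}_q$ together with the quantum initial data. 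The main technical obstacle is the initial-wall computation identifying $\bar{\mathcal{I}}(1^\mathrm{ss}(m))$ with the prescribed exponential, which rests on the (known) DT invariants of a simple module without self-extensions.
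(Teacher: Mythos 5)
Your proof follows essentially the same route as the paper: transport the mutation formula (Theorem \ref{scatmutation}) and cluster chamber structure (Theorem \ref{chamberstructure}) through the DT integration maps $\bar{\mathcal{I}}$, $\bar{\mathcal{I}}_q$ to get the cluster complex for $\mathfrak{D}^{\mathrm{DT}}_{\mathbf{s,w}}$, establish the cluster complex for $\mathfrak{D}^{\mathrm{In}}_{\mathbf{s}}$ by the GHKK argument with signs adjusted, and then run the same path-ordered-product comparison as in the proof of Corollary \ref{reddening}. Your proposal is slightly more explicit than the paper (which compresses this into two sentences) in isolating the initial-wall verification $\bar{\mathcal{I}}(\log 1^{\mathrm{ss}}(m)) = \sum_{k\geq 1} x^{ks_i}/k^2$ as the technical nub; that computation does indeed go through as you expect, since $\mathfrak{M}_{ks_i}\cong BGL_k$ (no loops, so no self-extensions of $S_i$), giving $\bar I_q(BGL_k\to\mathfrak{M}_{ks_i}) = (-q^{1/2})^{k^2}x^{ks_i}/[\mathrm{GL}_k]_q$, matching $\bar g_{i,q}$ by construction.
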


\begin{proof}
The proof follows the same routine as the proof of \cref{reddening}. The cluster complex structures of the DT scattering diagrams ${\mathfrak D}^{q\mathrm{DT}}_{\mathbf{s,w}}$ and ${\mathfrak D}^\mathrm{DT}_{\mathbf{s,w}}$ is guaranteed by the cluster complex structure of $\mathfrak D_{\mathbf{s,w}}^\mathrm{Hall}$. What is missing is the cluster complex structures of ${\mathfrak D}_\mathbf{s}^{q\mathrm{In}}$ and ${\mathfrak D}^\mathrm{In}_\mathbf{s}$ (see \cref{ghkkmutation} and \cref{agreecc}), which can be obtained using the same argument as in \cite{gross2018canonical} for cluster scattering diagrams.
\end{proof}

\begin{remark}
Since a scattering diagram $\mathfrak D = (\mathfrak S, \phi)$ is determined by the group element $\phi(0)$ in $\hat G$, $(2)$ of the above conjecture means $ \phi^q_{\mathbf{s,w}}(0) = \phi^q_{\mathbf{s}}(0)\cdot g_0$ where $\log(g_0)$ is in $(\hat {\mathfrak g}_{\mathbf{s}}^q)_{N_0}$ and $N_0\colon = \ker p^*$ (see \ref{indata}). Note that $(2)$ implies $(1)$ by specializing $q^{1/2} =1$ for polynomial potentials. 
\end{remark}

\subsection{Examples}
Instead of the DT scattering diagrams, one can formulate the following conjecture analogous to conjecture \ref{ksconjecture} for stability scattering diagrams.

\begin{conjecture}\label{clusterstabilityconjecture}
Let $(\mathbf{s,w})$ be a non-degenerate seed with potential. Then we have
\begin{enumerate}
     \item The scattering diagrams $ {\mathfrak D}_\mathbf{s,w}$ and $ {\mathfrak D}_\mathbf{s}$ only differ by a central element in $\exp(\hat {\mathfrak g}_\mathbf{s})$.
    \item The scattering diagrams $ {\mathfrak D}^q_{\mathbf{s,w}}$ and $ {\mathfrak D}^q_{\mathbf{s}}$ only differ by a central element in $\exp(\hat {\mathfrak g}_\mathbf{s}^q)$.
\end{enumerate}
\end{conjecture}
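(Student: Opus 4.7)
The plan is to reduce the conjecture to a statement about walls that lie outside the cluster complex $\Delta^+_\mathbf{s}\cup\Delta^-_\mathbf{s}$. By \cref{agreecc}, $\Delta^\pm_\mathbf{s}$ sit as sub-posets of both $\mathfrak S_\mathbf{s,w}$ and $\mathfrak S_\mathbf{s}$ with identical wall-crossings on codimension one cones. Consequently, for any $\mathfrak S$-path $\gamma$ from $\mathcal C^+_\mathbf{s}$ to $\mathcal C^-_\mathbf{s}$, the contributions indexed by walls of the cluster complex match for the two scattering diagrams. The comparison $\phi_\mathbf{s,w}(0)\cdot \phi_\mathbf{s}(0)^{-1}\in \hat G$ is then controlled entirely by walls lying in the ``badlands'' $M_\mathbb R\setminus(|\Delta^+_\mathbf{s}|\cup|\Delta^-_\mathbf{s}|)$, and the conjecture is reformulated as the claim that these badlands contributions produce a central element.

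The key technical assertion to aim for is that every wall of either scattering diagram outside the cluster complex has primitive normal vector $n_0\in N_0\colon =\ker p^*$. Granting this, the Lie subalgebra $\mathfrak g_{N_0}\subset \mathfrak g_\mathbf{s}$ is central: for $n_1,n_2\in N_0$, $[x^{n_1},x^{n_2}]=\{n_1,n_2\}x^{n_1+n_2}=0$ because $p^*(n_1)=0$. Thus $\exp(\hat{\mathfrak g}_{N_0})$ is central in $\hat G$, and every badlands wall-crossing lies in this central subgroup, so does their product. An analogous centrality of $\hat x^{n}$ for $n\in N_0$ in $\mathfrak g^q_\mathbf{s}$ (using the commutator arising from the twisted product $*$) would handle part~(2).

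Two special cases yield direct verifications. If $Q(\mathbf{s})$ admits a green-to-red sequence, then $\mathcal C^-_\mathbf{s}\in \Delta^+_\mathbf{s}$ and one may choose $\gamma$ entirely within the cluster complex; the badlands are never crossed and the central element is trivial, recovering \cref{reddening}. For the Markov quiver, the kernel $N_0$ is generated by $s_1+s_2+s_3$, and one classifies $m$-semistable Jacobian representations for $m\in N_0^\perp$ directly: the relevant dimension vectors are precisely multiples of $s_1+s_2+s_3$, so by \cref{hallalgebrafactorization} the badlands contribution is supported on $N_0$. Applying the (quantum) integration map and comparing with the cluster scattering diagram on $N_0^\perp$ isolates a single central wall, which one computes explicitly; mutation invariance (\cref{mutationinvariance}) then propagates the equality to all mutations of the initial seed.

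The hard part will be establishing the key claim in full generality. Controlling which dimension vectors admit non-trivial $m$-semistable loci for stability conditions far from the cluster locus is a delicate question in the Donaldson-Thomas theory of Jacobian algebras: one must rule out ``wild'' semistable families supported on dimension vectors $d$ with $p^*(d)\neq 0$ that would otherwise yield non-central walls. Beyond green-to-red and Markov, the full conjecture appears to require new categorical input, most likely along the lines of cohomological integrality for arbitrary non-degenerate $(Q,\mathbf w)$, and it remains open in general.
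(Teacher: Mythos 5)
The statement is a conjecture; the paper only proves it in two special cases (green-to-red sequences, recovered as \cref{reddening}, and the Markov quiver), and you correctly recognize both of these and that the general case is open. Your treatment of the green-to-red case matches the paper's.

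However, your ``key technical assertion'' --- that every wall of $\mathfrak D_{\mathbf{s,w}}$ or $\mathfrak D_\mathbf{s}$ outside the cluster complex has primitive normal vector in $N_0=\ker p^*$ --- is stronger than what is needed and is not what the paper establishes even for the Markov quiver. In the Markov case the cluster complex $\Delta^+_\mathbf{s}$ accumulates along a family of two-dimensional cones $S_v$ (one per $g$-vector $v$, lying in the open half-space $H^+$ where $\langle m, n_0\rangle>0$); these $S_v$ are \emph{outside} the cluster complex yet have normal vectors that are generically \emph{not} in $N_0$. So if any $S_v$ carries a nontrivial wall-crossing, your assertion fails. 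What the paper actually shows is weaker and different in mechanism: it proves that $H^+$ is \emph{exhausted} by closed cluster chambers together with the cones $S_v$, that $\phi_{\mathbf{s,w}}$ (resp.\ $\phi_\mathbf{s}$) is constant on each $S_v$, and that the value on $S_v$ is forced by consistency from the already-matched cluster wall-crossings (\cref{agreecc}). Hence $\phi_{\mathbf{s,w}}=\phi_\mathbf{s}$ on all of $M_\mathbb R\setminus H$, and the only remaining discrepancy sits on $H=n_0^\perp$, where any wall-crossing has support in $N_0$ and is therefore central. No classification of semistable representations is needed. Your proposed step of ``classifying $m$-semistable Jacobian representations for $m\in N_0^\perp$'' addresses only the central hyperplane, but the substance of the argument is about the open half-spaces $H^\pm$ and the accumulation geometry of the cluster complex there; that step is absent from your outline. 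To repair the plan, replace the overly strong claim with the statement that $\phi_{\mathbf{s,w}}$ and $\phi_\mathbf{s}$ agree on $M_\mathbb R\setminus N_0^\perp$ (which you must establish by a chamber-closure argument, not by controlling supports of semistables), after which the centrality of the residual wall on $N_0^\perp$ is immediate exactly as you say.
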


In fact, \cref{reddening} confirms this conjecture in the case that the quiver $Q(\mathbf{s})$ has a green-to-red sequence and in this case, the equality holds. The following example (the Markov quiver) is the first evidence of the above conjecture when the two scattering diagrams are not known to be equal. 

\begin{example}\label{markovquiver}
The following quiver is called the Markov quiver. 
\[\begin{tikzcd}[arrow style=tikz,>=stealth,row sep=4em]
1 \arrow[rr, shift right = .4ex] \arrow[rr, shift left = .4ex]
&&
2 \arrow[dl,shift left=.4ex] \arrow[dl,shift right=.4ex]
\\
& 3\arrow[ul,shift left=.4ex] \arrow[ul,shift right=.4ex]
  \end{tikzcd}\]
One can fix a lattice $N = \mathbb Z^3$ with the standard basis $\mathbf s = \{e_1, e_2, e_3\}$. The equipped skewsymmetric form $\{,\}$ is given by the following adjacency matrix 
\[
B = \begin{bmatrix}
0&2&-2\\
-2&0&2\\
2&-2&0\\
\end{bmatrix}.
\]
Then the quiver $Q(\mathbf{s})$ is the Markov quiver. 
\end{example}

\begin{proposition}
Let $(N,\{,\})$ and $\mathbf{s}$ be the data as above. Suppose $(\mathbf{s,w})$ is a non-degenerate SP. Then \cref{clusterstabilityconjecture} is true for $(\mathbf{s,w})$.
\end{proposition}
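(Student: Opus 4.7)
The plan combines Theorem \ref{agreecc} with the very particular structure of the Markov mutation fan. A first computation shows that $p^*(e_1+e_2+e_3) = 0$, so $\ker p^* = \mathbb Z(e_1+e_2+e_3)$; indeed the three rows of the adjacency matrix $B$ sum to zero. Accordingly the hyperplane $H := (e_1+e_2+e_3)^\perp \subset M_\mathbb R$ has a primitive integral normal vector lying in $N^+_\mathbf{s}$, and the graded piece $(\mathfrak g_\mathbf{s})_{\mathbb Z_{>0}(e_1+e_2+e_3)}$ is abelian and central in $\hat{\mathfrak g}_\mathbf{s}$, since its grading directions pair trivially with everything in $N$. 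The same holds for $\hat{\mathfrak g}_\mathbf{s}^q$: every twisting factor $q^{\frac12\{d_1,d_2\}}$ appearing when one of $d_1,d_2$ is a multiple of $e_1+e_2+e_3$ is trivial. In particular, any wall-crossing supported on $H$ is automatically a central element of the relevant pro-unipotent group.

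Next, I would analyze the Markov mutation fan. By tracking $g$-vectors through the mutations described in Theorem \ref{ghkkmutation}, one verifies that every cluster chamber $\overline{\mathcal C^+_v}$ lies in the closed half-space $\overline{\mathcal H^+}$ cut out by $H$, every $\overline{\mathcal C^-_v}$ lies in the opposite closed half-space, and the chambers accumulate onto $H$ from both sides. Consequently $|\Delta^+_\mathbf{s}| \cup |\Delta^-_\mathbf{s}| = M_\mathbb R \setminus C$, where $C$ is a neighbourhood of $H$ contained in a bounded region. Theorem \ref{agreecc} then gives that $\mathfrak D_\mathbf{s}$ and $\mathfrak D_\mathbf{s,w}$ carry identical wall-crossings on every wall of $\Delta_\mathbf{s}^\pm$, so all of the possible discrepancy between them is localised inside $C$.

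The central step is to show that inside $C$, both scattering diagrams have all their walls supported on $H$. For $\mathfrak D_\mathbf{s,w}$ this reduces, via \cref{hallalgebrafactorization} and the integration map, to the claim that for generic $m \in C^\circ$ the only $m$-semistable $J(\mathbf{s,w})$-modules have dimension vectors in $\mathbb Z_{>0}(e_1+e_2+e_3)$; equivalently, rigid indecomposables of the Jacobian algebra account for exactly the cluster walls of $\Delta^\pm_\mathbf{s}$, while the remaining semistable objects live in dimension $k(1,1,1)$ (the band or ``imaginary'' objects associated with the once-punctured torus). I would prove this by combining \cref{ifm} and \cref{fmg0} with the mutation covariance of \cref{scatmutation}: if $d \notin \mathbb Z_{>0}(e_1+e_2+e_3)$ supports an $m$-semistable module for some $m \in C^\circ$, then an appropriate sequence of mutations pushes the hyperplane $d^\perp$ into the cluster complex of the mutated seed, contradicting the position of $m$ in $C$. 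For $\mathfrak D_\mathbf{s}$ the analogous conclusion follows from the same mutation argument applied to the initial data of the cluster scattering diagram, using Theorem \ref{ghkkmutation}.

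Granted the preceding steps, along any $\mathfrak S_\mathbf{s,w}$-path from $\mathcal C^+_\mathbf{s}$ to $\mathcal C^-_\mathbf{s}$ one may arrange the crossings so that every wall in $\Delta_\mathbf{s}^\pm$ is traversed with the same wall-crossing in both scattering diagrams, while every additional crossing (lying in $C$, hence in $H$) is a central element. Hence $\phi_\mathbf{s,w}(0)$ and $\phi_\mathbf{s}(0)$ differ by a single central element of $\exp((\hat{\mathfrak g}_\mathbf{s})_{\mathbb Z_{>0}(e_1+e_2+e_3)})$, and the argument transfers verbatim to the quantum setting, proving both parts of Conjecture \ref{clusterstabilityconjecture}. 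The principal obstacle will be the representation-theoretic input in the central step: rigorously controlling the dimension vectors of all $m$-semistable $J$-modules for $m \in C^\circ$. This will likely require either a direct classification of indecomposable modules for the Jacobian algebra of a non-degenerate Markov potential (leveraging the known equivalence with the cluster category of the once-punctured torus), or a careful inductive mutation argument ensuring that every Schur root outside $\mathbb Z_{>0}(e_1+e_2+e_3)$ appears as a $c$-vector in some mutation of $\mathbf{s}$.
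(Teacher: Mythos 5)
Your first step is sound: $p^*(e_1+e_2+e_3)=0$ so $\ker p^* = \mathbb Z(1,1,1)$, and any wall-crossing supported on $\mathbb Z_{>0}(1,1,1)$ is central. You also correctly invoke \cref{agreecc} to get agreement on the cluster complex $\Delta^\pm_\mathbf{s}$. But the ``central step'' you isolate is a genuine gap, and the claim it rests on is false. The complement of $|\Delta^+_\mathbf{s}|\cup|\Delta^-_\mathbf{s}|$ in $M_\mathbb{R}$ is not an open neighbourhood of $H=(1,1,1)^\perp$; the cluster chambers are dense in each open half-space, and what is left over (besides $H$ itself) is a measure-zero family of codimension-one cones $S_v$, each spanned by a $g$-vector $v$ (which satisfies $v(n_0)=1\neq 0$) together with a ray $r_v\in H$. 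The normal $d_v$ to such an $S_v$ is \emph{not} a multiple of $(1,1,1)$, since $n_0\notin S_v^\perp$, so the wall-crossing $\phi(S_v)\in\exp(\mathfrak g_{\mathbb{Z}d_v\cap N^+})$ is not central — and it is non-trivial, since otherwise $S_v$ would merge with the adjacent cluster chambers into a single cell, contradicting the fact that the $S_v$'s are distinct cones of $\mathfrak S_\mathbf{s}$. So your reduction to ``for generic $m\in C^\circ$ the only $m$-semistable $J$-modules have dimension vector a multiple of $(1,1,1)$'' fails precisely on these cones, and neither a classification of indecomposables nor a mutation argument can rescue a false statement.

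The paper's argument circumvents this entirely by a consistency (rather than representation-theoretic) argument. Each $S_v$ has a codimension-two face $\mathbb R_+v$ through the $g$-vector $v$, and the positive and negative maximal cells relative to $\mathbb R_+v$ are both cluster chambers. By consistency, the face-crossing $\phi(\mathbb R_+v)$ is a path-ordered product along cluster walls only, which coincides for $\mathfrak D_\mathbf{s}$ and $\mathfrak D_\mathbf{s,w}$ by \cref{agreecc}; projecting via $\pi_{\mathbb R_+v,\,S_v}$ (\cref{keyfunctor}) then forces $\phi_\mathbf{s}(S_v)=\phi_\mathbf{s,w}(S_v)$. Once agreement is established on all of $M_\mathbb{R}\setminus H$, a single path from $\mathcal C^+_\mathbf{s}$ to $\mathcal C^-_\mathbf{s}$ crossing $H$ once shows the full path-ordered products differ only by the wall-crossing on $H$, which is central. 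The correct replacement for your ``central step'' is therefore not an assertion about dimension vectors of semistables near $H$, but the geometric statement (from \cite{chavez2013c} and \cite{fock2016cluster}) that $H_1 = \{m: m(n_0)=1\}$ is covered by the cluster triangles together with the rays $v+\mathbb R_+r_v$, so that every non-cluster cell in $H^+$ is a cone $S_v$ whose value is pinned down by the cluster data via consistency.
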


\begin{proof}
Consider $n_0 = (1,1,1)\in N$. The vector $n_0$ decomposes $M_\mathbb R$ into two open halves ($H^+$ and $H^-$) and a hyperplane $H = n_0^\perp$. It is well-known that the cluster complex $\Delta_\mathbf{s}^+$ is contained in $\overline {H^+}$; see \cite{chavez2013c}. The intersection $\Delta_\mathbf{s}^+\cap H$ is actually the origin. 

Note that each three-dimensional cone in $\Delta_\mathbf{s}^+$ is a simplicial cone. It has three one-dimensional faces (rays) generated by primitive vectors in $M$. These vectors are called the $g$-\textit{vectors}  and all of them lie on the plane 
\[
H_1 \colon = \{m\in M_\mathbb R\mid m(n_0) = 1\}.
\]
The intersection of a three-dimensional cone $\sigma$ of $\Delta_\mathbf{s}^+$ with $H_1$ is a triangle $T_\sigma$ of which the three vertices are the corresponding $g$-vectors. We refer to figure 1 of \cite{fock2016cluster} for a visualization of this picture. For each $g$-vector $v$, there is a ray $v + \mathbb R_+r_v$ on $H_1$ with end point $v$ and direction $r_v$ that is disjoint from $\Delta _\mathbf{s}^+ \cap H_1$. The two vectors $r_v$ and $v$ generate a (relatively open) two-dimensional cone $S_v$ which is in contained in ${H^+}$ and we have $S_v\cap {H} = \mathbb R_+r_v \subset H$.

One can show that $H_1$ is the union of the triangles $T_\sigma$ and the rays $v+\mathbb R_+r_v$ where $\sigma$ runs over all three-dimensional cones in $\Delta_\mathbf{s}^+$ and $v$ runs over the set of $g$-vectors. This gives a decomposition of $H^+$ into (the relative interiors of) cones in $\Delta_\mathbf{s}^+$ and two-dimensional cones of form $S_v$. 

This decomposition of $H^+$ is the same cone decomposition given by $\mathfrak S_\mathbf{s}^\circ$ restricted to $H^+$. In fact, the function $\phi_{\mathbf{s,w}}$ (or $\phi_\mathbf s$ for the cluster scattering diagram) is constant along each $S_v$ and is determined by the face-crossing $\phi_{\mathbf{s,w}}(\mathbb R_+v)$ at the ray $\mathbb R_+v$ as follows. Note that the positive chamber and the negative chamber relative to $\mathbb R_+v$ (see \cref{consistent}) are connected by mutations: they are both cluster chambers. Therefore $\phi_{\mathbf{s,w}}(\mathbb R_+v)$ is determined by the wall-crossings at walls of the cluster complex $\Delta_\mathbf{s}^+$. The cluster and stability scattering diagrams have identical wall-crossings at walls of $\Delta_\mathbf{s}^+$ by \cref{agreecc}. This gives the same wall-crossing at each $S_v$ for these two scattering diagrams.

The same argument applies to the negative cluster complex $\Delta_\mathbf{s}^-$. Then we have for any $m\in M_\mathbb R \setminus  H$, 
\[
\phi_{\mathbf s} (m) = \phi_{\mathbf {s,w}} (m). 
\]
Consider a path $\gamma$ from $\mathcal C_\mathbf s^+$ to $\mathcal C_\mathbf s^-$ that crosses $H$ only once. The only place the path-ordered products can differ for these two scattering diagrams is the wall-crossing at $H$ which is central in $\exp (\hat {\mathfrak g}_\mathbf s)$. This proves conjecture \ref{clusterstabilityconjecture} for the Markov quiver.
\end{proof}

\bibliographystyle{abbrv}
\bibliography{scat}

\newcommand{\Addresses}
{{
  \bigskip
  \footnotesize
  Lang Mou, \textsc{Department of Mathematics, University of California Davis}\par\nopagebreak
  \textit{E-mail address}: \texttt{lmou@math.ucdavis.edu}
}}
\Addresses

\end{document}